\documentclass[oneside]{amsart}

\usepackage[width=170mm,top=30mm,bottom=32mm]{geometry}
\usepackage[utf8]{inputenc}

\usepackage[style=alphabetic,maxbibnames=99,maxalphanames=5]{biblatex}
\usepackage{newunicodechar}

\newunicodechar{̵}{}
\usepackage{caption}
\usepackage{subcaption}
\usepackage[centertags]{amsmath}
\usepackage{bbm, amsfonts,amssymb, amsthm}
\usepackage{mathrsfs}
\usepackage{fancyhdr}
\usepackage{graphicx}
\usepackage{amsmath}
\usepackage{tikz-cd} 
\usepackage{cancel}
\usepackage{physics}

\usepackage{mathtools}
\usepackage{hyperref}
\usepackage{lipsum}

\usepackage[normalem]{ulem} 
\usepackage[all]{xy}
\usepackage{enumitem}
\usepackage{mathtools}
\usepackage{mathrsfs}
\usepackage{amssymb}
\usepackage{amsmath,amsfonts,amsthm}
\usepackage{graphicx}
\usepackage{upgreek}
\usepackage[toc,page]{appendix}
\usepackage{bbm}
\usepackage{esint}
\usepackage{scalerel}
\usepackage{stackengine,wasysym}
\usepackage{verbatim}
\usepackage{hyperref}
\usepackage[dvipsnames]{xcolor}
\hypersetup{colorlinks=true, linktocpage=true,citecolor=Emerald, linkcolor=RedViolet, urlcolor=RedViolet}
\usepackage{geometry}

\newtheorem*{Theorem*}{Theorem}
\newtheorem*{Problema*}{Problem}

\newtheorem{thm}{Theorem}[section]

\newtheorem{lema}[thm]{Lemma}
\newtheorem{prop}[thm]{Proposition}
\newtheorem{cor}[thm]{Corollary}
\newtheorem{exam}[thm]{Example}
\newtheorem{defn}[thm]{Definition} 
\newtheorem{remark}[thm]{Remark}

\newtheorem{teo}{Theorem}[section]
\newtheorem{corollary}{Corollary}[section]

\newcommand{\cA}{\mathcal{A}}\newcommand{\cB}{\mathcal{B}}

\newcommand{\cG}{\mathcal{G}}\newcommand{\cH}{\mathcal{H}}
\newcommand{\cJ}{\mathcal{J}}
\newcommand{\cL}{\mathcal{L}}
\newcommand{\cM}{\mathcal{M}}

\newcommand{\cV}{\mathcal{V}}
\newcommand{\cX}{\mathcal{X}}

\newcommand{\bC}{\mathbb{C}}

\newcommand{\bN}{\mathbb{N}}

\newcommand{\bR}{\mathbb{R}}
\newcommand{\bS}{\mathbb{S}}

\newcommand{\bZ}{\mathbb{Z}}

\newcommand{\nc}{\newcommand}
\nc{\p}{\partial}
\nc{\ol}{\overline}

\def\@maketitle{%
  \normalfont\normalsize
  \let\@makefnmark\relax  \let\@thefnmark\relax
  \ifx\@empty\@date\else \@footnotetext{\@setdate}\fi
  \ifx\@empty\@subjclass\else \@footnotetext{\@setsubjclass}\fi
  \ifx\@empty\@keywords\else \@footnotetext{\@setkeywords}\fi
  \ifx\@empty\thankses\else \@footnotetext{%
    \def\par{\let\par\@par}\@setthanks}\fi
  \@mkboth{\@nx\shortauthors}{\@nx\shorttitle}%
  \global\topskip42\p@ %
  \@settitle
  \ifx\@empty\authors \else \@setauthors \fi
  \ifx\@empty\@commby
  \else
    \baselineskip18\p@
    \vtop{\centering{\footnotesize\@commby\@@par}%
      \global\dimen@i\prevdepth}\prevdepth\dimen@i
  \fi
  \ifx\@empty\@dedicatory
  \else
    \baselineskip18\p@
    \vtop{\centering{\footnotesize\itshape\@dedicatory\@@par}%
      \global\dimen@i\prevdepth}\prevdepth\dimen@i
  \fi
  \@setabstract
  \normalsize
  \dimen@34\p@ \advance\dimen@-\baselineskip
  \vskip\dimen@\relax
}

\definecolor{myteal}{HTML}{00797d}
\hypersetup{
    colorlinks=false,%
    citebordercolor=green,%
    filebordercolor=red,%
    linkbordercolor=blue,%
    urlbordercolor=red}

\usepackage[T1]{fontenc}
\usepackage{lettrine}

\title{On the Morse index of nonorientable minimal surfaces in $\mathbb{R}^3$}

\author{Carlos Andrés Toro Cardona}
\address{Department of Mathematics, Indiana University Bloomington}
\email{cartoroc@iu.edu}
\author{Carlos Granada-Palacio}
\address{Instituto de Matemática Pura e Aplicada}
\email{carlos.palacio@impa.br
}
\author{Ivan Miranda}
\address{Instituto de Matemática Pura e Aplicada}
\email{ivan.miranda@impa.br}

\begin{document}

\begin{abstract}
        We compute the Morse index of several complete nonorientable minimal surfaces immersed in Euclidean three-space. In particular, we prove that the Meeks minimal M\"obius band has Morse index two, which is the least possible Morse index for such nonorientable minimal surfaces, by previous works of Ros and Chodosh-Maximo. This is the first known minimal surface with index two. We show that the López minimal Klein bottle has Morse index three. As a consequence of this computation, we extend results in the literature and observe that  any complete minimal surface immersed in Euclidean three-space with total curvature $8 \pi$ has Morse index three. We also compute the Morse index of the Oliveira family of minimal M\"obius bands in terms of their total curvature and show that every non-negative integer is the Morse index of a complete minimal surface immersed in Euclidean three-space. We show that five is a sharp lower bound for the Morse index of any minimal oriented double cover in Euclidean three-space, and equality is attained by the double cover of the Meeks minimal M\"obius band. Finally, we prove results towards the classification of complete minimal surfaces with Morse index two immersed in Euclidean three-space, obtaining topological and geometric restrictions on such surfaces.

\end{abstract}

\thanks{I.M. was financed in part by the Coordenação de Aperfeiçoamento de Pessoal de Nível Superior - Brasil (CAPES) – Finance Code  001. C.A.G.P. was supported by CNPq – Conselho Nacional
de Desenvolvimento Científico e Tecnológico. C.A.T.C. thanks FAPERJ (Grant number E26/202.321/2024) for supporting this research project.}
\maketitle

	\setcounter{tocdepth}{1}
	\tableofcontents

\section{Introduction }
    
    We study minimal surfaces immersed in the Euclidean three-space $\mathbb{R}^3$. Minimal surfaces are defined as critical points for the area functional and can be characterized as immersions with mean curvature zero. The Morse index of a minimal surface encodes the maximal number of linearly independent directions of compactly supported variations that decrease its area up to second order. 

        Orientable complete minimal surfaces immersed in $\mathbb{R}^3$ with Morse index at most one have long been classified. They are the plane, the Catenoid and the Enneper minimal surface. The plane is stable, i.e. has Morse index zero. The classification of complete orientable stable minimal surfaces was established by do Carmo and Peng \cite{DoCarmo}, Fischer-Colbrie and Schoen \cite{fischerschoen1980structure}, and Pogorelov \cite{pogorelov1981stability}, independently. The Catenoid and the Enneper minimal surface both have Morse index one, and they are classified by their Morse index among complete orientable minimal surfaces immersed in $\mathbb{R}^3$ \cite{LopezRosWeakly}.

    In contrast, it was proved by Ros \cite{RossStability} and Chodosh and Maximo \cite{ChodoshMaximotopandindexII} that there exists no complete nonorientable minimal surface immersed in Euclidean three-space with Morse index zero or one, respectively. In \cite{ChodoshMaximotopandindexII}, Chodosh and Maximo also prove the non-existence of complete orientable minimal surfaces with Morse index two immersed in $\mathbb{R}^3$. Moreover, there is no complete nonorientable minimal surface immersed in $\mathbb{R}^3$ whose Morse index is known. This motivates the question: are there complete nonorientable minimal surfaces with index two immersed in Euclidean three-space?

    The main goal of this paper is to answer this question and related ones. For this objective, we compute the Morse index of several nonorientable minimal surfaces immersed in $\mathbb{R}^3$ and study the problem of classifying those of least Morse index. 

    The study of the Morse index of minimal surfaces is closely related to their total curvature. It is a classical result due to Fischer-Colbrie \cite{Fisher-Colbrie1} and Ros \cite{RossStability} that a complete minimal surface immersed in $\mathbb{R}^3$ has finite index if and only if it has finite total curvature. Related to the problem of classifying minimal surfaces by Morse index is the problem of classifying them by total curvature. We mention, for instance, that complete orientable minimal surfaces of total curvature at most $4\pi$ are classified: the plane, the Catenoid and the Enneper minimal surface. (See the work of Osserman \cite{Osserman}).

    Up to rigid motions, there is a unique complete nonorientable minimal surface immersed in Euclidean three-space with total curvature at most $6\pi$ \cite{Meeks}. This surface has the topology of a M\"obius band and was constructed by Meeks, \cite{Meeks}. Our first main result is the computation of its Morse index. 
    
{
		\renewcommand{\theteo}{A}
		\begin{teo} \label{thm-Meeks-indice} 
            The Meeks minimal M\"obius band has Morse index two.
		\end{teo}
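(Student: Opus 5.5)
The lower bound is already available: by the results of Ros and Chodosh--Maximo recalled above, no complete nonorientable minimal surface in $\mathbb{R}^3$ has index zero or one. So the content of Theorem~\ref{thm-Meeks-indice} is the upper bound $\operatorname{ind}(M)\le 2$ for the Meeks M\"obius band $M$. The plan is to lift everything to the oriented double cover $\widetilde M\to M$ with deck involution $I$. The Jacobi operator of $M$ is then the Jacobi operator $L=\Delta+|A|^2$ of $\widetilde M$ restricted to the functions $u$ with $u\circ I=-u$. This is because the unit normal satisfies $N\circ I=-N$, so a normal variation $uN$ descends to $M$ exactly when $u$ is $I$-odd. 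Hence $\operatorname{ind}(M)$ is the number of negative eigenvalues of $L$ on $I$-odd functions.

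Since $M$ has finite total curvature, $L$ on $\widetilde M$ is conformally equivalent, via the extended Gauss map $g:\overline{\widetilde M}\to\mathbb{S}^2$, to the operator $\Delta_{\mathbb{S}^2}+2$ pulled back to the compactification. Concretely, the index equals the number of eigenvalues less than $2$ of the Laplacian of the singular metric $g^*ds^2_{\mathbb{S}^2}$ on the compact surface (Fischer-Colbrie, Tysk, Ejiri--Kotani, Montiel--Ros). The involution becomes an isometry of this pulled-back metric, and we must count odd eigenfunctions.

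For Meeks' surface the compactified double cover is $\overline{\bC}$ with $I(z)=-1/\bar z$, and the Gauss map has degree $3$. The Weierstrass data are $g(z)=z^2\frac{z+1}{z-1}$ and $\Phi_3 = i\frac{z^2-1}{z^2}\,dz$, or the equivalent normalized form. So $\widetilde M$ has index $\le 2\cdot3-1=5$ by Tysk-type bounds. For the computation I would use the Montiel--Ros description. Eigenvalue exactly $2$ corresponds to bounded Jacobi fields, given by the space of branched minimal immersions with the same Gauss map. The span of the pulled-back linear functions $\langle N,a\rangle$ is $3$-dimensional. These functions are $I$-odd, since $N\circ I=-N$, and they have eigenvalue $2$, so they are not counted.

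The key step is to show that on the odd subspace exactly two eigenvalues lie below $2$. I would proceed by the following counting.

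\textbf{Step 1.} Compute the full index of $\widetilde M$ (I expect $5$). To do this, identify the nullity via the Montiel--Ros/Ejiri--Kotani criterion: determine the branch-value configuration of $g$ and check that the bounded Jacobi fields are only the $3$ translational ones. The index is then $2\deg g-1$ minus corrections, and for generic branch values it equals $2d-1=5$.

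\textbf{Step 2.} Split the index into $I$-even and $I$-odd parts. The even part equals the index of the orientable problem on functions invariant under $I$. I would exhibit explicit even test functions, for instance constants, or $\log$-type functions built from $|g|$ symmetrized under $I$, producing at least three negative directions. Alternatively, the first eigenfunction is positive and hence even, so it contributes one even direction. Then one can use the rotational symmetry $z\mapsto \bar z$, together with the index of $\widetilde M$ split over the symmetry group, to count even directions.

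This splitting is the main obstacle: proving $\operatorname{ind}_{\rm odd}\le 2$ requires either an explicit eigenfunction analysis or $\operatorname{ind}_{\rm even}\ge 3$. My first attempt would be the test-function route. I would use the functions $\langle N,a\rangle\cdot\langle N,b\rangle$-type products and the support functions $\langle X,N\rangle$ adjusted by the Gauss map, whose parity under $I$ is even since $X\circ I=X$ and $N\circ I=-N$ give $\langle X,N\rangle$ odd; so I would instead use quadratic expressions in $N$, which are even. I would check that their Rayleigh quotients for $\Delta_{g^*}$ are below $2$ on a $3$-dimensional space.
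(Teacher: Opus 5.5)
Your lower bound is fine: the results of Ros and Chodosh--Maximo do give $\operatorname{index}\ge 2$. The paper instead obtains it from an odd Dirichlet bracketing, but citing those results is legitimate.

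The gap is the upper bound $\operatorname{index}_{\mathcal O}\le 2$. Step~2 is only a plan, and the test functions you propose cannot carry it out. Constants, $\log|g|$-type functions (symmetrized or not) and the quadratic expressions $\langle N,a\rangle\langle N,b\rangle$ are all of the form $f\circ\phi$ with $f$ a function on $\mathbb{S}^2$. Away from the branch points, $\phi$ is a $3$-sheeted covering that is a local isometry for $g_\phi$, so $Q(f\circ\phi,h\circ\phi)=3\,Q_{\mathbb{S}^2}(f,h)$. Here $Q_{\mathbb{S}^2}$ is the form of $-\Delta_{\mathbb{S}^2}-2$, whose index is one. On pulled-back functions the index form therefore has exactly one negative direction, which can be taken to be the constants. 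You can never produce the three even negative directions you need this way: any further directions must come from functions that distinguish the sheets of $\phi$. Splitting further under $z\mapsto\bar z$ only refines the bookkeeping and does not supply them.

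Step~1 is also unjustified as written. $\operatorname{index}(\widetilde M)=5$ does not follow from $d=3$ by a Tysk-type estimate, since those are linear in $d$ with a much worse constant. Nor can you invoke genericity for this particular Gauss map. Computing the full index needs Montiel--Ros's great-circle criterion.

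That criterion is exactly the missing idea, and once you have it you can skip the total index and the even/odd split altogether. The branch points of $\mathcal{G}(z)=z^2\frac{z+1}{z-1}$ are $0$, $\infty$ and the two real roots of $z^2-z-1$, all simple. Since $\mathcal G$ has real coefficients, every branch value lies on the great circle $C$ corresponding to $\mathbb{R}\cup\{\infty\}$.

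Hence $\phi^{-1}(\mathbb{S}^2\setminus C)$ consists of six domains, each mapped isometrically onto a hemisphere. The involution $\tau$ exchanges the three lying over one hemisphere with the three lying over the other, because the antipodal map swaps the two hemispheres. An odd function is determined by its restrictions to three representatives $\Omega_1,\Omega_2,\Omega_3$, and $Q(u,u)=2\sum_i Q_{\Omega_i}(u|_{\Omega_i})$.

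Neumann bracketing then applies, using the hemisphere's Neumann spectrum ($0$ simple, $2$ double). This gives $\operatorname{index}_{\mathcal O}+\operatorname{nul}_{\mathcal O}\le 1+1+3=5$, which is Lemma~\ref{lem:adaptmontielros1}. Your own observation that the three coordinates of $N$ are odd Jacobi fields gives $\operatorname{nul}_{\mathcal O}\ge 3$, hence $\operatorname{index}_{\mathcal O}\le 2$. Together with your lower bound, this proves the theorem.

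The paper also gets $\operatorname{index}_{\mathcal O}\ge 2$ from the corresponding Dirichlet bracketing (Lemma~\ref{lem:adaptmontielros2}), using that the first Dirichlet eigenvalue of a hemisphere is $2$. So it does not need the nonexistence results.
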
                       
	}

    To our knowledge, the Meeks minimal M\"obius band is the first known complete minimal surface in $\bR^3$ with Morse index two. See the recent comprehensive book by Urbano \cite{Urbano} about the Morse index of minimal submanifolds. We highlight that the Meeks minimal M\"obius band attains the minimum possible Morse index among nonorientable complete minimal surfaces immersed in $\mathbb{R}^3$.

    The classification of complete nonorientable minimal surfaces with total curvature at most $8\pi$ was established by López. It turns out that, up to rigid motions, in addition to the Meeks minimal M\"obius band there is a unique other nonorientable complete minimal surface with total curvature at most $8\pi$. This other surface has the topology of a once-punctured Klein bottle and total curvature $8\pi$. This is the López minimal Klein bottle constructed in \cite{LopezConstruction}. Our second main result is the computation of its Morse index.

    	{
		\renewcommand{\theteo}{B}
		\begin{teo} \label{thm-index-lopez} 
            The López minimal Klein bottle has Morse index three.
		\end{teo}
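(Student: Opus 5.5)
Our approach is to reduce the computation to a spectral problem on the compactified Gauss map image, and to handle nonorientability through the orientable double cover. Let $M$ be the López minimal Klein bottle and $\pi:\widetilde M\to M$ its oriented two-sheeted cover, with deck involution $I$. This involution is fixed-point free and orientation reversing, and satisfies $N\circ I=-N$ for the Gauss map $N$. Normal variations of $M$ correspond to functions $u$ on $\widetilde M$ with $u\circ I=-u$, since the normal itself flips under $I$. Hence
\[
\operatorname{Ind}(M)=\#\{\text{negative eigenvalues of } \Delta_{\widetilde M}-2K \text{ on } I\text{-odd functions}\}.
\]
By the Fischer-Colbrie/Ros results quoted above, this count equals the index of $\Delta_{\bS^2}+2$ on the compact branched cover $N:\overline{\widetilde M}\to\bS^2$. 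Equivalently, it equals the number of eigenvalues of the pulled-back spherical Laplacian $\Delta_{N^*g}$ that are less than $2$, counted on the odd subspace. Because total curvature $8\pi$ lifts to $16\pi$, the map $N$ has degree $4$, and the cover is a compact genus-one surface (the double cover of a punctured Klein bottle has Euler characteristic $-2$ with two ends, so genus $1$) carrying a degree-$4$ branched metric.

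The first step is to write the explicit Weierstrass data of López's Klein bottle on the torus $\overline{\widetilde M}=\bC/\Lambda$ (or on its algebraic model), together with the antiholomorphic involution $I$ and the Gauss map $g$ satisfying $g\circ I=-1/\bar g$. This gives the branch points and symmetries, including the extra finite symmetry group López's surface enjoys. The second step gives a lower bound $\operatorname{Ind}\ge 3$. The coordinate functions $\langle N,a\rangle$, $a\in\bR^3$, are Jacobi fields (eigenvalue $2$), and they are $I$-odd. We would then produce a $3$-dimensional space of odd test functions with negative quadratic form. Natural candidates are $\langle N,a\rangle$ paired with suitable support functions or truncations, using log cutoffs near the ends so that the nullity is converted into strict negativity. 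A cleaner alternative is to use the known lower bound for the double cover: its index is at least the index counted by the Montiel–Ros / Ejiri–Kotani estimate for degree $4$, which is at least $2\deg-1 = 7$ on all functions. We would then split this count into odd and even parts via the symmetry group representation.

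The third step gives the upper bound $\operatorname{Ind}\le 3$, following the Montiel–Ros approach. The index equals $2d-1$ minus corrections when the branch values are in general position, and more precisely, the number of eigenvalues below $2$ of a branched cover is controlled by the space of holomorphic/meromorphic data. We would use the finite symmetry group $G$ of the surface (including $I$) to decompose $L^2(\overline{\widetilde M})$ into isotypic components. In each component we would bound the count of eigenvalues less than $2$ by the count for the quotient orbifold, which covers $\bS^2$ with small degree, where the eigenvalues can be compared with those of the round sphere. Summing over the odd components should give exactly $3$. Finally, we check that eigenvalue $2$ carries no extra multiplicity beyond the Jacobi fields, so that nothing changes sign.

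The main obstacle is this upper bound in the odd sector: it requires precise control of the first several eigenvalues of a degree-$4$ branched spherical metric on a torus. We would first try to exploit the symmetries so that each quotient piece is a degree-$1$ or degree-$2$ branched cover of $\bS^2$. For such covers, the eigenvalues below $2$ are known explicitly (Montiel–Ros, Nayatani), and counting these would then reduce the problem to bookkeeping over the representations.
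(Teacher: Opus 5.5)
Your reduction to counting negative eigenvalues of $-\Delta_\phi-2$ on $\tau$-odd functions on the compactified double cover is correct. However, neither of your two bounds goes through, and the idea that actually decides the answer is missing.

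\emph{Lower bound.} The Jacobi fields $\langle N,a\rangle$ lie exactly in the kernel of $L_\phi$. Whenever $Lu=0$, one has $Q(\eta u,\eta u)=\int u^2|\nabla\eta|^2\ge 0$ for every cutoff $\eta$. So truncating Jacobi fields, or combinations of them, never produces negative directions; they only account for the nullity. Your alternative, quoting $\operatorname{index}(\widetilde M)\ge 2\deg-1=7$, is not available. Montiel--Ros obtain $2d-1$ only under extra hypotheses (e.g.\ all branch values on one great circle), and the branch values of the L\'opez Gauss map lie on two orthogonal great circles, not on one. In the paper, index $7$ for the double cover is itself a by-product of the argument you are missing. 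Even granting it, you would still have to show that the even part is at most $4$.

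\emph{Upper bound.} The Gauss map intertwines $A_1$, $A_2$, $\tau$ with a faithful action on $\mathbb{S}^2$: the reflections in the imaginary and real great circles, the antipodal map, and, for $A_1\circ A_2:(z,w)\mapsto(z,-w)$, the rotation by $\pi$. Hence every quotient is again a degree-$4$ branched cover of a spherical orbifold, never a degree-$1$ or degree-$2$ piece. Cutting instead along the preimages of the two great circles gives $16$ quarter-spheres. Their Neumann eigenvalues $\le 2$ are $0$ and $2$, each simple, and their first Dirichlet eigenvalue is $6$, so the bracketing yields only $0\le\operatorname{index}_{\mathcal O}\le 6$.

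\emph{The missing idea} is the continuity method. The paper joins the Gauss map $\mathcal{G}_1$ to $\mathcal{G}_0=-\sqrt{r}\,zw/(z+r)$ by the $C^1$ family of $\tau$-odd degree-$4$ maps $\mathcal{G}_t=\sqrt{r}\,(z+t)w/\big((tz-1)(z+r)\big)$.
\begin{itemize}
\item All branch values of $\mathcal{G}_0$ lie on one great circle. The preimage of its complement is therefore $8$ isometric hemispheres swapped in pairs by $\tau$, and an odd version of the Montiel--Ros bracketing gives $\operatorname{index}_{\mathcal O}(\mathcal{G}_0)=d-1=3$ and $\operatorname{nul}_{\mathcal O}(\mathcal{G}_0)=3$.
\item One then shows $\operatorname{nul}(\mathcal{G}_t)=3$ for every $t$, via the Montiel--Ros isomorphism $N(\phi)/L(\phi)\cong H(\phi)$. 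The residue conditions cut the $8$-dimensional space $H^{0,2}(\mathcal{G}_t)$ down to a $3$-dimensional $\hat H(\mathcal{G}_t)$, and the period matrix on it is nonsingular, so $H(\mathcal{G}_t)=0$.
\item Since no odd eigenvalue can cross $2$ along the path, $\operatorname{index}_{\mathcal O}(\mathcal{G}_1)=3$.
\end{itemize}
Your final remark about excluding extra multiplicity at eigenvalue $2$ points in this direction. Without a deformation path, though, it does no work, and establishing it uniformly in $t$ is where essentially all of the effort lies.
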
                      
	}

    Chen and Gackstatter have constructed an orientable complete minimal surface with total curvature $8\pi$, with the topology of a once-punctured torus \cite{CG}. It is known that the Chen-Gackstatter minimal surface also has index three \cite{MontielRos}. Complete minimal surfaces with total curvature $8\pi$ have long been classified. Apart from the López minimal Klein bottle and the Chen-Gackstatter minimal torus, there is a family of genus-zero orientable minimal surfaces, described by López in \cite{Lopezorientable8pi}. We have verified that they all have the same Morse index.

    	{
		\renewcommand{\thecorollary}{C}
		\begin{corollary} \label{coro-index-8pi}
            Every complete minimal surface immersed in $\mathbb{R}^3$ with total curvature $8\pi$ has Morse index three.
		\end{corollary}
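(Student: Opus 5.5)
The plan is to use the classification of complete minimal surfaces in $\mathbb{R}^3$ with total curvature $8\pi$ and check the Morse index on each class. Such a surface has finite total curvature, so by Osserman/Huber it is conformally a compact surface minus finitely many points, and its Gauss map extends meromorphically of degree $2$. In the orientable case the Gauss map $g\colon \overline{M}\to\mathbb{S}^2$ has degree $2$. In the nonorientable case the oriented double cover has total curvature $16\pi$, and the map $g$ satisfies $g\circ I = -1/\bar g$ for the antiholomorphic involution $I$. The classification then gives three classes.
\begin{itemize}
\item The López minimal Klein bottle, which is the unique nonorientable example, by López's classification quoted above.
\item The Chen--Gackstatter torus, the unique genus-one example.
\item The genus-zero orientable family described by López in \cite{Lopezorientable8pi}.
\end{itemize}
Theorem~\ref{thm-index-lopez} gives index three for the first class, and \cite{MontielRos} gives index three for Chen--Gackstatter. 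So the work is concentrated on the genus-zero family.

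For that family I would use the Fischer-Colbrie/Montiel--Ros reduction. The Morse index of a complete orientable minimal surface of finite total curvature equals the number of negative eigenvalues of $\Delta_{\mathbb{S}^2}-2$ pulled back by $g$ on $\overline{M}$. Equivalently, it equals the index of the operator $\Delta + |\nabla g|^2$ on the compact Riemann surface with the pulled-back metric. Since $g$ depends only on the branch values, the index is a function of the degree and the branching data.

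When $\deg g=2$ on $\overline{M}\cong\mathbb{S}^2$, the map $g$ is a rational map of degree two. After a Möbius transformation, i.e.\ a rotation of $\mathbb{R}^3$ composed with a conformal change on the source, it has exactly two branch points. Up to a rotation of $\mathbb{S}^2$, a rotation of $\mathbb{R}^3$ and reparametrization, one may take $g(z)=z^2$, up to post-composition with a Möbius map of $\mathbb{S}^2$. That post-composition affects the index, so the branch values must be tracked.

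Montiel--Ros show that for degree-two maps the index is $2\cdot 2-1=3$ when the branch values are not "special". In genus zero, degree two has two branch values. By a rotation these can be moved to antipodal points, since two distinct points on $\mathbb{S}^2$ are congruent to any other pair at the same distance. Alternatively, the result of Montiel--Ros for genus zero gives index $2d-1$ for every degree-$d$ map, because the special case requires nontrivial holomorphic data that cannot occur on the sphere. I would invoke that result, so each surface in the family has index $2\cdot2-1=3$.

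The main obstacle is making sure the Montiel--Ros genus-zero statement applies to all of López's genus-zero examples, including non-generic positions of the ends relative to the branch points. Since the index depends only on $g$, and every degree-two rational map has index exactly three by their theorem, this reduces to quoting them correctly. As a fallback I would do the direct computation: the nullity of $\Delta_{\mathbb{S}^2}-2$ pulled back by $z^2$ (up to Möbius map) is three, coming from linear functions, and the eigenvalues below two, counted via the degree-two covering, contribute $1+2=3$. Combining the three classes proves the corollary.
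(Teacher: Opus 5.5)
Your decomposition into three classes is exactly the paper's: the López Klein bottle handled by Theorem~\ref{thm-index-lopez}, the Chen--Gackstatter torus handled by Montiel--Ros, and the genus-zero family of \cite{Lopezorientable8pi}. However, the genus-zero step, where you place the real work, is not justified as written.

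\begin{itemize}
\item A rotation preserves spherical distance, so two non-antipodal branch values can never be moved to antipodal points. Your own remark that pairs are congruent only when they are at the same distance shows this. So you cannot reduce to $z^2$ up to an isometry of the target.
\item Your fallback spectral computation for $z^2$ therefore covers only the antipodal case. As you yourself note, post-composing with a non-isometric Möbius map changes the pulled-back metric and hence the spectrum.
\item The claimed genus-zero result, ``index $2d-1$ for every degree-$d$ map because the special case cannot occur on the sphere,'' is not something you can quote. Nondegeneracy $H(\phi)=0$ can fail on the sphere. For example, the Gauss map of a minimal sphere with embedded planar ends carries the bounded Jacobi field $\langle X,\phi\rangle\notin L(\phi)$. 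So the reason you give is false, and at best such a statement holds only for generic $\phi$.
\end{itemize}

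The missing observation is elementary: any two points of $\mathbb{S}^2$ lie on a common great circle. By Riemann--Hurwitz, a degree-two map on the sphere has exactly two simple branch points, so its two branch values always lie on an equator. Corollary~15 of \cite{MontielRos} then gives index $2\cdot 2-1=3$ directly, with no normalization of the map needed. This is precisely the paper's argument. Once this line replaces both your rotation step and the appeal to a general genus-zero theorem, your proposal is complete.
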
 
	}

    Our main contribution for the proof of Corollary \ref{coro-index-8pi} lies in the computation of the Morse index of the López minimal Klein bottle given by Theorem \ref{thm-index-lopez}. For completeness, we have also included in Section \ref{indexthreecorollary} a computation of the Morse index of the genus-zero examples with total curvature $8\pi$ described in \cite{Lopezorientable8pi}, building on the work of Montiel and Ros \cite{MontielRos}. We note that it follows from Corollary \ref{coro-index-8pi} and the classification of complete minimal surfaces with low index and low total curvature that the Morse index of a complete minimal surface with total curvature at most $8\pi$ is uniquely determined by its total curvature. This is related to a question posed by Fischer-Colbrie in her seminal work \cite{Fisher-Colbrie1} where she proves that the Morse index of a minimal surface is completely determined by its Gauss map and leaves the question whether there exists an explicit relation between the Morse index and the total curvature of a complete minimal surface immersed in $\bR^3$. 
    
    The problem of classifying complete minimal surfaces with total curvature $10\pi$ is open. These surfaces are necessarily nonorientable, and there are infinitely many of them, by work of \cite{Barros, Oliveira, Zhang}. A distinguished member of this family is a minimal M\"obius band constructed by Oliveira \cite{Oliveira}, which generalizes the construction of Meeks \cite{Meeks}. The Oliveira minimal M\"obius band is the unique minimal M\"obius band with total curvature $10\pi$ whose extended Gauss map has a branch point of maximal order at its end \cite[Corollary]{Ishihara}. 

    More generally, Oliveira \cite{Oliveira} constructed a family of minimal M\"obius bands, indexed by their total curvature, which includes as values of total curvature every odd multiple of $2\pi$ greater than $6\pi$. The members of this family are distinguished among complete minimal M\"obius bands by the behavior of their Gauss map at their end \cite[Corollary]{Ishihara}.
    
    With the tools we have developed to prove Theorem \ref{thm-Meeks-indice}, we compute the Morse index of every member of the Oliveira family of minimal M\"obius bands as a linear function of their total curvature.

    	{
		\renewcommand{\theteo}{D}
		\begin{teo} \label{thm-index-oliveira} 
            The Oliveira minimal M\"obius band of total curvature $2m\pi$ has Morse index $m-1$, for every odd integer $m \ge 5$. 
		\end{teo}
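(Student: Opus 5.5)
The plan is to compute the index through the Gauss map, following Fischer-Colbrie's reduction. Let $\Sigma$ be the Oliveira Möbius band of total curvature $2m\pi$, and let $\tilde\Sigma$ be its oriented double cover. Then $\tilde\Sigma$ is a twice-punctured sphere of total curvature $4m\pi$, with Gauss map $G:\overline{\tilde\Sigma}=\bar{\bC}\to \bS^2$ of degree $m$ satisfying $G\circ I = -G$, where $I(z)=-1/\bar z$ is the deck involution. On $\tilde\Sigma$, the index equals the number of eigenvalues below $2$ of the Laplacian $\Delta_{G^*g_{\bS^2}}$ on the compactified branched cover; this is Fischer-Colbrie's result. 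On $\Sigma$, the variations are sections of the normal bundle. Since the unit normal satisfies $N\circ I=-N$, these correspond to functions $u$ on $\tilde\Sigma$ with $u\circ I=-u$. Hence
\[
\operatorname{Ind}(\Sigma)=\#\{\text{eigenvalues }<2 \text{ of } \Delta_{G^*g}\text{ on } I\text{-odd functions}\}.
\]

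\textbf{Step 1: the eigenvalue-$2$ space and nullity.} The coordinate functions $\langle G, e_i\rangle$ are odd eigenfunctions with eigenvalue $2$. I would also determine the full $2$-eigenspace via the Montiel–Ros correspondence. That correspondence identifies the eigenspace with bounded Jacobi fields, that is, with branched minimal immersions having Gauss map $G$ (or the associated meromorphic data). I would then select the odd ones. The goal of this step is to show that the odd $2$-eigenspace is exactly $3$-dimensional, which pins down the nullity.

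\textbf{Step 2: lower bound.} I would construct $m-1$ independent odd test functions with negative quadratic form, i.e. with $\int|\nabla u|^2<2\int u^2$ in the pulled-back metric. The Weierstrass data of the Oliveira band has a symmetry group that includes rotations by $2\pi/m$-type angles about the vertical axis, and this group commutes with $I$. Decomposing into Fourier modes $e^{ik\theta}$ then separates variables. In each mode I would reduce to an ODE (Sturm–Liouville) problem along the radial variable, using the explicit $G$ from Oliveira's construction (of the form $z^{m-1}(z^m+a)/(az^m-1)$ or similar). The negative directions are counted mode by mode, together with the parity under $I$. The count is set up so that the modes $|k|\le (m-3)/2$, each counted with appropriate multiplicity, sum to $m-1$. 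For Meeks ($m=3$) the analogous count should recover the index $2$ of Theorem A. I expect to reuse the same machinery developed for Theorem A here.

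\textbf{Step 3: upper bound, and the main obstacle.} I need to show there are no further negative odd eigenvalues. My first approach is to use nodal-domain and Courant-type arguments within each Fourier mode. In each ODE mode, the number of eigenvalues below $2$ equals the number of zeros of the $\lambda=2$ solution, by Sturm oscillation. The eigenfunction at $\lambda=2$ is explicit, coming from the coordinate functions and the Step 1 Jacobi fields, so its zeros can be counted directly. The main difficulty is the branch points of $G$, which is where the metric $G^*g$ degenerates. These include the maximal-order branch point at the end, and they break the standard Sturm theory at the singular endpoints. To handle this, I would check that the ODE is limit-point or of regular-singular type there, and compute the indicial exponents to decide which solutions are admissible. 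As a fallback, I would compare with the total-curvature estimates: Tysk's bound gives a rough upper bound, which I would refine using the fact that $\operatorname{Ind}(\tilde\Sigma)=\operatorname{Ind}(\Sigma)+\#\{\text{even eigenvalues}<2\}$. I would then bound the index of the double cover, e.g. through the Ejiri–Kotani/Montiel–Ros results for genus-zero Gauss maps in general position, and subtract the even count computed from the same Fourier analysis.
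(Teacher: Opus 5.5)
Your reduction to the odd spectrum of $-\Delta_{\cG^*g_{\bS^2}}-2$ on the compactified double cover is correct; it is exactly Theorem~\ref{thm:fischercolbrienaoorientavel}. Steps~2 and~3, however, rest on a symmetry that the Oliveira band does not have. Its double cover is $\bC\setminus\{0\}$ with $\cG_m(z)=z^{m-1}\frac{z+1}{z-1}$ and $\eta_m=i\frac{(z-1)^2}{z^{m+1}}dz$, not data of the form you guess. The branched metric $\cG_m^*g_{\bS^2}$ has four cone points: $0$ and $\infty$ of order $m-2$, and the two real roots $\zeta_1,\zeta_2$ of $(m-1)z^2-2z-(m-1)$, of order $1$. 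A connected group of isometries of this metric must fix each cone point, and a M\"obius transformation fixing three points is the identity. So there is no circle action, no decomposition into modes $e^{ik\theta}$, and no reduction to radial Sturm--Liouville problems. A finite symmetry group would only give an isotypic splitting, never ODEs.

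This has several consequences for your plan. The mode count in Step~2 is set up to give $m-1$ rather than derived. The oscillation argument in Step~3 has nothing to act on. Your fallback still needs the even count from the same nonexistent Fourier analysis. Genericity results of Ejiri--Kotani type do not apply to a Gauss map with branch points of order $m-2$, and Tysk's bound is not sharp.

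The missing idea is that all branch values of $\cG_m$, namely $0$, $\infty$, $\cG_m(\zeta_1)$ and $\cG_m(\zeta_2)$, are real, hence lie on a single great circle $C$. Then $\cG_m^{-1}(\bS^2\setminus C)$ consists of $2m$ domains, each mapped isometrically onto a hemisphere. Because $\cG_m\circ\tau=-1/\overline{\cG_m}$ is antipodal, which swaps the two hemispheres, these domains can be labeled $\Omega_1,\dots,\Omega_m,\tau(\Omega_1),\dots,\tau(\Omega_m)$. Since $\tau$ is an isometry of the branched metric, an odd function is determined by its restriction to $\Omega_1\cup\dots\cup\Omega_m$. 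Dirichlet--Neumann bracketing over just these $m$ hemispheres (Lemmas~\ref{lem:adaptmontielros1} and~\ref{lem:adaptmontielros2}) then gives two bounds. First, $\operatorname{index}_{\mathcal O}(\cG_m)\ge m-1$, from the simple Dirichlet eigenvalue $2$ on $m-1$ of them. Second, $\operatorname{index}_{\mathcal O}(\cG_m)+\operatorname{nul}_{\mathcal O}(\cG_m)\le (m-1)+1+2=m+2$, from the Neumann eigenvalues $0$ (simple) and $2$ (double). The three coordinate functions of $\cG_m$ are odd and lie in the kernel, so $\operatorname{nul}_{\mathcal O}(\cG_m)\ge 3$. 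This forces $\operatorname{index}_{\mathcal O}(\cG_m)=m-1$ and odd nullity exactly $3$, so your Step~1 comes out as a by-product rather than an input. This is Theorem~\ref{thm:indexofnonorientable} with $d=m$, and it is how the paper proves the statement.
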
                       
	}

    Therefore, every non-negative integer is realized as the Morse index of a complete minimal surface immersed in $\mathbb{R}^3$, as we now explain. As a consequence of Theorem \ref{thm-Meeks-indice} and Theorem \ref{thm-index-oliveira}, every positive even number is attained as the index of a complete minimal M\"obius band immersed in $\mathbb{R}^3$. (To the best of our knowledge, no complete minimal surface in $\bR^3$ was known to have a positive even number as their Morse index). Recall that the plane is stable and that the Catenoid and the Enneper surface both have index $1$. The Jorge-Meeks $3$-noid constructed in \cite{jorge1983topology} and the Chen-Gackstatter minimal torus both have index $3$, by the Corollary 15 of \cite{MontielRos}. Finally, by results of Nayatani for $1 \le g \le 37$ in \cite{Nayatani2}, \cite{Nayatani-Costa} and Morabito for $g \ge 38$ in \cite{morabito2009index}, the Morse index of the Costa-Hoffmann-Meeks surface of genus $g$ is $2g + 3$, and these numbers cover all odd integers larger than $3$.
    
    We now return to the problem of understanding whether the Meeks minimal M\"obius band is the unique minimal surface immersed in Euclidean three-space with Morse index two. Building on the ideas of Chodosh and Maximo \cite{ChodoshMaximotopandindexI, ChodoshMaximotopandindexII}, and relying on Theorems \ref{thm-Meeks-indice} and \ref{thm-index-lopez}, we now derive topological and geometric restrictions for a complete minimal surface with Morse index two. Recall that by work of Chodosh and Máximo \cite{ChodoshMaximotopandindexII}, every index two complete minimal surface immersed in $\mathbb{R}^3$ is necessarily nonorientable.

    	{
		\renewcommand{\theteo}{E}
		\begin{teo} \label{thm-classification-index-two} 
            Let $M$ be a complete minimal surface with Morse index two immersed in $\mathbb{R}^3$. Then $M$ has total curvature at most $10\pi$. Moreover, $M$ is nonorientable, has only one end, and its end has multiplicity three. In addition, the oriented double cover of $M$ has genus at most two.   
		\end{teo}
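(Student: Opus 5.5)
Since $M$ has finite index, it has finite total curvature by the results of Fischer-Colbrie \cite{Fisher-Colbrie1} and Ros \cite{RossStability}. By Chodosh--Maximo \cite{ChodoshMaximotopandindexII}, $M$ is nonorientable. The plan is to pass to the oriented double cover $\tilde M$, bound its index, and then use the Chodosh--Maximo inequalities to constrain its topology, ends and total curvature.

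\textbf{Step 1: index of the double cover.} Let $\sigma$ be the deck involution of $\tilde M$. Jacobi fields on $\tilde M$ split into $\sigma$-even and $\sigma$-odd parts. Since the unit normal $N$ satisfies $N\circ\sigma=-N$, normal variations of $M$ correspond to odd functions $u\circ\sigma=-u$ on $\tilde M$. Hence $\mathrm{ind}(M)=\mathrm{ind}^{-}(\tilde M)$, the index restricted to odd functions.

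The even part should be controlled by the linear functions $\langle N,a\rangle$, $a\in\bR^3$. These are odd Jacobi fields, while the coordinate functions of $\tilde M$ give even directions. I would use a Montiel--Ros type argument: the Jacobi operator is $\Delta+|\nabla N|^2$, and $\mathrm{ind}(\tilde M)$ equals the number of eigenvalues of the spherical Laplacian below $2$ for the branched cover $N$. I would aim for the bound $\mathrm{ind}^{+}(\tilde M)\le \mathrm{ind}^{-}(\tilde M)+c$ with a small $c$, possibly by comparing with the Meeks double cover, where the abstract claims the total index is $5$. A cleaner route is to cite the abstract's claim that $\mathrm{ind}(\tilde M)\ge 5$ only as a lower bound, and then get an upper bound directly. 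The Chodosh--Maximo estimate for nonorientable surfaces in \cite{ChodoshMaximotopandindexII}, of the form $\mathrm{ind}(M)\ge \tfrac{1}{3}(\text{genus}(\tilde M)+2r)-c$ with $r$ the number of ends, applies directly to $M$, and I expect it to be the main tool.

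\textbf{Step 2: topology and ends.} Apply the Chodosh--Maximo lower bounds to $M$ with index $2$, and rule out the small cases that survive. For example, if $M$ had two ends, the total curvature would be $4\pi(k-1)$ with $k\ge$ the total multiplicity. The Jorge--Meeks formula for the double cover gives $\int|K|=2\pi(2\tilde g-2+\tilde r+\sum I_j)$. Each end of $M$ lifts to two ends of $\tilde M$ with equal multiplicity. An end of multiplicity one lifts to planar ends, and planar ends combined with the nonorientable structure yield extra negative directions via log cut-off test functions, as in Chodosh--Maximo. This forces $r=1$. An end of multiplicity $2$ forces the total curvature to be a multiple of $4\pi$ with special symmetry. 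By the classification of López (total curvature at most $8\pi$ gives only the Meeks band and the López Klein bottle), Theorem~\ref{thm-index-lopez} excludes the Klein bottle, and Theorem~\ref{thm-Meeks-indice} gives the Meeks band with multiplicity three. In general, the multiplicity must be odd, and multiplicities $\ge5$ should give more negative directions via the degree of $N$, since $\mathrm{ind}\ge$ roughly $\deg N$-dependent bounds from Tysk or Ejiri--Micallef.

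\textbf{Step 3: curvature bound and the main obstacle.} Use the Tysk/Ejiri--Micallef linear bound $\mathrm{ind}(\tilde M)\le C\deg N$ in reverse, via the Ejiri--Micallef lower bound $\mathrm{ind}\ge \tfrac{1}{3}(\ldots)$. Combined with the Jorge--Meeks formula, this should give a total curvature of at most $10\pi$ and $\tilde g\le 2$. The main obstacle is obtaining sharp enough constants: generic inequalities give weaker numbers. I would therefore refine the Chodosh--Maximo harmonic-one-form test functions, using odd harmonic forms on $\tilde M$ of dimension $\tilde g$, so that $\mathrm{ind}(M)\ge \lceil(\tilde g+1)/\ldots\rceil$, which excludes $\tilde g\ge3$. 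I would then handle the total curvature by parity, since the total curvature of a nonorientable surface is $2\pi\cdot$ odd or a multiple of $4\pi$, checking the finite list of remaining cases.
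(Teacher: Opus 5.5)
Your proposal does not prove the statement: none of the four conclusions (the $10\pi$ bound, one end, multiplicity three, genus at most two) is actually derived. Each is handed to an argument you only say ``should'' work:
\begin{itemize}
\item log cut-off test functions producing extra negative directions at multiplicity-one ends;
\item ``more negative directions'' for multiplicities $\ge 5$;
\item Tysk/Ejiri--Micallef bounds used ``in reverse''. These are upper bounds on the index, so they point the wrong way.
\item an unspecified refinement of the Chodosh--Maximo harmonic one-form argument.
\end{itemize}
None of this is carried out. Step 1 (even versus odd index on $\tilde M$, the index-$5$ claim for double covers) plays no role in the argument.

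You have also misremembered the key topological inequality. For nonorientable $M$, Chodosh--Maximo prove $\mathrm{index}(M)\ge\frac13(g+2\sum_{j=1}^r(d_j+1)-4)$. Here $g$ is the genus of the oriented double cover and the sum is weighted by the end multiplicities $d_j$, not by the number of ends. This matters. With only $r$ you cannot exclude $g=0$ with one end of multiplicity $5$ at total curvature $10\pi$, and that configuration is realized by the Oliveira band, which has index $4$ by Theorem~\ref{thm-index-oliveira}.

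No refinement is needed, because Chodosh--Maximo also prove $\mathrm{index}(M)\ge\frac13+\frac1{6\pi}\int_M(-\kappa)$ for nonorientable $M$.
\begin{itemize}
\item Index $2$ then gives $\int_M(-\kappa)\le 10\pi$ immediately.
\item The total curvature is a multiple of $2\pi$. The only nonorientable examples with total curvature at most $8\pi$ are the Meeks band, which satisfies every conclusion ($g=0$, one end of multiplicity $3$), and the López Klein bottle, which is excluded by Theorem~\ref{thm-index-lopez}. So only total curvature exactly $10\pi$ remains.
\item In that case the nonorientable Jorge--Meeks formula gives $\sum_j(d_j+1)=6-g$. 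Inserting this into the multiplicity-weighted inequality gives $g\ge 2$, and $\sum_j(d_j+1)\ge 2$ gives $g\le 4$.
\item $g=3$ contradicts Meeks' parity relation $\frac1{2\pi}\int_M(-\kappa)\equiv\chi(\Sigma)\pmod 2$.
\item $g=4$ forces $r=1$ and $d_1=1$. The double cover then has two embedded ends, so by Schoen's theorem it is a catenoid, which is impossible for genus $4$.
\item $g=2$ forces $\sum_j(d_j+1)=4$. The option $r=2$, $d_1=d_2=1$ is ruled out by Kusner's theorem that no complete nonorientable minimal surface has two embedded ends. This leaves $r=1$, $d_1=3$.
\end{itemize}
This finite case analysis (Jorge--Meeks, the parity formula, Schoen, Kusner) is what has to replace your heuristic claims about planar ends and higher multiplicities producing extra negative directions.
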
                                  
	}

    The total curvature of a complete nonorientable minimal surface is an integer multiple of $2\pi$ and, as mentioned before, the ones with total curvature at most $8\pi$ are classified as either the Meeks minimal M\"obius band or the López minimal Klein bottle (\cite{LopezUniqueness}, \cite{Meeks}). Therefore, using Theorem \ref{thm-index-lopez} and Theorem \ref{thm-classification-index-two} we conclude that if there exists a complete minimal surface immersed in Euclidean three-space that is not congruent to the Meeks minimal M\"obius band, then it is necessarily nonorientable, of total curvature $10\pi$, has only one end, its end has multiplicity three, and its oriented double cover has genus two.

    We now turn our attention to oriented minimal immersions that are obtained from the pullback of the immersion of a nonorientable minimal surface by its oriented double cover. We refer to such a pullback immersion as a minimal oriented double cover.

	{
		\renewcommand{\theteo}{F}
		\begin{teo} \label{thm-classification-oriented-double-cover} 
            Every complete minimal oriented double cover immersed in Euclidean three-space has Morse index at least five. Moreover, the oriented double cover of the Meeks minimal M\"obius band has Morse index five and the oriented double cover of the López minimal Klein bottle has Morse index seven. 
		\end{teo}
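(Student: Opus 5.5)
The plan rests on one relation. Let $\tilde M \to M$ be the oriented double cover and $I$ the antipodal deck involution. The Jacobi operator $L = \Delta + |A|^2$ of the pulled-back immersion commutes with $I$. The normal $\tilde N$ satisfies $\tilde N\circ I = -\tilde N$, so a normal variation of $M$ corresponds to a function $u$ on $\tilde M$ with $u\circ I = -u$. Hence the spectrum of $L$ on compactly supported functions splits into $I$-odd and $I$-even parts:
\[
\operatorname{ind}(\tilde M) = \operatorname{ind}(M) + \operatorname{ind}^{+}(\tilde M),
\]
where $\operatorname{ind}(M)$ is the count on $I$-odd functions and $\operatorname{ind}^{+}$ the count on $I$-even functions. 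Following Fischer-Colbrie, for finite total curvature all of these can be computed on the compactified surface $\overline{\tilde M}$. There the operator becomes $\Delta_{g_0} + 2$ for the metric $g_0$ pulled back by the extended Gauss map $G:\overline{\tilde M}\to \mathbb{S}^2$, and $G\circ I = -G$.

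For the lower bound I will show $\operatorname{ind}^{+}(\tilde M)\ge 3$ and combine this with $\operatorname{ind}(M)\ge 2$, which follows from Ros and Chodosh--Maximo. The even test space is built from the coordinates of the Gauss map, $\tilde N_i = \langle \tilde N, e_i\rangle$. These are $I$-odd, so I will instead use the support functions $\langle x, \tilde N\rangle$ and $\langle x\wedge e_i, \tilde N\rangle$, which are Jacobi fields coming from dilation and rotations. Since $x\circ I = x$ and $\tilde N\circ I=-\tilde N$, these are also odd. Better candidates are products such as $\tilde N_i \tilde N_j$ and the quadratic harmonic-type functions on $\mathbb{S}^2$ composed with $G$, which are even.

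Concretely, $\Delta_{g_0} + 2$ on $\overline{\tilde M}$ with branched metric $G^*g_{\mathbb{S}^2}$ is negative on pullbacks of the first eigenfunctions. Each such pullback has Rayleigh quotient $\int(|\nabla u|^2 - 2u^2)\,dA_{g_0} = \deg G\cdot 0$, so they give null directions but are odd. I therefore plan a different route to three even negative directions. Take the space $\mathcal{E}$ of $I$-even functions obtained from pullbacks of degree-$2$ spherical harmonics, which are even on $\mathbb{S}^2$. These have eigenvalue $6$, so they are positive for $\Delta+2$ and useless directly.

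The route I would actually pursue is the Courant nodal-domain and counting argument of Chodosh--Maximo, applied to even functions. Take the first even eigenfunction $\phi_1>0$, which is negative because the first eigenvalue of $L$ is simple and positive eigenfunctions are $I$-invariant. Then use the three coordinates $\tilde N_i$, which lie in the kernel, multiplied by an odd sign structure to produce even competitors. Alternatively I can argue by contradiction: if $\operatorname{ind}^+\le 2$, the total index of $\tilde M$ is at most $\operatorname{ind}(M)+2$. Chodosh--Maximo's bound $\operatorname{ind}\ge \frac{2}{3}(g+r)-1$, applied to $\tilde M$ with ends and genus doubled appropriately, combined with $\operatorname{ind}(M)$ estimated via the same bound descended to $M$, should force a contradiction unless the total index is at least $5$.

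The main obstacle is this step: producing the lower bound on even index, or equivalently an inequality $\operatorname{ind}(\tilde M)\ge \operatorname{ind}(M)+3$. My first attempt will be the contradiction argument. It uses harmonic one-forms on $\tilde M$ that are even or odd under $I$ (Ros' method), with the dimension count split by $I$-parity, which should give $\ge g(\tilde M)+$ const even negative directions.

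For the equality cases, I will reuse the explicit Weierstrass data from the proofs of Theorem~\ref{thm-Meeks-indice} and Theorem~\ref{thm-index-lopez}. There, the index of $M$ was computed as the number of odd eigenvalues of $\Delta_{g_0}$ below $2$ on $\overline{\tilde M}$. I will now also count the even eigenvalues below $2$ for the same operator. For Meeks the target total is $5 = 2+3$; for López it is $7 = 3+4$. This reduces to repeating the earlier eigenvalue count, using the symmetry group of each surface to separate eigenspaces, now keeping track of the even part as well.
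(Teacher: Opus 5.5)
Your parity splitting $\operatorname{ind}(\tilde M)=\operatorname{ind}(M)+\operatorname{ind}^{+}(\tilde M)$ is correct, and so is $\operatorname{ind}(M)\ge 2$. But the lower bound then rests entirely on $\operatorname{ind}^{+}(\tilde M)\ge 3$, which you never prove, and none of your routes can prove it. Constants give $\operatorname{ind}^{+}\ge 1$, and nothing further comes from the Gauss map. Since $G$ is a degree-$d$ branched covering and a local isometry for $g_0$, the index form on pullbacks $f\circ G$ is exactly $d$ times the index form of $-\Delta_{\mathbb{S}^2}-2$ on $f$. On even functions of $\mathbb{S}^2$ that form is negative only on constants, because the next even eigenvalue is $6$. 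So products $N_iN_j$, degree-two harmonics, $|N_i|$ and similar functions can never give a second even negative direction; you would need genuinely non-pulled-back even functions, and your proposal produces none. The fallbacks do not close the gap either. Ros's harmonic one-forms give nothing in genus zero, which is exactly the Meeks double cover, where $\operatorname{ind}^{+}=3$ is sharp. The Chodosh--Maximo inequalities are separate lower bounds (and total-curvature upper bounds) for $\operatorname{ind}(\tilde M)$ and $\operatorname{ind}(M)$. They are compatible with $\operatorname{ind}(\tilde M)=\operatorname{ind}(M)+2$, for instance with $\operatorname{ind}(M)$ at its upper bound $-6+\frac{3}{\pi}\int_M(-\kappa)$, so no contradiction can come from them alone.

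The idea you are missing is to drop the parity split and exclude the values $\operatorname{ind}(\tilde\Sigma)\le 4$ one by one, using the explicit examples as input.
\begin{itemize}
\item Index $0$ and $1$ give the plane, catenoid and Enneper surface, none of which is a double cover (the catenoid is embedded, the others have one end). Orientable index $2$ does not occur by Chodosh--Maximo.
\item For index $3$, apply the orientable bounds $\frac13+\frac{1}{6\pi}\int_{\tilde\Sigma}(-\kappa)\le\operatorname{ind}\le -3+\frac{3}{2\pi}\int_{\tilde\Sigma}(-\kappa)$ with $\int_{\tilde\Sigma}(-\kappa)=2\int_\Sigma(-\kappa)$. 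They give $2\pi\le\int_\Sigma(-\kappa)\le 8\pi$, so $\Sigma$ is the Meeks band or the L\'opez bottle, whose double covers have index $5$ and $7$.
\item For index $4$, the same bounds leave $\int_\Sigma(-\kappa)\in\{6\pi,8\pi,10\pi\}$, and the first two are excluded as before. For $10\pi$, the bound $\operatorname{ind}(\tilde\Sigma)\ge\frac13\bigl(2g+2\sum_{j=1}^{\tilde r}(d_j+1)-5\bigr)$ gives $g+2\sum_{j=1}^{r}(d_j+1)\le 8$. The nonorientable Jorge--Meeks formula gives $g+\sum_{j=1}^{r}(d_j+1)=6$. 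Hence $r=1$ and $d_1=1$, so $\tilde\Sigma$ has two embedded ends and is a catenoid by Schoen, a contradiction.
\end{itemize}
For the equality cases your targets $5=2+3$ and $7=3+4$ are right, but the even part does not follow from ``repeating'' the odd count: hemisphere bracketing for even functions only gives $d-1\le\operatorname{ind}^{+}\le d+2$. Instead, apply Montiel--Ros Corollary 15 to the total index. This gives $2d-1=5$ for Meeks and $2d-1=7$ for $\mathcal{G}_0$. Transfer the latter to $\mathcal{G}_1$ using that the residue and period analysis shows the full space $H(\mathcal{G}_t)$ vanishes, so the total nullity stays $3$ along the path.
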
                                  
	}

    To the best of our knowledge, it is unknown whether there exists an orientable complete minimal surface of index four immersed in $\mathbb{R}^3$. It is unclear, for instance, if there are elements of the family of perturbations of the Costa surface that have index four (see \cite{ChodoshTaiwan}). On the other hand, Theorem \ref{thm-classification-oriented-double-cover} guarantees that no complete orientable minimal surface of index four is a minimal oriented double cover. In particular, the Weierstrass data of a possible example cannot satisfy the conditions described by Meeks \cite{Meeks} for oriented double covers.

            All of our main theorems are underpinned by Theorem \ref{thm:indexofnonorientable} and its Corollary \ref{cor:indexofnonorientable}, which computes the Morse index of a nonorientable minimal surface in terms of its total curvature, when the surface has finite total curvature and its extended Gauss map has branch values lying in an equator of the complex sphere. This is an adaptation to the nonorientable case of Corollary $15$ of Montiel and Ros \cite{MontielRos}, taking into account the odd-symmetry of the Gauss map of the orientable double cover with respect to the change of sheets involution.
            
            Theorems \ref{thm-Meeks-indice} and \ref{thm-index-oliveira} follow from the study of the Gauss map of the Meeks minimal M\"obius band and of each member of the Oliveira family of minimal M\"obius bands together with applications of Corollary \ref{cor:indexofnonorientable}. 

            To prove Theorem \ref{thm-index-lopez}, we follow a different strategy. This is because the branch values of the extended Gauss map of the López minimal Klein bottle do not lie in an equator of the round sphere. (In fact, they fall in a pair of orthogonal equators). We have used the so-called continuity method. This is inspired in \cite[Theorem 5]{MontielRos} by Montiel and Ros, and the computation of the Morse index of the Costa minimal surface by Nayatani, \cite{Nayatani-Costa}. 

            In order to compute the Morse index of the López minimal Klein bottle, we construct a path of odd meromorphic functions that connect the Gauss map $\mathcal{G}_1$ of the López minimal Klein bottle to a meromorphic function $\mathcal{G}_0$ whose branch values lie in an equator of the complex sphere. The Morse index and nullity of $\mathcal{G}_0$ can be computed with Theorem \ref{thm:indexofnonorientable}. (See Definitions \ref{indexholomorphic} and \ref{indexholomorphicodd} for the definition of index and nullity for a holomorphic map).  We then show that the nullity of the meromorphic functions $\mathcal{G}_t$ are all equal to three. The constancy of the nullity and the continuity in the $C^1$ topology of the path $\{\mathcal{G}_t\}_{t \in [0,1]}$ allow us to conclude that the odd Morse index of $\mathcal{G}_1$ is three, just as $\mathcal{G}_0$. 

            To study the nullity of the maps $\mathcal{G}_t$ we rely on Theorem $5$ by Montiel and Ros \cite{MontielRos}.  This result computes the nullity $3$ plus the dimension of some spaces of meromorphic forms satisfying certain residue and period conditions that we refer to as residue and period problems, see \eqref{eq:spaceresidues} and \eqref{eq:spaceperiods}. In light of this result, to prove that $\mathcal{G}_t$ has nullity three, we prove that the associated residue and period problems have no solution. Our study of these residue and period problems is technical and reminiscent of the work of Nayatani \cite{Nayatani2}, Sarenhu \cite{Sarenhu} and Morabito \cite{morabito2009index}.

                We now highlight some aspects of our approach of this technical part. To study the residue problem, one has to understand the structure of the space of meromorphic one-forms of the underlying Riemann surface structure. We made systematic use of the symmetries of the surface for that. A symmetric choice of basis revealed symmetry in the Residue Matrix (see Theorem \ref{residuematrix}) and this allowed us to exhibit an explicit symmetric basis for the kernel of the Residue Matrix for all $t \in [0,1]$ (in Theorem \ref{Hchapelsymmetricbasis}). This choice of basis was particularly suited to the analysis of the period problem, because it turned out that the Period Matrix is essentially diagonal with respect to it. Having a diagonal Period Matrix reduced the problem of showing that there is no period solution to the problem of showing that six independent integrals are nonzero, for all $t \in [0,1]$ (see Theorem \ref{determinantperiodmatrix}). 
                
                This kind of non-vanishing problem also appears in \cite{morabito2009index, Nayatani2,Sarenhu} and one clever idea that is present in these works is to deform a generator for the homology into simpler paths, so that one can apply tools from the classical theory of elliptic integrals. However, in this process one needs to take care of the poles on the integrand in the region of deformation. It is then typical to search for exact meromorphic one-forms that can be added to the expression to remove the poles inside the region of deformation (see \cite[Appendix]{Sarenhu}). However, this tends to complicate the integrand. We have explored a different idea at this step.
                
                We decompose each of these six integrals as a sum of simpler integrals, which we analyzed independently. We recognized that we could deform the generators of the homology in different directions, ad hoc to each of these simpler integrands, avoiding the usage of new summands. This simplified the expressions. Nonetheless, we had to face difficulties with elliptic integrals and the fact that our path of meromorphic functions is more complicated than what had been studied in the literature. We have developed techniques to deal with these complicated expressions, adapted to each specific case.  We hope these ideas are flexible and can be adapted to other contexts.
                
            The proof of Theorems \ref{thm-classification-index-two} and \ref{thm-classification-oriented-double-cover} build on the ideas of Chodosh and Maximo \cite{ChodoshMaximotopandindexI, ChodoshMaximotopandindexII}. We use their inequalities to bound topology in terms of index. We use the nonorientable version of the Jorge-Meeks theorem due to F. Martin \cite{MartinLopezSurvey, MartinThesis} to bound total curvature in terms of topology. We use classification results in terms of total curvature and topology to rule out some possibilities. In the proof of Theorem \ref{thm-classification-oriented-double-cover} we relate the topological and geometric properties of the oriented double cover with those of the nonorientable surface, and explore the restrictions that the results of Chodosh and Maximo impose on both. We also rely on a theorem of Schoen \cite{schoen1983uniqueness} and its nonorientable adaptation by Kusner \cite{kusner1987conformal}. \\

    \textbf{Structure of the paper}
 The paper is organized as follows. Section \ref{section::Definitions and preliminary results} contains preliminary results and definitions that are used in the paper. The main goal of Section \ref{section::On the index of Schr\"odinger operators associated to even potentials} is to prove Theorem \ref{thm:indexofnonorientable}. In Section \ref{section::The Meeks minimal M\"obius band and the Oliveira family of minimal M\"obius bands} we prove Theorem \ref{thm-Meeks-indice} and Theorem \ref{thm-index-oliveira}. Section \ref{section::The López minimal Klein bottle} is devoted to the study of the López minimal Klein bottle and culminates with the proof of Theorem \ref{thm-index-lopez}. In Section \ref{section::Minimal surfaces with Morse index two immersed in Euclidean three-space} we prove Theorem \ref{thm-classification-index-two}. The proof of Theorem \ref{thm-classification-oriented-double-cover} is presented in Section \ref{section::The index of a minimal oriented double cover immersed in Euclidean three-space}. The paper contains four appendices. Appendix \ref{section::Riemann surfaces results} contains basic results about Riemann surfaces. Appendix \ref{section::Auxiliary computations with elliptic integrals} contains some technical computations with elliptic integrals needed in the proof of Theorem \ref{thm-index-lopez}. The Appendix \ref{section::Explicit estimates} contains some explicit estimates used in Section \ref{section::The López minimal Klein bottle}. Appendix \ref{section::EstimatesPolynomials} contains the description of a framework to prove the positivity of some functions.

\subsection{AI disclosure} We did not make use of LLMs to conduct the research that led to this paper. This article does not contain AI-generated text. 

\subsection{Acknowledgments} We started working on this project when all three of us were PhD students of Lucas Ambrozio at IMPA. We would like to thank him for his support and encouragement during the development of this project.                                               
\section{Definitions and preliminary results} \label{section::Definitions and preliminary results}

\subsection{Morse index of a minimal immersion}

As mentioned in the introduction, minimal immersions into $\mathbb{R}^3$ are defined to be critical points of the area functional under compactly supported variations. By the first variation formula, this is equivalent for the immersion to have zero mean curvature, see \cite{coldingminicozzi}. The Morse index appears when we study the second variation of the area. For a two sided immersion $X: M \to \mathbb{R}^3$ with Gaussian curvature $\kappa$, the second variation of the area along a deformation in the normal direction with amplitude $u \in C^{\infty}_0(M)$ is given by:
\begin{equation*}
    \eval{\frac{d^2}{dt^2}}_{t=0}\text{area}(M_t) = \int_{M} \abs{\nabla u}^2 + 2 \kappa u^2.
\end{equation*}
This motivates the introduction of the symmetric bilinear form $Q(u,u)$ given by the expression above. This bilinear form is usually called the index form of the minimal immersion. When restricted to test functions supported on a fixed bounded domain $\Omega \subset M$, the bilinear form $Q$, which is associated to the elliptic operator $-\Delta + 2 k$, has finite index due to the compact embedding of $H^1_0(\Omega)$ into $L^2(\Omega)$. Now, since the index is monotonic with respect to the domain, following Fischer-Colbrie \cite{Fisher-Colbrie1}, the Morse index of the immersion is defined as:
\begin{equation}\label{eq:defindex}
    \text{index}(M) = \sup_{\Omega \subset M} \text{index}(Q\lvert_{H^1_0(\Omega)}).
\end{equation}
Fisher-Colbrie proved that the Morse index is finite if and only if the total curvature is finite, see \cite{Fisher-Colbrie1}. Also, recall that by a result of Osserman \cite{Osserman}, when the minimal immersion $X$ is complete, two-sided and has finite total curvature, then $M$ is conformally equivalent to a compact Riemann surface $\Sigma$ minus a finite number of points, and the Gauss map extends as a holomorphic map $\phi:\Sigma\to \bS^2$. 
\subsection{Index and Nullity of a meromorphic map}\label{sec:indexandgaussmap}

The Morse index of a minimal immersion can be computed entirely in terms of its Gauss map. This fact was observed in many pioneering works, for instance \cite{YuenTysk}, \cite{Fisher-Colbrie1}, \cite{MontielRos}. Following \cite{MontielRos}, we briefly describe this relation. Given a complete, two sided minimal immersion with finite total curvature $X: M \to \mathbb{R}^3$, by Osserman \cite{Osserman}, we know that $M$ is conformal to a compact Riemann surface minus a finite number of points $\Sigma\setminus \{p_i \}$ and the Gauss map extends as a holomorphic map $\phi: \Sigma \to \bS^2$. 

Comparing bilinear forms we have that the Morse index of $X$, defined in \eqref{eq:defindex} coincides with the number of eigenvalues of $-\Delta_{\phi}$ which are less than $2$, where this Laplacian is taken with respect to the branched metric $g_{\phi} = \phi^*(g_{\bS^2})$. The eigenvalues and eigenfunctions are well defined and follow the same classical variational characterization in this kind of singular metrics, \cite{Kokarev}. 

\begin{defn}[{Morse index and nullity of a holomorphic map}]\label{indexholomorphic}
\normalfont
    \textit{The Morse index and the nullity of a holomorphic map} $\phi: \Sigma \to \bS^2$ are defined as the Morse index and the nullity of the operator $L_{\phi} = -\Delta_{\phi} - 2$. We denote them by $\text{index}(\phi)$ and $\text{nul}(\phi)$.
\end{defn}
As a remark, when we have a meromorphic function $\mathcal{G}: M \to \overline{\bC}$ we refer to its Morse index and nullity as the ones corresponding to the associated map taking values in $\bS^2$ after composing with the stereographic projection.
We recall that if the holomorphic map $\phi$ corresponds to the Gauss map of some complete two sided minimal immersion, then the Morse index of $\phi$ coincides with the Morse index of the associated minimal immersion, \cite{Fisher-Colbrie1}. Also, in this case, the nullity of $\phi$ coincides with the dimension of the space of bounded Jacobi fields of the immersion, see \cite{MontielRos}. 

Observe that the three coordinate functions of $\phi$ are eigenfunctions of $-\Delta_{\phi}$  with eigenvalue $2$, thus, the nullity of $\phi$ is always at least three. Denoting by $N(\phi)$ the null-space and $L(\phi)$ the space generated by these three trivial eigenfunctions, in \cite{MontielRos} it is shown that $N(\phi)/L(\phi)$ is isomorphic to a space of meromorphic quadratic differentials in $\Sigma$ that we describe below. First, we consider the divisor $R(\phi) = \sum_{i=1}^k p_i$ given by the formal sum of the branch points of $\phi$ without multiplicities and then, define the spaces introduced in \cite{MontielRos}:
\begin{equation}\label{eq:spacedivisors}
     H^{0,2}(\phi) = \{ f\omega_0\otimes\omega_0: \text{div}(f)+2K_{\Sigma}+R(\phi)\geq 0\},
\end{equation}
\begin{equation}\label{eq:spaceresidues}
    \hat{H}(\phi) = \{ \sigma \in  H^{0,2}(\phi): \text{Res}_{p_i} \frac{\sigma}{d \phi} = 0, \ i = 1, ..., r \},
\end{equation}
\begin{equation}\label{eq:spaceperiods}
    H(\phi) = \{ \sigma \in \hat{H}(\phi): \text{Re}\int_{\alpha} (1 - \phi^2, i(1 + \phi^2), 2 \phi) \frac{\sigma}{d \phi} = 0, \ \forall \alpha \in H_1(\Sigma,\mathbb{Z}) \}.
\end{equation}
Where $K_\Sigma=\text{div}(\omega_0)$, is the canonical divisor of a fixed non-identically zero meromorphic one-form $\omega_0$ in $\Sigma$. Thus we can enunciate the result that we want to use, which is Theorem $5$ in \cite{MontielRos}, saying that $N(\phi)/L(\phi) \cong H(\phi)$.

 The isomorphism is constructed as follows. Consider the meromorphic function $\cG:\Sigma\to \overline{\bC}$ by composing $\phi$ with the stereographic projection. Given a function $u\in N(\phi)/L(\phi)$ there is an associated complete branched minimal immersion $X_u:\Sigma \setminus \{p_1,\ldots p_k\}\to \mathbb{R}^3$ with extended Weierstrass data $(\cG,\eta)$ and end points localized at the branch points $p_i$ of $\phi$ defined by 
     \begin{equation}\label{montielrosmap1}
     X_u=u\phi+\frac{u_z\phi_{\overline{z}}+u_{\overline{z}}\phi_z }{|\phi_z|^2},\quad u=\langle X_u,\phi\rangle ,
     \end{equation}
     such that $\sigma_u\coloneq \eta\ d\cG\in H(\phi)
    $.\\ Conversely, given $\sigma\in H(\phi)$ we can construct a complete branched minimal immersion of finite total curvature $X:\Sigma\setminus \{p_1,\ldots p_k\}\to \mathbb{R}^3$ with ends at the branch points of $\phi$ defined by
    \begin{equation}\label{montielrosmap2}
    X^{\sigma}(p)\coloneq \text{Re}\int_{p_*}^p\left(\frac{1}{2}(1-\cG^2),\frac{i}{2}(1+\cG^2),\cG\right)\frac{\sigma}{d\cG},\quad p_*\in M\ \text{fixed}
    \end{equation}
    such that the function $u$ defined by $u\coloneq \langle X^{\sigma},\phi\rangle$ is in $ N(\phi)/L(\phi).$
\begin{remark}\label{JorgeMeeksMultiplicitiesvsBranchOrders}
\normalfont
    By the proof of Montiel-Ros isomorphism, it is possible to check that the Jorge-Meeks multiplicity $d_i$ of the induced complete branched minimal immersions $X:\Sigma\setminus\{p_1,\ldots, p_k\}\to \bR^3$ at the end $p_i$ coincide exactly with the branching order of $p_i$ as a branch point of the map $\phi$.
\end{remark}
    
\subsection{Index of nonorientable minimal surfaces}\label{subsec:indexonesided}

For a nonorientable minimal immersion $X: M \to \mathbb{R}^3$, we can define its Morse index by means of its orientable double cover. If $\pi: \Tilde{M} \to M$ is its orientable double cover with induced immersion $\tilde{X}$, then we can consider a variation of $M$ with variational vector field $V \in \Gamma(T^{\perp} M)$ with compact support in $\Omega$. Then this variation induces a variation in $\tilde{M}$ with variational vector field $u_V \nu$, where $u_V$ is a smooth function with compact support in $\pi^{-1}(\Omega)$ which is odd with respect to the change of sheets involution. Then we can compare the index forms of $X$ and $\tilde{X}$ as:
\begin{equation*}
    Q(V,V) = \int_{M} \lvert \nabla^{\perp} V \lvert^2 - \lvert A \lvert^2 \lvert V \lvert^2 = \frac{1}{2} \int_{\tilde{M}} \lvert \nabla u_V \lvert^2 - \lvert \tilde{A} \lvert^2 u_V^2 = \frac{1}{2} \tilde{Q}(u_V,u_V).
\end{equation*}
Conversely, an odd function with compact support in $\tilde{M}$ induces a variation with compact support of $M$ that verifies the same relation as the one above. This means that over a compact domain $\Omega \subset M$, the Morse index of $M$ equals to the Morse index of $\tilde{M}$ when we consider the index form to be restricted to the space of smooth functions supported in $\pi^{-1}(\Omega)$, that are odd with respect to the change of sheets involution. We call this number \textit{odd Morse index} of $M$ restricted to $\Omega$ and denote it by $\text{index}_{\mathcal{O}}(\pi^{-1}(\Omega))$. Thus, it makes sense to define the Morse index of a complete nonorientable minimal immersion as: 
\begin{equation*}   \text{index}_{\mathcal{O}}(\tilde{M}) = \sup_{\Omega \subset M} \text{index}_{\mathcal{O}} (\pi^{-1}(\Omega)).
\end{equation*}
It is known that for $X: M \to \bR^3$ being a complete nonorientable minimal immersion into $\bR^3$ with finite total curvature, the change of sheets involution in its orientable double cover extends to an anti-holomorphic involution in the Osserman's compactification of the two-sided immersion. Moreover, this involution permute the ends of $\tilde{M}$. For these facts, see \cite{MartinLopezSurvey}, \cite{MartinThesis}. This motivates the following definition:

\begin{defn}[{Odd Morse index and odd nullity of a holomorphic map}]\label{indexholomorphicodd}
\normalfont
    Given a holomorphic map $\phi: \Sigma \to \bS^2$, defined on a compact Riemann surface together with an anti-holomorphic involution $\tau: \Sigma \to \Sigma$, we define \textit{the odd Morse index and the odd nullity} of $\phi$ as being the Morse index and the nullity of the operator $L_{\phi} = -\Delta_{\phi} - 2$ when restricted to the space of functions that are odd with respect to $\tau$. We denote these quantities by $\text{index}_{\mathcal{O}}(\phi)$ and $\text{nul}_{\mathcal{O}}(\phi)$.
\end{defn}

If we denote by $(H^1)_{\mathcal{O}}$ to the space of functions in the Sobolev space $H^1(\Sigma)$ that are odd with respect to $\tau: \Sigma \to \Sigma$, then the odd Morse index of $\phi$ is the number of negative eigenvalues of $-\Delta_{\phi} - 2$, where these eigenvalues follow the usual variational characterization:
\begin{equation*}
    (\lambda_k)_{\mathcal{O}} = \inf_{V \subset (\mathcal{V}^k)_{\mathcal{O}}} \sup_{u \in V \setminus \{ 0 \}} \frac{\int_{\Sigma} \lvert \nabla^{\phi}u \lvert^2 - 2 u^2}{\int_{\Sigma} u^2}.
\end{equation*}
Where $(\mathcal{V}^k)_{\mathcal{O}}$ is the set of $k-$dimensional subspaces of $(H^1)_{\mathcal{O}}$. these notations are inspired in those introduced by Ambrozio, Buzano, Carlotto and Sharp in \cite{LucasIndex}, where they studied Morse index Schr\"odinger operadors in a complete Riemannian manifold in the presence of an ambient involution.

Now, we describe how to compute the Morse index of a complete nonorientable minimal immersion using the Osserman's compactification of its orientable double cover. We summarize this in the following proposition, whose proof follows by an adaptation of the arguments of Fischer-Colbrie in \cite{Fisher-Colbrie1} to the nonorientable setting: 

\begin{thm}\label{thm:fischercolbrienaoorientavel}

Let $X: M \to \mathbb{R}^3$ be a complete nonorientable minimal immersion with finite total curvature. Denote by $\pi: \tilde{M} \to M$ the orientable double cover of $M$, and let $\tilde{\Sigma}$ be the Osserman compactification of the immersion $\tilde{X}: \tilde{M} \to \mathbb{R}^3$, with extended Gauss map $\phi: \tilde{\Sigma} \to \bS^2$. Then the Morse index of $M$ equals to the odd Morse index of the holomorphic map $\phi$.
    
\end{thm}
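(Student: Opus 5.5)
The plan is to follow Fischer-Colbrie's argument on the orientable double cover $\tilde M$ and check at each step that the involution is respected. By the discussion in Subsection \ref{subsec:indexonesided}, the index of $M$ equals $\sup_{\Omega}\text{index}_{\mathcal O}(\pi^{-1}(\Omega))$. This is the index of the form $\tilde Q(u,u)=\int_{\tilde M}|\nabla u|^2-|\tilde A|^2u^2$ restricted to compactly supported functions that are odd under the change of sheets involution $\tau$.

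Step 1 (conformal invariance). Since $|\tilde A|^2=-2\kappa$, we have $|\tilde A|^2 dA = 2\,\phi^*dA_{\bS^2}$ on $\tilde M$. Moreover $\int|\nabla u|^2$ is conformally invariant in dimension two, so $\tilde Q(u,u)=\int_{\tilde M}|\nabla^{\phi}u|^2-2u^2\,dA_\phi$, computed in the branched metric $g_\phi$. Because $\tau$ is an isometry of $\tilde X$ reversing the normal, it satisfies $\phi\circ\tau=-\phi$, so $g_\phi$ is $\tau$-invariant. By the cited facts, $\tau$ extends to an antiholomorphic involution of $\tilde\Sigma$ permuting the ends, so the odd subspace $(H^1)_{\mathcal O}$ on $\tilde\Sigma$ makes sense.

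Step 2 ($\le$). Any odd compactly supported $u$ on $\pi^{-1}(\Omega)$ lies in $(H^1)_{\mathcal O}(\tilde\Sigma,g_\phi)$, with the same quadratic form value and $L^2$ norm. A negative-definite subspace for $\tilde Q$ of odd functions is therefore a negative-definite subspace for $L_\phi$ on odd functions. The variational characterization then gives $\text{index}_{\mathcal O}(\pi^{-1}(\Omega))\le\text{index}_{\mathcal O}(\phi)$.

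Step 3 ($\ge$), the main step. Let $V$ be spanned by odd eigenfunctions $u_1,\dots,u_k$ of $-\Delta_\phi$ on $\tilde\Sigma$ with eigenvalues $<2$. We need compactly supported odd approximants. The ends $p_1,\dots,p_r$ form a finite $\tau$-invariant set, and points have zero $H^1$-capacity in dimension two. So we take logarithmic cutoffs $\eta_\epsilon$ equal to $0$ near each $p_i$ and $1$ outside small balls, with $\int|\nabla\eta_\epsilon|^2\to0$. We choose the balls $\tau$-symmetric, for instance by replacing $\eta_\epsilon$ with $\eta_\epsilon\cdot(\eta_\epsilon\circ\tau)$; this keeps $\eta_\epsilon$ even and still gives vanishing Dirichlet energy. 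Then the functions $\eta_\epsilon u_j$ are odd and compactly supported in $\tilde M=\pi^{-1}(M)$, with support inside $\pi^{-1}(\Omega)$ for a suitable compact $\Omega$. As $\epsilon\to0$ they converge to $u_j$ in $H^1(g_\phi)$. Here the $u_j$ are bounded and smooth away from the branch points (elliptic regularity for the branched metric, as in \cite{Kokarev}), so $\int|\nabla\eta_\epsilon|^2u_j^2\to0$. By continuity of the quadratic form, the span of the $\eta_\epsilon u_j$ is negative definite for small $\epsilon$, giving $\text{index}_{\mathcal O}(\pi^{-1}(\Omega))\ge k$.

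I expect the main obstacle to be this approximation step together with regularity at the branch points. One must make sure that the interior branch points of $\phi$, where $g_\phi$ degenerates, cause no trouble. They do not need to be cut off, since they lie in $\tilde M$ and $H^1(g_\phi)$ agrees with the usual $H^1$ there by conformal invariance; only the ends are cut off. One must also confirm that the cutoffs preserve oddness, which the $\tau$-symmetrization handles.
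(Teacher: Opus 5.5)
Your proposal is correct and follows the paper's proof. Both arguments use conformal invariance to identify $\tilde Q$ with the form of $-\Delta_{\phi}-2$, extension by zero for the inequality $\le$, and Fischer-Colbrie's logarithmic cutoffs near the ends, made even under $\tau$, for the inequality $\ge$. The only difference is how evenness is obtained: the paper measures distance to the orbit $\{p_j,\tau(p_j)\}$, whereas you take the product $\eta_\epsilon\cdot(\eta_\epsilon\circ\tau)$, which is an inessential variation.
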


\begin{proof}
    As observed above, we have to prove that the odd Morse index of $\tilde{M}$ coincides with the odd Morse index of $L_{\phi}$. By Osserman's theorem and conformal invariance of the energy, if $u \in C^{\infty}_0(\tilde{M})$, then we have that:
    \begin{equation*}
        \tilde{Q}(u,u) = \int_{\tilde{M}} (\lvert \nabla u\lvert^2 - \lvert \tilde{A} \lvert^2 u^2) d\tilde{M}=\int_{\tilde{M}} (\lvert \nabla u\lvert^2 - \frac{\lvert \nabla \phi \lvert^2}{2} u^2) d\tilde{M}= \int_{\tilde{\Sigma}} (\lvert \nabla^{\phi} u \lvert^2 - 2 u^2) dv_{g_{\phi}}.
    \end{equation*}
     Therefore, since any odd test function in $\tilde{M}$ extends to zero as an odd function in $\tilde{\Sigma}$, we have that $\text{index}_{\mathcal{O}} \tilde{M} \leq \text{index}_{\mathcal{O}} (\phi)$. 
     
     To prove the opposite inequality, suppose that $\tilde{M} = \tilde{\Sigma} \setminus \{ p_1, ..., p_r, \tau(p_1), ..., \tau(p_r) \}$. Given $n$ odd eigenfunctions $f_1,...,f_n$ of $L_{\phi}$, associated to negative eigenvalues, following the idea of the idea of Corollary $2$ in \cite{Fisher-Colbrie1}, we want to multiply $f_i$ by a cut-off function vanishing near the ends of $\tilde{M}$, in order to estimate $\text{index}_{\mathcal{O}} \tilde{M}$. We only have to take care to produce test functions that are odd with respect to $\tau$, however, due to the fact that the anti-holomorphic involution extends to the compactification and permutes the ends \cite{MartinThesis}, the following modification of the test functions used by Fisher-Colbrie in $2$ of \cite{Fisher-Colbrie1}, works in this case. In fact, given $1 \leq j \leq r$, consider the function:
     \begin{equation*}
        \eta_j(x) = 
        \begin{cases}
        0 & \text{if } d(x,\{p_j, \tau(p_j)\}) < \varepsilon^2\\
        \log (\frac{1}{\varepsilon^2}d(x,\{p_j, \tau(p_j) \}))/\log (\frac{1}{\varepsilon}) & \text{if } \varepsilon^2 < d(x,\{p_j, \tau(p_j) \}) < \varepsilon\\
        1 & \text{if } \varepsilon < d(x,\{p_j, \tau(p_j)\})
        \end{cases}.
    \end{equation*}
Observe these functions are even with respect to $\tau$. Now, we define $\eta$ to be equal to $\eta_j$ in a small neighborhood of each $p_j$ and $\tau(p_j)$, in such a way that this function $\eta$ is even with respect to $\tau$. Then we take $h_i = \eta f_i$. This function is odd with respect to $\tau$ and the same estimates in \cite{Fisher-Colbrie1} holds to conclude that for small enough $\varepsilon>0$, we have that the index form $\tilde{Q}$ is negative definite on the span of the odd functions $h_1,...,h_n$. This proves that $\text{index}_{\mathcal{O}}(\phi) \leq \text{index}_{\mathcal{O}} \tilde{M}$ and then the result follows.
\end{proof}

\subsection{Previous estimates for the Morse index}
An important result in the theory of minimal surfaces is the Jorge-Meeks formula, deduced in \cite{jorge1983topology}. Denote by $Y_s$ the scaling by $\frac{1}{s}$ of the surface intersected by the sphere of radius $s$ centered at the origin. Then for big enough $s$ we have that $Y_s = \{ \gamma^s_1,...,\gamma^s_r \}$ and $X^{-1}(Y_s)$ consists of $r$-immersed Jordan curves in $M$ bounding each end, such that $\gamma^s_i$ converges $C^{\infty}$ as $s \to +\infty$ to a geodesic of $\bS^2$, with multiplicity $d_i$. Moreover, if $\phi$ denotes the extended Gauss map in the Osserman's compactification, then we have:
\begin{equation}\label{eq:jorgemeeks1}
    2\text{deg}(\phi)=-\mathcal{X}(\Sigma)+\sum_{i=1}^r(d_i+1).
\end{equation}

In the case of a complete nonorientable minimal immersion of finite total curvature $X:M\to \bR^3$, we have that $M=\Sigma\setminus\{p_1,\ldots,p_r\}$ where $\Sigma$ is compact, by the generalized Osserman's theorem due to F. Martin \cite{MartinThesis}. Moreover, the nonorientable version of the Jorge-Meeks formula due to F. Martin \cite[Eq. 25]{MartinLopezSurvey} states that
\begin{equation}\label{jogemeeksformulaonesided}
        \frac{\int_{M} (-\kappa)}{2\pi}=g-1+\sum_{j=1}^r(d_j+1),
 \end{equation}
where $g$ is the genus of the orientable double cover of $\Sigma$.
Moreover, W. Meeks, found a formula relating the total curvature of a nonorientable minimal surface with its topology \cite{Meeks}. More precisely,
\begin{equation}\label{eq:meeksmod2}
    \frac{\int_{M} (-\kappa)}{2 \pi} \equiv \mathcal{X}(\Sigma) \ (\text{mod } 2),
\end{equation}
where the Euler characteristic of $\Sigma$ is defined as a half of the Euler characteristic of its orientable double cover. 

Now, we recall the estimates of Chodosh-Maximo in \cite{ChodoshMaximotopandindexII} bounding the index of a minimal surface with the total curvature. Let $X:M\to \bR^3$ be a complete, non-planar, orientable minimal immersion of finite total curvature, where $M=\Sigma\setminus\{p_1,\ldots p_r\}$ and $\Sigma$ compact. Then we have:
\begin{equation}\label{eq:chodosh-maximo1}
        \frac{1}{3} + \frac{1}{6\pi} \int_M (-\kappa) \le \text{index}(M) \le -3 + \frac{3}{2\pi} \int_M (-\kappa).
\end{equation}
Also, they proved under the same hypothesis that if $\Sigma$ has genus $g$ and $r$ ends $p_1, ..., p_r$ with multiplicities $d_1, ..., d_r$, then we have:
\begin{equation}\label{eq:chodosh-maximo3}
        \text{index}(M) \ge \frac{1}{3} \big ( 2g + 2\sum_{j=1}^r (d_j +1) - 5\big).
\end{equation}
For nonorientable minimal immersions, Chodosh-Maximo proved
\begin{equation}\label{eq:chodosh-maximo2}
        \frac{1}{3} + \frac{1}{6\pi} \int_M (-\kappa) \le \text{index}(M) \le -6 + \frac{3}{\pi} \int_M (-\kappa),
\end{equation}
together with
\begin{equation}\label{eq:chodosh-maximo4}
        \text{index}(M) \ge \frac{1}{3} \big ( g + 2\sum_{j=1}^r (d_j +1) - 4\big),
\end{equation}
where $g$ is the genus of the orientable double cover of $M$. We now state an adaptation of the equivalence contained in equations \eqref{montielrosmap1} and \eqref{montielrosmap2} to the nonorientable setting
\begin{thm}\label{nonorientablemontielros}
Suppose that $\phi:\Sigma\to \mathbb{S}^2$ is a holomorphic map on a compact Riemann surface with anti holomorphic involution $\tau:\Sigma\to \Sigma$ such that $\phi\circ \tau = -\phi$, and consider $\cG:\Sigma\to \overline{\bC}$ the map $\phi$ composed with the stereographic projection.\\ If $u\in N(\phi)/L(\phi)$ and $u\circ \tau=-u$ then the complete minimal immersion $X_u:\Sigma\setminus\{p_1,\ldots p_k\}\to \mathbb{R}^3$ with extended Weierstrass data $(\cG,\eta)$ is the double cover of a complete nonorientable minimal surface of finite total curvature i.e.
\begin{equation*}
X_u(\tau(p))=X_u(p),\quad \forall p\in \Sigma\setminus\{p_1,\ldots, p_k\},
\end{equation*}
such that $\sigma_u\coloneq \eta d\cG\in H(\phi).$
        
Conversely, if $\sigma \in H(\phi)$ is antisymmetric with respect to $\tau$ i.e. $\overline{\tau^{*}\sigma}=-\sigma$ then the complete minimal immersion of finite total curvature $X^{\sigma}:\Sigma\setminus\{p_1,\ldots p_k\}\to \mathbb{R}^3$ is the double cover of a nonorientable minimal immersion of finite total curvature and the associated function $u=\langle X^{\sigma},\phi\rangle\in N(\phi)/L(\phi)$ is odd with respect to $\tau$ i.e $u\circ \tau=-u$.
\end{thm}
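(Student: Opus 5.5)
We will prove Theorem \ref{nonorientablemontielros} by tracking how the anti-holomorphic involution $\tau$ acts through both directions of the Montiel--Ros isomorphism \eqref{montielrosmap1}--\eqref{montielrosmap2}. The basic fact is this. Since $\phi\circ\tau=-\phi$, the stereographic projection $\cG$ satisfies $\cG\circ\tau=-1/\overline{\cG}$. This is the antipodal map in the coordinate $\cG$. It also gives $\tau^*d\cG=\overline{d\cG}/\overline{\cG}^2$. We will compute with the $\bC^3$-valued form
\[
\Phi_\sigma=\left(\tfrac12(1-\cG^2),\tfrac{i}{2}(1+\cG^2),\cG\right)\frac{\sigma}{d\cG}.
\]
The goal is to show that $\tau^*\Phi_\sigma=\overline{\Phi_\sigma}$ exactly when $\overline{\tau^*\sigma}=-\sigma$.

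For the converse direction, assume $\sigma\in H(\phi)$ with $\overline{\tau^*\sigma}=-\sigma$. Write $\sigma/d\cG=\eta$, and use $\tau^*\cG=-1/\overline{\cG}$ and $\tau^*d\cG=\overline{d\cG}/\overline{\cG}^2$. A direct computation gives $\tau^*(\sigma/d\cG)=-\overline{\cG}^2\,\overline{\eta}$. Substituting into each component:
\begin{itemize}
\item $\tfrac12(1-\cG^{-2}\!\cdot)$ becomes $-\tfrac12(\overline{\cG}^2-1)\overline{\eta}=\overline{\tfrac12(1-\cG^2)\eta}$;
\item the second component is handled the same way;
\item the third becomes $(-1/\overline{\cG})(-\overline{\cG}^2\overline{\eta})=\overline{\cG\eta}$.
\end{itemize}
This is the classical Meeks condition. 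Integrating, we get $X^\sigma(\tau(p))=X^\sigma(\tau(p_*))+\text{Re}\int_{p_*}^{p}\overline{\Phi_\sigma}=X^\sigma(p)+c$, where $c=X^\sigma(\tau(p_*))-X^\sigma(p_*)$. Applying the identity twice and using $\tau^2=\mathrm{id}$ gives $2c=0$. The periods are well defined because $\sigma\in H(\phi)$. Hence $X^\sigma\circ\tau=X^\sigma$, so $X^\sigma$ descends to the quotient $\Sigma/\tau$, which is nonorientable since $\tau$ has no fixed points in $M$. Finite total curvature and completeness come from the orientable Montiel--Ros statement. Finally $u=\langle X^\sigma,\phi\rangle$ satisfies $u\circ\tau=\langle X^\sigma,-\phi\rangle=-u$, and $u\in N(\phi)/L(\phi)$ by the original theorem.

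For the direct direction, let $u$ be odd. Define $X_u$ by \eqref{montielrosmap1}. This formula is built naturally from $u$ and $\phi$ with the metric $g_\phi$, so it is equivariant under conformal or anticonformal maps. An anti-holomorphic $\tau$ is an anticonformal isometry of $g_\phi$, because $\phi\circ\tau=-\phi$ and the antipodal map is an isometry. The gradient term $\frac{u_z\phi_{\bar z}+u_{\bar z}\phi_z}{|\phi_z|^2}$ is the tangential vector $d\phi(\nabla^{g_\phi}u)$ under the identification $T\bS^2$, and it is coordinate-free. So $X_u\circ\tau=(u\circ\tau)(\phi\circ\tau)+d(\phi\circ\tau)(\nabla(u\circ\tau))=(-u)(-\phi)+d(-\phi)(\nabla(-u))=X_u$. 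Then $\sigma_u=\eta\,d\cG$ lies in $H(\phi)$ by the original theorem. Its antisymmetry follows by reading the computation of the previous paragraph backwards: $X_u\circ\tau=X_u$ forces $\tau^*\Phi=\overline\Phi$, and the third component then gives $\overline{\tau^*\sigma_u}=-\sigma_u$.

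The main obstacle is making the equivariance of \eqref{montielrosmap1} rigorous. I would check it directly in a local coordinate $w=\overline{z}\circ\tau$, verifying that the expression transforms correctly when $z$ is replaced by a conjugate coordinate. Care is also needed at branch points and ends, but there we only need continuity, since the identity holds on a dense set.
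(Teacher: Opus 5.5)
Your proposal is correct and follows essentially the same route as the paper. The converse is the same Meeks-type computation $\tau^*\Phi_\sigma=\overline{\Phi_\sigma}$ derived from $\cG\circ\tau=-1/\overline{\cG}$, and your $2c=0$ argument is equivalent to the paper's evaluation at $p=p_*$. Your direct direction, via $X_u=u\phi+d\phi(\nabla^{g_\phi}u)$ and $\tau$ being a $g_\phi$-isometry, is the coordinate-free form of the paper's substitution of $u_z(\tau(p))=-u_{\bar z}(p)$ and $\phi_z(\tau(p))=-\phi_{\bar z}(p)$ into \eqref{montielrosmap1}. Your extra claim that $\sigma_u$ is $\tau$-antisymmetric (comparing $(1,0)$-parts of $\tau^*\mathrm{Re}\,\Phi=\mathrm{Re}\,\Phi$) is also correct; the paper does not spell it out.
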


\begin{proof}
Fix $u\in N(\phi)/L(\phi)$ such that $u\circ \tau=-u$. We have that
\begin{align*}
\begin{split}
&u(\tau(p))=-u(p),\quad u_z(\tau(p))=-u_{\overline{z}}(p),\quad  u_{\overline{z}}(\tau(p))=-u_{z}(p),\\ &\phi(\tau(p))=-\phi(p),\quad \phi_z(\tau(p))=-\phi_{\overline{z}}(p),\quad  \phi_{\overline{z}}(\tau(p))=-\phi_{z}(p).
\end{split}
\end{align*}
This implies by \eqref{montielrosmap1} and the symmetry of $X_u$ that
\begin{align*}
\begin{split}
X_u(\tau(p))&=u(\tau(p))\phi(\tau(p))+\frac{u_z(\tau(p))\phi_{\overline{z}}(\tau(p))+u_{\overline{z}}(\tau(p))\phi_z (\tau(p))}{|\phi_z(\tau(p))|^2}\\&=u(p)\phi(p)+\frac{u_{\overline{z}}(p)\phi_z(p) +u_z(p)\phi_{\overline{z}}(p)}{|\phi_{\overline{z}}(p)|^2}\\&=X(p).
\end{split}
\end{align*}

Conversely, assume that $\sigma\in H(\phi)$ is antisymmetric with respect to $\tau$. By hypothesis we know that $\cG\circ \tau=-\frac{1}{\overline{\cG}}$. Then by an equation \eqref{montielrosmap2} for all $p\in \Sigma\setminus\{p_1,\ldots, p_k\}$
\begin{align*}
\begin{split}
X^{\sigma}(\tau(p)) &= \text{Re}\int_{p_*}^{\tau(p)} \left(1-\cG^2,i(1+\cG^2),2\cG\right)\frac{\sigma}{2d\cG} = \text{Re}\int_{\tau(p_*)}^p \left(1-\frac{1}{\overline \cG^2},i(1+\frac{1}{\overline \cG^2}),2\frac{-1}{\overline{\cG}}\right) \frac{1}{2}\frac{\tau^*\sigma}{d\frac{-1}{\overline{\cG}}}\\
&= \text{Re}\int_{\tau(p_*)}^p \left(1-\frac{1}{\overline{\cG}^2},i(1+\frac{1}{\overline{\cG}^2}),2\frac{-1}{\overline{\cG}}\right)\frac{1}{2}\frac{\tau^*\sigma \cdot  \overline{\cG}^2}{d \overline{\cG}} = \text{Re}\int_{\tau(p_*)}^p \overline{ \left(1-\cG^2,i(1+\cG^2),2\cG\right)\frac{1}{2}\frac{ (-\overline{\tau^*\sigma}) }{d  \cG}} \\
&= \text{Re}\int_{\tau(p_*)}^p (1-\cG^2,i(1+\cG^2),2\cG)\frac{1}{2} \frac{ (-\overline{\tau^*\sigma}) }{d  \cG}\\&=X^{\sigma}(p) - X^{\sigma}(\tau(p_*)).
\end{split}
\end{align*}
This forces $X^{\sigma}(\tau(p_*))=0$ and consequently $X^{\sigma} \circ \tau = X^{\sigma}$ as we wanted to prove.
    \end{proof}

\section{On the Morse index of Schr\"odinger operators associated to odd holomorphic maps} \label{section::On the index of Schr\"odinger operators associated to even potentials}

We extend some of the ideas from the work of S. Montiel and A. Ros \cite{MontielRos} about the index of Schr\"odinger operators. The results of this Section are specialized for holomorphic maps that are odd with respect to some ambient involution. We divide this Section in two parts. In Section \ref{subsection::abstract-setting} we develop these abstract results. In Section \ref{subsection::applications-even-potentials} we prove theorems about the Morse index of nonorientable minimal surfaces immersed in Euclidean three-space. 

\subsection{Abstract setting} \label{subsection::abstract-setting}

We consider the general case of a holomorphic map $\phi: \Sigma \to \bS^2$ defined on a compact Riemann surface in the presence of an involution $\tau: \Sigma \to \Sigma$ in such a way that $\phi \circ \tau = - \phi$. In this section, we establish abstract results about the Morse index and nullity of $\phi$ (see Section \ref{subsec:indexonesided}) in the same vein of the work of S. Montiel and A. Ros \cite{MontielRos}. We denote by $\Delta_{\phi}$ the Laplacian with respect to the branched metric $g_{\phi}=\phi^*(g_{\bS^2})$. 

As before, given such a holomorphic map $\phi$, we consider the elliptic operator $L_{\phi} = -\Delta_{\phi} - 2$, where the Laplacian is taken with respect to the branched metric $g_{\phi}$. Recall that the index of this operator is the same as the one of $-\Delta_g - \frac{1}{2} \lvert \nabla^g \phi \lvert_g^2$ for any Riemannian metric $g$ in the conformal class of the Riemann surface $\Sigma$. 

A simple observation that will be used in the following is that $\tau$ is an isometry in the branched metric $g_{\phi}$. In fact, since $\phi \circ \tau = - \phi$, we have that for any tangent vector $v \in T_p \Sigma$
\begin{align*}
    g_{\phi}(d \tau_p v, d \tau_pv) &= g_{\bS^2}(d\phi_{\tau(p)} d\tau_p v, d\phi_{\tau(p)} d\tau_p v) = g_{\bS^2}(-d\phi_p v, -d\phi_p v)\\
    &= g_{\phi}(v,v).
\end{align*}

In the following, given a compact domain $\Omega \subset \Sigma$, we denote by $\Delta^D \lvert_{\Omega}$ and $\Delta^N \lvert_{\Omega}$ to the Dirichlet and Neumann Laplacians. In other words, they are the usual Laplacian operator restricted to the spaces $H^1_0(\Omega)$ and $H^1(\Omega)$, respectively. 

\begin{lema}\label{lem:adaptmontielros1}
    Let $\Sigma$ be a compact Riemann surface with an involution $\tau$ and a holomorphic map $\phi: \Sigma \to \bS^2$. Let $\Omega_1, ..., \Omega_k$ be open sets on $\Sigma$ such that together with $\tau(\Omega_1), ..., \tau(\Omega_k)$ give a partition of $\Sigma$ by open sets with piecewise smooth boundary. Then we have 

    \begin{align}
    \begin{split}\label{eq:adaptmontielros1}
        &\text{index}_{\mathcal{O}} (-\Delta_{\phi} - k(k+1) ) + \text{nul}_{\mathcal{O}}(-\Delta_{\phi} - k(k+1))  \\
        \leq \sum_{i=1}^{k-1} \text{index}(-\Delta^N \lvert_{\Omega_i} &- k(k+1)) + \text{index}(-\Delta^N \lvert_{\Omega_k} - k(k+1)) + \text{nul}(-\Delta^N \lvert_{\Omega_k} - k(k+1)).    
    \end{split}        
    \end{align}
\end{lema}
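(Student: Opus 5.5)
The plan is to adapt the Montiel--Ros domain-decomposition argument to odd functions. The key point is that an odd function on $\Sigma$ is determined by its restriction to $\Omega_1\cup\dots\cup\Omega_k$. Write $\lambda=k(k+1)$, and let $E$ be the direct sum of the eigenspaces of $-\Delta_\phi$ on $(H^1)_{\mathcal O}$ with eigenvalue at most $\lambda$. Then $\dim E$ is the left-hand side of \eqref{eq:adaptmontielros1}, and for every $u\in E$ we have
\[
\int_\Sigma |\nabla^\phi u|^2 - \lambda u^2 \le 0 ,
\]
with equality only if $u$ lies in the $\lambda$-eigenspace. Since $\tau$ is an isometry of $g_\phi$ (observed just before the lemma) and $u\circ\tau=-u$, we get
\[
\int_{\tau(\Omega_i)} |\nabla^\phi u|^2-\lambda u^2=\int_{\Omega_i}|\nabla^\phi u|^2-\lambda u^2 .
\]
Hence the total quadratic form equals twice $\sum_{i=1}^k Q_i(u|_{\Omega_i})$, where $Q_i$ is the Neumann form of $-\Delta-\lambda$ on $\Omega_i$.

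Suppose, for contradiction, that $\dim E$ exceeds the right-hand side. For $i<k$, let $P_i$ be the $L^2(\Omega_i)$-orthogonal projection onto the span of the Neumann eigenfunctions on $\Omega_i$ with eigenvalue $<\lambda$. For $i=k$, let $P_k$ be the projection onto those with eigenvalue $\le\lambda$. The linear map $u\mapsto (P_1(u|_{\Omega_1}),\dots,P_k(u|_{\Omega_k}))$ has target of dimension equal to the right-hand side. So there is a nonzero $u\in E$ with $u|_{\Omega_i}\perp$ those eigenfunctions for every $i$.

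For such a $u$, the Neumann variational characterization gives $Q_i(u|_{\Omega_i})\ge 0$ for $i<k$. It also gives $Q_k(u|_{\Omega_k})\ge \mu\|u\|^2_{L^2(\Omega_k)}$, where $\mu>0$ is the gap to the next Neumann eigenvalue above $\lambda$. Combined with $\sum_i Q_i\le 0$, this forces $u|_{\Omega_k}\equiv 0$ and $Q_i(u|_{\Omega_i})=0$ for all $i$. In particular the total form vanishes, so $u$ is a $\lambda$-eigenfunction of $-\Delta_\phi$ on $\Sigma$. But $u$ vanishes on the open set $\Omega_k$ (and on $\tau(\Omega_k)$). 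By unique continuation for $-\Delta_g-\lambda|d\phi|^2_g/2$ in a smooth conformal metric, the branch points being isolated, $u\equiv0$. This is a contradiction.

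The main obstacle is the step where $Q_i(u|_{\Omega_i})=0$ must be deduced from the inequalities. This works only because one domain carries the index plus nullity, which yields strict positivity there, while the others only need nonnegativity. Two technical points also need checking. First, restrictions of $H^1(\Sigma)$ functions to the $\Omega_i$ lie in $H^1(\Omega_i)$, and Neumann spectra are discrete for piecewise smooth boundaries in the branched metric; this follows because the form is conformally invariant up to the potential, so one can work in a smooth background metric with weight $|d\phi|^2$. Second, unique continuation must hold across the branch points, which are isolated and so cause no trouble.
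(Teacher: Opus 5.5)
Your proposal is correct and follows the paper's proof essentially step for step. A dimension count produces a nonzero odd function in the span of eigenfunctions with eigenvalue at most $k(k+1)$ that is orthogonal to the relevant Neumann eigenfunctions on each $\Omega_i$. Oddness together with $\tau$ being a $g_\phi$-isometry then doubles the quadratic form over $\Omega_1\cup\dots\cup\Omega_k$. The extra nullity allotted to $\Omega_k$ forces the function to vanish there, and unique continuation gives the contradiction.

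Your projection map $u\mapsto (P_i(u|_{\Omega_i}))_i$ is the same linear algebra as the paper's choice of a nonzero $f\in V\cap\bigoplus_i V_i^{\perp}$. Your explicit spectral gap $\mu>0$ on $\Omega_k$, and your check that vanishing of the total form puts $u$ in the $\lambda$-eigenspace before invoking unique continuation, make explicit two steps the paper only states in passing.
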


\begin{remark}
\normalfont
    In the orientable version of this lemma, which is proven in \cite{MontielRos}, the bound in the right hand side of \eqref{eq:adaptmontielros1} contains the sum over all domains of the partition, not only a half, and the estimate is for the total index and nullity.
\end{remark}

\begin{proof}
    Define the following sets for $i = 1, ..., d-1$
\begin{align*}
\begin{split}
    V ={}& \text{span} \{ \text{eigenfunctions of } -\Delta_{\phi} \text{ with eigenvalue } \lambda \leq k(k+1) \\
        &\text{ that are odd with respect to } \tau \}.
\end{split}\\
\begin{split}
    V_i ={}& \text{span} \{ \text{eigenfunctions of } -\Delta_{\phi} \text{ on } \Omega_i, \\
        &\text{ with Neumann boundary condition and eigenvalue } \lambda < k(k+1) \}.
\end{split}\\
\begin{split}
    V_d = {}& \text{span} \{ \text{eigenfunctions of } -\Delta_{\phi} \text{ on } \Omega_d, \\
        &\text{ with Neumann boundary condition and eigenvalue } \lambda \leq k(k+1) \}.
\end{split}
\end{align*}
We can regard the sets $V_i$ as subsets of $L^2(\Sigma,g)$ extending to zero outside $\Omega_i$. Observe that:
\begin{equation*}\label{dimensions}
    \text{dim} V \leq  \sum_{i=1}^d \text{dim} V_i.
\end{equation*}
Otherwise, if this is not true, there would be a nonzero $f \in V \cap \bigoplus_{i=1}^d V_i^{\perp}$. In particular,
since $f$ is orthogonal to $\bigoplus_{i=1}^d V_i$, by the variational characterization of eigenvalues, we have that for any $i = 1,..., d$.
\begin{equation*}
    \int_{\Omega_i} \abs{\nabla f}^2 dv_{g_{\phi}} \geq k(k+1) \int_{\Omega_i} f^2 dv_{g_{\phi}}.
\end{equation*}
Now, since $f$ is odd with respect to $\tau$, which is an isometry in the branched metric $g_{\phi}$, and the symmetrization by $\tau$ of the sets $\Omega_1,...,\Omega_k$ cover all $\Sigma$, we have:
\begin{align*}
    \int_{\Sigma} \abs{\nabla f}^2 dv_{g_{\phi}} &= \sum_{i = 1}^d (\int_{\Omega_i} \abs{\nabla f}^2 dv_{g_{\phi}} + \int_{\tau(\Omega_i)} \abs{\nabla f}^2dv_{g_{\phi}}) = 2 \sum_{i = 1}^d \int_{\Omega_i} \abs{\nabla f}^2 dv_{g_{\phi}} \\
    &\geq 2 k(k+1) \sum_{i = 1}^d \int_{\Omega_i} f^2 dv_{g_{\phi}} = k(k+1) \int_{\Sigma} f^2 dv_{g_{\phi}}.
\end{align*}
Equality holds in the equation above if and only if $f\big|_{\Omega_i}$ is an eigenfunction for the Neumann problem in $\Omega_i$ with eigenvalue $k(k+1)$, for $i = 1, ..., d$. In particular, since $f\big|_{\Omega_d}$ is orthogonal to all eigenfunctions for the Neumann problem with eigenvalue $k(k+1)$, it follows that $f\big|_{\Omega_d}$ must be zero. 

Finally, the fact that $f \in V$ means that 
\begin{equation*}
    \int_{\Sigma} \abs{\nabla f}^2 dv_{g_{\phi}} \leq k(k+1) \int_{\Sigma} \abs{f}^2 dv_{g_{\phi}}.
\end{equation*}
In particular we have equality in the last equation, meaning that $f\big|_{\Omega_d} = 0$, but the unique continuation principle implies that $f$ is identically zero, which is a contradiction.
\end{proof}

\begin{lema}\label{lem:adaptmontielros2}
    Let $\Sigma$ be a compact Riemann surface with an involution $\tau$ and a holomorphic map $\phi:\Sigma \to \bS^2$. Let $\Omega_1, ..., \Omega_k$ be open sets on $\Sigma$ such that together with $\tau(\Omega_1), ..., \tau(\Omega_k)$ give a partition of $\Sigma$ by open sets with piecewise smooth boundary. Then we have 

    \begin{align*}
    \begin{split}       &\text{index}_{\mathcal{O}} (-\Delta_{\phi} - k(k+1) )  \\
        \geq \sum_{i=1}^{k-1} (\text{index}(-\Delta^D \lvert_{\Omega_i} - k(k+1&)) + \text{nul}(-\Delta^D \lvert_{\Omega_i} - k(k+1))) + \text{index}(-\Delta^D \lvert_{\Omega_k} - k(k+1)).     
    \end{split}
    \end{align*}
\end{lema}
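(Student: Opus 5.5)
The plan is to build an explicit subspace of odd test functions on which the quadratic form of $-\Delta_{\phi}-k(k+1)$ is negative definite, and whose dimension is the right hand side. This is the standard Dirichlet counterpart of Lemma \ref{lem:adaptmontielros1}. (As in that lemma, $k$ plays two roles, as the number of domains and in the eigenvalue $k(k+1)$. I write $d$ for the number of domains, as the proof of Lemma \ref{lem:adaptmontielros1} does.)

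\textbf{Construction of the test space.} For $i=1,\dots,d-1$, let $W_i$ be the span of the Dirichlet eigenfunctions of $-\Delta_\phi$ on $\Omega_i$ with eigenvalue $\lambda\le k(k+1)$. Let $W_d$ be the span of those on $\Omega_d$ with eigenvalue $\lambda<k(k+1)$. Extend each element by zero; this gives $H^1$ functions on $\Sigma$ supported in $\overline{\Omega_i}$. For $f\in W_i$, define its odd symmetrization
\[
\hat f = f - f\circ\tau .
\]
It is odd with respect to $\tau$. Since $\tau$ is an isometry of $g_\phi$, $f\circ\tau$ lies in $H^1_0(\tau(\Omega_i))$. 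The sets $\Omega_1,\dots,\Omega_d,\tau(\Omega_1),\dots,\tau(\Omega_d)$ are pairwise disjoint, so the map $f\mapsto \hat f$ is injective. Moreover, the images of the different $W_i$ have disjoint supports, so they form a direct sum
\[
W=\bigoplus_i \widehat{W_i}\subset (H^1)_{\mathcal O},
\]
whose dimension is the right hand side of the inequality.

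\textbf{Sign of the form.} Write $Q(u)=\int|\nabla u|^2-k(k+1)\int u^2$. For $u=\sum_i \hat f_i$, disjointness of supports and $\tau$-invariance of $g_\phi$ give
\[
Q(u)=2\sum_i Q_{\Omega_i}(f_i).
\]
Each term satisfies $Q_{\Omega_i}(f_i)\le 0$, and the term for $\Omega_d$ is strictly negative unless $f_d=0$. So $Q\le 0$ on $W$.

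\textbf{The main obstacle: upgrading $\le 0$ to $<0$.} The summands with $i<d$ include null directions, so we must show that $Q(u)=0$ forces $u=0$. Suppose $Q(u)=0$. Then $f_d=0$, and each $f_i$ with $i<d$ is a Dirichlet eigenfunction with eigenvalue exactly $k(k+1)$. Hence $u$ minimizes the Rayleigh quotient value $k(k+1)$ on $W$.

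I would argue by contradiction with the usual min-max argument. Assume $\dim W>\text{index}_{\mathcal O}$. Then some nonzero $u\in W$ is $L^2$-orthogonal to all odd eigenfunctions with eigenvalue $<k(k+1)$. Such a $u$ has $Q(u)\ge 0$, hence $Q(u)=0$, so $u$ is a genuine odd eigenfunction of $-\Delta_\phi$ with eigenvalue $k(k+1)$ on all of $\Sigma$. But $u$ vanishes identically on the open set $\Omega_d\cup\tau(\Omega_d)$, since $f_d=0$ there. By unique continuation (valid for the branched metric, away from the discrete branch set) $u\equiv 0$, a contradiction.

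Therefore $\text{index}_{\mathcal O}\ge\dim W$, which is the claimed inequality. If $\Omega_d$ could be empty this would need care, but it is assumed to be a genuine open set of the partition.
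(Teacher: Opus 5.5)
Your proposal is correct and is essentially the paper's argument. The paper also takes a nonzero element of $\bigoplus_i V_i$ that is $L^2$-orthogonal to the odd eigenfunctions with eigenvalue below $k(k+1)$, makes it odd as $f_0-f_0\circ\tau$, and uses the equality case (which forces the $\Omega_k$-component to vanish) together with unique continuation. The only difference is that you make the functions odd before choosing the orthogonal vector, which is equivalent because $\langle \hat f, g\rangle = 2\langle f, g\rangle$ for every odd $g$.
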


\begin{proof}

 Define as usual, for $i=1,...,k-1$.
\begin{align*}
\begin{split}
    V ={}& \text{span} \{ \text{eigenfunctions of } -\Delta_{\phi} \text{ with eigenvalue } \lambda < k(k+1) \\
        &\text{ that are odd with respect to } \tau \}.
\end{split}\\
\begin{split}
    V_i ={}& \text{span} \{ \text{eigenfunctions of } -\Delta_{\phi} \text{ on } \Omega_i, \\
        &\text{ with Dirichlet boundary condition and eigenvalue } \lambda \leq k(k+1) \}.
\end{split}\\
\begin{split}
    V_k = {}& \text{span} \{ \text{eigenfunctions of } -\Delta_{\phi} \text{ on } \Omega_d, \\
        &\text{ with Dirichlet boundary condition and eigenvalue } \lambda < k(k+1) \}.
\end{split}
\end{align*}
Note that the spaces $V_i$ for $i=1,...,k$ can be regarded as subspaces of $W^{1,2}(\Sigma)$ by extension to zero. We want to prove that
\begin{equation*}
    \text{dim}V \geq \text{dim} \bigoplus_{i=1}^k V_i.
\end{equation*}
Proceeding again by contradiction, suppose there is $f_0 \in \bigoplus_{i=1}^k V_i \cap V^{\perp}$. By definition, $f_0$ would be zero on the sets $\tau(\Omega_i)$. We can define an odd version $f$, of this function just as taking the value $-f_0\circ \tau$ on the sets $\tau(\Omega_i)$, and being equal to $f_0$ on the sets $\Omega_i$. 

Observe that $f$ also belongs to $V^{\perp}$. In fact, for any odd eigenfunction $g$, we have
\begin{align*}
    \int_{\Sigma} f g dv_{g_{\phi}}&= \sum_{i=1}^k (\int_{\Omega_i} f_0g dv_{g_{\phi}} + \int_{\tau(\Omega_i)} (-f_0\circ \tau) g dv_{g_{\phi}}) \\
    &=\sum_{i=1}^k (\int_{\Omega_i} f_0 g dv_{g_{\phi}} - \int_{\Omega_i} f_0 (g\circ \tau) dv_{g_{\phi}}) \\
    &= 2 \int_{\Sigma} f_0 g dv_{g_{\phi}}\quad \\
    &= 0.
\end{align*}
Where we have used that $\tau$ is an isometry of $g_{\phi}$. Then, we have that $f$ is an odd function which is orthogonal to all odd eigenfunctions with eigenvalue less than $k(k+1)$. By the variational characterization of the eigenvalues, we have
\begin{equation*}\label{geq2}
    \int_{\Sigma} \abs{\nabla f}^2 dv_{g_{\phi}} \geq k(k+1) \int_{\Sigma} f^2 dv_{g_{\phi}}.
\end{equation*}
Now, since $f_0 \in \bigoplus_{i=1}^k V_i$, we have that 
\begin{equation*}
    \int_{\Omega_i} \abs{\nabla f}^2 dv_{g_{\phi}} \leq k(k+1) \int_{\Omega_i} f^2 dv_{g_{\phi}}.
\end{equation*}
With equality if and only if $f \lvert_{\Omega_i}$ is a linear combination of $k(k+1)$ Dirichlet eigenfunctions on $\Omega_i$ with eigenvalue $k(k+1)$. In particular, equality for $i = k$ would imply that $f\lvert_{\Omega_k}$ is identically zero. But, dividing into all $\Omega_i$ and $\tau(\Omega_i)$, and using that $f$ is odd, we have that
\begin{equation*}
    \int_{\Sigma} \abs{\nabla f}^2 dv_{g_{\phi}} \leq k(k+1) \int_{\Sigma} f^2 dv_{g_{\phi}}.
\end{equation*}
But this implies that we have equality above, then by the last observations $f \lvert_{\Omega_k}$ is identically zero, but this means that $f$ is identically zero by the unique continuation property. Then the lemma follows by contradiction.
    
\end{proof}

\subsection{Application about the Morse index of nonorientable surfaces in Euclidean three-space} \label{subsection::applications-even-potentials}

We derive a tool to compute the Morse index of complete nonorientable minimal surfaces based on information about its Gauss map. The result should be compared to Corollary 15 in \cite{MontielRos}, from where it was adapted.

\begin{thm}\label{thm:indexofnonorientable}
    Suppose we have a holomorphic map of degree $d$ defined on a compact Riemann surface $\phi: \Sigma \to \bS^2$ together with an involution $\tau: \Sigma \to \Sigma$ such that $\phi \circ \tau = - \phi$. If all branch values of $\phi$ are contained in some equator of $\bS^2$, then the odd Morse index of $\phi$ is $d-1$ and the odd nullity of $\phi$ is $3$.
\end{thm}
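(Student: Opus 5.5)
We follow Corollary 15 of Montiel--Ros: pull back the great-circle decomposition of $\bS^2$ along $\phi$ and compare with the two hemispheres. Rotate so the equator containing the branch values is $\{x_3=0\}$, and let $\bS^2_\pm$ be the open hemispheres. Since no branch value lies in $\bS^2_+$, the map $\phi:\phi^{-1}(\bS^2_+)\to\bS^2_+$ is an unbranched covering of degree $d$ of a simply connected set. Hence $\phi^{-1}(\bS^2_+)=\Omega_1\sqcup\cdots\sqcup\Omega_d$, and each $\phi|_{\Omega_i}$ is an isometry onto $\bS^2_+$ for $g_\phi$. Because $\phi\circ\tau=-\phi$ and the antipodal map exchanges the hemispheres, $\tau(\phi^{-1}(\bS^2_+))=\phi^{-1}(\bS^2_-)$. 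So $\Omega_1,\dots,\Omega_d,\tau(\Omega_1),\dots,\tau(\Omega_d)$ partition $\Sigma$ up to the measure-zero graph $\phi^{-1}(\text{equator})$, which has piecewise smooth boundary. This is exactly the hypothesis of Lemmas \ref{lem:adaptmontielros1} and \ref{lem:adaptmontielros2}, with $d$ domains and $k=1$, so $k(k+1)=2$.

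Each $\Omega_i$ is isometric to the round hemisphere, so the spectra are explicit.
\begin{itemize}
\item The Dirichlet eigenvalues are $2,6,\dots$; eigenvalue $2$ is simple, spanned by $x_3$. Hence $\text{index}(-\Delta^D-2)=0$ and $\text{nul}(-\Delta^D-2)=1$.
\item The Neumann eigenvalues are $0,2,\dots$; eigenvalue $2$ has multiplicity $2$, spanned by $x_1,x_2$. Hence $\text{index}(-\Delta^N-2)=1$ and $\text{nul}(-\Delta^N-2)=2$.
\end{itemize}
Lemma \ref{lem:adaptmontielros2} then gives $\text{index}_{\mathcal O}(\phi)\ge (d-1)\cdot 1+0=d-1$. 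Lemma \ref{lem:adaptmontielros1} gives $\text{index}_{\mathcal O}(\phi)+\text{nul}_{\mathcal O}(\phi)\le (d-1)\cdot1+1+2=d+2$.

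For the nullity, the three coordinate functions of $\phi$ are eigenfunctions of $-\Delta_\phi$ with eigenvalue $2$ and are odd under $\tau$, since $\phi\circ\tau=-\phi$. They are linearly independent because $\phi$ is nonconstant, so $\text{nul}_{\mathcal O}(\phi)\ge 3$. Combining, $d-1+3\le \text{index}_{\mathcal O}+\text{nul}_{\mathcal O}\le d+2$, which forces $\text{index}_{\mathcal O}(\phi)=d-1$ and $\text{nul}_{\mathcal O}(\phi)=3$.

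The main point needing care is the hemisphere decomposition: the preimage of the equator must be a nice graph, and each $\Omega_i$ must really be isometric to a hemisphere with piecewise smooth boundary. Branch points lie on the equator preimage, so the boundary curves may meet at vertices there. The isometry follows from the covering argument on the open hemisphere, together with the fact that the boundary has measure zero, which justifies the splitting of integrals used in the lemmas. A second check is that the pairing $\Omega_i\leftrightarrow\tau(\Omega_i)$ used in the lemmas holds, which it does because $\tau$ maps components of $\phi^{-1}(\bS^2_+)$ bijectively onto components of $\phi^{-1}(\bS^2_-)$. The spectral facts for the hemisphere are standard, obtained by restricting spherical harmonics even or odd in $x_3$.
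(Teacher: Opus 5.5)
Your proof is correct and follows the paper's argument. You split $\Sigma$ into the $d$ preimage components $\Omega_i$ of an open hemisphere and their images $\tau(\Omega_i)$, plug the hemisphere Neumann and Dirichlet data at eigenvalue $2$ into Lemmas \ref{lem:adaptmontielros1} and \ref{lem:adaptmontielros2} to get $\text{index}_{\mathcal{O}}(\phi)\ge d-1$ and $\text{index}_{\mathcal{O}}(\phi)+\text{nul}_{\mathcal{O}}(\phi)\le d+2$, and close with the three odd coordinate functions. Your explicit bookkeeping (Neumann: index $1$, nullity $2$; Dirichlet: index $0$, nullity $1$) and the covering argument for the decomposition simply spell out what the paper leaves implicit.
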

Theorem \ref{thm:indexofnonorientable} is an important ingredient of the proof of Theorems \ref{thm-Meeks-indice}, \ref{thm-index-lopez} and \ref{thm-index-oliveira}.

\begin{proof}[Proof of Theorem \ref{thm:indexofnonorientable}]

    Since $\phi$ is odd with respect to $\tau$, note that 
\begin{equation*}
    \phi^{-1}(\bS^2-C) = \bigcup_{i=1}^{d} (\Omega_i \cup \tau(\Omega_i)),
\end{equation*}

\noindent where every $\Omega_i$ and $\tau(\Omega_i)$ is sent by $\phi$ isometrically, with respect to the metric $g$, onto a half sphere $D$. Thus, the Dirichlet and Neumann eigenvalues on these domains are equal to that of a half sphere. Recall that

\begin{itemize}
    \item The Neumann eigenvalues on a half sphere are $k(k+1)$ with multiplicities $k+1$ and $k=0,1,2,...$. 
    \item The Dirichlet eigenvalues on a half sphere are $k(k+1)$ with multiplicity $k$ and $k=1,2,3,...$.
\end{itemize}
Combining Lemmas \ref{lem:adaptmontielros1} and \ref{lem:adaptmontielros2}, we have:
\begin{align*}
    \text{index}_{\mathcal{O}}(-\Delta_{\phi} - 2) + \text{nul}_{\mathcal{O}}(-\Delta_{\phi} - 2) &\leq d+2,\\
    \text{index}_{\mathcal{O}}(-\Delta_{\phi} - 2) &\geq d-1.
\end{align*}
Moreover, $\text{nul}_{\mathcal{O}}(-\Delta_{\phi} - 1) \geq 3$ since the coordinate function of $\phi$ belong to this eigenspace. Then we obtain $\text{index}_{\mathcal{O}} (-\Delta_{\phi} - 2) = d-1$ and $\text{nul}_{\mathcal{O}}(-\Delta_{\phi} - 2) = 3$, which proves the result.
\end{proof}
Therefore, as a consequence of Theorems \ref{thm:fischercolbrienaoorientavel} and \ref{thm:indexofnonorientable}, we have a tool to compute Morse index of nonorientable minimal surfaces
\begin{cor}\label{cor:indexofnonorientable}
    If the extended Gauss map of the double cover of a complete nonorientable minimal immersion has degree $d$ and all its branch values are contained in some equator $C \subset \bS^2$, then the nonorientable minimal immersion has Morse index $d-1$.
\end{cor}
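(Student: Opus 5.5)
This is a direct combination of Theorem \ref{thm:fischercolbrienaoorientavel} and Theorem \ref{thm:indexofnonorientable}. The plan is as follows.

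\textbf{Setting up the data.} Let $X: M \to \mathbb{R}^3$ be the complete nonorientable minimal immersion and $\pi:\tilde M\to M$ its orientable double cover. The hypothesis that the extended Gauss map has a degree implicitly gives finite total curvature. By Osserman's theorem, $\tilde M=\tilde\Sigma\setminus\{\text{ends}\}$ with $\tilde\Sigma$ compact, and the Gauss map extends holomorphically to $\phi:\tilde\Sigma\to\bS^2$ of degree $d$. The change of sheets involution extends to an anti-holomorphic involution $\tau$ of $\tilde\Sigma$ permuting the ends (F.~Martin, as recalled in Section~\ref{subsec:indexonesided}).

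\textbf{The key identity $\phi\circ\tau=-\phi$.} Since $\tau$ is a deck transformation, $\tilde X\circ\tau=\tilde X$. Because $M$ is nonorientable, $d\tau$ reverses the orientation of $\tilde M$, so the unit normal satisfies $\nu\circ\tau=-\nu$. By continuity this extends to the compactification, giving $\phi\circ\tau=-\phi$ on all of $\tilde\Sigma$. This is the step that needs care; everything after it is quotation.

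\textbf{Invoking the two theorems.} By Theorem~\ref{thm:fischercolbrienaoorientavel}, the Morse index of $M$ equals $\text{index}_{\mathcal O}(\phi)$. The hypotheses of Theorem~\ref{thm:indexofnonorientable} hold: $\phi$ is holomorphic of degree $d$, it is odd with respect to $\tau$, and its branch values lie in an equator $C$. That theorem gives $\text{index}_{\mathcal O}(\phi)=d-1$, so $\text{index}(M)=d-1$.

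The only potential subtlety is a normalization point. In Theorem~\ref{thm:indexofnonorientable} the involution plays the role of a $g_\phi$-isometry, which holds here by the computation at the start of Section~\ref{subsection::abstract-setting}. The odd Morse index in Definition~\ref{indexholomorphicodd} uses the same anti-holomorphic $\tau$, so the two results match without any further adjustment.
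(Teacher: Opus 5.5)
Your proposal is correct and is exactly the paper's argument: the corollary follows by combining Theorem~\ref{thm:fischercolbrienaoorientavel} (the Morse index of $M$ equals $\text{index}_{\mathcal{O}}(\phi)$) with Theorem~\ref{thm:indexofnonorientable} ($\text{index}_{\mathcal{O}}(\phi)=d-1$ when the branch values lie on an equator). The paper leaves the identity $\phi\circ\tau=-\phi$ implicit; your justification from $\tilde X\circ\tau=\tilde X$ and the orientation-reversing deck transformation, extended to $\tilde\Sigma$ by continuity, is sound.
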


\section{The Meeks minimal M\"obius band and the Oliveira family of minimal M\"obius bands}\label{section::The Meeks minimal M\"obius band and the Oliveira family of minimal M\"obius bands}

We compute the Morse index of the Meeks minimal M\"obius band and of each element of the Oliveira family of minimal M\"obius bands. These computations are based on the study of the Gauss map of these surfaces, and we rely on the application of Theorem \ref{thm:indexofnonorientable} to compute their index in terms of their total curvature. 

\begin{proof}[Proof of Theorem \ref{thm-Meeks-indice}]

Let us describe a parametrization of the Meeks minimal M\"obius band. We consider its oriented double cover $M=\bC\setminus\{0\}$ with its Weierstrass data  
\begin{equation*}
     \cG(z)=z^{2}\frac{z+1}{z-1}, \text{ and }\quad \eta=i\frac{(z-1)^2}{z^{3}}dz.
\end{equation*}

Note that $\cG$ extends holomorphically as a map from the complex sphere $\overline{\mathbb{C}}$ to itself. By the Riemann-Hurwitz Theorem \ref{RiemannHurwitz}, $\cG$ has $4$ branch points, taking into account their multiplicities. We now explicitly identify these branch points and their branching orders. 

In the chart $\mathbb{C}$, the derivative of the Gauss map $\cG$ is given by
\begin{align*}
    \cG'(z) &= z\frac{(3z+2)(z-1) - (z^2 + z)}{(z-1)^2}= 2z \frac{z^2 - z - 1}{(z-1)^2}.
\end{align*}

Thus, $\cG$ has the origin $0$ as a branch point with branch order $1$. Moreover, there are two real numbers $\zeta_1, \zeta_2$, which are zeros of the polynomial in the numerator above, which are also branch points of $\cG$ with branch order $1$. A similar computation taking the chart at infinity shows that the point $\infty$ is a branch point with branch order $1$. Since the Gauss map $\cG$ preserves the real line, this proves that all branch values of the Gauss map of the Meeks Möbius band belong to the equator determined by the real line after mapping to the sphere by the stereographic projection. 
\end{proof} 

We now study the Oliveira family of minimal M\"obius bands. The computations are similar, and in fact generalize the previous one. 

\begin{proof}[Proof of Theorem \ref{thm-index-oliveira}]
 Let us describe a parametrization for each element of the Oliveira family of minimal M\"obius bands. This family is indexed by their total curvature, which are listed as $2 m \pi$ for some odd integer $m\geq 5$. We consider the parametrization by its oriented double cover $M=\bC\setminus\{0\}$ with Weierstrass data
\begin{equation*}
     \cG_m(z)=z^{m-1}\frac{z+1}{z-1}, \text{ and }\quad \eta_m=i\frac{(z-1)^2}{z^{m+1}}dz.
\end{equation*}
By the Riemann-Hurwitz Theorem \ref{RiemannHurwitz}, the number of branch points of $g$ counting multiplicities is $2m-2$. In the parametrization outside $\infty$, the derivative of the Gauss map is given by:
\begin{align*}
    \cG_m'(z) &= z^{m-2}\frac{(mz+(m-1))(z-1) - (z^2 + z)}{(z-1)^2}= z^{m-2} \frac{(m-1)z^2 - 2z - (m-1)}{(z-1)^2}.
\end{align*}
Thus, this map has $0$ as a branch point with branch order $m-2$ and two real numbers $\zeta_1, \zeta_2$, which are zeros of the polynomial in the numerator above, both with branch order $1$. A similar computation taking the chart at infinity shows that the point $\infty$ is a branch point with branch order $m-2$. Now, we observe that the image of these points under the map $\cG_m$ are $0$, $\infty$ and two real numbers $\cG_m(\zeta_1)$, $\cG_m(\zeta_2)$, proving that all branch values of $\cG_m$ lie in a common equator. 

Then, using Theorem \ref{thm:indexofnonorientable} we conclude that the Morse index of the Oliveira's Möbius bands with total curvature $2 \pi m$ equals $m-1$. 
\end{proof}

Notice that the Oliveira family of minimal Möbius bands of degree $m\geq 5$, and the Meeks Möbius band $m=3$, have a branch point at the origin $z=0$ of branch order $m-2$. We present an alternative way to prove that such family have branch values in a single equator as a consequence of the following result
\begin{prop}\label{criteriaequator}
    Let $\tau:\bS^2\to \bS^2$ an anti-holomorphic involution on the sphere and consider a holomorphic map $\cG:\bS^2\to \overline{\bC}$ of degree $m$ such that $\cG\circ \tau=-1/\overline{\cG}$. If $\cG$ has a branch point $p\in \bS^2$ with branch order $r_p(\cG)\geq m-2$, then all the branch values of $\cG$ lie on a single equator.
\end{prop}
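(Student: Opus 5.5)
The plan is to combine the Riemann--Hurwitz count with the symmetry $\cG\circ\tau=-1/\overline{\cG}$ and the elementary fact that two pairs of antipodal points of $\bS^2$ always lie on a common great circle.

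\emph{Step 1: $\tau$ preserves branching and has no fixed points.} The map $A(w)=-1/\overline{w}$ corresponds, under stereographic projection, to the antipodal map of $\bS^2$. The relation $\cG\circ\tau=A\circ\cG$ gives two things.
\begin{itemize}
\item If $\tau(q)=q$ for some $q$, then $\cG(q)=A(\cG(q))$. This is impossible, since the antipodal map has no fixed points. So $\tau$ is fixed-point free.
\item Both $\tau$ and $A$ are anti-holomorphic diffeomorphisms. Composing with them does not change local degrees, so $r_{\tau(q)}(\cG)=r_q(\cG)$ for every $q$. Hence branch points come in disjoint pairs $\{q,\tau(q)\}$ with equal branch orders.
\item The corresponding branch values are $\cG(q)$ and $\cG(\tau(q))=A(\cG(q))$, which are antipodal.
\end{itemize}

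\emph{Step 2: counting.} By the Riemann--Hurwitz Theorem \ref{RiemannHurwitz}, the total branching of $\cG:\bS^2\to\overline{\bC}$ is $2m-2$. Since $p\neq\tau(p)$ and both have order $r_p(\cG)\ge m-2$, these two points alone account for at least $2m-4$. At most $2$ units of branching remain, and by Step 1 they must come in $\tau$-symmetric pairs of equal order. There are only two possibilities.
\begin{itemize}
\item Either $r_p(\cG)=m-1$ and $p,\tau(p)$ are the only branch points.
\item Or $r_p(\cG)=m-2$ and there is exactly one further pair $q,\tau(q)$, each of order $1$. (A single extra point of order $2$ is excluded, since it would have to be fixed by $\tau$.)
\end{itemize}

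\emph{Step 3: geometry of the branch values.} In the first case the branch values are the single antipodal pair $\{\cG(p),A(\cG(p))\}$, which lies on infinitely many great circles. In the second case the branch values are two antipodal pairs $\{a,-a\}$ and $\{b,-b\}$, viewed as points of the unit sphere. The plane through the origin spanned by $a$ and $b$ (any plane containing $a$, if $b=\pm a$) contains all four points. Its intersection with $\bS^2$ is the required equator.

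The only point needing care is Step 1, specifically the invariance of local branch order under composition with the anti-holomorphic maps $\tau$ and $A$. In local coordinates this is immediate: conjugation preserves the vanishing order of the derivative. The rest is bookkeeping.
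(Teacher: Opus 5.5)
Your proof is correct and follows the same route as the paper: Riemann--Hurwitz forces the total branching to be $2m-2$, the pair $p,\tau(p)$ absorbs at least $2m-4$ of it, the $\tau$-pairing of branch points leaves room for at most one further pair, and at most two antipodal pairs of branch values always lie on a common great circle. You also make explicit two points the paper leaves implicit or imports. First, $\tau$ is fixed-point free, so $p\neq\tau(p)$ and the bound $2r_p(\cG)\ge 2m-4$ is legitimate. Second, you give a direct local-coordinate proof that $r_{\tau(q)}(\cG)=r_q(\cG)$, which the paper instead cites as Proposition \ref{BranchPointsComeInPairs}.
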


\begin{proof}
    Indeed, if there exists such a point $p\in \bS^2$ with $r_p(\cG)\geq m-2$, then $r_{\tau(p)}(\cG)\geq m-2$ by Proposition \ref{BranchPointsComeInPairs}. Therefore by the Riemann-Hurwitz Theorem \ref{RiemannHurwitz} we have
    \begin{equation*}
        2=2m-\left(r_p(\cG)+r_{\tau(p)}(\cG)+\sum_{q\neq p} r_q(\cG)\right)\leq 2m-(2m-4)-\sum_{q\neq p} r_q(\cG),
    \end{equation*}
    which implies that
    \begin{equation*}
       0\leq  \sum_{q\neq p} r_q(\cG)\leq 2.
    \end{equation*}
    Since branch points of $\cG$ come in pairs of points related by the anti-holomorphic involution $\tau$ by Proposition \ref{BranchPointsComeInPairs}, it follows that either $p$ and $\tau(p)$ are the only branch points of $\cG$ or there exists only one additional pair of branch points $\tilde{p}$ and $\tau(\tilde{p})$ of $\cG$. Therefore, the meromorphic function $\cG$ has at most two pairs of branch points related by the anti-holomorphic involution, which are mapped into at most two antipodal pairs of points. It then follows that the branch values of $\cG$ lie on a single equator.
\end{proof}
As a consequence of Theorem \ref{thm:indexofnonorientable} and Proposition \ref{criteriaequator} the following application is immediate
\begin{cor}\label{cor:nulmeeks}
    The extended Gauss map of the Meeks minimal Möbius band and the Oliveira family of minimal Möbius bands has odd nullity three. 
\end{cor}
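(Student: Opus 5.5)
The corollary follows by applying Theorem \ref{thm:indexofnonorientable} to the extended Gauss maps of the oriented double covers. That theorem gives odd nullity exactly $3$ for any holomorphic $\phi$ with $\phi\circ\tau=-\phi$ whose branch values all lie on one equator. So for the Meeks band ($m=3$) and each Oliveira band ($m\ge 5$ odd), it suffices to check the hypotheses, and we do this through Proposition \ref{criteriaequator}, as the sentence before the statement indicates.

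\textbf{Step 1: the setting.} The Osserman compactification of the double cover $M=\bC\setminus\{0\}$ is $\overline{\bC}\cong\bS^2$. The change-of-sheets involution extends to an anti-holomorphic involution $\tau$ of $\overline{\bC}$. For these surfaces it is $\tau(z)=-1/\bar z$, which has no fixed points, as required for a nonorientable quotient. The extended Gauss map of a nonorientable surface is odd under the deck involution, so $\cG_m\circ\tau=-1/\overline{\cG_m}$, which is the condition $\phi\circ\tau=-\phi$ after stereographic projection.

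We can also verify this directly from the formula. With $w=-1/\bar z$, write $\cG_m(w)=w^{m-1}\frac{w+1}{w-1}$. Clearing denominators gives
\[
\cG_m(-1/\bar z)=\bar z^{-(m-1)}\frac{\bar z-1}{\bar z+1},
\]
using that $m-1$ is even. This equals $-1/\overline{\cG_m(z)}$ up to a sign that must be tracked carefully. The sign of the factor $\frac{w+1}{w-1}$ contributes a $-1$, which gives the required antipodality.

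\textbf{Step 2: degree and branch order.} Here $\cG_m$ has degree $m$: it is a rational function whose numerator has degree $m$ and whose denominator has degree $1$ with no cancellation. The computation already done in the proof of Theorem \ref{thm-index-oliveira} (and in the Meeks case) shows that $z=0$ is a branch point of order $m-2$, from the factor $z^{m-2}$ in $\cG_m'$. This is exactly the hypothesis $r_p(\cG)\ge m-2$ of Proposition \ref{criteriaequator}.

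\textbf{Step 3: conclusion.} Proposition \ref{criteriaequator} now shows that all branch values lie on a single equator. Theorem \ref{thm:indexofnonorientable} then gives $\mathrm{nul}_{\mathcal O}(\cG_m)=3$.

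The only point needing care is Step 1. One must confirm that the involution on $\overline{\bC}$ is the one induced by the nonorientable structure, and that the odd-symmetry relation holds with the correct sign. I would settle this by the explicit computation sketched above rather than by citing the general fact.
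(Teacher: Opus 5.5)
Your proposal is correct and follows the paper's own route. The branch point of order $m-2$ at $z=0$ verifies the hypothesis of Proposition~\ref{criteriaequator}, which puts all branch values on a single equator, and Theorem~\ref{thm:indexofnonorientable} then gives odd nullity three. One small slip: your displayed value of $\cG_m(-1/\bar z)$ omits the factor $-1$ coming from $\frac{w+1}{w-1}=-\frac{\bar z-1}{\bar z+1}$. The correct identity is $\cG_m(-1/\bar z)=-\bar z^{-(m-1)}\frac{\bar z-1}{\bar z+1}=-1/\overline{\cG_m(z)}$; you restore the sign in words, but it should appear in the formula itself.
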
 
\begin{remark}
\normalfont
     Let us give an alternative proof of Corollary \ref{cor:nulmeeks} in the case of Meeks Möbius band, which illustrates a combination of ideas that could be adapted to other contexts. Let $\phi:\mathbb{S}^2\to \mathbb{S}^2$ be the extended Gauss map associated with the double cover of Meeks minimal M\"obius band. Assume by contradiction that $\dim Nul_{\text{odd}}(\phi)>3$. Then there exists $u\in N(\phi)\setminus L(\phi)$ such that $u\circ \tau =-u$. Since the holomorphic map $\phi$ has degree $3$ and four branch points with branch order one, by Montiel-Ros theorem there exists a branched complete minimal immersion $X_u:\mathbb{S}^2\setminus\{p_1,p_2,p_3,p_4\}\to \mathbb{R}^3$ of finite total curvature $12\pi$ with four ends such that $u=\langle X_u,\phi\rangle$.

Moreover since the branch orders of $\phi$ are one, by Remark \ref{JorgeMeeksMultiplicitiesvsBranchOrders} the Jorge-Meeks multiplicities $d_j$ of each end of $X_u$ are also one, implying that all the ends of $X_u$ are embedded.

By the generalized Jorge-Meeks formula for a branched complete minimal immersion $Y:\Sigma\setminus\{r_1,\ldots r_k\}\to \mathbb{R}^3$ and extended Gauss map $\cG:\Sigma\to \overline{\mathbb{C}}$ \cite{JorgeMeeksBranched} we have that
\begin{equation*}
     2\deg(\cG)=-\mathcal{X}(\Sigma)+\sum_{j=1}^k (d_j+1)-\sum_{j=1}^m K_j,
\end{equation*}
where $K_j$ are the order of the branch points of the immersion $Y$. For the specific case of the branched complete minimal immersion $X_u:\mathbb{S}^2\setminus\{p_1,p_2,p_3,p_4\}\to \mathbb{R}^3$ whose Gauss map has degree $3$ and four embedded ends $d_j=1$, we have that 
\begin{equation*}
    \sum_{j=1}^m K_j=0,
\end{equation*}
which implies that $X_u$ is free of branched points and thus it is a complete minimal immersion.

By the adaptation of the Montiel-Ros theorem to the nonorientable context, Theorem \ref{nonorientablemontielros}, we know that since $u\circ \tau=-u$, then the minimal immersion $X_u$ is the double cover of a nonorientable complete minimal immersion $X'_u:\mathbb{RP}^2\setminus\{q_1,q_2\}\to \mathbb{R}^3$ of finite total curvature $6\pi$, which is a contradiction with Meeks uniqueness theorem \cite{Meeks}. In conclusion \begin{equation*}
        \dim Nul_{\text{odd}}(\phi)=3,
    \end{equation*}
holds for the Meeks minimal M\"obius band.
\end{remark}

\section{The López minimal Klein bottle}\label{section::The López minimal Klein bottle}

This Section is divided into four parts. In Section \ref{sec:deformationfamily} we describe a family of meromorphic functions that deform the Gauss map of the orientable double cover of the López minimal Klein bottle to a meromorphic map whose branch values lie on an equator of the complex sphere. In Section \ref{sec:localizationbranchs} we localize the branch points of each map of the constructed family of meromorphic functions. In Section \ref{sec:nullityofdeformation} we study the nullity of the Schr\"odinger operator associated to each of these meromorphic functions. Finally, Section \ref{sec:proofB} contains the proof of Theorem \ref{thm-index-lopez}.

We start describing a parametrization of the López minimal Klein bottle, introduced in \cite{LopezConstruction}, and its basic properties. For this, consider:
\begin{equation}
    q(z):=z\left(z-\frac{1}{r}\right)(z+r)
\end{equation}
\noindent and let
\begin{equation}
    \overline{M}_r :=\left\lbrace(z,w)\in\overline{\mathbb{C}}^2: w^2=q(z)\right\rbrace.
\end{equation}
 The compact Riemann surface $\overline{M}$ has genus one as can be verified using the Riemann-Hurwitz Theorem \ref{RiemannHurwitz}. We denote $M_r := \overline{M}_r \backslash \{(0,0), (\infty,\infty)\}$.
There exists a unique real number $r >2$ such that the Weierstrass data on $M_r$
\begin{align*}
    \mathcal{G}(z,w) &:=\sqrt{r}\frac{(z+1)w}{(z-1)(z+r)}, \text{ and }\quad 
    \eta(z,w):=i\frac{(z-1)^2(z+r)}{z^2w}dz
\end{align*}
is free of real periods and compatible in the sense of \cite[Proposition 1]{Meeks} with the anti-holomorphic involution without fixed points of $M_r$ given by
\begin{equation}\label{tauinvolution}
    \tau(z,w) :=\left(-\frac{1}{\overline{z}},\frac{\overline{w}}{\overline{z}^2}\right).
\end{equation} 
The minimal immersion of the nonorientable quotient $M'_r=M_r/\langle\tau\rangle$ into $\mathbb{R}^3$ constructed with this data defined a punctured Klein bottle with total curvature $8 \pi$. This is the López minimal Klein bottle constructed in \cite{LopezConstruction}. It is the unique complete nonorientable minimal surface with total curvature $8\pi$ immersed in Euclidean three-space, see \cite{LopezUniqueness}. Note that $M_r$ is the oriented double cover of this surface. From now on, we drop the subscript $r$ from $M$ and $\overline{M}$ and assume that $r$ stands for the conformal parameter of the López minimal Klein bottle. Useful estimates for the parameter $r$, that will be used later, are presented in Appendix \ref{section::Explicit estimates}, Lemma \ref{rLopezNumerical}. 

We highlight two other anti-holomorphic involutions of $\overline{M}$:
\begin{equation}\label{eq:simetriasA}
    A_1(z,w) := (\overline{z},-\overline{w}), \quad \text{and } \quad A_2(z,w) := (\overline{z}, \overline{w}).
\end{equation}

It was shown by López \cite{LopezConstruction} that $A_1$ and $A_2$ are intrinsic isometries of the minimal oriented double cover of the López minimal Klein bottle with the pullback metric of its immersion into $\mathbb{R}^3$ which have extrinsic counterparts given by rotation in Euclidean three-space with respect to a pair of perpendicular lines that are contained in the surface. 

Let us conclude the presentation of the López minimal Klein bottle with a description of an atlas for the Riemann surface structure of its oriented double cover $\overline{M}$ and the computation of the divisors of some meromorphic functions and one-forms defined on $\overline{M}$. 

\begin{lema}[{\textit{Cf}. \cite[Example 1.7]{RiemannSurfacesGirondo}}] \label{holomorphic-charts}
    Let $(a_1,a_2,a_3) = (-r,0,\frac{1}{r})$. The following holomorphic charts define an atlas for $\overline{M}$.

    \begin{itemize}
        \item Chart at $(x_0,y_0)$ with $x_0\neq a_i$
        \begin{equation}\label{chartgenericpoints}
            \phi^{-1}(\xi)=\left(\xi+x_0, \sqrt[]{\Pi_i(\xi+x_0-a_i)}\right)\quad  \quad 0<|\xi|<\epsilon,
        \end{equation}
        \item Chart at $(a_i,0)$
        \begin{equation}
            \phi_i^{-1}(\xi)=\left(\xi^2+a_i,\xi\sqrt[]{\Pi_{j\neq i}(\xi^2+a_i-a_j)}\right)\quad  \quad 0<|\xi|<\epsilon,
        \end{equation}
        \item Chart at $(\infty,\infty)$
        \begin{equation}
            \psi^{-1}(\xi)=\left(\frac{1}{\xi^2},\frac{1}{\xi^3}\sqrt[]{\Pi_i(1-a_i\xi^2)}\right)\quad  \quad 0<|\xi|<\epsilon.
        \end{equation}
    \end{itemize}
\end{lema}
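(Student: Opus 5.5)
The plan is to check directly that the three families of maps are homeomorphisms onto open sets whose transition maps are holomorphic, and that their domains cover $\overline{M}$. This is the standard atlas for a hyperelliptic (here elliptic) curve $w^2=q(z)$ with $q$ a cubic with simple roots $a_1,a_2,a_3$.

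\emph{Generic points.} At $(x_0,y_0)$ with $x_0\neq a_i$ and $x_0\neq\infty$, we have $q(x_0)\neq 0$, so $q(\xi+x_0)$ has a holomorphic square root on a small disc. We choose the branch with value $y_0$ at $\xi=0$. The map $\xi\mapsto(\xi+x_0,\sqrt{q(\xi+x_0)})$ is then injective onto a neighbourhood of $(x_0,y_0)$ in $\overline{M}$, and its inverse is the holomorphic coordinate $z-x_0$. Equivalently, by the implicit function theorem, $\partial_w(w^2-q)=2w\neq0$ there.

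\emph{Branch points.} At $(a_i,0)$ we have $\partial_z(w^2-q)=-q'(a_i)\neq0$ because the roots are simple, so $w$ is a local coordinate there. Substituting $z=\xi^2+a_i$ gives
\[
w^2=\xi^2\prod_{j\neq i}(\xi^2+a_i-a_j).
\]
The product is nonzero at $\xi=0$, so it has a holomorphic square root near $0$. Hence $w=\xi h(\xi)$ with $h(0)\neq0$. It follows that $\xi\mapsto w$ is a local biholomorphism, so $\xi$ is a valid coordinate and $\phi_i^{-1}$ is injective near $0$.

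\emph{Point at infinity.} Set $z=1/\xi^2$. Then
\[
w^2=\xi^{-6}\prod_i(1-a_i\xi^2),
\]
so $w=\xi^{-3}\sqrt{\prod_i(1-a_i\xi^2)}$, which describes a single point $(\infty,\infty)$, since $\deg q$ is odd. Injectivity follows from $\xi\mapsto w/z \approx \xi^{-1}$, equivalently $z/w\approx \xi$, being locally biholomorphic. This coordinate is compatible with the standard compactification of the affine curve in weighted projective space.

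\emph{Compatibility and covering.} Every pair of overlapping charts has a holomorphic transition map. For generic-to-branch charts the transition is $\xi\mapsto \xi^2+a_i-x_0$. For the chart at infinity the transition is $\xi\mapsto 1/\xi^2-x_0$ away from $\xi=0$. The generic-to-generic transitions are translations.

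The charts cover $\overline{M}$, since every point is either affine with $z\neq a_i$, one of the three branch points, or $(\infty,\infty)$. The only delicate point is checking that the square-root branches can be chosen consistently and that each parametrization is injective. This is handled by the implicit function theorem arguments above; there is no real obstacle.
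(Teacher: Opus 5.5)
Your proposal is correct and is the standard verification that the paper does not write out but delegates to the cited example of Girondo--Gonz\'alez-Diez: local square roots and the implicit function theorem on the affine part, and the substitutions $z=a_i+\xi^2$ and $z=\xi^{-2}$ at the branch points and at infinity. Two points are worth stating explicitly. First, the special charts must be defined on full discs $|\xi|<\epsilon$; the condition $0<|\xi|$ in the statement is a slip, since otherwise $(a_i,0)$ and $(\infty,\infty)$ would not be covered. Second, the closure of the affine curve in $\overline{\mathbb{C}}^2$ has a cusp at $(\infty,\infty)$ (in the coordinates $u=1/z$, $v=1/w$ it reads $u^3=v^2\prod_i(1-a_iu)$), so the complex structure there is \emph{defined} by $\psi$; this is why your injectivity argument through $z/w=\xi/\sqrt{\prod_i(1-a_i\xi^2)}$, rather than an implicit-function argument, is the right tool at that point.
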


 We now introduce the notation
    \begin{equation} \label{notation-P}
    \begin{cases}
         P_0: =(0,0), \text{ } P_{\frac{1}{r}}:=(\frac{1}{r}, 0),  \text{ and } P_{-r}:=(-r,0),\\
        P_{\infty} := (\infty, \infty),\\
        P_{- t}^+ := \left(-t, \sqrt{q(-t)}\right),  \text{ and } P_{- t}^- := \left(-t,- \sqrt{q(-t)}\right), \text{ for every } t \in (0,1],\\
        P_{\frac{1}{t}}^+ := \left(\frac{1}{t},  \sqrt{q(\frac{1}{t})}\right),  \text{ and } \left(\frac{1}{t}, - \sqrt{q(\frac{1}{t})}\right), \text{ for every } t \in (0,1].
    \end{cases}
    \end{equation} 
\begin{lema} \label{some-divisors}
    The following meromorphic functions and one-forms have explicitly computable divisors:
    \begin{equation}
        \begin{cases}
            \operatorname{div}(w) = P_0 + P_{-r}+P_{\frac{1}{r}} - 3 P_\infty, \\
            \operatorname{div}(z) = 2P_0-2P_\infty, \\
            \operatorname{div}(z+r) = 2P_{-r} - 2 P_\infty, \\
            \operatorname{div}(z+t) = P_{-t}^+ + P_{-t}^- -2P_\infty \text{ for every } t \in (0,1], \\
            \operatorname{div}(z-\frac{1}{t}) = P_{\frac{1}{t}}^+ + P_{\frac{1}{t}}^- -2 P_\infty \text{ for every } t \in (0,1], \\
            \operatorname{div}(z-\frac{1}{r}) = 2 P_{\frac{1}{r}} - 2 P_\infty, \\
            \operatorname{div}(dz) = P_0 + P_{-r}+P_{\frac{1}{r}} - 3 P_\infty.
        \end{cases}
    \end{equation}
\end{lema}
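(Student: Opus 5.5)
The plan is to compute each divisor directly in the local charts of Lemma \ref{holomorphic-charts}. We only need to inspect points where the function or form could vanish or blow up: the ramification points $P_0,P_{-r},P_{\frac1r}$, the point at infinity $P_\infty$, and the preimages of $-t$ and $\frac1t$. At every other point $x_0\neq a_i$ the chart \eqref{chartgenericpoints} uses $\xi=z-x_0$. There $z$ is a local coordinate and $w=\sqrt{q(z)}$ is holomorphic and nonzero, so none of the listed objects has a zero or pole there, except the linear factors at their roots.

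\emph{Ramification points.} At $P_{a_i}=(a_i,0)$ the chart gives $z=\xi^2+a_i$ and $w=\xi\sqrt{\prod_{j\neq i}(\xi^2+a_i-a_j)}$. Since the $a_j$ are distinct, the square root is a nonvanishing holomorphic function near $\xi=0$. It follows that
\begin{itemize}
\item $w$ vanishes to order $1$ at each $P_{a_i}$;
\item $z-a_i=\xi^2$ vanishes to order $2$, which gives the local parts of $\operatorname{div}(z)$, $\operatorname{div}(z+r)$ and $\operatorname{div}(z-\frac1r)$;
\item $dz=2\xi\,d\xi$ vanishes to order $1$;
\item the functions $z-a_j$ with $j\neq i$, as well as $z+t$ and $z-\frac1t$ (apart from the coincidences below), are nonzero at $P_{a_i}$.
\end{itemize}

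\emph{The point at infinity.} At $P_\infty$ the chart gives $z=\xi^{-2}$ and $w=\xi^{-3}\sqrt{\prod_i(1-a_i\xi^2)}$, with the square root equal to $1$ at $\xi=0$. Hence $w$ has a pole of order $3$, and $z$, $z+r$, $z+t$, $z-\frac1t$, $z-\frac1r$ all have poles of order $2$. For the form, $dz=-2\xi^{-3}d\xi$ has a pole of order $3$.

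\emph{Preimages of $-t$ and $\frac1t$.} For $t\in(0,1]$ with $-t\neq a_i$, the fibre of $z$ over $-t$ consists of the two distinct points $P_{-t}^\pm$. Since $q(-t)\neq0$, the chart \eqref{chartgenericpoints} shows that $z+t=\xi$ has simple zeros at both points. The same argument at $\frac1t$ gives the simple zeros of $z-\frac1t$ at $P_{\frac1t}^\pm$.

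Finally, as a consistency check, I will verify that each divisor has degree zero: $3-3$, $2-2$, and so on. This holds because $z$ is a degree-$2$ function and, on a genus-one surface, $\deg\operatorname{div}(dz)=2g-2=0$.

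The only delicate point is the case where the factor $z+t$ or $z-\frac1t$ meets a ramification value. Since $r>2$, $-t\neq -r$ and $-t\neq 0$, so this can happen only if $\frac1t=\frac1r$, that is $t=r$, which is excluded because $t\le1<r$. The $z+t$ and $z-\frac1t$ lines of the statement therefore hold for all $t\in(0,1]$. Apart from this, everything is routine bookkeeping of orders in the three chart types.
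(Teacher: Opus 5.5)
Your proposal is correct and follows the same route as the paper: both count orders directly in the three chart types of Lemma~\ref{holomorphic-charts}, and both use $r>2$, which makes $\{-r,0,\frac1r\}$ disjoint from $[-1,0)$ and $[1,\infty)$, so that $-t$ and $\frac1t$ are never branch values. In your ``delicate point'' paragraph you should also state that $-t\neq\frac1r$ and $\frac1t\notin\{-r,0\}$; both follow immediately from signs, so your conclusion stands.
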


\begin{proof}
        The result follows directly from the expression of the corresponding meromorphic functions and one-forms in the holomorphic charts of $\overline{M}$ described in Lemma \ref{holomorphic-charts}. We use that $ \{-r,0,\frac{1}{r}\} \cap [-1,0) = \varnothing$ and $\{-r,0,\frac{1}{r}\} \cap [1,\infty) = \varnothing$, since $r>2$ by Lopez work \cite{LopezConstruction} or using the more precise estimate of Lemma \ref{rLopezNumerical}.

\end{proof}

We highlight that the meromorphic one-form $\frac{dz}{w}$ defined on $\overline{M}$ is free of zeros and poles, because the divisors of $w$ and $dz$ are equal, in fact:
\begin{gather}\label{Lopezcanonicaldivisortrivial}
    \operatorname{div}\left(\frac{dz}{w}\right)=0.
\end{gather}

\subsection{The deformation family and its properties}\label{sec:deformationfamily}

We now define a family of meromorphic functions $\{\mathcal{G}_t\}_{t \in [0,1]}$ from $\overline{M}$ to the complex sphere $\overline{\bC}$ given by
\begin{equation}\label{eq:deformapadegauss}
    \mathcal{G}_t(z,w):=\sqrt{r} \frac{(z+t) w}{(tz-1)(z+r)}, \text{ for $t \in (0,1]$, \quad and } \quad \mathcal{G}_0(z,w) := -\sqrt{r} \frac{z w}{(z+r)}.
\end{equation}
The map $\mathcal{G}_1$ is equal to the Gauss map $\mathcal{G}$ of the López minimal Klein bottle. Let us study the zeros and poles of the meromorphic functions $\mathcal{G}_t$. Recall the notation introduced in \eqref{notation-P}. 

\begin{lema}\label{zeroespolos}
     
 The divisors of the meromorphic functions in the family $\{\mathcal{G}_t\}_{t \in [0,1]}$ are
 \begin{equation*}
     \begin{cases}
         \operatorname{div}(\mathcal{G}_t) = P_0 + P_{-t}^+ + P_{-t}^- + P_{\frac{1}{r}} - P_{\infty} - P_{\frac{1}{t}}^+ - P_{\frac{1}{t}}^- - P_{-r} \text{ for every }t \in (0,1], \\
         \operatorname{div}(\mathcal{G}_0) = 3 P_0 + P_{\frac{1}{r}} - 3 P_{\infty} - P_{-r}.
     \end{cases}
 \end{equation*}
\end{lema}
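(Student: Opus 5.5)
The plan is to use that the divisor of a product or quotient of meromorphic functions is the corresponding sum or difference of divisors. Every factor of $\mathcal{G}_t$ has a divisor already listed in Lemma \ref{some-divisors}, and the constant $\sqrt{r}$ (or $-\sqrt{r}$) contributes nothing.

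For $t\in(0,1]$ we have $tz-1=t\,(z-\frac{1}{t})$. Hence $\operatorname{div}(tz-1)=\operatorname{div}(z-\frac1t)=P_{\frac1t}^++P_{\frac1t}^--2P_\infty$, and
\begin{equation*}
\operatorname{div}(\mathcal{G}_t)=\operatorname{div}(z+t)+\operatorname{div}(w)-\operatorname{div}\left(z-\tfrac1t\right)-\operatorname{div}(z+r).
\end{equation*}
Substituting the formulas from Lemma \ref{some-divisors} gives
\begin{equation*}
\left(P_{-t}^++P_{-t}^--2P_\infty\right)+\left(P_0+P_{-r}+P_{\frac1r}-3P_\infty\right)-\left(P_{\frac1t}^++P_{\frac1t}^--2P_\infty\right)-\left(2P_{-r}-2P_\infty\right).
\end{equation*}
The coefficient of $P_\infty$ is $-2-3+2+2=-1$ and that of $P_{-r}$ is $1-2=-1$. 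The remaining points keep their coefficients, which yields the claimed divisor.

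For $t=0$, $\mathcal{G}_0=-\sqrt r\, zw/(z+r)$, so
\begin{equation*}
\operatorname{div}(\mathcal{G}_0)=\left(2P_0-2P_\infty\right)+\left(P_0+P_{-r}+P_{\frac1r}-3P_\infty\right)-\left(2P_{-r}-2P_\infty\right)=3P_0+P_{\frac1r}-3P_\infty-P_{-r}.
\end{equation*}

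The only point needing care is that the points named in Lemma \ref{some-divisors} are genuine and distinct. Lemma \ref{some-divisors} requires $-t\neq -r,0,\frac1r$ and $\frac1t\neq -r,0,\frac1r$ for $t\in(0,1]$. This holds because $r>2$ (Lemma \ref{rLopezNumerical}), so $\frac1r<\frac12$, and therefore $-t\in[-1,0)$ and $\frac1t\ge 1$ avoid the branch abscissae. Consequently $P_{-t}^\pm$ and $P_{\frac1t}^\pm$ are pairs of distinct non-branch points, and no cancellation between them and $P_0,P_{\pm r^{\pm1}}$ occurs. As a sanity check, both divisors have degree zero and $\mathcal{G}_t$ has degree $4$, consistent with a degree-$4$ map on a torus.
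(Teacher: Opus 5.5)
Your proposal is correct and follows the paper's approach: the paper's proof just states that the result follows directly from Lemma \ref{some-divisors}, and you carry out that additive bookkeeping of divisors explicitly. You also include the same check the paper uses for Lemma \ref{some-divisors} itself, namely that $-t\in[-1,0)$ and $\frac{1}{t}\ge 1$ avoid $\{-r,0,\frac{1}{r}\}$ because $r>2$.
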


\begin{proof}
    The result follows directly from Lemma \ref{some-divisors}.
\end{proof}
We now highlight important properties of the family $\{\mathcal{G}_t\}_{t \in [0,1]}$ of meromorphic functions. First, observe that each map of the family is odd with respect to the anti-holomorphic involution $\tau$ when considered having image on the sphere (recall that in the identification of the extended complex plane with the sphere $\overline{\bC}\simeq \bS^2$ by the stereographic projection, the antipodal map sends $x \in \overline{\bC}$ to $-\frac{1}{\bar x} \in \overline{\bC}$).

\begin{lema} \label{div-Gt}
    The meromorphic function $\mathcal{G}_t$ satisfies $\mathcal{G}_t \circ \tau =- \frac{1}{\overline{\mathcal{G}_t}}$, for every $t \in [0,1]$.
\end{lema}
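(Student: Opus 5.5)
The plan is a direct computation in the affine coordinates $(z,w)$ of $\overline{M}$, using only the defining relation $w^2=q(z)$ and the explicit formula \eqref{tauinvolution} for $\tau$. Both $\mathcal{G}_t\circ\tau$ and $-1/\overline{\mathcal{G}_t}$ are meromorphic functions on the compact surface $\overline{M}$. The first is meromorphic because it is the conjugate of a meromorphic function composed with an anti-holomorphic map; the second is the conjugate of a meromorphic function. So it suffices to check the identity on the dense open set where $z\notin\{0,\infty,-r,1/r,-t,1/t\}$ and $w\neq 0$. The identity then extends to all of $\overline{M}$ by the identity principle, equivalently by continuity as maps into $\overline{\bC}$. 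This disposes of the special points $P_0,P_\infty,P_{-r},\dots$ without looking at the charts of Lemma \ref{holomorphic-charts}.

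For $t\in(0,1]$, write $\tau(z,w)=(z',w')$ with $z'=-1/\overline{z}$ and $w'=\overline{w}/\overline{z}^2$. Substituting into \eqref{eq:deformapadegauss} gives
\begin{align*}
(z'+t)w' = \frac{(t\overline{z}-1)\,\overline{w}}{\overline{z}^3},\qquad (tz'-1)(z'+r) = -\frac{(\overline{z}+t)(r\overline{z}-1)}{\overline{z}^2},
\end{align*}
so that
\begin{align*}
\mathcal{G}_t(\tau(z,w)) = -\sqrt{r}\,\frac{(t\overline{z}-1)\,\overline{w}}{\overline{z}\,(\overline{z}+t)(r\overline{z}-1)}.
\end{align*}
On the other hand,
\begin{align*}
-\frac{1}{\overline{\mathcal{G}_t(z,w)}} = -\frac{(t\overline{z}-1)(\overline{z}+r)}{\sqrt{r}\,(\overline{z}+t)\,\overline{w}}.
\end{align*}
Cross-multiplying, the two expressions agree if and only if $r\,\overline{w}^2 = \overline{z}(r\overline{z}-1)(\overline{z}+r) = r\,\overline{q(z)}$. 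This is exactly the conjugate of the curve equation $w^2=q(z)$, multiplied by $r$. Note that $q$ has real coefficients, since $r$ is real, so $\overline{q(z)}=q(\overline{z})$.

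The case $t=0$ is the same computation with the factor $(t\overline{z}-1)$ replaced by $-1$ throughout, and again reduces to $r\overline{w}^2=r\overline{q(z)}$. Alternatively, $\mathcal{G}_0$ is the pointwise limit of $\mathcal{G}_t$ as $t\to0$ on the dense set above, so the identity passes to the limit.

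There is no real obstacle here. The only points needing care are the sign bookkeeping in the denominator $(tz'-1)(z'+r)$, and the justification for ignoring the finitely many special points, which the identity principle handles.
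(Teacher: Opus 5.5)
Your proposal is correct and follows essentially the same route as the paper. You substitute $\tau(z,w)=(-1/\overline{z},\overline{w}/\overline{z}^2)$ into \eqref{eq:deformapadegauss} and reduce the identity to the conjugated curve equation $\overline{w}^2=\overline{q(z)}$, with $t=0$ being literally the same computation. The only difference is at the special points: the paper checks $P_0$ and $P_\infty$ directly via $\mathcal{G}_t(P_0)=0$ and $\mathcal{G}_t(P_\infty)=\infty$ from Lemma \ref{zeroespolos}, while you argue by density and continuity into $\overline{\mathbb{C}}$. That is fine; strictly speaking both sides are anti-meromorphic rather than meromorphic, but your continuity argument does not depend on that.
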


\begin{proof}
    If $t \in (0,1]$ and $(z,w) \not\in \{(0,0),(\infty,\infty)\}$ then
    \begin{align*}
        \mathcal{G}_t \circ \tau (z,w) &= \mathcal{G}_t  \left(-\frac{1}{\bar z},\frac{\bar w}{\bar z^2}\right) = \sqrt{r} \frac{((-\frac{1}{\bar z})+t) \frac{\bar w}{\bar z^2}}{(t(-\frac{1}{\bar z})-1)((-\frac{1}{\bar z})+r)} \\&=\sqrt{r} \frac{(-1+t \bar z) \bar w^2}{\bar z(-t-\bar z)(-1+r \bar z)\bar w} = \sqrt{r} \frac{(-1+t \bar z) \bar z( \bar z - \frac{1}{r})(\bar z +r)}{\bar z(-t-\bar z)(-1+r \bar z)\bar w} \\ &= -\frac{1}{\sqrt{r}} \frac{(-1+t \bar z) (\bar z +r)}{(t+\bar z)\bar w} = - \frac{1}{\overline{\mathcal{G}_t(z,w)}}. 
    \end{align*}

    Moreover, if $(z,w) \not \in \{(0,0),(\infty,\infty)\}$ then
    \begin{align*}
        \mathcal{G}_0 \circ \tau (z,w) &= \mathcal{G}_0  \left(-\frac{1}{\bar z},\frac{\bar w}{\bar z^2}\right) = \sqrt{r} \frac{(-\frac{1}{\bar z}) \frac{\bar w}{\bar z^2}}{((-\frac{1}{\bar z})+r)} \\&=\sqrt{r} \frac{-\bar w^2}{\bar z^2(-1+r \bar z)\bar w} = -\sqrt{r} \frac{\bar z( \bar z - \frac{1}{r})(\bar z +r)}{\bar z^2(-1+r \bar z)\bar w} \\ &= -\frac{1}{\sqrt{r}} \frac{(\bar z +r)}{\bar z\bar w} = - \frac{1}{\overline{\mathcal{G}_0(z,w)}}. 
    \end{align*}

    Finally, it follows from Lemma \ref{zeroespolos} that $\mathcal{G}_t(0,0) = 0$ and $\mathcal{G}_t(\infty,\infty) = (\infty,\infty)$, for every $t \in [0,1]$. This completes the proof. 
\end{proof}

We now study the continuity of the family of meromorphic functions $\{\mathcal{G}_t\}_{t \in [0,1]}$.

\begin{lema}\label{deformationpath}
    The family $\{\mathcal{G}_t\}_{t \in [0,1]}$ is continuous in the $C^1$ topology.
\end{lema}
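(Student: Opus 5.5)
The plan is to prove something stronger: the map $\Phi:[0,1]\times\overline{M}\to\overline{\bC}\simeq\bS^2$, $\Phi(t,p)=\mathcal{G}_t(p)$, is jointly smooth when read in the holomorphic charts of Lemma \ref{holomorphic-charts} on $\overline{M}$ and in the two standard charts $\zeta$, $1/\zeta$ of the Riemann sphere. Granting this, the partial derivatives in $p$ of $\Phi$ are continuous on the compact set $[0,1]\times\overline{M}$, hence uniformly continuous. Therefore $\mathcal{G}_s\to\mathcal{G}_t$ in $C^1(\overline{M},\bS^2)$ as $s\to t$, which is the claim.

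First observe that a single formula covers all $t$. The expression $\sqrt{r}\,(z+t)w/((tz-1)(z+r))$ at $t=0$ equals $-\sqrt{r}\,zw/(z+r)=\mathcal{G}_0$. So $\mathcal{G}_t$ is a rational expression in $(t,z,w)$ on all of $[0,1]$, and joint smoothness is a local question. By compactness of $\overline{M}$, I check it near each point $p_0\in\overline{M}$, uniformly for $t$ near a given $t_0\in[0,1]$. At each point, in the local coordinate $\xi$, I write either $\mathcal{G}_t$ or $1/\mathcal{G}_t$ as a quotient whose denominator does not vanish near $(t_0,\xi=0)$. Such a quotient is automatically jointly smooth, and composing with the stereographic projection keeps it smooth. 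For generic points, and for $P_{-r}$, $P_{\frac1r}$, this is immediate from Lemma \ref{some-divisors}: the factors $z+t$ and $tz-1$ either do not vanish there, or they vanish only at points $P^\pm_{-t}$, $P^\pm_{\frac1t}$ that move continuously in $t$ and stay away from $P_{-r}$, $P_{\frac1r}$ (since $r>2$). Near such a moving simple zero or pole I use $\mathcal{G}_t$ or $1/\mathcal{G}_t$ respectively, and the relevant denominator stays nonzero.

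The main obstacle is the degeneration at $t=0$. There the zeros $P^\pm_{-t}$ collide with $P_0$, and the poles $P^\pm_{\frac1t}$ run off to $P_\infty$, so the divisor jumps as in Lemma \ref{zeroespolos}. I handle this by explicit computation in the branch charts. At $P_0$, take $z=\xi^2$, $w=\xi k(\xi)$ with $k(0)\neq0$. This gives
\[
\mathcal{G}_t=\sqrt{r}\,\frac{(\xi^2+t)\,\xi\, k(\xi)}{(t\xi^2-1)(\xi^2+r)},
\]
whose denominator is near $-r\neq0$ for small $(t,\xi)$, so this expression is jointly smooth. At $P_\infty$, take $z=\xi^{-2}$, $w=\xi^{-3}h(\xi)$ with $h(0)=1$. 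Then
\[
\frac{1}{\mathcal{G}_t}=\frac{\xi\,(t-\xi^2)(1+r\xi^2)}{\sqrt{r}\,(1+t\xi^2)\,h(\xi)},
\]
which is again jointly smooth near $(t,\xi)=(0,0)$, and at $t=0$ it recovers the triple pole of $\mathcal{G}_0$. Finally, I will make sure the neighbourhoods are chosen so that $P^\pm_{-t}$ and $P^\pm_{\frac1t}$, for small $t$, lie inside these two chart domains. Then the two formulas above cover them, and the local pieces together give joint smoothness on all of $[0,1]\times\overline{M}$.
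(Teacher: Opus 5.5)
Your proposal is correct and follows essentially the paper's route. Both arguments use local chart expressions at $P_0$, $P_\infty$, $P_{-r}$, $P_{\frac1r}$ and at generic points, in which $\mathcal{G}_t$ or $1/\mathcal{G}_t$ is a quotient with nonvanishing denominator. Your $P_0$ and $P_\infty$ formulas are exactly the paper's charts $U_3$ (reading $\mathcal{G}_t$) and $U_2$ (reading $1/\mathcal{G}_t$). Packaging this as joint smoothness on $[0,1]\times\overline{M}$, followed by uniform continuity of the $p$-derivatives on a compact set, is a tidy way to also cover the limits $t\to t_0\in(0,1]$, which the paper only declares analogous.
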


\begin{proof}
    
We first prove that $\mathcal{G}_t \to \mathcal{G}_0$ smoothly when $t \to 0^+$. The case $t \to t_0$ for $t_0 \in (0,1]$ is analogous.

Consider the finite family of holomorphic charts of $\overline{M}$ given by

\begin{itemize}
    \item $(U_1,x_1) = \left(B(0,R_1),\ \xi \mapsto \Big( \xi^2-r,\xi\sqrt{(\xi^2 - r - \frac{1}{r})(\xi^2-r)}\,\Big)\right)$,
    \item $(U_2,x_2) = \left(B(0,R_2),\ \xi \mapsto \Big( \frac{1}{\xi^2},\frac{1}{\xi^3}\sqrt{(1-\frac{1}{r}\xi^2)(1+r\xi^2)}\,\Big)\right)$,
    \item $(U_3,x_3) = \left(B(0,R_3),\ \xi \mapsto \Big( \xi^2,\xi\sqrt{(\xi^2+r)(\xi^2-\frac{1}{r})}\Big)\right)$,
    \item $(U_4,x_4) = (B(0,R_4),\ \xi \mapsto \Big( \xi^2+\frac{1}{r},\xi\sqrt{(\xi^2+\frac{1}{r})(\xi^2+\frac{1}{r}+r)}\,\Big)$,
    \item $(U_j,x_j) = \left(B(0,R_j),\ \xi \mapsto \Big( \xi+\theta_j,\sqrt{(\xi+\theta_j)(\xi+\theta_j+r)(\xi+\theta_j-\frac{1}{r})}\,\Big)\right)$,\ $j=5,\dots,N$, 
\end{itemize}

\noindent where we assume that $N \ge 5$ is an integer, the real numbers $R_k$ satisfy $R_k^2 \le \min\{r-1,\frac{1}{2}\}$, for every $k=1,\dots,N$, and $R_j < \frac{R_1}{2}$ for every $j=5,\dots,N$. We assume that the points $\theta_j$ satisfy $\theta_j \not \in \{0,-r,\frac{1}{r},\infty\}$ and $|\theta_j + r| \ge \frac{R_1}{2}$, for every $j=5,\dots,N$. Finally, we assume that the collection of open sets $U_j$ cover $\overline{M}$. We also consider charts of $\overline{\mathbb{C}}$: $(V_1,y_1) = (\mathbb{C},\ \xi \mapsto \frac{1}{\xi})$, and $(V_2,y_2) = (\mathbb{C},\ \xi \mapsto \xi)$.

    We now assume that $0 \le t<1$ satisfies $t|\theta_j|< \frac{1}{2}$, for every $j=5,\dots,N$. Then
\begin{equation*}
\mathcal{G}_t(U_j) \subset y_2(V_2),\ \text{for}\ j \ge 5,\quad \mathcal{G}_t(U_1) \subset y_1(V_1),\quad \mathcal{G}_t(U_2) \subset y_1(V_1),\quad \mathcal{G}_t(U_3) \subset y_2(V_2), \text{ and  } \mathcal{G}_t(U_4) \subset y_2(V_2).
\end{equation*}
 
In what follows, we let $P$ denote a holomorphic function defined on a open ball that never vanishes. We compute $$y_1^{-1} \circ \mathcal{G}_t|_{U_1} \circ x_1 : \xi \in B(0,R_1) \mapsto  (\mathcal{G}_t(\xi^2-r,\xi P(\xi)))^{-1} = \frac{1}{\sqrt{r}}\frac{[t(\xi^2-r)-1]\xi}{(\xi^2-r+t)P(\xi)} \in \mathbb{C}.$$ 
    Since $R_1^2 < r-1$, using equation \eqref{eq:deformapadegauss}, it is clear that $y_1^{-1} \circ \mathcal{G}_t|_{U_1} \circ x_1$ converges smoothly on compact subsets of $B(0,R_1)$ to $y_1^{-1} \circ \mathcal{G}_0|_{U_1} \circ x_1$. Analogously, $y_2^{-1} \circ \mathcal{G}_t|_{U_3} \circ x_3$ converges smoothly on compact subsets of $B(0,R_3)$ to $y_2^{-1} \circ \mathcal{G}_0|_{U_3} \circ x_3$; $y_2^{-1} \circ \mathcal{G}_t|_{U_4} \circ x_4$ converges smoothly on compact subsets of $B(0,R_4)$ to $y_2^{-1} \circ \mathcal{G}_0|_{U_4} \circ x_4$; $y_1^{-1} \circ \mathcal{G}_t|_{U_2} \circ x_2$ converges smoothly on compact subsets of $B(0,R_2)$ to $y_1^{-1} \circ \mathcal{G}_0|_{U_2} \circ x_2$; $y_2^{-1} \circ \mathcal{G}_t|_{U_j} \circ x_j$ converges smoothly on compact subsets of $B(0,R_j)$ to $y_2^{-1} \circ \mathcal{G}_0|_{U_j} \circ x_j$, for every $j = 5,\dots,N$. This proves that $\mathcal{G}_t \to \mathcal{G}_0$ smoothly when $t \to 0^+$, as desired.

\end{proof}
As a remark, it follows from Lemma \ref{deformationpath} that all maps $\mathcal{G}_t$ have degree four. This follows since $C^1$ continuity preserves the degree, and $\mathcal{G}_1$ has degree four, since it is the Gauss map of the minimal orientable double cover of the Lopez Klein bottle, with total curvature $16 \pi$.

We highlight that the map $\mathcal{G}_0$ has a special configuration of branch values. 

\begin{lema}\label{branchvaluesG0}
    The branch values of the map $\mathcal{G}_0$ lie in an equator of the complex sphere $\overline{\mathbb{C}}$.
\end{lema}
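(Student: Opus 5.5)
The plan is to locate all branch points of $\mathcal{G}_0$ explicitly and compute their images. The key simplification is that $\mathcal{G}_0^2$ is a rational function of $z$ with real coefficients, so every branch value turns out to be either $0$, $\infty$, or a point whose square is a negative real number.

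\textbf{Step 1: total branching.} By Lemma \ref{deformationpath} and the remark after it, $\mathcal{G}_0$ has degree four on the genus one surface $\overline{M}$. The Riemann--Hurwitz Theorem \ref{RiemannHurwitz} then gives total branching order $2\cdot 4 + 2\cdot 1 - 2 = 8$. By Lemma \ref{zeroespolos}, $\mathcal{G}_0$ has a zero of order $3$ at $P_0$ and a pole of order $3$ at $P_\infty$. Each of these is a branch point of order $2$ with branch values $0$ and $\infty$, which accounts for $4$ of the $8$. The points $P_{1/r}$ and $P_{-r}$ are simple zeros and poles, hence not branch points.

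\textbf{Step 2: the remaining branch points.} Using $w^2=q(z)$, compute
\[
\mathcal{G}_0^2 = r\frac{z^2 q(z)}{(z+r)^2} = \frac{z^3(rz-1)}{z+r} =: h(z).
\]
Away from the zeros and poles of $\mathcal{G}_0$ and from the ramification points of $z$ (which are $P_0,P_{-r},P_{1/r},P_\infty$, all treated in Step 1), we have $d\mathcal{G}_0=0$ exactly when $h'(z)=0$, since $\mathcal{G}_0\neq 0$ there and $z$ is a local coordinate. Taking the logarithmic derivative and clearing denominators,
\[
h'(z)=0 \iff 3rz^2+(4r^2-2)z-3r=0 .
\]
This quadratic has positive discriminant, so it has two real roots $z_1<0<z_2$ with $z_1z_2=-1$; the product relation is consistent with the symmetry $\tau$. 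Each root gives two points $(z_i,\pm w)$, which produces four simple branch points. These exhaust the remaining branching count of $4$.

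\textbf{Step 3: sign of $h$ at the critical points.} At these four points $\mathcal{G}_0^2=h(z_i)$ is real, so we need its sign.
\begin{itemize}
\item The quadratic equals $-3r<0$ at $z=0$ and $(r^2+1)/r>0$ at $z=1/r$, so $z_2\in(0,1/r)$. There $z^3>0$, $rz-1<0$ and $z+r>0$, so $h(z_2)<0$.
\item Since $z_1=-1/z_2<-r$, there $z^3<0$, $rz-1<0$ and $z+r<0$, so again $h(z_1)<0$.
\end{itemize}
Hence the four remaining branch values lie on the imaginary axis. Together with $0$ and $\infty$, all branch values lie on the great circle $i\mathbb{R}\cup\{\infty\}$, which is an equator of $\overline{\mathbb{C}}$ under stereographic projection.

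\textbf{Main obstacle.} The delicate point is Step 3, namely checking that $h$ has the same sign at both critical values. If the signs differed, the branch values would lie on both the real and the imaginary axes, which is exactly the two-equator situation that $\mathcal{G}_1$ exhibits. Locating $z_2$ in $(0,1/r)$ by evaluating the quadratic at $0$ and $1/r$ resolves this, and it only uses $r>0$.
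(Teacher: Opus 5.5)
Your proof is correct and follows essentially the same route as the paper. Both arguments reduce to the quadratic $3rz^2+(4r^2-2)z-3r$ (the paper's $k_0(z)/z$ up to the factor $-1/r$), locate its roots in $(-\infty,-r)$ and $(0,1/r)$ by sign checks, and obtain branch order two at $P_0$ and $P_\infty$. You read those orders off the divisor of $\mathcal{G}_0$, whereas the paper uses a chart at $P_0$ and the $\tau$-symmetry for $P_\infty$. Your sign check of $\mathcal{G}_0^2=z^3(rz-1)/(z+r)$ is equivalent to the paper's observation that $q(\rho_i)<0$, so $w$, and hence $\mathcal{G}_0$, is purely imaginary there; because you characterize all critical points directly, the Riemann--Hurwitz count is only a consistency check in your version.
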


\begin{proof}
    A direct computation shows that:
    \begin{equation}
    d\mathcal{G}_0(z,w) =  \sqrt{r} \frac{k_0(z)}{ (z+r)} \frac{dz}{w},
\end{equation}
where $k_0(z) = z(3 + (\frac{2}{r} - 4r)z - 3z^2)$. If we evaluate the quadratic polynomial given by $k_0(z)/z$ at $-r$, a basic algebraic manipulation gives $r^2 + 1$. This means that there is a zero in $\rho_1 \in (-\infty, -r)$. Similarly, evaluating this quadratic polynomial at $1/r$ gives $(1-r^2)/r^2$, which is negative, therefore, since it is evidently positive at zero, there is another root $\rho_2 \in (0,1/r)$. Since $\rho_1, \rho_2$ do not belong to $\{ -r, 0, 1/r \}$, we have found four branch points with branch order one for $\mathcal{G}_0$, namely, $(\rho_1, \pm i \sqrt{\lvert q(\rho_1) \lvert})$ and $(\rho_2, \pm i \sqrt{\lvert q(\rho_2) \lvert})$. 
    
Now, consider the holomorphic chart around the point $(0,0)$ given by $\phi_0^{-1}(z) = \left(z^2, z \sqrt{(z^2 - \frac{1}{r})(z^2 + r)}\right)$. In this chart:
    \begin{equation*}
        \mathcal{G}_0 \circ \phi_0^{-1}(z) = -\sqrt{r} \frac{z^3 \sqrt{(z^2 - \frac{1}{r})(z^2 + r)}}{z^2 + r}. 
    \end{equation*}

Thus, the point $P_0 = (0,0)$ is a branch point with branch order two for $\mathcal{G}_0$. Finally, the point $P_{\infty}=(\infty,\infty)$ is a branch point with branch order two as an application of Lemma \ref{div-Gt} and Proposition \ref{BranchPointsComeInPairs}. Since by the Riemann-Hurwitz Theorem \ref{RiemannHurwitz}, $\mathcal{G}_0$ has total branching order $8$, it follows by the previous computations that all the branch values of $\mathcal{G}_0$ lie in the equator of the complex sphere $\overline{\bC}$ determined by the purely imaginary line. 
    
\end{proof} 

\subsection{The branch points of the meromorphic functions $\mathcal{G}_t$ for every $t \in (0,1]$}\label{sec:localizationbranchs}

A direct computation of the differential of $\mathcal{G}_t$ gives
\begin{equation}\label{eq:derivativeofgaussmap}
    d\mathcal{G}_t(z,w) = \sqrt{r} \frac{k_t(z)}{2(tz - 1)^2 (z+r)} \frac{dz}{w},
\end{equation}
\noindent where 
\begin{equation}\label{eq:kt}
    k_t(z)\coloneq k(z,t) = t + (3 - 2rt + t^2) z + \left(\frac{2}{r} - 4r - 2t + \frac{2t^2}{r}\right)z^2 + (-3 + 2rt - t^2) z^3 + tz^4.
\end{equation}
Let us introduce new real-valued parameters $L_i = L_i(t)$, $i=1,2,3$, defined so that $$k_t(z) = t -L_1 z + L_2 z^2 + L_1 z^3 + t z^4.$$ 
\begin{lema}\label{lem:defkt}
    The roots of the polynomial $k_t$ lie on the real line, for any $t \in [0,1]$. When $t \in (0,1]$, the polynomial $k_t$ can be factored as $k_t(z) = t (z - a_t)(z-b_t)(z+\frac{1}{a_t})(z+\frac{1}{b_t})$.
\end{lema}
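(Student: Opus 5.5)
The plan is to exploit a hidden symmetry of $k_t$. Writing $k_t(z) = t - L_1 z + L_2 z^2 + L_1 z^3 + t z^4$, comparison with \eqref{eq:kt} gives
\[
L_1 = 2rt - t^2 - 3, \qquad L_2 = \tfrac{2}{r} - 4r - 2t + \tfrac{2t^2}{r}.
\]
A direct substitution shows $z^4 k_t(-1/z) = k_t(z)$. So for $t\in(0,1]$ the root set of $k_t$ is invariant under $z\mapsto -1/z$; this is consistent with $\tau$ and with Proposition \ref{BranchPointsComeInPairs}. The case $t=0$ is handled separately: $k_0(z) = z\bigl(3+(\tfrac{2}{r}-4r)z-3z^2\bigr)$, and its roots $0,\rho_1,\rho_2$ were already shown to be real in the proof of Lemma \ref{branchvaluesG0}.

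For $t\in(0,1]$ we have $k_t(0)=t\neq 0$, so we may divide by $z^2$. Setting $u = z - \tfrac1z$, so that $z^2+z^{-2} = u^2+2$, the equation $k_t(z)=0$ becomes the quadratic
\[
t u^2 - L_1 u + (L_2 + 2t) = 0 .
\]
Each real root $u_j$ of this quadratic gives $z^2 - u_j z - 1 = 0$. The discriminant of that equation is $u_j^2+4>0$, so it has two real roots, and their product is $-1$, i.e. they are of the form $a$ and $-1/a$. Hence it suffices to show that the discriminant of the $u$-quadratic is nonnegative.

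That discriminant is
\[
D = L_1^2 - 4t(L_2 + 2t) = L_1^2 + 4t\Bigl(4r - \tfrac{2}{r} - \tfrac{2t^2}{r}\Bigr).
\]
Since $r>2$ (López, or Lemma \ref{rLopezNumerical}) and $t\le 1$, we have $4r - \tfrac{2}{r}(1+t^2) \ge 4r - \tfrac{4}{r} > 0$. Therefore $D>0$, the two values $u_1,u_2$ are real and distinct, and all four roots of $k_t$ are real.

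Call these roots $a_t, -1/a_t$ (from $u_1$) and $b_t, -1/b_t$ (from $u_2$). Since the leading coefficient of $k_t$ is $t$, this gives the factorization $k_t(z) = t(z-a_t)(z-b_t)(z+\tfrac{1}{a_t})(z+\tfrac{1}{b_t})$. The only real obstacle is the sign of $D$, and the estimate above settles it using only $r>2$.
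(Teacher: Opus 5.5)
Your argument is correct and is essentially the paper's: divide by $z^2$, pass to a quadratic in $z-\frac{1}{z}$ (the paper uses $\lambda=\frac{1}{z}-z$), check that its discriminant $L_1^2-4t(L_2+2t)$ is positive, and pair the roots as $a,-\frac{1}{a}$. You go slightly further by justifying positivity via $4r-\frac{2}{r}(1+t^2)\ge 4r-\frac{4}{r}>0$ and by treating $t=0$ separately, both of which the paper's proof leaves implicit. There is one harmless slip: with $u=z-\frac{1}{z}$ the quadratic is $tu^2+L_1u+(L_2+2t)$, not $tu^2-L_1u+(L_2+2t)$, but its discriminant, and hence your conclusion, is unchanged.
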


\begin{proof}
    When $t \in (0,1]$, $z = 0$ is not a root of $k_t$. After division by $z^2$, introducing $\lambda := \frac{1}{z} - z$, we obtain
    \begin{equation}\label{eq:ktdivididoz2}
        \frac{k_t(z)}{z^2} = t \lambda^2 - L_1 \lambda + L_2 + 2t.
    \end{equation}
    The discriminant $\delta_t$ of the polynomial above satisfies: 
    \begin{equation}\label{discriminant}
        \delta_t=L_1^2 - 4t(L_2 + 2 t) =(3 - 2 r t + t^2)^2 -\frac{8 t (1 - 2 r^2 + t^2)}{r} >0.
    \end{equation}
    
     Thus, the polynomial in $\lambda$ defined in \eqref{eq:ktdivididoz2} has two real roots, meaning that $k_t$ has four real roots, since every real number is assumed twice by the function $\frac{1}{z} - z$. The structure of the factorization comes from the fact that if $z$ is a root of $k_t$, by the expression above, we have that $-\frac{1}{z}$ is also a root of $k_t$.

     Now we write an explicit expression for $a_t$ and $b_t$. Consider the roots of the quadratic polynomial in \eqref{eq:ktdivididoz2} denoted by 
\begin{align}\label{xtyt}
    x_t &= \frac{L_1 + \sqrt{L_1^2 - 4 t(L_2 + 2 t)}}{2 t}, \quad y_t = \frac{L_1 - \sqrt{L_1^2 - 4 t(L_2 + 2t)}}{2 t}.
\end{align}
The roots of $k_t$ are roots of the equations $\frac{1}{z} - z = x_t$ and $\frac{1}{z} - z = y_t$. We choose 
\begin{align}\label{eq:atbt}
    a_t = \frac{-y_t - \sqrt{y_t^2 + 4}}{2},\quad
    b_t = \frac{-x_t - \sqrt{x_t^2 + 4}}{2}. 
\end{align}
\end{proof}
We now describe the branch points of $\mathcal{G}_t$. First, from Lemma \ref{lem:defkt} and equation \eqref{eq:atbt}, we know that $a_t$ and $b_t$ are the two negative zeroes of $k_t$. Moreover, since the function $z \mapsto \frac{1}{z}-z$ is decreasing and $y_t < x_t$, we observe that $a_t > b_t$. Now we give a precise description of the branch points.
\begin{lema}\label{BranchpointsDeformationmap}
      For $t \in (0,1]$, $\mathcal{G}_t$ has $8$ different branch points with branch order one of the form $B_1(t) = (a_t, \sqrt{\abs{q(a_t)}})$, $B_2(t) = (b_t, i \sqrt{\abs{q(b_t)}})$, $B_3(t) = A_1(B_1(t))$, $B_4(t) = A_2(B_2(t))$ and $B_{4+i}(t) = \tau(B_i(t))$ for $i=1,2,3,4$. Moreover, the branch values of $\mathcal{G}_t$ are contained in two fixed orthogonal equators for all $t\in (0,1]$.
\end{lema}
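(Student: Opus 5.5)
From \eqref{eq:derivativeofgaussmap}, for $t\in(0,1]$ we have $d\mathcal{G}_t = \sqrt{r}\,\frac{k_t(z)}{2(tz-1)^2(z+r)}\frac{dz}{w}$, and $\frac{dz}{w}$ has no zeros or poles by \eqref{Lopezcanonicaldivisortrivial}. So the plan is to read off the branch points from the zeros of the meromorphic function $k_t(z)/((tz-1)^2(z+r))$ on $\overline{M}$, correcting for the points where $\mathcal{G}_t$ has a pole, since there branching must be checked in the chart $1/\mathcal{G}_t$.

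\textbf{Counting.} By the Riemann--Hurwitz Theorem \ref{RiemannHurwitz}, with genus one and degree four (the remark after Lemma \ref{deformationpath}), the total branching order is $8$. By Lemma \ref{lem:defkt}, $k_t$ has the four real roots $a_t,b_t,-1/a_t,-1/b_t$. I would first check that they are distinct and avoid $\{-r,0,\frac1r,\frac1t,-t\}$:
\begin{itemize}
\item $\delta_t>0$ gives $x_t\neq y_t$, hence $a_t\neq b_t$.
\item A root $a_t=-1/b_t$ is impossible because both are negative.
\item Evaluating $k_t$ at the excluded points gives nonzero values. For instance, $k_t(-r)$ should reduce to $(rt-1)^2\cdot(\text{something nonzero})$, and $k_t(1/t)\neq 0$. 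These are direct computations, using the estimates on $r$ from Lemma \ref{rLopezNumerical} if needed.
\end{itemize}
Each such root gives two points of $\overline{M}$ (both values of $w$). This produces $8$ simple zeros of $d\mathcal{G}_t$ away from the poles of $\mathcal{G}_t$. At the poles $P_\infty, P_{1/t}^{\pm}, P_{-r}$, which are simple by Lemma \ref{zeroespolos}, $\mathcal{G}_t$ is unbranched; this is consistent with the count. Since the total branching order is $8$, all $8$ branch points have order one and there are no others.

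\textbf{Labelling.} Since $a_t<0$, we need the sign of $q(a_t)=a_t(a_t-\frac1r)(a_t+r)$, which depends on whether $a_t>-r$. I expect the locations to be $a_t\in(-r,0)$, so that $q(a_t)>0$ and the points are $(a_t,\pm\sqrt{q(a_t)})$, and $b_t<-r$, so that $q(b_t)<0$ and the points are $(b_t,\pm i\sqrt{|q(b_t)|})$. Establishing this via sign changes of $k_t$ (checking $k_t(-r)$ and $k_t(0)=t>0$) is the main technical obstacle. With it in hand:
\begin{itemize}
\item $A_1(a_t,\sqrt{q})=(a_t,-\sqrt{q})$, so $B_1,B_3$ are the two points over $a_t$.
\item $A_2(b_t,i\sqrt{|q|})=(b_t,-i\sqrt{|q|})$, so $B_2,B_4$ are the two points over $b_t$.
\item By Proposition \ref{BranchPointsComeInPairs} and Lemma \ref{div-Gt}, $\tau$ maps branch points to branch points, and $\tau$ sends $z\mapsto -1/\bar z$. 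Hence $\tau(B_i)$ lie over $-1/a_t$ and $-1/b_t$, giving the remaining four.
\end{itemize}

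\textbf{Branch values.} On the points with $z,w$ real, $\mathcal{G}_t$ is real. On the points with $z$ real and $w$ purely imaginary, $\mathcal{G}_t$ is purely imaginary. So $\mathcal{G}_t(B_1),\mathcal{G}_t(B_3)\in\bR$ and $\mathcal{G}_t(B_2),\mathcal{G}_t(B_4)\in i\bR$. The values at $\tau(B_i)$ equal $-1/\overline{\mathcal{G}_t(B_i)}$, which preserves the real line and the imaginary line respectively. Therefore all branch values lie on the two fixed orthogonal great circles corresponding to $\bR\cup\{\infty\}$ and $i\bR\cup\{\infty\}$, independently of $t$.
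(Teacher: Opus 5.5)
Your argument is correct and is essentially the paper's proof. Both read the branch points off the simple roots $a_t,b_t,-1/a_t,-1/b_t$ of $k_t$ in \eqref{eq:derivativeofgaussmap}, use the placement $b_t<-r<a_t<0$ (the paper's Lemma \ref{lem:posicoesdosatbt}) to decide whether $w$ is real or imaginary, and finish with the symmetries $A_1,A_2,\tau$ and the Riemann--Hurwitz count; your branch-value paragraph also supplies the ``moreover'' part, which the paper leaves implicit.

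One correction: $k_t(-r)$ is not of the form $(rt-1)^2\cdot(\cdot)$ but equals $(1+r^2)(t-r)(1+rt)<0$. Together with $k_t(0)=t>0$ and the positive leading coefficient, this gives $b_t<-r<a_t<0$ immediately, so the step you flag as the main obstacle is a one-line computation. Similarly $k_t(-t)=-\frac{2t(1+t^2)(1+rt)(r-t)}{r}\neq 0$, and the identity $z^4k_t(-1/z)=k_t(z)$ then rules out $1/t$ and $1/r$ as roots.
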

\begin{proof}
    By Lemma \ref{lem:posicoesdosatbt} we know that $q(a_t) > 0 > q(b_t)$ for any $t \in (0,1]$. Thus we have that
    \begin{align*}
        &B_1(t) = (a_t, \sqrt{\abs{q(a_t)}}),\hspace{0.6cm}B_5(t)=\left(-1/a_t,\sqrt{\abs{q(a_t)}}/a_t^2\right) \\ &B_2(t) = (b_t, i \sqrt{\abs{q(b_t)}}),\hspace{0.55cm}  B_6(t)=\left(-1/b_t,-i\sqrt{\abs{q(b_t)}}/b_t^2\right)\\& B_3(t)=(a_t, -\sqrt{\abs{q(a_t)}}),\quad B_7(t)=\left(-1/a_t,-\sqrt{\abs{q(a_t)}}/a_t^2\right)\\&B_4(t)=(b_t, -i \sqrt{\abs{q(b_t)}}),\quad B_8(t)=\left(-1/b_t,i\sqrt{\abs{q(b_t)}}/b_t^2\right),
    \end{align*}
    are branch points of $\mathcal{G}_t$ using equation \eqref{eq:derivativeofgaussmap} and Lemma \ref{lem:defkt}. On the other hand, notice that we can write $B_3(t) = A_1(B_1(t))$, $B_4(t) = A_2(B_2(t))$ and $B_{4+i}(t)=\tau(B_i(t))$ using the symmetries \eqref{tauinvolution} and \eqref{eq:simetriasA}. Since by the Riemann-Hurwitz Theorem \ref{RiemannHurwitz}, $\mathcal{G}_t$ has exactly $8$ branch points counting multiplicities, then we have found all the branch points of $\mathcal{G}_t$. 
\end{proof}

\subsection{On the nullity of the meromorphic functions $\mathcal{G}_t$}\label{sec:nullityofdeformation}
Our aim in this Section is to compute the nullity of the maps $\mathcal{G}_t$ for $t > 0$. For this, we use the theory developed by Montiel and Ros in \cite{MontielRos} to compute the nullity of a meromorphic function as we presented in Section \ref{sec:indexandgaussmap}.

Our first result is about the dimension and characterization of the space $H^{0,2}(\mathcal{G}_t)$, which is a space of meromorphic quadratic differentials satisfying certain conditions on its divisors. Recall equation \eqref{eq:spacedivisors} for the precise definition of this space.

\begin{thm}\label{lema:formadeH0}
 For all $t\in (0,1]$ the space $ H^{0,2}(\mathcal{G}_t)$ has complex dimension $8$ and it is characterized by
\begin{equation*}
    H^{0,2}(\mathcal{G}_t) = \left\lbrace \left( \frac{P(z)}{k_t(z)} + \frac{Q(z)}{k_t(z)} w \right) \left( \frac{dz}{w} \right)^2: P, Q\in \bC[z],\ \text{deg} (P) \leq 4, \text{deg} (Q) \leq 2 \right\rbrace.
\end{equation*}    
\end{thm}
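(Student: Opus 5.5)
Since $\overline{M}$ has genus one and $\operatorname{div}(dz/w)=0$ by \eqref{Lopezcanonicaldivisortrivial}, I will take $\omega_0 = dz/w$. Then $K_{\overline{M}}=0$, and the condition defining $H^{0,2}(\mathcal{G}_t)$ in \eqref{eq:spacedivisors} becomes
\[
\operatorname{div}(f) + R(\mathcal{G}_t) \ge 0, \qquad R(\mathcal{G}_t) = \sum_{j=1}^8 B_j(t).
\]
By Lemma \ref{BranchpointsDeformationmap}, $R(\mathcal{G}_t)$ is the sum of eight distinct simple branch points. So $H^{0,2}(\mathcal{G}_t)\cong L(R)$, the space of meromorphic functions on the torus with at most simple poles at the $B_j(t)$. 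Since $\deg R = 8 > 0$ on a genus-one surface, Riemann--Roch gives $\dim L(R) = \deg R + 1 - g = 8$. This settles the dimension claim.

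For the explicit description, I first identify the divisor of $k_t(z)$, viewed as a function on $\overline{M}$. By Lemma \ref{lem:defkt}, for $t\in(0,1]$ we have $k_t(z)=t(z-a_t)(z-b_t)(z+1/a_t)(z+1/b_t)$, with four distinct real roots. None of these roots lies in $\{-r,0,1/r\}$: Lemma \ref{lem:posicoesdosatbt} gives $q(a_t)\neq0\neq q(b_t)$, and the $\tau$-symmetry $z\mapsto -1/\bar z$ handles the other two roots. Each factor $z-c$ therefore has divisor $(c,\pm\sqrt{q(c)})$ minus $2P_\infty$, as in Lemma \ref{some-divisors}. The branch points listed in Lemma \ref{BranchpointsDeformationmap} are exactly the two preimages of each of $a_t$, $b_t$, $-1/a_t$, $-1/b_t$. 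Hence
\[
\operatorname{div}(k_t) = R(\mathcal{G}_t) - 8P_\infty .
\]

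Next I check that every element of the proposed set lies in $L(R)$. Take $f=(P+Qw)/k_t$. The function $P+Qw$ is holomorphic away from $P_\infty$. At $P_\infty$, in the chart $\psi$, $z$ has a pole of order $2$ and $w$ a pole of order $3$. So $P$ with $\deg P\le 4$ has a pole of order at most $8$, and $Qw$ with $\deg Q\le 2$ has a pole of order at most $7$. Thus $\operatorname{div}(P+Qw)\ge -8P_\infty$, and therefore $\operatorname{div}(f)\ge -R(\mathcal{G}_t)$.

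Finally, the map $(P,Q)\mapsto (P+Qw)/k_t$ is injective. Indeed, $w\notin\mathbb{C}(z)$, because the hyperelliptic involution $w\mapsto -w$ fixes $\mathbb{C}(z)$ but sends $w$ to $-w$; so $P+Qw=0$ forces $P=Q=0$. The parameter space has dimension $5+3=8$, so the injective image is an $8$-dimensional subspace of the $8$-dimensional space $L(R)$, and the two coincide.

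The main obstacle I anticipate is the divisor identity $\operatorname{div}(k_t)=R(\mathcal{G}_t)-8P_\infty$. It needs the four roots of $k_t$ to be distinct and to avoid the ramification values $\{-r,0,1/r\}$ for every $t\in(0,1]$, which I will take from Lemma \ref{lem:defkt}, Lemma \ref{BranchpointsDeformationmap} and the positional Lemma \ref{lem:posicoesdosatbt}. Distinctness follows from the discriminant \eqref{discriminant} being positive and from $a_t,b_t<0<-1/a_t,-1/b_t$.
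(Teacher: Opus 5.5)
Your proof is correct and follows essentially the paper's argument: Riemann--Roch with the trivial canonical divisor gives $\dim H^{0,2}(\mathcal{G}_t)=8$, a pole-order count at $P_\infty$ and at the points over the roots of $k_t$ shows the proposed family lies in $H^{0,2}(\mathcal{G}_t)$, and equality follows by comparing dimensions. You also make explicit two points the paper leaves implicit, namely the divisor identity $\operatorname{div}(k_t)=R(\mathcal{G}_t)-8P_\infty$ and the injectivity of $(P,Q)\mapsto (P+Qw)/k_t$, which is what actually guarantees that the proposed family is $8$-dimensional.
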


\begin{proof}
 The space $H^{0,2}(\cG_t)$ is isomorphic to $\cL(2K_{\overline{M}}+R(\cG_t))$, by \eqref{isomorphismLspaceH0space}. The divisor $2K_{\overline{M}}+R(\cG_t)$ has degree $8$ since the Gauss map has exactly $8$ different branch points for each $t\in (0,1]$ and the canonical divisor of $\overline{M}$ is trivial, see Lemma \ref{BranchpointsDeformationmap} and \eqref{Lopezcanonicaldivisortrivial}. Therefore we find that
 \begin{align*}
     \dim\left(H^{0,2}(\cG_t)\right)=l(2K_{\overline{M}}+R(\cG_t))
     &=1-\operatorname{genus}(\overline{M})+\deg\left(2K_{\overline{M}}+R(\cG_t)\right)+i(2K_{\overline{M}}+R(\cG_t))\\
     &=1-1+8+0=8.
 \end{align*}
 as a consequence of the Riemann-Roch Theorem \ref{RiemannRoch} and Lemma \ref{highdegreedivisor}. For the characterization of $H^{0,2}(\cG_t)$, consider the space $\cA$ of all the quadratic forms \begin{equation*}
     \sigma=\left(\frac{P}{k_t(z)}+\frac{Q(z)}{k_t(z)}w\right)\left(\frac{dz}{w}\right)^2,
 \end{equation*}
 where $P, Q$ are polynomials of degrees $\deg P\leq 4$, $\deg Q\leq 2$. Notice that the quotient $\frac{P}{k_t}$ can only have poles at the roots of $k_t$, and they have order at most one. On the other hand, the function $w$ has only one pole at $P_{\infty}$ which have order three and the order of the pole for the function $z$ at this point is two, by Lemma \ref{some-divisors}. Hence, we have that $\frac{Q(z)w}{k_t(z)}$ does not have pole at $P_{\infty}$ for $\text{deg}(Q) \leq 2$. Moreover, this expression can only have poles at the roots of $k_t$, and they have order at most one. 
    Then, since the canonical divisor $K_{\overline{M}}$ is trivial by \eqref{Lopezcanonicaldivisortrivial}, we have that the space $\cA$ is contained in $H^{0,2}(\mathcal{G}_t)$ and since their dimensions are the same, we have the conclusion. 
\end{proof}
Now we introduce some notation to distinguish between this space and the associated space of meromorphic one-forms obtained after division by $d \mathcal{G}_t$.
\begin{defn}\label{spaceoneforms}
\normalfont
Let $\{\sigma_j\}_{j=1}^8$ be a complex basis of $H^{0,2}(\cG_t)$. We define $\cV(\cG_t)$ as the complex eight dimensional vector space generated by the associated one-forms $\eta_j\coloneq\frac{\sigma_j}{d\cG}$.
\end{defn}

\subsubsection{The residue problem}\label{residueproblem} 

In this Section, we study the space:
\begin{equation*}
    \hat{H}(\mathcal{G}_t) = \{ \sigma \in H^{0,2}(\mathcal{G}_t): \operatorname{Res}_{B_i(t)}\frac{\sigma}{d \mathcal{G}_t} = 0,\ 1\leq i\leq 8 \}.
\end{equation*}

\begin{defn}[{Residue Matrix}]
\normalfont
Let $\cB=\{\eta_j(t)=\frac{\sigma_j}{d\cG_t}\}_{j=1}^8$ be a complex basis of the space $\cV(\cG_t)$. We define the \textit{Residue Matrix} $\operatorname{Res}_{\cB}(\cG_t)$ with respect to $\cB$ as the square complex matrix
\begin{gather*}
   \operatorname{Res}_{\cB}(\cG_t)\coloneqq (\text{Res}_{B_i}\eta_j(t))\in \cM_{8\times 8}(\bC).
\end{gather*}
\end{defn}
\begin{lema}\label{simplificationresidueeta}
    For all $t\in(0,1]$ the induced one-form $\eta(t)=\frac{\sigma}{d\cG_t}\in \cV(\cG_t)$ is
    \begin{equation*}
        \eta(t)=\frac{2(tz-1)(z+t)P(z)}{\cG_tk_t^2}dz+\frac{2(tz-1)^2(z+r)Q(z)}{\sqrt{r}k_t^2}dz,\quad \sigma=\left(\frac{P+Qw}{k_t}\right)\left(\frac{dz}{w}\right)^2.
    \end{equation*}
    Moreover, the residue of $\eta(t)$ at any branch point $B_i(t)=(p_i(t),w(p_i(t)))$, $1\leq i\leq 8$ is
    \begin{equation}\label{residuecalculationoneforms}
        \operatorname{Res}_{B_i(t)}\eta(t)=\frac{2}{\cG_t(B_i(t))}\operatorname{Res}_{p_i(t)}\left(\frac{(tz-1)(z+t)P(z)}{k_t^2(z)} dz\right)+\frac{2}{\sqrt{r}}\operatorname{Res}_{p_i(t)}\left(\frac{(tz-1)^2(z+r)Q(z)}{k_t^2(z)} dz\right).
    \end{equation}
\end{lema}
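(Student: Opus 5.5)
The plan is a direct computation in two steps: first write down $\eta(t)=\sigma/d\mathcal{G}_t$ explicitly, then reduce the residue at each branch point to a residue in the $z$-variable. Throughout I use $\sigma=\frac{P+Qw}{k_t}\left(\frac{dz}{w}\right)^2$ from Theorem \ref{lema:formadeH0} and $d\mathcal{G}_t$ from \eqref{eq:derivativeofgaussmap}.

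\textbf{Step 1: the formula for $\eta(t)$.} Dividing $\sigma$ by $d\mathcal{G}_t$ gives
\begin{equation*}
\eta(t)=\frac{2(tz-1)^2(z+r)(P+Qw)}{\sqrt{r}\,k_t^2}\,\frac{dz}{w}.
\end{equation*}
For the $Q$-term, the factor $w$ cancels against $\frac{dz}{w}$, which yields the second summand directly. For the $P$-term, I use the definition of $\mathcal{G}_t$ in \eqref{eq:deformapadegauss}. It gives
\begin{equation*}
\frac{(tz-1)(z+r)}{\sqrt{r}\,w}=\frac{z+t}{\mathcal{G}_t}.
\end{equation*}
Substituting this converts $\frac{(tz-1)^2(z+r)}{\sqrt{r}\,w}$ into $\frac{(tz-1)(z+t)}{\mathcal{G}_t}$, which gives the first summand.

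\textbf{Step 2: reducing to $z$-residues.} By Lemma \ref{BranchpointsDeformationmap}, each $B_i(t)$ has first coordinate $p_i(t)$, which is a simple root of $k_t$. By Lemma \ref{lem:defkt} and Lemma \ref{lem:posicoesdosatbt}, $p_i(t)$ is distinct from $-r$, $0$, $1/r$ and $\infty$. Hence $q(p_i)\neq 0$, so $z-p_i$ is a local coordinate at $B_i$: the chart \eqref{chartgenericpoints} applies with $w$ a holomorphic, nonvanishing function of $z$ there.

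\emph{The $Q$-term.} This term is a rational $1$-form in $z$ alone, so its residue at $B_i$ equals the residue in $z$ at $p_i$. This gives the second summand of \eqref{residuecalculationoneforms}.

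\emph{The $P$-term.} This term equals $\frac{1}{\mathcal{G}_t}\cdot R(z)\,dz$, where $R$ is rational with a double pole at $p_i$. Since $B_i$ is a branch point of order one, $d\mathcal{G}_t$ vanishes at $B_i$. In the coordinate $z$ this means $\frac{1}{\mathcal{G}_t}=c_0+O((z-p_i)^2)$ with $c_0=1/\mathcal{G}_t(B_i)$. Here I use that $\mathcal{G}_t(B_i)\neq 0,\infty$, which holds because by Lemma \ref{zeroespolos} the zeros and poles of $\mathcal{G}_t$ lie at points where $k_t\neq 0$. Multiplying the Laurent expansion of $R$, whose pole has order at most two, by $c_0+O((z-p_i)^2)$ leaves the coefficient of $(z-p_i)^{-1}$ equal to $c_0$ times the residue of $R$. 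This gives the first summand.

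\textbf{Main obstacle.} The only delicate points are verifying that $d(1/\mathcal{G}_t)$ has no linear term at $B_i$, and that $p_i\notin\{-r,0,1/r,-t,1/t\}$. Both follow from the branch-order-one statement and the root locations established earlier.
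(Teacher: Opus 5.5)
Your proof is correct and follows essentially the same route as the paper: divide $\sigma$ by $d\mathcal{G}_t$ using \eqref{eq:derivativeofgaussmap} and \eqref{eq:deformapadegauss}, then compute residues in the chart \eqref{chartgenericpoints}. That chart is valid because the branch points avoid $P_0, P_{1/r}, P_{-r}$ by Lemma \ref{lem:posicoesdosatbt}, and since $d\mathcal{G}_t$ vanishes at $B_i(t)$, the expansion $1/\mathcal{G}_t = 1/\mathcal{G}_t(B_i(t)) + O(\xi^2)$ removes any contribution from the double pole. Your extra check that $\mathcal{G}_t(B_i(t)) \neq 0, \infty$ is a detail the paper leaves implicit, and you handle it correctly.
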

\begin{proof}
    The first part of the Lemma follows directly by Definition \ref{spaceoneforms} and an application of equations \eqref{eq:deformapadegauss}, \eqref{eq:derivativeofgaussmap}. For the second part, we find the residue of $\eta(t)$ at each branch point $B_i(t)$ using the complex chart $\phi^{-1}(\xi)$ of equation \eqref{chartgenericpoints}, since for all $t\in (0,1]$ the branch points $B_j(t)$ of $\cG_t$ do not intersect the set of points $\{P_0,\ P_{\frac{1}{r}},\ P_{-r}\}$ by Lemma \ref{lem:posicoesdosatbt} Moreover, the one-form 
    \begin{align*}
        &\left(\phi^{-1}\right)^{*}\left(\frac{(tz-1)(z+t)P(z)}{k_t^2(z)} dz\right )=\left(\frac{a_{-2}}{\xi^2}+\frac{a_{-1}}{\xi}+h(\xi)\right)d\xi,\\
        &a_{-1}=\operatorname{Res}_{p_i(t)}\frac{(tz-1)(z+t)P(z)}{k_t^2(z)},\ h\ \text{holomorphic},
    \end{align*}
    has at most a pole of order two at $\xi=0$, while the Gauss map in the $\phi^{-1}(\xi)$ chart around any branch point $B_i(t)$ satisfies $(G_t\circ \phi^{-1})'(0)=0$. Therefore, locally around any $B_i(t)$ we have
    \begin{equation*}
\frac{1}{\cG_t\circ \phi^{-1}(\xi)}=\frac{1}{\cG_t(B_i(t))}+O(\xi^2)
    \end{equation*}
    and the result follows.
\end{proof}
 To compute the dimension of the space $\hat{H}(\cG_t)$ it is enough to compute the dimension of the nullity of the Residue Matrix $\operatorname{Res}_\cB(\cG_t)$ as the next proposition shows. We omit its straightforward proof.
 
\begin{prop}\label{isomorphismHchapeuspace}
   Let $\{e_j\}_{j=1}^8$ be the canonical basis of $\bC^8$. For each $t\in(0,1]$, the linear map
    \begin{equation*}
T:\text{Nul}(\operatorname{Res}_{\cB}(\cG_t))\to \hat{H}(G_t),\quad T\left(\sum_{j=1}^8c_je_j\right)=\sum_{j=1}^{8}c_j\sigma_j,
    \end{equation*} 
    is an isomorphism. 
\end{prop}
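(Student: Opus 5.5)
The plan is to observe that $T$ is simply the restriction of the coordinate isomorphism $\bC^8\to H^{0,2}(\cG_t)$, $c\mapsto\sum_j c_j\sigma_j$, to the subspace $\text{Nul}(\operatorname{Res}_{\cB}(\cG_t))$. Then it suffices to check two facts. First, this coordinate map carries the null space exactly onto $\hat{H}(\cG_t)$. Second, it is injective on that subspace.

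Injectivity is immediate. By Theorem \ref{lema:formadeH0} and Definition \ref{spaceoneforms}, $\{\sigma_j\}_{j=1}^8$ is a complex basis of $H^{0,2}(\cG_t)$. Hence $\sum_j c_j\sigma_j=0$ forces $c=0$, and $T$ is a linear injection.

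For the image, I will use linearity of division by $d\cG_t$ and of residues. For $c\in\bC^8$ and $\sigma=\sum_j c_j\sigma_j$, and for each $1\le i\le 8$,
\begin{equation*}
\operatorname{Res}_{B_i(t)}\frac{\sigma}{d\cG_t}=\sum_{j=1}^8 c_j\operatorname{Res}_{B_i(t)}\eta_j(t)=\big(\operatorname{Res}_{\cB}(\cG_t)\,c\big)_i .
\end{equation*}
Here $d\cG_t$ is a fixed nonzero meromorphic one-form for each $t\in(0,1]$, so $\sigma\mapsto\sigma/d\cG_t$ is linear. Residues at a fixed point are also linear.

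By Lemma \ref{BranchpointsDeformationmap}, the points $B_1(t),\dots,B_8(t)$ are exactly the branch points of $\cG_t$. They are the points appearing in the definition \eqref{eq:spaceresidues} of $\hat{H}(\cG_t)$, since $R(\cG_t)$ is their formal sum. Consequently, $\sigma\in\hat{H}(\cG_t)$ if and only if $\operatorname{Res}_{\cB}(\cG_t)\,c=0$. This shows both that $T$ maps $\text{Nul}(\operatorname{Res}_{\cB}(\cG_t))$ into $\hat{H}(\cG_t)$ and that it maps onto it. Every $\sigma\in\hat H(\cG_t)\subset H^{0,2}(\cG_t)$ has a unique coordinate vector $c$, and that $c$ then lies in the null space.

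There is no genuine obstacle. The only point needing care is that the row indexing of the Residue Matrix matches the full list of branch points, so that no residue condition is missed. Lemma \ref{BranchpointsDeformationmap} guarantees this for every $t\in(0,1]$: the eight branch points are distinct and each has branch order one.
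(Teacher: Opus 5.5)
Your argument is correct: $T$ is the restriction of the coordinate isomorphism $\bC^8\to H^{0,2}(\cG_t)$, and linearity of $\sigma\mapsto\operatorname{Res}_{B_i(t)}(\sigma/d\cG_t)$ shows that the coordinate vectors of elements of $\hat{H}(\cG_t)$ are exactly the vectors in the null space of the Residue Matrix. This is the straightforward verification the paper has in mind when it omits the proof, so your approach matches it.
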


Now, we introduce a notion of symmetry for quadratic differentials:
\begin{defn}\label{symmetricquadraticdifferentials}
\normalfont
    Given a anti-holomorphic involution $\gamma$ we say that a quadratic differential $\sigma$ is $\gamma$-\textit{symmetric} if $\overline{\gamma^* \sigma} = \sigma$. Conversely, we say that it is $\gamma$-\textit{antisymmetric} if $\overline{\gamma^* \sigma} = - \sigma$. 
\end{defn}

Observe that a subspace of quadratic differentials which are either symmetric or antisymmetric is not a complex subspace, but a real one. The space $H^{0,2}(\cG_t)$ decomposes into eight real subspaces composed by quadratic differentials which are symmetric or antisymmetric under the anti-holomorphic involutions $A_1$, $A_2$, $\tau$ respectively
\begin{equation}\label{decompositionH0}
    H^{0,2}(\cG_t)=H^{0}_{SSS}\oplus H^{0}_{SSA}\oplus H^{0}_{ASA}\oplus  H^{0}_{ASS}\oplus H^{0}_{AAA}\oplus H^{0}_{AAS}\oplus H^{0}_{SAS}\oplus H^{0}_{SAA},
\end{equation}
with the last four subspaces being just $i$ times the first four. More explicitly we can write those subspaces as
\begin{align*}
    \begin{split}
         &H^0_{SSS}=\left\lbrace \frac{c_0z^4-c_1z^3+c_2z^2+c_1z+c_0}{k_t}\left(\frac{dz}{w}\right)^2: c_j\in \bR\right\rbrace ,\quad \dim_{\bR} H^0_{SSS}=3,\\& H^0_{SSA}=\left\lbrace \frac{c_0z^4-c_1z^3-c_1z-c_0}{k_t}\left(\frac{dz}{w}\right)^2: c_j\in \bR\right\rbrace,\quad \dim_{\bR} H^0_{SSA}=2,\\&H^0_{ASA}=\left\lbrace  \frac{c_0z^2+c_1z-c_0}{k_t}w\left(\frac{dz}{w}\right)^2: c_j\in \bR\right\rbrace,\quad \dim_{\bR}H^0_{ASA}=2,\\&H^0_{ASS}=\left\lbrace c_0\frac{z^2+1}{k_t}w\left(\frac{dz}{w}\right)^2: c_0\in \bR\right\rbrace,\quad \dim_{\bR}H^0_{ASS}=1.
    \end{split}
\end{align*}
We prove this in the case of the subspace $H^0_{SSS}$. Suppose that $\sigma \in H^0_{SSS}\subset H^{0,2}(\cG_t)$. There exist $P,Q\in \bC[z]$ with $\deg P\leq 4$ and $\deg Q\leq 2$ such that $\sigma$ has the form presented in Theorem \ref{lema:formadeH0}. By hypothesis, the quadratic form $\sigma$ is symmetric with respect to the anti-holomorphic involution $A_1$ of equation \eqref{eq:simetriasA}. Since $k_t(z)\in \bR[z]$ by equation \eqref{eq:kt}, we have that
\begin{equation*}
    \left(\frac{P(z)}{k_t(z)}+\frac{Q(z)}{k_t(z)}w\right)\left(\frac{dz}{w}\right)^2=\sigma=\overline{A_1^*\sigma}= \left(\frac{\overline{P(\overline{z})}}{\overline{k_t(\overline{z})}}-\frac{\overline{Q(\overline{z})}}{\overline{k_t(\overline{z})}}w\right)\left(-\frac{dz}{w}\right)^2=\left(\frac{\overline{P(\overline{z})}}{k_t(z)}-\frac{\overline{Q(\overline{z})}}{k_t(z)}w\right)\left(\frac{dz}{w}\right)^2.
\end{equation*}
Hence, it follows that $P\in \bR_4[z]$ and $Q\in i\bR_2[z]$. However, the quadratic form is also symmetric with respect to $A_2$ of equation \eqref{eq:simetriasA} by hypothesis, which implies that
\begin{equation*}
    \left(\frac{P(z)}{k_t(z)}+\frac{Q(z)}{k_t(z)}w\right)\left(\frac{dz}{w}\right)^2=\sigma=\overline{A_2^*\sigma}= \left(\frac{\overline{P(\overline{z})}}{\overline{k_t(\overline{z})}}+\frac{\overline{Q(\overline{z})}}{\overline{k_t(\overline{z})}}w\right)\left(\frac{dz}{w}\right)^2=\left(\frac{\overline{P(\overline{z})}}{k_t(z)}+\frac{\overline{Q(\overline{z})}}{k_t(z)}w\right)\left(\frac{dz}{w}\right)^2,
\end{equation*}
and therefore $Q\in \bR_2[z]$. Hence, it follows that $Q$ is the zero polynomial and $P(z)=\sum_{j=0}^4 c_jz^j$ where $c_j\in \bR$. Finally, we use the hypothesis of $\sigma$ being symmetric with respect to the $\tau$ anti-holomorphic involution of equation \eqref{tauinvolution}. We find that
\begin{equation*}
    \frac{P(z)}{k_t(z)}\left(\frac{dz}{w}\right)^2=\sigma=\overline{\tau^{*}\sigma}= \frac{\overline{P(-\frac{1}{\overline{z}})}}{\overline{k_t(-\frac{1}{\overline{z}})}}\left(\frac{d(-\frac{1}{z})}{\frac{w}{z^2}}\right)^2=\frac{z^4P(-\frac{1}{z})}{k_t(z)}\left(\frac{dz}{w}\right)^2,
\end{equation*}
where we have used that $P\in \bR_4[z]$ and the transformational property $\overline{z}^4k_t(-\frac{1}{\overline{z}})=\overline{k_t(z)}$ which follows from equation \eqref{eq:kt}. Hence, $z^4P(-\frac{1}{z})=P(z)$, which implies that $c_4=c_0$ and $c_3=-c_1$. In conclusion, we have reduced the quadratic form $\sigma\in H^0_{SSS}$, using the symmetries $A_1, A_2$ and $\tau$ to
\begin{equation*}
    \sigma=\frac{c_0z^4-c_1z^3+c_2z^2+c_0}{k_t(z)}\left(\frac{dz}{w}\right)^2
\end{equation*}
which proves the characterization of the $H^0_{SSS}$ space. The other subspaces are handled in a similar way.
Let $\{\sigma^1_{SSS},\ \sigma^2_{SSS},\ \sigma^3_{SSS}\}$, $\{\sigma^1_{SSA},\ \sigma^2_{SSA}\}$, $\{\sigma^1_{ASA},\ \sigma^2_{ASA}\}$ and $\{\sigma^1_{ASS}\}$ be real bases of the vector spaces $H^{0}_{SSS}$, $H^{0}_{SSA}$, $H^{0}_{ASA}$, $H^{0}_{ASS}$ respectively. Then the eight quadratic differentials
\begin{equation*}
    \{\sigma^1_{SSS},\ \sigma^2_{SSS},\ \sigma^3_{SSS},\ \sigma^1_{SSA},\ \sigma^2_{SSA},\ \sigma^1_{ASA},\ \sigma^2_{ASA},\ \sigma^1_{ASS}\}
\end{equation*}
form a complex basis of the space $H^{0,2}(\cG_t).$ We now calculate the nullity of the Residue Matrix with respect to the following complex basis $\cB_{\text{sym}}$ of $\cV(\cG_t)$ (recall Definition \ref{spaceoneforms} for the notation below)
\begin{equation*}
    \cB_{\text{sym}}\coloneq \{\eta^1_{SSS},\ \eta^2_{SSS},\ \eta^3_{SSS},\ \eta^1_{SSA},\ \eta^2_{SSA},\ \eta^1_{ASA},\ \eta^2_{ASA},\ \eta^1_{ASS}\}.
\end{equation*}
Notice that the name of the elements of this basis comes from the symmetries of the associated quadratic differentials, but these symmetries change after we divide by $d \mathcal{G}_t$. We clarify the symmetry properties of the basis $\cB_{\text{sym}}$ in the following lemma 
 \begin{lema}\label{symmetrypropertiesBsym}
     For all $t\in(0,1]$, the complex basis $\cB_{\text{sym}}$ satisfies that for all $j=1,2,3$ and $k=1,2$, $l=1$
     \begin{align*}
         \begin{split}
             &\overline{A_1^{*}\eta_{SSS}^j}=-\eta_{SSS}^j,\quad \overline{A_1^{*}\eta_{SSA}^k}=-\eta_{SSA}^k,\quad \overline{A_1^{*}\eta_{ASA}^k}=\eta_{ASA}^k,\quad \overline{A_1^{*}\eta_{ASS}^l}=\eta_{ASS}^l,\\\\ &\overline{A_2^{*}\eta_{SSS}^j}=\eta_{SSS}^j,\quad \overline{A_2^{*}\eta_{SSA}^k}=\eta_{SSA}^k,\quad \overline{A_2^{*}\eta_{ASA}^k}=\eta_{ASA}^k,\quad \overline{A_2^{*}\eta_{ASS}^l}=\eta_{ASS}^l,\\\\ &\overline{\tau^{*}\eta_{SSS}^j}=\cG_t^2\eta_{SSS}^j,\quad \overline{\tau^{*}\eta_{SSA}^k}=-\cG_t^2\eta_{SSA}^k,\quad \overline{\tau^{*}\eta_{ASA}^k}=-\cG_t^2\eta_{ASA}^k,\quad \overline{\tau^{*}\eta_{ASS}^l}=\cG_t^2\eta_{ASS}^l.
         \end{split}
     \end{align*}
 \end{lema}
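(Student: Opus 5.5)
The plan is to use that each $\eta = \sigma/d\cG_t$ is a quotient of a quadratic differential by a one-form. For any anti-holomorphic involution $\gamma$ we therefore have
\begin{equation*}
\overline{\gamma^*\eta}=\frac{\overline{\gamma^*\sigma}}{\overline{\gamma^* d\cG_t}}=\frac{\overline{\gamma^*\sigma}}{d\big(\overline{\cG_t\circ\gamma}\big)}.
\end{equation*}
The symmetry of $\eta$ is then the product of two signs: the symmetry sign of $\sigma$, which is built into the definition of the subspaces $H^0_{\ast\ast\ast}$, and the transformation law of $\cG_t$ under $\gamma$. So the proof reduces to computing $\overline{\cG_t\circ\gamma}$ for $\gamma=A_1,A_2,\tau$.

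First, from \eqref{eq:deformapadegauss} and the fact that $k_t$ and the coefficients of $\cG_t$ are real, we get $\cG_t(\bar z,-\bar w)=-\overline{\cG_t(z,w)}$ and $\cG_t(\bar z,\bar w)=\overline{\cG_t(z,w)}$. Hence $\overline{\cG_t\circ A_1}=-\cG_t$ and $\overline{\cG_t\circ A_2}=\cG_t$. This gives $\overline{A_1^*d\cG_t}=-d\cG_t$ and $\overline{A_2^*d\cG_t}=d\cG_t$.

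Second, Lemma \ref{div-Gt} gives $\overline{\cG_t\circ\tau}=-1/\cG_t$. Therefore
\begin{equation*}
\overline{\tau^*d\cG_t}=d(-1/\cG_t)=\frac{d\cG_t}{\cG_t^2}.
\end{equation*}

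Combining these: for $A_1$, the sign of $\eta$ is the $A_1$-sign of $\sigma$ multiplied by $-1$. Thus SSS and SSA flip to $-\eta$, while ASA and ASS flip to $+\eta$. For $A_2$, the sign of $\eta$ equals the $A_2$-sign of $\sigma$; this is $+$ for SSS and SSA. For ASA and ASS the second letter is S, so these are also $+$, matching the statement. For $\tau$ we get $\overline{\tau^*\eta}=\pm\cG_t^2\,\sigma/d\cG_t$, with the sign given by the $\tau$-letter of $\sigma$: $+$ for SSS and ASS, $-$ for SSA and ASA.

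One step needs independent justification: the claimed symmetry letters of the explicit subspaces. Only $H^0_{SSS}$ is checked in the text; for the others I would verify them directly with the same computations used for $H^0_{SSS}$. For example, for $\sigma = c(z^2+1)w\,(dz/w)^2/k_t$, the involution $A_1$ changes the sign of $w$, so $\sigma$ is $A_1$-antisymmetric. Under $A_2$ it is symmetric. Under $\tau$, one uses $\bar z^4 k_t(-1/\bar z)=\overline{k_t(z)}$, $w\mapsto \bar w/\bar z^2$ and $(dz/w)^2$ invariant; the factor $z^{-2}(1+z^2)\cdot z^{-2}\cdot z^4$ works out to $+$. This bookkeeping of powers of $z$ under $\tau$ is the only place where care is needed, and it is the main (if mild) obstacle. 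Everything else is the sign calculus above.
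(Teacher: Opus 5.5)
Your proposal is correct and follows the paper's own argument. You write $\eta=\sigma/d\cG_t$, so that $\overline{\gamma^*\eta}=\overline{\gamma^*\sigma}/d(\overline{\cG_t\circ\gamma})$, and combine the defining symmetry of $\sigma$ with $\overline{\cG_t\circ A_1}=-\cG_t$, $\overline{\cG_t\circ A_2}=\cG_t$ and $\overline{\cG_t\circ\tau}=-1/\cG_t$ (Lemma \ref{div-Gt}). Your extra check of the symmetry letters is not strictly needed, since $H^0_{SSS},\dots,H^0_{ASS}$ are defined by those symmetries and $\cB_{\text{sym}}$ is built from real bases of them, but it is harmless and correctly confirms the explicit descriptions.
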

 \begin{proof}
     The proof follows from a straightforward application of Definitions \ref{spaceoneforms},  \ref{symmetricquadraticdifferentials} and equations \eqref{tauinvolution}, \eqref{eq:simetriasA}, \eqref{eq:deformapadegauss}. For instance, we have
     \begin{equation*}
         \overline{A_1^{*}\eta^j_{SSS}}=\frac{\overline{A_1^{*}\sigma_{SSS}^j}}{\overline{d\cG_t\circ A_1}}=\frac{\sigma_{SSS}^j}{-d\cG_t}=-\eta_{SSS}^j,\quad \overline{A_2^{*}\eta^k_{SSA}}=\frac{\overline{A_2^{*}\sigma_{SSA}^k}}{\overline{d\cG_t\circ A_2}}=\frac{\sigma_{SSA}^k}{d\cG_t}=\eta_{SSA}^k,
     \end{equation*}
     and
     \begin{equation*} \overline{\tau^{*}\eta^k_{ASA}}=\frac{\overline{\tau^{*}\sigma_{ASA}^k}}{\overline{d\cG_t\circ \tau}}=\frac{-\sigma_{ASA}^k}{d(-1/\cG_t)}=-\cG_t^2\eta_{ASA}^k,\quad \overline{\tau^{*}\eta^l_{ASS}}=\frac{\overline{\tau^{*}\sigma_{ASS}^l}}{\overline{d\cG_t\circ \tau}}=\frac{\sigma_{ASS}^k}{d(-1/\cG_t)}=\cG_t^2\eta_{ASA}^k.
     \end{equation*}
 \end{proof}
The symmetry properties of the elements of the basis $\cB_{\text{sym}}$ allows us to deduce the following observations about the residues at the branch points
\begin{lema}\label{realimaginaryresidues}
    For all $t\in (0,1]$ the residues of the one-forms of $\cB_{\text{sym}}$ at the branch points $B_1(t)$ and $B_2(t)$ satisfy for all $j=1,2,3$ and $k=1,2$, $l=1$
    \begin{align*}
        \begin{split}
              \operatorname{Res}_{B_1(t)}\eta^j_{SSS}\in \bR,\quad , \operatorname{Res}_{B_1(t)}\eta^k_{SSA} \in \bR,\quad \operatorname{Res}_{B_1(t)}\eta^k_{ASA} \in \bR,\quad \operatorname{Res}_{B_1(t)}\eta^l_{ASS} \in \bR, \\ \operatorname{Res}_{B_2(t)}\eta^j_{SSS}\in i\bR ,\quad \operatorname{Res}_{B_2(t)}\eta^k_{SSA} \in i\bR,\quad \operatorname{Res}_{B_2(t)}\eta^k_{ASA} \in \bR ,\quad \operatorname{Res}_{B_2(t)}\eta^l_{ASS} \in \bR. 
        \end{split}
    \end{align*}
\end{lema}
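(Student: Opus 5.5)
The plan is to use the symmetry relations of Lemma \ref{symmetrypropertiesBsym} under $A_1$ and $A_2$, together with the fact that $B_1(t)$ is fixed by $A_2$ and $B_2(t)$ is fixed by $A_1$. The $\tau$-relations are not needed.

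First I check the fixed points directly from \eqref{eq:simetriasA}. Since $a_t$ is real and $q(a_t)>0$, we get $A_2(B_1(t))=(\overline{a_t},\overline{\sqrt{q(a_t)}})=B_1(t)$. Since $b_t$ is real, $A_1(B_2(t))=(b_t,-\overline{i\sqrt{\abs{q(b_t)}}})=(b_t,i\sqrt{\abs{q(b_t)}})=B_2(t)$. Note also that $A_2$ fixes $B_2$ only up to conjugating $w$, giving $B_4$, and $A_1$ sends $B_1$ to $B_3$. So the useful pairs are $(A_2,B_1)$ and $(A_1,B_2)$.

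Next comes the general residue identity. Let $\gamma$ be an anti-holomorphic involution fixing a point $p$, and let $\eta$ be a meromorphic one-form. Take a small loop $c$ around $p$. The map $\gamma$ reverses orientation, so $\gamma(c)$ is homologous to $-c$ near $p$. Then
\begin{equation*}
2\pi i\,\operatorname{Res}_p \overline{\gamma^*\eta}=\int_c\overline{\gamma^*\eta}=\overline{\int_{\gamma(c)}\eta}=-\overline{2\pi i\operatorname{Res}_p\eta},
\end{equation*}
so $\operatorname{Res}_p\overline{\gamma^*\eta}=\overline{\operatorname{Res}_p\eta}$. The same conclusion can be checked in the local chart \eqref{chartgenericpoints}, where $\gamma$ acts as $\xi\mapsto\bar\xi$ because the fixed points lie on real $z$. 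In that chart, $\overline{\gamma^*\eta}$ has coefficients that are the conjugates of those of $\eta$.

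Combining these: if $\overline{\gamma^*\eta}=\epsilon\eta$ with $\epsilon=\pm1$, then $\overline{\operatorname{Res}_p\eta}=\epsilon\operatorname{Res}_p\eta$. For $\epsilon=1$ the residue is real, and for $\epsilon=-1$ it is purely imaginary. At $B_1$ with $A_2$, Lemma \ref{symmetrypropertiesBsym} gives $\epsilon=+1$ for all four families, so every residue at $B_1$ is real. At $B_2$ with $A_1$, the families SSS and SSA have $\epsilon=-1$, giving residues in $i\bR$, while ASA and ASS have $\epsilon=+1$, giving real residues. This is exactly the claim.

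The main point to double check is the sign convention in the local identity: that orientation reversal exactly cancels the conjugation of $i$. I would verify it concretely in a chart, taking $\eta=\frac{c}{\xi}d\xi$ with $\gamma(\xi)=\bar\xi$. Then $\overline{\gamma^*\eta}=\frac{\bar c}{\xi}d\xi$, whose residue is $\bar c$, confirming the identity. It remains to check that $A_2$ near $B_1$ and $A_1$ near $B_2$ are indeed $\xi\mapsto\bar\xi$ in the chart \eqref{chartgenericpoints} centered at real $x_0$. This holds because $z\mapsto\bar z$ and the square root branch is compatible: real for $B_1$, and, for $B_2$, the sign flip in $A_1$ compensates $\overline{i}=-i$.
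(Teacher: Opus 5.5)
Your argument is correct, but it follows a different route from the paper's.

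The paper proves the lemma by direct computation. By \eqref{residuecalculationoneforms} in Lemma \ref{simplificationresidueeta}, the residue at $B_i(t)$ takes one of two forms:
\begin{itemize}
\item for the SSS and SSA families, where only the real polynomial $P$ appears, it is $\frac{2}{\cG_t(B_i(t))}$ times the residue, at the real point $p_i(t)$, of a rational function with real coefficients;
\item for the ASA and ASS families, where only the real polynomial $Q$ appears, it is $\frac{2}{\sqrt{r}}$ times such a real residue.
\end{itemize}
Proposition \ref{residuetool} shows these $z$-residues are real. The explicit coordinates in Lemma \ref{BranchpointsDeformationmap} give $\cG_t(B_1(t))\in\bR$ and $\cG_t(B_2(t))\in i\bR$, both finite and nonzero, and the stated pattern follows.

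You instead observe that $A_2$ fixes $B_1(t)$ and $A_1$ fixes $B_2(t)$. You then combine the fixed-point case of Lemma \ref{Residuerelatedpoints} with the sign rules of Lemma \ref{symmetrypropertiesBsym}. That lemma is already in the appendix, so you could cite it rather than rederive it.

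What your version buys:
\begin{itemize}
\item It is independent of the choice of basis within each symmetry class.
\item It needs no information about the value $\cG_t(B_i)$.
\item It does not need the local expansion $1/\cG_t=1/\cG_t(B_i)+O(\xi^2)$ at a branch point.
\item It explains why the real/imaginary pattern occurs. It is the same mechanism the paper uses in Theorem \ref{residuematrix} to fill in the remaining rows of the Residue Matrix, so the two steps become one argument.
\end{itemize}

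The paper's computation has its own advantage: it produces the explicit residue formulas that are needed anyway for the nonvanishing claims in Lemma \ref{specialpropertiesbasisBsym}. In that setting the reality statement comes essentially for free.
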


\begin{proof}
    The result follows from an application of Lemma \ref{BranchpointsDeformationmap}, Lemma \ref{simplificationresidueeta}, the decomposition of the $H^{0,2}(\cG_t)$ into eight real subspaces \eqref{decompositionH0} and Proposition \ref{residuetool}. 
\end{proof}
The Residue Matrix in the complex basis $\cB_{\text{sym}}$ has several symmetries due to the fact that all branch points are determined by $B_1(t)$ and $B_2(t)$ by applying the $A_1,\ A_2$ or $\tau$ symmetries. Therefore, the relevant information of the Residue Matrix is contained in its first two rows. To exploit these symmetries, we introduce the following terminology
\begin{defn}\label{residuevectors}
\normalfont
    We adopt the following notation for the elements of the Residue Matrix
    \begin{align*}
    \begin{split}
    &\Vec{\mu}_a\coloneq \left(\text{Res}_{B_1}\eta^1_{SSS},\ \text{Res}_{B_1}\eta^2_{SSS},\ \text{Res}_{B_1}\eta^3_{SSS}\right)\in \bR^3,\quad i\Vec{\mu}_b\coloneq \left(\text{Res}_{B_2}\eta^1_{SSS},\ \text{Res}_{B_2}\eta^2_{SSS},\ \text{Res}_{B_2}\eta^3_{SSS}\right)\in i\bR^3,\\&
    \Vec{\nu}_a\coloneq \left(\text{Res}_{B_1}\eta^1_{SSA},\text{Res}_{B_1}\eta^2_{SSA}\right)\in \bR^2,\quad i \Vec{\nu}_b\coloneq \left(\text{Res}_{B_2}\eta^1_{SSA},\text{Res}_{B_2}\eta^2_{SSA}\right)\in i\bR^2,\\& \Vec{\kappa}_{a}\coloneq \left(\text{Res}_{B_1}\eta^1_{ASA},\text{Res}_{B_1}\eta^2_{ASA}\right)\in \bR^2,\quad \Vec{\kappa}_{b}\coloneq \left(\text{Res}_{B_2}\eta^1_{ASA},\text{Res}_{B_2}\eta^2_{ASA}\right)\in \bR^2,\\&    \lambda_{a}\coloneq \text{Res}_{B_1}\eta^1_{ASS}\in \bR \quad \lambda_{b}\coloneq \text{Res}_{B_2}\eta^1_{ASS}\in \bR.
        \end{split}
    \end{align*}
    The fact that some residues are purely real and some others are purely imaginary come from Lemma \ref{realimaginaryresidues}.
\end{defn}
We present the symmetries of the Residue Matrix in the following theorem

\begin{thm}\label{residuematrix}
    The Residue Matrix with respect to the basis $\cB_{\text{sym}}$ has the form
    \begin{gather*}
       \operatorname{Res}_{\cB_{\text{sym}}}(\cG_t)= \begin{pmatrix}
I_4 & 0 \\
0 & D_4
\end{pmatrix}_{8\times 8} \left(\begin{array}{cccc}           \Vec{\mu}_a&\Vec{\nu}_a&\Vec{\kappa}_a&\lambda_a\\ i\Vec{\mu}_b&i\Vec{\nu}_b&\Vec{\kappa}_b&\lambda_b\\ -\Vec{\mu}_a&-\Vec{\nu}_a&\Vec{\kappa}_a&\lambda_a\\ -i\Vec{\mu}_b&-i\Vec{\nu}_b&\Vec{\kappa}_b&\lambda_b\\\Vec{\mu}_a&-\Vec{\nu}_a&-\Vec{\kappa}_a&\lambda_a\\ -i\Vec{\mu}_b&i\Vec{\nu}_b&-\Vec{\kappa}_b&\lambda_b\\ -\Vec{\mu}_a&\Vec{\nu}_a&-\Vec{\kappa}_a&\lambda_a\\ i\Vec{\mu}_b&-i\Vec{\nu}_b&-\Vec{\kappa}_b&\lambda_b
        \end{array}\right)_{8\times 8},
    \end{gather*}
    where
    \begin{equation*}
        D_4=\operatorname{diag}(\overline{\cG_t(B_1)^2},\overline{\cG_t(B_2)^2},\overline{\cG_t(B_3)^2},\overline{\cG_t(B_4)^2}).
    \end{equation*}
\end{thm}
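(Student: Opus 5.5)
The plan is to derive every row of the Residue Matrix from the first two rows, using the symmetry relations of Lemma \ref{symmetrypropertiesBsym}, the description $B_3=A_1(B_1)$, $B_4=A_2(B_2)$, $B_{4+i}=\tau(B_i)$ from Lemma \ref{BranchpointsDeformationmap}, and the reality statements of Lemma \ref{realimaginaryresidues}. The basic tool is a transformation rule for residues under an anti-holomorphic involution $\gamma$. I expect this is the content of Proposition \ref{residuetool}; if not, it is proved as follows. For a meromorphic one-form $\eta$, the form $\omega:=\overline{\gamma^*\eta}$ is meromorphic, and
\begin{equation*}
\operatorname{Res}_{\gamma(p)}\eta=\overline{\operatorname{Res}_{p}\,\overline{\gamma^*\eta}}.
\end{equation*}
To see this, take a small positively oriented loop $c$ around $p$. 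Since $\gamma$ reverses orientation, $\oint_{\gamma(c)}\eta$ taken with positive orientation equals $-\oint_c\gamma^*\eta=-\overline{\oint_c\omega}$. Dividing by $2\pi i$ turns the minus sign into complex conjugation of $\tfrac{1}{2\pi i}\oint_c\omega$.

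\textbf{Rows 3 and 4.} Apply the rule with $\gamma=A_1$ at $p=B_1$, together with Lemma \ref{symmetrypropertiesBsym}. This gives $\operatorname{Res}_{B_3}\eta=-\overline{\operatorname{Res}_{B_1}\eta}$ for the $SSS$ and $SSA$ forms, and $\operatorname{Res}_{B_3}\eta=+\overline{\operatorname{Res}_{B_1}\eta}$ for the $ASA$ and $ASS$ forms. Since all these residues at $B_1$ are real (Lemma \ref{realimaginaryresidues}), row 3 is $(-\vec\mu_a,-\vec\nu_a,\vec\kappa_a,\lambda_a)$. Similarly, $\gamma=A_2$ at $B_2$ gives $\operatorname{Res}_{B_4}\eta=\overline{\operatorname{Res}_{B_2}\eta}$ for all basis elements. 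Conjugation sends the purely imaginary entries $i\vec\mu_b,i\vec\nu_b$ to their negatives and fixes the real entries $\vec\kappa_b,\lambda_b$, which is row 4.

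\textbf{Rows 5 to 8.} Here $\overline{\tau^*\eta}=\pm\cG_t^2\eta$, so I need $\operatorname{Res}_{B_i}(\cG_t^2\eta)=\cG_t(B_i)^2\operatorname{Res}_{B_i}\eta$. Because $\sigma$ has at most a simple pole at $B_i$ and $d\cG_t$ has a simple zero there, $\eta$ has a pole of order at most two. Because $B_i$ is a branch point of order one, in a local coordinate $\cG_t=\cG_t(B_i)+O(\xi^2)$, and hence $\cG_t^2=\cG_t(B_i)^2+O(\xi^2)$. This is the same argument as in Lemma \ref{simplificationresidueeta}. It uses that $\cG_t(B_i)$ is finite and nonzero: by Lemma \ref{zeroespolos} the zeros and poles of $\cG_t$ are at $P_0,P_\infty,P_{\frac1r},P_{-r},P^\pm_{-t},P^\pm_{\frac1t}$, and none of these lies over a root of $k_t$ (Lemma \ref{lem:posicoesdosatbt}). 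The transformation rule then yields
\begin{equation*}
\operatorname{Res}_{\tau(B_i)}\eta=\pm\,\overline{\cG_t(B_i)^2}\;\overline{\operatorname{Res}_{B_i}\eta},
\end{equation*}
with sign $+$ for the $SSS$ and $ASS$ forms and $-$ for the $SSA$ and $ASA$ forms. Applying this to rows 1 to 4 (for $i=1,\dots,4$) and conjugating the real or imaginary entries as before gives rows 5 to 8 exactly as displayed, multiplied on the left by $D_4$. For instance, for $SSS$ at $B_6$ the entry is $\overline{\cG_t(B_2)^2}\,\overline{i\vec\mu_b}=\overline{\cG_t(B_2)^2}(-i\vec\mu_b)$.

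The main obstacle is bookkeeping rather than substance: getting the orientation-reversal sign in the transformation rule right, and justifying that multiplication by $\cG_t^2$ acts on residues as multiplication by the value $\cG_t(B_i)^2$ despite the double pole. I would check both carefully and then verify the sign pattern entry by entry.
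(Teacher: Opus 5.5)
Your proposal is correct and is essentially the paper's proof. Rows 3--4 come from $A_1$ and $A_2$ via the conjugation rule for residues under an anti-holomorphic involution together with Lemma \ref{realimaginaryresidues}, and rows 5--8 come from $\tau$ via $\operatorname{Res}_{B_i}(\cG_t^2\eta)=\cG_t(B_i)^2\operatorname{Res}_{B_i}\eta$, justified as in the paper by the pole of order at most two and $\cG_t=\cG_t(B_i)+O(\xi^2)$; the only slip is that the transformation rule is Lemma \ref{Residuerelatedpoints}, not Proposition \ref{residuetool} (the partial-fraction residue formula), though your orientation-reversal argument proves it correctly anyway.
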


\begin{proof}
    We know from Lemma \ref{BranchpointsDeformationmap} that $B_3(t) = A_1(B_1(t))$. Using Definition \ref{residuevectors}, Lemma \ref{symmetrypropertiesBsym} and Lemma \ref{Residuerelatedpoints} we relate the first and third row of the Residue Matrix
    \begin{align*}
        \operatorname{Res}_{\cB_{\text{sym}}}(\cG_t)_{3,\cdot}&=(\operatorname{Res}_{B_3}\eta^j_{SSS},\operatorname{Res}_{B_3}\eta^k_{SSA},\operatorname{Res}_{B_3}\eta^k_{ASA},\operatorname{Res}_{B_3}\eta^l_{ASS})\\&=\overline{(\operatorname{Res}_{B_1}\overline{A_1^{*}\eta^j_{SSS}},\operatorname{Res}_{B_1}\overline{A_1^{*}\eta^k_{SSA}},\operatorname{Res}_{B_1}\overline{A_1^{*}\eta^k_{ASA}},\operatorname{Res}_{B_1}\overline{A_1^{*}\eta^l_{ASS})}}\\&=\overline{(-\operatorname{Res}_{B_1}\eta_{SSS}^j,-\operatorname{Res}_{B_1}\eta_{SSA},\operatorname{Res}_{B_1}\eta^k_{ASA},\operatorname{Res}_{B_1}\eta^l_{ASS})}\\&=\overline{(-\Vec{\mu}_a,-\Vec{\nu}_a,\Vec{\kappa}_a,\lambda_a)}\\&=(-\Vec{\mu}_a,-\Vec{\nu}_a,\Vec{\kappa}_a,\lambda_a).
    \end{align*}
    The form of the fourth row of the Residue Matrix follows from the observation that $B_4(t)=A_2(B_2(t))$ and a similar analysis as before. The last half of the Residue Matrix is obtained through the action of the $\tau$ symmetry on the first four rows, since $B_{i+4}(t)=\tau(B_i(t))$. We point out that the one-forms $\eta\in \cB_{\text{sym}}$ have poles of order at most two at the branch points $B_i(t)=(p_i(t),w(p_i(t)))$, therefore, in a complex chart $\phi^{-1}(\xi)$ of equation \eqref{chartgenericpoints} centered at $B_i(t)$ we have
    \begin{align*}
    &\left(\phi^{-1}\right)^{*}\eta=\left(\frac{a_{-2}}{\xi^2}+\frac{a_{-1}}{\xi}+h(\xi)\right)d\xi, \ h\ \text{holomorphic}, \\
    &a_{-1}=\operatorname{Res}_{p_i(t)}\eta,\\ &\left(\cG_t\circ \phi^{-1}\right)^2(\xi)=\cG_t(B_i(t))^2+O(\xi^2).
    \end{align*}
   It follows that for all $\eta\in \cB_{\text{sym}}$
    \begin{equation*}
\operatorname{Res}_{B_i}\cG_t^2\eta=\cG_t(B_i(t))^2\operatorname{Res}_{B_i}\eta.
    \end{equation*}
    We conclude the proof by showing how it is possible to relate the two halves of the Residue Matrix through the action of the $\tau$ symmetry, in the particular case of the second and sixth rows, the others follow the same principle
    \begin{align*}
           \operatorname{Res}_{\cB_{\text{sym}}}(\cG_t)_{6,\cdot}&=(\operatorname{Res}_{B_6}\eta^j_{SSS},\operatorname{Res}_{B_6}\eta^k_{SSA},\operatorname{Res}_{B_6}\eta^k_{ASA},\operatorname{Res}_{B_6}\eta^l_{ASS})\\&=\overline{(\operatorname{Res}_{B_2}\overline{\tau^{*}\eta^j_{SSS}},\operatorname{Res}_{B_2}\overline{\tau^{*}\eta^k_{SSA}},\operatorname{Res}_{B_2}\overline{\tau^{*}\eta^k_{ASA}},\operatorname{Res}_{B_2}\overline{\tau^{*}\eta^l_{ASS})}}\\&=\overline{(\operatorname{Res}_{B_2}\cG_t^2\eta_{SSS}^j,-\operatorname{Res}_{B_2}\cG_t^2\eta_{SSA},-\operatorname{Res}_{B_2}\cG_t^2\eta^k_{ASA},\operatorname{Res}_{B_2}\cG_t^2\eta^l_{ASS})}\\&=\overline{\cG_t^2(B_2(t))}\ \overline{(\operatorname{Res}_{B_2}\eta_{SSS}^j,-\operatorname{Res}_{B_2}\eta_{SSA},-\operatorname{Res}_{B_2}\eta^k_{ASA},\operatorname{Res}_{B_2}\eta^l_{ASS})}\\&=\overline{\cG_t^2(B_2(t))}\ \overline{(i\Vec{\mu}_b,-i\Vec{\nu}_b,-\Vec{\kappa}_b,\lambda_b)}\\&=\overline{\cG_t^2(B_2(t))}\ (-i\Vec{\mu}_b,i\Vec{\nu}_b,-\Vec{\kappa}_b,\lambda_b),
    \end{align*}
    which completes the proof.
\end{proof}
The following corollary can be established by Proposition \ref{isomorphismHchapeuspace} and straightforward row operations in the Residue Matrix $\operatorname{Res}_{\cB_{\text{sym}}}(\cG_t)$ of Theorem \ref{residuematrix}, since these operations preserve the null space.

\begin{cor}\label{reducedresiduematrix}
    The space $\hat{H}(\cG_t)$ is isomorphic to the nullity space of the reduced Residue Matrix
    \begin{gather*}
       \operatorname{Res}^{\text{red}}_{\cB_{\text{sym}}}(\cG_t) =\begin{pmatrix}
\Vec{\mu}_a&0&0&0\\i\Vec{\mu}_b&0&0&0\\0&0&\Vec{\kappa}_a&0\\0&0&\Vec{\kappa}_b&0\\0&\Vec{\nu}_a&0&0\\0&i\Vec{\nu}_b&0&0\\ 0&0&0&\lambda_a\\0&0&0&\lambda_b
        \end{pmatrix}_{8\times 8}.
    \end{gather*}
\end{cor}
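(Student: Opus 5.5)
We combine Proposition \ref{isomorphismHchapeuspace} with a sequence of invertible row operations on $\operatorname{Res}_{\cB_{\text{sym}}}(\cG_t)$. Left multiplication by an invertible matrix preserves the null space, so $\hat{H}(\cG_t)\cong \text{Nul}(\operatorname{Res}_{\cB_{\text{sym}}}(\cG_t))$ will equal the null space of the reduced matrix. The first step is to strip off the diagonal factor in Theorem \ref{residuematrix}: we multiply on the left by $\operatorname{diag}(I_4,D_4^{-1})$. This is legitimate because the branch points $B_i(t)$ are neither zeros nor poles of $\cG_t$. Indeed, by Lemma \ref{zeroespolos} the zeros and poles of $\cG_t$ are $P_0,P_{\pm r},P_{1/r},P_\infty,P^\pm_{-t},P^\pm_{1/t}$, while $B_i(t)$ lie over roots of $k_t$; the latter are disjoint from $\{0,-r,1/r,-t,1/t,\infty\}$ since $k_t(-t)\neq 0$, $k_t(1/t)\neq 0$ (otherwise $d\cG_t$ would not vanish simply there), and by Lemma \ref{lem:posicoesdosatbt}. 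Hence $D_4$ has nonzero entries $\overline{\cG_t(B_i)^2}$ and is invertible. After this step we are left with the $8\times 8$ block matrix whose rows are
$$(\pm\vec\mu_a,\pm\vec\nu_a,\pm\vec\kappa_a,\lambda_a)\quad\text{and}\quad(\pm i\vec\mu_b,\pm i\vec\nu_b,\pm\vec\kappa_b,\lambda_b),$$
with the sign patterns displayed in Theorem \ref{residuematrix}.

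The second step decouples the four column groups. We treat the four rows $R_1,R_3,R_5,R_7$ built from the $a$-data. Their sign patterns across the column blocks $(\mu,\nu,\kappa,\lambda)$ are
$$(+,+,+,+),\quad(-,-,+,+),\quad(+,-,-,+),\quad(-,+,-,+).$$
These are exactly the four characters of $\bZ_2\times\bZ_2$, so they form an invertible $4\times4$ Hadamard-type matrix $H$ with $H^TH=4I$. Replacing $(R_1,R_3,R_5,R_7)$ by the combinations $\tfrac14 H^T(R_1,R_3,R_5,R_7)^T$ isolates each block, giving
$$\tfrac14(R_1-R_3+R_5-R_7)=(\vec\mu_a,0,0,0),$$
and similarly $(0,\vec\nu_a,0,0)$, $(0,0,\vec\kappa_a,0)$, $(0,0,0,\lambda_a)$. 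The same operation on the $b$-rows $R_2,R_4,R_6,R_8$ has sign patterns $(+,+,+,+),(-,-,+,+),(-,+,-,+),(+,-,-,+)$. Up to a permutation of rows this is the same Hadamard matrix, so it produces $(i\vec\mu_b,0,0,0)$, $(0,i\vec\nu_b,0,0)$, $(0,0,\vec\kappa_b,0)$, $(0,0,0,\lambda_b)$. A final permutation of rows puts the result in the displayed order of the reduced Residue Matrix.

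The only points needing care are two routine checks. First, the invertibility of $D_4$, justified above. Second, a careful reading of the sign patterns of Theorem \ref{residuematrix}, in particular that the $b$-rows reproduce the same character table. No step is expected to present a real obstacle.
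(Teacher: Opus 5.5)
Your proposal is correct and follows the paper's route: the paper obtains the corollary from Proposition \ref{isomorphismHchapeuspace} together with unspecified null-space-preserving row operations on the matrix of Theorem \ref{residuematrix}, and you make those operations explicit. Your steps check out: inverting $D_4$ is legitimate, since Lemma \ref{lem:posicoesdosatbt} alone keeps $a_t,b_t,-1/a_t,-1/b_t$ away from $0,-r,1/r,-t,1/t$; your sign patterns agree with Theorem \ref{residuematrix}; and the Hadamard combinations of the $a$-rows and $b$-rows, followed by the row permutation, produce exactly the displayed reduced matrix.
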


In order to compute the nullity of the reduced Residue Matrix $\operatorname{Res}^{red}_{\cB_{\text{sym}}}(\cG_t)$ we use a specific basis of $\cB_{\text{sym}}$ that depend on careful choices of parameters presented below. We remark that these parameters were introduced ad hoc in order to make the first entries of the vectors $\Vec{\mu}_a, \Vec{\mu}_b, \Vec{\nu}_a, \Vec{\nu}_b, \Vec{\kappa}_a, \Vec{\kappa}_b$ vanish.
\begin{defn}\label{def:coefficientspolynomials}
\normalfont
   Consider the following set of real functions defined on the interval $(0,1]$
   \begin{align*}
       c_{1N}(t) =& -rt^4+(-4+2r^2)t^3-2rt^2+(-4+6r^2)t+3r,\quad
        c_{1D}(t) =-3rt^3+2r^2t^2-rt,\\
       c_{2N}(t) =&-t^4+4rt^3-4r^2t^2+8rt+(1-8r^2),\quad
        c_{2D}(t) =2t^3-5rt^2+(2+2r^2)t-3r,\\
       c_{3N}(t) =& -rt^6+(-4+8r^2)t^5+(23r-14r^3)t^4+(-20r^2+4r^4)t^3\\
       &+(13r-16r^3)t^2+(4-20r^2+12r^4)t+(-3r+6r^3) ,\\
       c_{4N}(t) =&2r^2t^6+(10r-8r^3)t^5+(8-38r^2+8r^4)t^4+(-4r+28r^3)t^3+(8-46r^2)t^2+(-30r+36r^3)t+18r^2,\\
       c_{34D}(t) =&-3rt^5+10r^2t^4+(-2r-10r^3)t^3+(6r^2+4r^4)t^2+(r-2r^3)t.
    \end{align*}
\end{defn}
Using these functions, we define the following polynomials in order to define the basis we want to use
\begin{defn}\label{specialpolynomials}
\normalfont
    We introduce the following set of polynomial functions
    \begin{align*}
        &P_1(z,t)\coloneq (z+t) (tz-1)p_1(z,t),\quad p_1(z,t)\coloneq c_{34D}(t) z^4 - c_{3N}(t) z^3 + c_{4N}(t) z^2 + c_{3N}(t) z + c_{34D}(t),\\ &P_2(z,t)\coloneq (z+t) (tz-1)p_2(z,t),\quad p_2(z,t)\coloneq  c_{1D}(t) z^4 - c_{1N}(t) z^3 - c_{1N}(t) z - c_{1D}(t), \\ &P_3(z,t)\coloneq (tz-1)^2 (z+r)p_3(z,t) ,\quad p_3(z,t)\coloneq c_{2D}(t) z^2 + c_{2N}(t) z - c_{2D}(t).
    \end{align*}
\end{defn}
Now we use this set of polynomials to define a symmetric basis of one-forms as follows:
\begin{lema}\label{specialbasis} 
    The following collection of eight one-forms 
     \begin{align*}
      &\eta_{SSS}^1(t)=  \frac{P_1(z,t)}{\mathcal{G}_t k_t^2} dz,\quad \eta_{SSS}^2(t) = \frac{(z+t) (tz-1) (z^3 - z)}{\mathcal{G}_t k_t^2} dz,\quad
        \eta_{SSS}^3(t) = \frac{(z+t) (tz-1) (z^4 + 1)}{\mathcal{G}_t k_t^2}dz,\\ &\eta_{SSA}^1(t) = \frac{P_2(z,t)}{\mathcal{G}_t k_t^2} dz,\quad \eta_{SSA}^2(t) = \frac{(z+t) (tz-1) (z^4 - z^3 - z - 1)}{\mathcal{G}_t k_t^2} dz, \\
        &\eta_{ASA}^1(t) = \frac{P_3(z,t)}{k_t^2} dz,\quad \eta_{ASA}^2(t) = \frac{(tz-1)^2 (z+r) (z^2 + z - 1)}{k_t^2}dz\\
        &\eta_{ASS}(t) =  \frac{(tz-1)^2 (z+r) (z^2+1)}{k_t^2} dz,
    \end{align*}
    constitute a complex basis $\cB_{\text{sym}}$ of $\cV(\cG_t)$.
\end{lema}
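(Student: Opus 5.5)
The plan is to translate each listed one-form back into a quadratic differential by multiplying by $d\mathcal{G}_t$. We then check three things: it lies in $H^{0,2}(\mathcal{G}_t)$, it lies in the correct real symmetry subspace of \eqref{decompositionH0}, and the resulting eight differentials are complex-linearly independent. Since $\dim_{\mathbb{C}} H^{0,2}(\mathcal{G}_t)=8$ by Theorem \ref{lema:formadeH0}, independence already gives a basis of $\cV(\cG_t)$.

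\textbf{Step 1: the $\sigma$ attached to each $\eta$.} By Lemma \ref{simplificationresidueeta}, the quadratic differential $\sigma=\bigl(\frac{P+Qw}{k_t}\bigr)\bigl(\frac{dz}{w}\bigr)^2$ corresponds to
\[
\eta=\frac{2(tz-1)(z+t)P}{\mathcal{G}_t k_t^2}\,dz+\frac{2(tz-1)^2(z+r)Q}{\sqrt{r}\,k_t^2}\,dz .
\]
Reading this backwards, each listed form corresponds, up to the harmless nonzero constants $2$ and $2/\sqrt r$, to the following data:
\begin{itemize}
\item $\eta^1_{SSS}$: $Q=0$ and $P=p_1(\cdot,t)$, which has shape $c_0z^4-c_1z^3+c_2z^2+c_1z+c_0$ with $c_0=c_{34D}$, $c_1=c_{3N}$, $c_2=c_{4N}$.
\item $\eta^2_{SSS}$, $\eta^3_{SSS}$: $Q=0$ and $P=z^3-z$, respectively $P=z^4+1$.
\item $\eta^1_{SSA}$, $\eta^2_{SSA}$: $Q=0$ and $P=p_2$, respectively $P=z^4-z^3-z-1$, both of shape $c_0z^4-c_1z^3-c_1z-c_0$.
\item $\eta^1_{ASA}$, $\eta^2_{ASA}$: $P=0$ and $Q=p_3$, respectively $Q=z^2+z-1$, both of shape $c_0z^2+c_1z-c_0$.
\item $\eta_{ASS}$: $P=0$ and $Q=z^2+1$.
\end{itemize}
In every case $\deg P\le 4$ and $\deg Q\le 2$, so Theorem \ref{lema:formadeH0} places each $\sigma$ in $H^{0,2}(\cG_t)$. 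The coefficients $c_{\ast}(t)$ are real for real $t$, so by the explicit descriptions after \eqref{decompositionH0} each $\sigma$ lies in the intended real subspace $H^0_{SSS}$, $H^0_{SSA}$, $H^0_{ASA}$ or $H^0_{ASS}$.

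\textbf{Step 2: independence.} The forms with $Q=0$ (pure $P$-part) and those with $P=0$ (pure $Qw$-part) cannot mix in a linear relation, because $1$ and $w$ are linearly independent over $\mathbb{C}(z)$. It is therefore enough to prove two separate facts.
\begin{itemize}
\item $\{p_1,\ z^3-z,\ z^4+1,\ p_2,\ z^4-z^3-z-1\}$ are independent in $\mathbb{C}_4[z]$. The $\tau$-palindromic space ($c_4=c_0$, $c_3=-c_1$) and the anti-palindromic space ($c_4=-c_0$, $c_3=c_1$, $c_2=0$) intersect trivially, so this splits into two checks.
\item $\{p_3,\ z^2+z-1,\ z^2+1\}$ are independent in $\mathbb{C}_2[z]$. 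The first two are $\tau$-antisymmetric and the last is symmetric, so again the check splits.
\end{itemize}
This leaves three determinant conditions:
\begin{itemize}
\item the $3\times3$ coefficient matrix of $p_1$, $z^3-z$, $z^4+1$ in the coordinates $(c_0,c_1,c_2)$ is nonsingular, which holds iff $c_{4N}(t)\neq 0$;
\item $p_2$ and $z^4-z^3-z-1$ are not proportional, which holds iff $c_{1D}(t)\neq c_{1N}(t)$;
\item $p_3$ and $z^2+z-1$ are not proportional, which holds iff $c_{2D}(t)\neq c_{2N}(t)$.
\end{itemize}
We must also make sure $p_1$, $p_2$, $p_3$ are not identically zero, but that follows from the conditions above.

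\textbf{Main obstacle.} The hard part is verifying these three scalar non-vanishing conditions for every $t\in(0,1]$ with $r$ the López parameter. The plan is to use the numerical bounds on $r$ from Lemma \ref{rLopezNumerical} together with the positivity framework of Appendix \ref{section::EstimatesPolynomials}.
\begin{itemize}
\item For $c_{4N}$, evaluate at $t=0$, which gives $18r^2>0$, and bound the remaining terms on $[0,1]$ by grouping them into monomials with sign-controlled coefficients, using $r>2$.
\item For $c_{1N}-c_{1D}$ and $c_{2N}-c_{2D}$, which are explicit quartic polynomials in $t$, show they have no root on $(0,1]$ by a sign or interval-bound argument with $r$ pinned in a small interval.
\end{itemize}
A further possible subtlety is that some $c$'s might vanish at isolated $t$. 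If that happens, we would fall back to showing independence directly via the residue vectors, or by rescaling the basis element, for instance dividing by $c_{34D}$ where it is nonzero.
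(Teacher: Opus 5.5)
Your proposal is correct and is essentially the paper's proof. Separating the $P$-part from the $Qw$-part (the paper does this by evaluating the relation at $(z,w)$ and $(z,-w)$ and adding/subtracting) and comparing coefficients reduces independence to $c_{4N}(t)\neq 0$, $c_{1N}(t)\neq c_{1D}(t)$ and $c_{2N}(t)\neq c_{2D}(t)$; the paper packages these as Lemma~\ref{propertiesconstants} and verifies them with the framework of Appendix~\ref{section::EstimatesPolynomials}.

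Your monomial-grouping argument on $[0,1]$ does settle all three, and crude bounds using only $r>2$ already suffice. Your closing fallback is unnecessary and could not help anyway, since these three conditions are also necessary for the eight listed forms to be independent.
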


The proof of Lemma \ref{specialbasis} relies on the following estimates.

\begin{lema}\label{propertiesconstants}
    For all $t\in(0,1]$ it holds that
    \begin{itemize}
        \item[I.] $0<c_{4N}(t)$.
        \item[II.] $0<c_{1N}(t)-c_{1D}(t)$.
        \item[III.] $c_{2N}(t)<c_{2D}(t)<0$.
    \end{itemize}
\end{lema}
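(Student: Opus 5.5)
These are three elementary polynomial inequalities in $t$ with the parameter $r$ fixed. The plan is to bound $r$ using Lemma \ref{rLopezNumerical} of Appendix \ref{section::Explicit estimates}, which gives an explicit interval $r\in[r_-,r_+]$ with $r>2$, and then treat each expression as a polynomial in $(t,r)$ on the rectangle $(0,1]\times[r_-,r_+]$.

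\textbf{Item III.} Both polynomials are small, so I will do these first.
\begin{itemize}
\item For $c_{2D}$, group the terms as
\[
c_{2D}(t)=2t^3+2t-5rt^2+2r^2t-3r = t\,(2r^2-5rt+2t^2)+2t-3r .
\]
The quadratic factor satisfies $2r^2-5rt+2t^2=(2r-t)(r-2t)$. For $t\le 1<r/2$ this is positive and at most $2r^2$, so a direct bound is not immediately conclusive. Instead I will check that $c_{2D}(0)=-3r<0$ and $c_{2D}(1)=2r^2-5r+4-3r=2r^2-8r+4$, which is negative for $r<2+\sqrt2\approx 3.41$. Lemma \ref{rLopezNumerical} should place $r$ below this value. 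I will then show $c_{2D}$ is convex or monotone in $t$ on $[0,1]$: its derivative is $6t^2-10rt+2+2r^2$, whose discriminant is negative. So $c_{2D}$ is increasing, hence $c_{2D}(t)\le c_{2D}(1)<0$.
\item For the difference,
\[
c_{2D}-c_{2N}=t^4+2t^3-4rt^3+(4r^2-5r)t^2+(2+2r^2-8r)t+(8r^2-3r-1).
\]
At $t=0$ this equals $8r^2-3r-1>0$. I will bound it below by dropping the nonnegative terms and estimating the others using $t\in[0,1]$ and $r\in[r_-,r_+]$; the constant term dominates.
\end{itemize}

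\textbf{Item II.} Write
\[
c_{1N}-c_{1D}=-rt^4+(2r^2+3r-4)t^3-(2r+2r^2)t^2+(6r^2-3)t+3r .
\]
At $t=0$ this equals $3r>0$. For $t\in(0,1]$ the negative terms are controlled by $rt^4+(2r+2r^2)t^2\le (3r+2r^2)t$ when $t\le1$. The positive linear term $(6r^2-3)t$ dominates this, since $6r^2-3-3r-2r^2=4r^2-3r-3>0$ for $r>2$. The remaining terms $(2r^2+3r-4)t^3$ and $3r$ are positive, so the expression is positive.

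\textbf{Item I.} This degree-six polynomial is the main obstacle. Its coefficients alternate in sign: $8-46r^2$ and $-8r^3+10r$ are large and negative, so crude termwise bounds may fail near $t\approx 0.5$–$1$. I would first check the endpoints: $c_{4N}(0)=18r^2>0$, and $c_{4N}(1)$ is an explicit polynomial in $r$ that should be positive on $[r_-,r_+]$. Then I would use the positivity framework of Appendix \ref{section::EstimatesPolynomials}: subdivide $[0,1]$ into finitely many intervals and, on each, bound $c_{4N}$ below using its value at the left endpoint minus a Lipschitz or Taylor remainder bound in $t$, with all $r$-dependence controlled by interval arithmetic from Lemma \ref{rLopezNumerical}. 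Alternatively, I would try a sum-of-squares style regrouping, for example $18r^2-30rt+\dots$ paired with the $t^2$ term, such as $18r^2+36r^3t-46r^2t^2$ with $t\le 1$, to absorb the negative terms.

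Success here depends on how tight the bracket on $r$ is, so I expect some case subdivision in $t$ to be necessary.
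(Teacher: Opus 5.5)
\textbf{A false step in Item III.} Your argument for $c_{2D}<0$ does not work as written. The discriminant of $c_{2D}'(t)=6t^2-10rt+2+2r^2$ is $100r^2-24(2+2r^2)=52r^2-48$, which is positive for $r>1$, not negative. Indeed $c_{2D}'(1)=2r^2-10r+8=2(r-1)(r-4)<0$ at $r\approx 2.5447$.

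So $c_{2D}$ is not increasing on $[0,1]$. It rises to its maximum at $t_*=(10r-\sqrt{52r^2-48})/12\approx 0.70$ and then decreases, with $c_{2D}(t_*)\approx -2.72>c_{2D}(1)\approx -3.41$. The inequality $c_{2D}(t)\le c_{2D}(1)$ you rely on is therefore false, even though the conclusion $c_{2D}<0$ is true.

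A repair in the same elementary spirit: for $t\in[0,1]$, use $2t^3\le 2t^2$ to get
\[
c_{2D}(t)\le (2-5r)t^2+(2+2r^2)t-3r .
\]
This is a concave quadratic with discriminant $4(1+r^2)^2-12r(5r-2)\approx -104<0$ for $r$ in the bracket of Lemma \ref{rLopezNumerical}. Hence the right-hand side is negative for every $t$. Your bound for $c_{2D}-c_{2N}>0$ is fine as stated.

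\textbf{Two smaller points.} In Item II, the linear coefficient of $c_{1N}-c_{1D}$ is $6r^2+r-4$, not $6r^2-3$. It is larger by $r-1>0$, so your estimate still goes through.

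In Item I you overestimate the difficulty. On $[0,1]$ one has $(8-46r^2)t^2\ge(8-46r^2)t$ and $(10r-8r^3)t^5\ge(10r-8r^3)t^3$, whence
\[
c_{4N}(t)\ge 18r^2+(36r^3-46r^2-30r+8)t+(20r^3+6r)t^3+(8r^4-38r^2+8)t^4+2r^2t^6 .
\]
Every coefficient here is positive for $r\approx 2.5447$; the $t^4$ coefficient only needs $r>2.13$. So no subdivision is required.

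\textbf{Comparison with the paper.} With these corrections your route genuinely differs from the paper's. The paper certifies all three items with the computer-assisted interval-subdivision bound $\Theta$ of Appendix \ref{section::EstimatesPolynomials}. It works this out explicitly only for $c_{2D}<0$ in Example \ref{ex:examplec2D} and leaves the rest to a Mathematica notebook. That method is mechanical and uniform across the many positivity checks in the paper. Your termwise bounds instead give a short proof of this particular lemma that can be checked entirely by hand.
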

\begin{proof}[Proof of Lemma \ref{propertiesconstants}]

Note that $c_{4N}$ is a polynomial in $t$ and $r$, as described in Definition \ref{def:coefficientspolynomials}. We denote this polynomial by $P_{4N}(t, r)$ and compute the $\Theta$ function for this problem, as defined in \eqref{eq:thetageral}, using the estimates for $r$ given by Lemma \ref{rLopezNumerical} and some large enough $n$ representing the number of sub-intervals, to show that
\begin{equation}\label{eq:thetaforfirstlemma}
    \Theta(P_{4N}; r^{\inf},r^{\sup},n) > 0.
\end{equation}
The concrete computations and the other items of this lemma are carried out in the companion Mathematica notebook. In Example \ref{ex:examplec2D}, we have included the computation related to the condition $c_{2D}(t) < 0$ using the method described in Appendix \ref{section::EstimatesPolynomials}.
\end{proof}

\begin{proof}[Proof of Lemma \ref{specialbasis} ]
    Fix a time $t\in (0,1]$. By Theorem \ref{lema:formadeH0} it is enough to prove that the eight one-forms are $\bC$-linearly independent. Suppose that there exists $\lambda_{j}\in \bC$, $j=1,\ldots, 8$ such that
    \begin{equation}\label{linearcombination}
        \lambda_1\eta_{SSS}^1+\lambda_2\eta_{SSS}^2+\lambda_3\eta_{SSS}^3+\lambda_4\eta_{SSA}^1+\lambda_5\eta_{SSA}^2+\lambda_6\eta_{ASA}^1+\lambda_7\eta_{ASA}^2+\lambda_8\eta_{ASS}=0.
    \end{equation}
    We claim that $\lambda_j=0$ for all $j=1,\ldots ,8$. In fact, evaluating equation \eqref{linearcombination} at the points $(z,w),\ (z,-w)\in M$ and using Definition \eqref{eq:deformapadegauss}, we have after adding and subtracting the pairs of equations that for all $(z,w)\in M$
    \begin{equation*}
        \lambda_1\eta_{SSS}^1+\lambda_2\eta_{SSS}^2+\lambda_3\eta_{SSS}^3+\lambda_4\eta_{SSA}^1+\lambda_5\eta_{SSA}^2=0,\quad \lambda_6\eta_{ASA}^1+\lambda_7\eta_{ASA}^2+\lambda_8\eta_{ASS}=0.
    \end{equation*}
After multiplying by appropriate factors we obtain
\begin{equation}\label{firstpair}
    \lambda_1p_1(z,t)+\lambda_2(z^3-z)+\lambda_3(z^4+1)+\lambda_4p_2(z,t)+\lambda_5(z^4-z^3-z-1)=0,
\end{equation}
\begin{equation}\label{secondpair}
    \lambda_6p_3(z,t)+\lambda_7(z^2+z-1)+\lambda_8(z^2+1)=0.
\end{equation}
On one hand, from the $z^2$ coefficient in equation \eqref{firstpair} we obtain that $\lambda_1c_{4N}(t)=0$ which means by Lemma \ref{propertiesconstants} that $\lambda_1=0$. By analyzing the $z^0$, $z$, $z^3$ and $z^4$ coefficients in equation \eqref{firstpair} we obtain that 
\begin{equation*}
   \lambda_3-\lambda_4c_{1D}(t)-\lambda_5=0 ,\quad-\lambda_2-\lambda_4c_{1N}(t)-\lambda_5=0,\quad \lambda_2-\lambda_4c_{1N}(t)-\lambda_5=0,\quad \lambda_3+\lambda_4c_{1D}(t)+\lambda_5=0,
\end{equation*}
which implies that $\lambda_2=\lambda_3=0$ and $\lambda_4c_{1N}(t)=\lambda_4c_{1D}(t)=-\lambda_5$. From Lemma \ref{propertiesconstants} we obtain $\lambda_4=\lambda_5=0$. On the other hand, from the coefficients $z^0$, $z$ and $z^2$ in equation \eqref{secondpair} we obtain
\begin{equation*}
    -\lambda_6c_{2D}(t)-\lambda_7+\lambda_8=0,\quad \lambda_6c_{2N}(t)+\lambda_7=0,\quad \lambda_6c_{2N}(t)+\lambda_7+\lambda_8=0,
\end{equation*}
which implies that $\lambda_8=0$ and $\lambda_6c_{2N}(t)=\lambda_6c_{2D}(t)=-\lambda_7$. From Lemma \ref{propertiesconstants} we have $\lambda_6=\lambda_7=0$ concluding the proof.
\end{proof}

Now we establish the properties of the residues in the basis of Lemma \ref{specialbasis}
\begin{lema}\label{specialpropertiesbasisBsym}
    The complex basis $\cB_{\text{sym}}$ of Lemma \ref{specialbasis} has the following properties for all $t\in (0,1]$
   \begin{align*}
       &\Vec{\mu}_a=(0,\mu_a^2,\mu_a^3),\quad \Vec{\mu}_b=(0,\mu_b^2,\mu_b^3),\quad \mu_a^2\mu_b^3-\mu_a^3\mu_b^2\neq 0,\\ &\Vec{\nu}_a=(0,\nu_a^2),\quad \Vec{\nu}_b=(0,\nu_b^2),\quad \nu_a^2\neq 0,\quad \nu_b^2\neq0,\\&\Vec{\kappa}_a=(0,\kappa_a^2),\quad \Vec{\kappa}_b=(0,\kappa_b^2),\quad \kappa_a^2\neq 0,\quad \kappa_b^2\neq0,\\ &\lambda_a\neq 0,\quad \lambda_b\neq0 .
   \end{align*}
\end{lema}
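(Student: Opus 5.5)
The plan is a direct computation of the residues from formula \eqref{residuecalculationoneforms}, with the symmetric structure of Theorem \ref{residuematrix} reducing everything to the two points $B_1(t)$ and $B_2(t)$. By Lemma \ref{BranchpointsDeformationmap} and Lemma \ref{lem:defkt}, $p_1(t)=a_t$ and $p_2(t)=b_t$ are simple roots of $k_t$. Each form in $\cB_{\text{sym}}$ is, up to the factor $1/\cG_t$ (which is constant up to $O(\xi^2)$ near a branch point), of the form $h(z)\,dz/k_t^2$ with $h$ a polynomial. For a double pole coming from a simple root $p$ of $k_t$ one has
\begin{equation*}
\operatorname{Res}_{p}\frac{h(z)}{k_t(z)^2}dz=\frac{h'(p)k_t'(p)-h(p)k_t''(p)}{k_t'(p)^3}.
\end{equation*}
Hence every entry of $\Vec{\mu}_a,\Vec{\mu}_b,\Vec{\nu}_a,\Vec{\nu}_b,\Vec{\kappa}_a,\Vec{\kappa}_b,\lambda_a,\lambda_b$ is a nonzero multiple, namely $2/\cG_t(B_i)$ or $2/\sqrt r$ divided by $k_t'(p)^3$, of the quantity $W_h(p):=h'(p)k_t'(p)-h(p)k_t''(p)$ evaluated at $p\in\{a_t,b_t\}$.

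\textbf{Vanishing of the first entries.} For $h\in\{P_1,P_2,P_3\}$ of Definition \ref{specialpolynomials}, I would show that $W_h$ vanishes at both $a_t$ and $b_t$. The cleanest route is to reduce $W_h$ modulo $k_t$ (as polynomials in $z$ with coefficients in $\bR(t,r)$) and check that the remainder vanishes at the roots.
\begin{itemize}
\item The $z\mapsto -1/z$ symmetry of $k_t$ and the palindromic or antipalindromic shape of $p_1,p_2,p_3$ should make the remainder a multiple of the quadratic $t\lambda^2-L_1\lambda+L_2+2t$ in $\lambda=\frac1z-z$, or else force it to vanish identically.
\item The conditions that remain are one or two linear equations on the coefficients of $p_j$.
\item The ratios $c_{1N}/c_{1D}$, $c_{2N}/c_{2D}$, $c_{3N}/c_{34D}$ and $c_{4N}/c_{34D}$ of Definition \ref{def:coefficientspolynomials} should be precisely the solutions of these equations.
\end{itemize}
This is a finite symbolic verification. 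I would do it by computer algebra, as in the companion notebook, and simplify using $w^2=q(z)$ only where needed.

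\textbf{Nonvanishing.} For $\eta^2_{SSA},\eta^2_{ASA},\eta_{ASS}$, the claims $\nu^2_{a,b}\neq0$, $\kappa^2_{a,b}\neq0$ and $\lambda_{a,b}\neq0$ each reduce to $W_h(a_t)\ne0$ and $W_h(b_t)\ne0$ for an explicit $h$. After eliminating $k_t'$ and $k_t''$ through the relations $k_t(a_t)=0$, these become explicit real functions of $t$ and $a_t$ (respectively $b_t$). I would bound them away from zero using the following inputs:
\begin{itemize}
\item the localization of $a_t,b_t$ from Lemma \ref{lem:posicoesdosatbt};
\item the closed forms \eqref{xtyt} and \eqref{eq:atbt};
\item the numerical bounds on $r$ from Lemma \ref{rLopezNumerical};
\item the interval-subdivision positivity framework of Appendix \ref{section::EstimatesPolynomials}, namely the $\Theta$ function, applied to a polynomial obtained after clearing square roots, or to upper and lower polynomial bounds on $a_t,b_t$.
\end{itemize}
The $2\times2$ determinant $\mu_a^2\mu_b^3-\mu_a^3\mu_b^2$ factors as a nonzero prefactor times $W_{h_2}(a_t)W_{h_3}(b_t)-W_{h_3}(a_t)W_{h_2}(b_t)$, with $h_2=(z+t)(tz-1)(z^3-z)$ and $h_3=(z+t)(tz-1)(z^4+1)$. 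This expression is antisymmetric in $(a_t,b_t)$, so I would divide out $(a_t-b_t)\neq0$ (recall $a_t>b_t$) and then prove a definite sign for the quotient. The quotient is symmetric in $a_t,b_t$, so I would express it through $x_t,y_t$, and hence through $L_1,L_2,t$ and the discriminant $\delta_t$ of \eqref{discriminant}.

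\textbf{Main obstacle.} The hardest step is uniformity of the sign estimates on all of $(0,1]$, especially as $t\to0^+$. There $k_t$ degenerates to $k_0$ of degree three, so one root escapes to infinity (and its partner $-1/b_t$ tends to $0$), and the residue expressions may blow up or cancel to leading order. To handle it, I would first rescale each quantity by the appropriate power of $t$, computing its asymptotic expansion at $t=0$ to confirm a nonzero limit. I would then apply the $\Theta$-subdivision method on $[\epsilon,1]$, and treat $(0,\epsilon]$ with explicit Taylor-remainder bounds.
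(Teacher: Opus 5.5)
Your proposal follows the paper's proof. The vanishing of the first entries is exactly the paper's identity $P_j'k_t'-k_t''P_j=k_tQ_j$, i.e.\ your $W_{P_j}\equiv 0 \bmod k_t$; your formula $\operatorname{Res}_p = W_h(p)/k_t'(p)^3$ even removes the paper's case split on whether $P_j(a_t,t)=0$. The non-vanishing is likewise reduced to explicit polynomials in $a_t,b_t,t,r$, whose sign is certified by the $\Theta$-subdivision method using piecewise-linear enclosures of $|a_t|,|b_t|$ and the bounds on $r$.

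One correction to your ``main obstacle'': as $t\to0^+$ it is $a_t\to0$ and $-1/a_t\to+\infty$ that degenerate. Meanwhile $b_t<-r$ stays bounded and tends to the negative root of $k_0(z)/z$. So, once the nonzero prefactors $1/\cG_t(B_i)$ and $k_t'(p)^{-3}$ are stripped, every quantity you need to be nonzero extends continuously to $t=0$ with a nonzero limit (e.g.\ the $\mu$-determinant numerator tends to a nonzero multiple of that root cubed). The paper therefore applies $\Theta$ directly on $[0,1]$, with no separate Taylor analysis near $0$.
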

\begin{proof}
We start by proving that for all $t\in (0,1] $$$\mu_a^1=0,\quad \mu_b^1=0,\quad \nu_a^1=0,\quad \nu_b^1=0,\quad \kappa_a^1=0,\quad \kappa_b^1=0.$$

These equations are equivalent to prove that for all $t\in (0,1]$ \begin{align*}
   & \text{Res}_{z=a_t}\frac{P_1(z,t)}{k_t^2(z)}=0,\quad \text{Res}_{z=b_t}\frac{P_1(z,t)}{k_t^2(z)}=0,\\
   & \text{Res}_{z=a_t}\frac{P_2(z,t)}{k_t^2(z)}=0,\quad \text{Res}_{z=b_t}\frac{P_2(z,t)}{k_t^2(z)}=0,\\ &\text{Res}_{z=a_t}\frac{P_3(z,t)}{k_t^2(z)}=0,\quad \text{Res}_{z=b_t}\frac{P_3(z,t)}{k_t^2(z)}=0,
\end{align*}
since the polynomial $k_t$ has only simple roots at $a,\ b,\ -\frac{1}{a},\ -\frac{1}{b}$ and the Gauss map $\cG$ has a branch point at the points $B_j$, \textit{i.e.} $\cG_t'(B_j)=0$.

We handle each pair of equations separately using the same strategy. The key idea is that the polynomial $k_t(z)$ of equation \eqref{eq:kt} and the polynomials $P_j(z)$ introduced in Lemma \ref{specialbasis} satisfy ordinary differential equations that have a direct consequence for the calculation of the residue vectors of Definition \ref{residuevectors}.

Let us consider the residues associated to the polynomial $P_1$. An straightforward computation shows that for fixed $t\in (0,1]$, we have
\begin{gather}\label{EDO1}
    P_1'(z)k_t'(z)-k_t''(z)P_1(z)=k_t(z)Q_1(z),
\end{gather}
where 
\begin{equation*}
    Q_1(z)=s_4z^4+s_3z^3+s_2z^2+s_1z+s_0,
\end{equation*}
with
\begin{align*}
s_4=12rt^2\left[1-2r^2+(6r+4r^3)t-(2+10r^2)t^2+10rt^3-3t^4\right],
\end{align*}
\begin{gather*}
    s_3=-16t\left[-r+2r^3+(2-7r^2+8r^4)t+(5r-12r^3)t^2-8r^2t^3+(11r-2r^3)t^4-(2+r^2)t^5+rt^6\right],
\end{gather*}
\begin{gather*}
    s_2=6\left[
    \begin{aligned}
    &-3r+6r^3+(4-5r^2+12r^4)t+(-10r+8r^3)t^2+(4-39r^2-8r^4)t^3\\&+(6r+30r^3)t^4+(4-15r^2+4r^4)t^5+(-10r+4r^3)t^6+(4-5r^2)t^7+rt^8),
    \end{aligned}
    \right]
\end{gather*}
\begin{gather*}
    s_1= 6r\left[
    \begin{aligned}
    -9r&+(12-12r^2)t+(10r+12r^3)t^2+(4-20r^2)t^3-8rt^4\\&+(12+4r^2)t^5+(-10r+4r^3)t^6+(4-4r^2)t^7+rt^8,
    \end{aligned}
    \right]
\end{gather*}
and
\begin{gather*}
     s_0=2\left[
    \begin{aligned}
    &3r-6r^3+(-4+11r^2-12r^4)t+(8r^3)t^2+(-4+13r^2)t^3\\&-(2r+10r^3)t^4+(-4+5r^2-4r^4)t^5+8rt^6+(-4+3r^2)t^7-rt^8.
    \end{aligned}
    \right]
\end{gather*}
Suppose that there exists some $t\in (0,1]$ such that $P_1(a_t,t)=0$. Then by equation \eqref{EDO1} it follows that $P_1'(a_t,t)=0$, since $a_t$ is a simple root of $k_t(z)$. This implies that $z=a_t$ is a zero of $P_1(z,t)$ with order at least two, and therefore 
\begin{equation*}
    \operatorname{Res}_{z=a_t}\frac{P_1(z,t)}{k_t^2(z)}=0.
\end{equation*}
Now, let $t\in (0,1]$ such that $P_1(a_t,t)\neq 0$. Hence, we may apply Proposition \ref{residuetool} and Lemma \ref{polynomialidentity} to obtain
\begin{equation*}
    \operatorname{Res}_{z=a_t}\frac{P_1(z,t)}{k_t^2(z)}=\frac{P_1(a_t,t)}{t^2(a_t-b_t)^2(a_t+\frac{1}{a_t})^2(a_t+\frac{1}{b_t})^2}\left(\frac{P_1'(a_t,t)}{P_1(a_t,t)}-\frac{k_t''(a_t)}{k_t'(a_t)}\right)=0,
\end{equation*}
by the differential equation \eqref{EDO1} satisfied by the polynomial $P_1(z,t)$. The proof for $z=b_t$ is completely analogous. Similarly the polynomials $P_2(z,t)$ and $P_3(z,t)$ satisfy the differential equation
\begin{gather}\label{EDO23}
    P_2'(z)k_t'(z)-k_t''(z)P_2(z)=k_t(z)Q_2(z),\quad P_3'(z)k_t'(z)-k_t''(z)P_3(z)=k_t(z)Q_3(z), 
\end{gather}
where
\begin{align*}
\begin{split}
Q_2(z)=&12rt^2(-1+2rt-3t^2)z^4-16t(r+(-2+4r^2)t-2rt^2-2t^3+rt^4)z^3\\&+6(t^2-1)\left[-3r+(4-6r^2)t+2rt^2+(4-2r^2)t^3+rt^4\right]z^2\\&2\left[3r+(-4+6r^2)t-7rt^2-5rt^4+(4-2r^2)t^5+rt^6\right],
\end{split}
\end{align*}
and
\begin{align*}
    Q_3(z)=&-\left[24rt^2-(16+16r^2)t^3+40rt^4-16t^5\right]z^3\\&-\left[-33rt+(18+72r^2)t^2-(102r+12r^3)t^3+(24+48r^2)t^4-33rt^5+6t^6\right]z^2\\&- \left[9r-60r^2t+(57r+36r^3)t^2-72r^2t^3+(27r+12r^3)t^4-12r^2t^5+3rt^6\right]z\\& +2-4r^2+(-5r+24r^3)t+(8-24r^2)t^2+(-6r+12r^3)t^3+(6-12r^2)t^4+3rt^5.
\end{align*}
The rest of the argument is completely analogous. 

To conclude the proof of this lemma, it remains to prove that for every $t \in (0,1]$ we have
\begin{equation}\label{eq:residuesnonzero}
\begin{split}
    \mu_a^2\mu_b^3-\mu_a^3\mu_b^2 \neq 0, \ \nu^2_a \neq 0, \ \nu^2_b \neq 0, \\
    \kappa_a^2 \neq 0, \ \kappa_b^2 \neq 0, \ \lambda_a \neq 0, \ \lambda_b \neq 0.
\end{split}
\end{equation}
We now reduce \eqref{eq:residuesnonzero} to the verification of the sign of some polynomials in several variables in order to apply the strategy presented in Appendix \ref{section::EstimatesPolynomials}. Let us describe how to prove the non-vanishing condition $\mu_a^2\mu_b^3-\mu_a^3\mu_b^2 \neq 0$ for every $t \in (0,1]$, and the other desired conditions can be proved similarly.

As observed in the beginning of this proof, in each of the residues $\mu_a^2$, $\mu_b^3$, $\mu_a^3$ and $\mu_b^2$ we can factor out the Gauss map evaluated at the branch point where it is computed. We then apply Lemma \ref{residuetool} to the remaining expression and obtain that any factor is a rational function in $a_t$, $b_t$, $t$ and $r$. We show in the companion Mathematica notebook that the expression obtained in this way for the condition $\mu_a^2\mu_b^3-\mu_a^3\mu_b^2 \neq 0$ that we are analyzing has a factor
\begin{equation*}
    Q_{\mu}(t)=\frac{a_t^2 b_t^2}{(a_t - b_t)(1 + a_t b_t)(1 + a_t^2)^2(1 + b^2)^2}
\end{equation*}
so that $\mu_a^2\mu_b^3-\mu_a^3\mu_b^2 = Q_{\mu}(t) \tilde{P}_{\mu}(a_t,b_t,t,r)$, where $\tilde{P}_{\mu}(a_t,b_t,t,r)$ is a polynomial. Using Lemma \ref{lem:posicoesdosatbt} we can verify that $Q_{\mu}(t)$ is uniformly bounded away from zero in $(0,1]$, therefore we only have to prove that the polynomial $\tilde{P}_{\mu}(a_t,b_t,t,r)$ is nonzero in $(0,1]$. 

In order to apply the strategy described in Appendix \ref{section::EstimatesPolynomials} we introduce positive variables defining the polynomial $P_{\mu}(|a_t|,|b_t|,t,r):=P_{\mu}(-a_t,-b_t,t,r)$. Now we use the construction of the piecewise linear functions described in Lemma \ref{lem:estimatesforabtau} that bound from above and from below the functions $|a_t|$ and $|b_t|$ in $(0,1]$, and we use the estimates for $r$ given by Lemma \ref{rLopezNumerical}, to compute the $\Theta$ function for this problem, as defined in \eqref{eq:thetageral}: there exists some number of sub-intervals $n$ such that
\begin{equation}\label{thetaforresidues}
    \Theta(P_{\mu}; |a|^{\inf}_t,|a|^{\sup}_t,|b|^{\inf}_t,|b|^{\sup}_t,r^{\inf},r^{\sup},n) < 0.
\end{equation}
 This computation is done in the companion Mathematica notebook, where we also include the other conditions in \eqref{eq:residuesnonzero}, which are similar to this.
\end{proof}
Now, we can compute the dimension of the space introduced in equation \eqref{eq:spaceresidues}.
\begin{thm}\label{Hchapelsymmetricbasis}
    For all $t\in (0,1]$, the complex dimension of the space $\hat{H}(\cG_t)$ is equal to three with a complex basis given by $\{\sigma_j=\eta_j(t)d\cG_t\}_{j=1}^3$ where
    \begin{align*}
       \eta_1(t)\coloneq \eta^1_{SSS}(t),\quad \eta_2(t)\coloneq \eta^1_{SSA}(t),\quad \eta_3(t)\coloneq \eta^1_{ASA}(t).
    \end{align*}
\end{thm}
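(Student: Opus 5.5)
The plan is to read off the nullity of the reduced Residue Matrix $\operatorname{Res}^{\text{red}}_{\cB_{\text{sym}}}(\cG_t)$ of Corollary \ref{reducedresiduematrix} directly from the structure established in Lemma \ref{specialpropertiesbasisBsym}, and then to transport the kernel back to $\hat{H}(\cG_t)$ through Proposition \ref{isomorphismHchapeuspace}. The reduced matrix is block diagonal after a permutation of rows and columns. Its column blocks are the $\mu$-columns (indices $1,2,3$, basis elements $\eta^1_{SSS},\eta^2_{SSS},\eta^3_{SSS}$), the $\nu$-columns (indices $4,5$, elements $\eta^1_{SSA},\eta^2_{SSA}$), the $\kappa$-columns (indices $6,7$, elements $\eta^1_{ASA},\eta^2_{ASA}$) and the $\lambda$-column (index $8$, element $\eta_{ASS}$). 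Consequently the kernel splits as a direct sum of the kernels of four smaller blocks, and I would compute each of these separately.

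For the $\mu$-block, the rows are $\Vec{\mu}_a=(0,\mu_a^2,\mu_a^3)$ and $i\Vec{\mu}_b=(0,i\mu_b^2,i\mu_b^3)$. Multiplying the second row by $-i$ does not change the kernel. The first column is then identically zero, and the remaining $2\times 2$ minor has determinant $\mu_a^2\mu_b^3-\mu_a^3\mu_b^2\neq 0$. Hence this block has rank $2$ and its kernel is spanned by $e_1$. For the $\nu$-block, the rows are $(0,\nu_a^2)$ and $(0,i\nu_b^2)$. Since $\nu_a^2\neq 0$, the kernel is spanned by $e_4$. The $\kappa$-block is handled the same way using $\kappa_a^2\neq0$, which gives a kernel spanned by $e_6$. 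For the $\lambda$-block, $\lambda_a\neq 0$ forces the coefficient of $e_8$ to vanish.

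Combining the four blocks, the kernel of the reduced matrix is $\operatorname{span}_\bC\{e_1,e_4,e_6\}$, which has complex dimension three. Under the isomorphism $T$ of Proposition \ref{isomorphismHchapeuspace}, using the ordering of $\cB_{\text{sym}}$, these vectors map to the quadratic differentials $\sigma=\eta\, d\cG_t$ with $\eta\in\{\eta^1_{SSS},\eta^1_{SSA},\eta^1_{ASA}\}$. This is exactly the claimed basis $\{\eta_1,\eta_2,\eta_3\}$. One point needs care: the passage from the full Residue Matrix of Theorem \ref{residuematrix} to its reduced form uses left multiplication by the invertible matrix $\operatorname{diag}(I_4,D_4)^{-1}$, followed by row combinations such as $(R_1\pm R_3)/2$. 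For $D_4$ to be invertible we need $\cG_t(B_i)\neq 0,\infty$. This holds because, by Lemma \ref{zeroespolos}, the zeros and poles of $\cG_t$ lie at $P_0,P_{-t}^\pm,P_{1/r},P_\infty,P_{1/t}^\pm,P_{-r}$, and by Lemma \ref{lem:posicoesdosatbt} none of these is a branch point.

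All of the real difficulty has already been absorbed into Lemma \ref{specialpropertiesbasisBsym}: the vanishing of the first residue entries (from the ODE identities \eqref{EDO1} and \eqref{EDO23}) and the computer-assisted non-vanishing of the determinant and of the scalar residues. Given that lemma, the present argument is linear algebra. The only step I expect to require attention is confirming that the reduction in Corollary \ref{reducedresiduematrix} preserves the kernel, which it does because each row operation is invertible, and keeping the column ordering consistent when identifying $e_1,e_4,e_6$ with the basis elements.
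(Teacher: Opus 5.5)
Your proposal is correct and follows essentially the same route as the paper: you read off the kernel of the reduced Residue Matrix of Corollary \ref{reducedresiduematrix} block by block using Lemma \ref{specialpropertiesbasisBsym}, which forces $c_2=c_3=c_5=c_7=c_8=0$, and then transport it back through the isomorphism of Proposition \ref{isomorphismHchapeuspace}. You also check that $D_4$ is invertible, i.e.\ that no branch point is a zero or pole of $\cG_t$ (via Lemmas \ref{zeroespolos} and \ref{lem:posicoesdosatbt}); the paper leaves this implicit in its row reduction, and your justification of it is correct.
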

\begin{proof}
Let us fix $t\in (0,1]$ and consider the complex basis $\cB_{\text{sym}}$ of Lemma \ref{specialbasis}. The space $H(\cG_t)$ is isomorphic to the null space of the reduced Residue Matrix $\operatorname{Res}^{\text{red}}_{\cB_{\text{sym}}}(\cG_t)$ by Corollary \ref{reducedresiduematrix}. A vector $\mathbf{c}=\sum_{j=1}^8c_je_j$ is in the null space of the reduced Residue Matrix if and only if
\begin{equation*}
    \begin{pmatrix}
        \mu_a^2&\mu_a^3\\ i\mu_b^2& i\mu_b^3
    \end{pmatrix}\begin{pmatrix}
        c_2\\c_3
    \end{pmatrix}=\begin{pmatrix}
        0\\ 0
    \end{pmatrix},\quad \nu_a^2c_5=0,\quad \nu_b^2c_5=0,\quad \kappa_a^2c_7=0,\quad \kappa_b^2c_7=0,\quad \lambda_ac_8=0,\quad \lambda_bc_8=0.
\end{equation*}
Hence, the quadratic form $$\sigma=\left(c_1\eta^1_{SSS}+c_2 \eta^2_{SSS}+c_3\eta^3_{SSS}+c_4 \eta^1_{SSA}+c_5\eta^2_{SSA}+c_6 \eta^1_{ASA}+c_7\eta^2_{ASA}+c_8\eta^1_{ASS}\right)d\cG_t\in H^{0,2}(\cG_t)$$ belongs to the space $H(\cG_t)$ if and only if
\begin{equation*}
    c_2=0,\quad c_3=0,\quad c_5=0,\quad c_7=0,\quad c_8=0,
\end{equation*}
where we have used Proposition \ref{isomorphismHchapeuspace} and Lemma \ref{specialpropertiesbasisBsym}.

Therefore, the quadratic differentials $\{\sigma_j=\eta_j(t)d\cG_t\}_{j=1}^3$ where $$\eta_1(t)=\eta_{SSS}^1(t),\quad \eta_2(t)=\eta_{SSA}^1(t),\quad \eta_3(t)=\eta_{ASA}^1(t)$$ generates the complex space $H(\cG_t)$, and moreover it constitutes a basis for that space since the $\eta_j(t)$ one-forms are linearly independent by an application of Lemma \ref{specialbasis}.
\end{proof}

\subsubsection{The period problem} 
The main objective of this Section is to prove that for each $t\in (0,1]$ the space $H(\mathcal{G}_t)$ defined in equation \eqref{eq:spaceperiods}, is trivial, \textit{i.e.} it is not possible to find a non-identically zero one-form $\eta(t) = \gamma_1(t) \eta_1(t) + \gamma_2(t) \eta_2(t) + \gamma_3(t) \eta_3(t)$ satisfying the period condition
\begin{equation*}
    \text{Re}\int_{\alpha_j} ((1 - \mathcal{G}_t^2), i(1 + \mathcal{G}_t^2) , 2\mathcal{G}_t)\eta(t)=0,
\end{equation*}
where $\alpha_1, \alpha_2$ are generators of the first homology group of $\overline{M}$.
\begin{defn}[{Period Matrix}]\label{periodmatrix}
\normalfont
    We introduce \textit{the Period Matrix} $\mathcal{P}er(\mathcal{G}_t)$ as the real square matrix
\begin{gather*}
    \mathcal{P}er(\mathcal{G}_t)=\begin{pmatrix}
        \text{Re}\int_{\alpha_1}\Phi_{l_i,m_j}(t)&-\text{Im}\int_{\alpha_1}\Phi_{l_i,m_j}(t)\\ \text{Re}\int_{\alpha_2} \Phi_{l_i,m_j}(t)&-\text{Im}\int_{\alpha_2}\Phi_{l_i,m_j}(t)
    \end{pmatrix}\in \cM_{6\times6}(\bR),
\end{gather*}
where
\begin{equation*}
    \Phi_{l_1,m_j}(t)\coloneq (1-\cG_t^2)\eta_j(t),\quad \Phi_{l_2,m_j}(t)\coloneq i(1+\cG_t^2)\eta_j(t) ,\quad \Phi_{l_3,m_j}(t)\coloneq 2\cG_t\eta_j(t).
\end{equation*}
\end{defn}

The next proposition has a straightforward proof and highlights the relevance of the Period Matrix. 

\begin{prop}
    Let $\{E_j\}_{j=1}^6$ be the canonical basis of $\bR^6$. For all $t\in (0,1]$, the linear map
    \begin{equation*}
        S:\text{Nul}(\mathcal{P}er(\mathcal{G}_t))\to H(\cG_t),\quad S\left(\sum_{j=1}^6c_j(t)E_j\right)=\sum_{j=1}^3(c_j(t)+ic_{j+3}(t))\sigma_j(t)
    \end{equation*}
    is an isomorphism.
\end{prop}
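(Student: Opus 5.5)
The map $S$ is built in two stages. First, Theorem \ref{Hchapelsymmetricbasis} gives a real-linear isomorphism from $\bR^6$ onto $\hat H(\cG_t)$ viewed as a real vector space:
\[
(c_1,\dots,c_6)\mapsto \sum_{j=1}^3 (c_j+ic_{j+3})\,\sigma_j(t).
\]
This map is injective because $\{\sigma_j(t)\}_{j=1}^3$ is a complex basis, so its real dimension is $6$. Second, since $H(\cG_t)\subset \hat H(\cG_t)$ is cut out by real-linear conditions, the task is to show that the image of a vector lies in $H(\cG_t)$ exactly when the vector lies in the null space of $\mathcal{P}er(\cG_t)$. Given that, $S$ is the restriction of an injective real-linear map and is onto $H(\cG_t)$, hence an isomorphism of real vector spaces.

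\emph{Reducing the period condition.} Take $\sigma=\sum_j \gamma_j\sigma_j$ with $\gamma_j=c_j+ic_{j+3}$. Then
\[
\frac{\sigma}{d\cG_t}=\sum_j\gamma_j\eta_j(t),
\]
so for each $l\in\{1,2,3\}$ the $l$-th component of the period vector along a cycle $\alpha$ is
\[
\text{Re}\int_\alpha \sum_j \gamma_j\Phi_{l,m_j}(t)=\sum_j\Big(c_j\,\text{Re}\int_\alpha\Phi_{l,m_j}-c_{j+3}\,\text{Im}\int_\alpha\Phi_{l,m_j}\Big).
\]
Here I use $\text{Re}\big((c+ic')I\big)=c\,\text{Re}\, I-c'\,\text{Im}\, I$. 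The factor $\tfrac12$ in \eqref{montielrosmap2} does not affect vanishing.

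Because $H_1(\overline M,\bZ)$ is generated by $\alpha_1,\alpha_2$ and integration is additive over cycles, it suffices to impose the condition on these two generators. This gives six real linear equations in $(c_1,\dots,c_6)$. These are precisely the rows of $\mathcal{P}er(\cG_t)$ applied to $(c_1,\dots,c_6)^T$, with rows indexed by $(\alpha_k,l)$ and columns indexed by $j$ for the real part and $j+3$ for the imaginary part.

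The integrals are well defined for two reasons. The one-forms $\Phi_{l,m_j}$ are meromorphic with poles only at the branch points $B_i(t)$ and at the ends $P_0,P_\infty$. By the residue conditions defining $\hat H$, they have zero residues at the branch points, so the periods along $\alpha_1,\alpha_2$ do not depend on the choice of representative cycle avoiding the poles. I would fix $\alpha_1,\alpha_2$ once and for all, avoiding all poles for every $t$.

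The only point needing care is what $H_1(\Sigma,\bZ)$ means in \eqref{eq:spaceperiods}. If it is read on the punctured surface, one must also check that loops around the poles $B_i(t)$, $P_0$, $P_\infty$ contribute no real period. For the $B_i(t)$, the residues of $\Phi_{l,m_j}$ vanish: $\cG_t(B_i)$ is constant to second order, and Lemma \ref{simplificationresidueeta} shows that multiplying by $1$, $\cG_t$ or $\cG_t^2$ preserves vanishing residues. For $P_0$ and $P_\infty$, $\cG_t$ is regular with a zero or a pole there (not a branch point for $t>0$), so $\sigma/d\cG_t$ is holomorphic there up to the factor $\cG_t^{\pm}$ coming from $(1\pm\cG_t^2)$. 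Any remaining residues are handled by the residue condition as in Montiel–Ros. With this accepted, the equivalence with $\text{Nul}(\mathcal{P}er(\cG_t))$ is immediate, which completes the proof.
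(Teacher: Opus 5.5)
Your proposal is correct. It is the straightforward argument the paper omits: coordinates in the complex basis $\{\sigma_j(t)\}_{j=1}^3$ of $\hat H(\cG_t)$ from Theorem~\ref{Hchapelsymmetricbasis} identify $\bR^6$ with $\hat H(\cG_t)$ as real vector spaces, and the six real period conditions along $\alpha_1,\alpha_2$ are exactly the rows of $\mathcal{P}er(\cG_t)$.

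Your discussion of punctures can be shortened. For $t>0$ the forms $\Phi_{l,m_j}$ have poles only at the $B_i(t)$; they are holomorphic at $P_0$, $P_\infty$ and at the other zeros and poles of $\cG_t$. At the $B_i(t)$ all residues vanish because $\operatorname{Res}_{B_i}\cG_t^k\eta=\cG_t(B_i)^k\operatorname{Res}_{B_i}\eta$. That identity is established in the proof of Theorem~\ref{residuematrix}, not in Lemma~\ref{simplificationresidueeta}. Hence the periods depend only on classes in $H_1(\overline{M},\bZ)$.
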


In order to compute the entries of the Period Matrix $\mathcal{P}er(\mathcal{G}_t)$, we introduce explicit generators for the homology group of $\overline{M}$.

\begin{figure}
    \centering
    \includegraphics[width=0.7\linewidth]{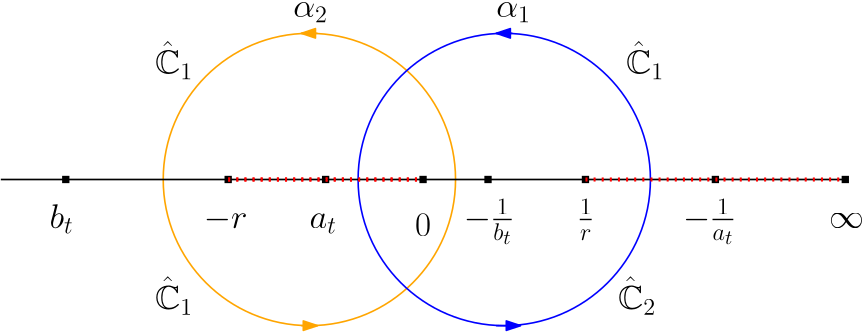}
    \caption{Generators of the first homology group of $\overline{M}$.}
    \label{fig:homological_paths}
\end{figure}
\begin{defn}[\textit{Cf}. \cite{Sarenhu}]
    \normalfont
    The first homology group of $\overline{M}$ is generated by the paths 
\begin{equation*}
    \alpha_1\coloneq \left\lbrace \left(\frac{1}{r}-\frac{r}{4}\right)+\left(\frac{1}{r}+\frac{r}{4}\right)e^{i\theta}\in \hat{\bC}_1| 0\leq \theta\leq \pi \right\rbrace \bigcup \left\lbrace \left(\frac{1}{r}-\frac{r}{4}\right)+\left(\frac{1}{r}+\frac{r}{4}\right)e^{i\theta}\in \hat{\bC}_2| \pi\leq \theta\leq 2\pi \right\rbrace, 
\end{equation*}
\noindent and 
\begin{equation*}
     \alpha_2\coloneq \left\lbrace \left(-r+\frac{1}{4r}\right)+\left(r+\frac{1}{4r}\right)e^{i\theta}\in \hat{\bC}_1| 0\leq \theta\leq 2\pi \right\rbrace.
\end{equation*}
\end{defn}
We compute the determinant of the Period Matrix associated with the Lopez Klein bottle
\begin{thm}\label{determinantperiodmatrix}
    \begin{equation*}
        \det(\mathcal{P}er(\mathcal{G}_t))=64i\int_{\alpha_1}\eta_1(t)\cdot \int_{\alpha_1}\eta_2(t)\cdot \int_{\alpha_1}\cG_t \eta_3(t)\cdot \int_{\alpha_2}\eta_1(t)\cdot \int_{\alpha_2}\eta_2(t)\cdot \int_{\alpha_2}\cG_t\eta_3(t).
    \end{equation*}
\end{thm}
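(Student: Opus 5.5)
The plan is to use the three anti-holomorphic symmetries $A_1$, $A_2$, $\tau$ to show that almost all entries of $\mathcal{P}er(\mathcal{G}_t)$ vanish or are determined by a single integral, so that the matrix becomes (after a permutation of rows and columns) block diagonal with $2\times 2$ blocks whose determinants are explicit. The first step is to compute the action of the symmetries on homology. Since $A_1,A_2,\tau$ are anti-holomorphic involutions of $\overline{M}$ preserving the branch points $\{P_0,P_{-r},P_{1/r},P_\infty\}$ of $z$, I would check directly from the explicit paths $\alpha_1,\alpha_2$ (circles in the $z$-plane, symmetric under $z\mapsto\bar z$, with the sheet of $w$ tracked) that each of $A_1,A_2,\tau$ sends $\alpha_j$ to $\pm\alpha_j$ in $H_1(\overline{M},\mathbb{Z})$. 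The signs would be recorded in a small table. For $\tau$ this uses $z\mapsto -1/\bar z$, which swaps the pairs $\{0,\infty\}$ and $\{-r,1/r\}$, so some care is needed to identify $\tau_*\alpha_j$ up to homology.

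Next, for any meromorphic one-form $\omega$ and anti-holomorphic $\gamma$, I would use the identity $\int_{\gamma_*\alpha}\omega=\overline{\int_\alpha\overline{\gamma^*\omega}}$. From Lemma~\ref{symmetrypropertiesBsym}, together with $\mathcal{G}_t\circ A_1=-\overline{\mathcal{G}_t}$ and $\mathcal{G}_t\circ A_2=\overline{\mathcal{G}_t}$ (read off from \eqref{eq:deformapadegauss}) and $\mathcal{G}_t\circ\tau=-1/\overline{\mathcal{G}_t}$, we obtain symmetry rules for $\eta_j$, $\mathcal{G}_t\eta_j$ and $\mathcal{G}_t^2\eta_j$. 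The key consequences I expect are the following.
\begin{itemize}
\item[(i)] The $\tau$-rule converts $\int_{\alpha}\mathcal{G}_t^2\eta_j$ into $\pm\overline{\int_\alpha \eta_j}$. Hence the first two components of $\Phi$ for $\eta_1,\eta_2$ become $\int\eta_j\mp\overline{\int\eta_j}$ and $i(\int\eta_j\pm\overline{\int\eta_j})$, that is, twice a real or imaginary part.
\item[(ii)] The combined $A_1$, $A_2$ parities force $\int_{\alpha_k}\mathcal{G}_t\eta_1$ and $\int_{\alpha_k}\mathcal{G}_t\eta_2$ to vanish, or to be purely real or purely imaginary.
\item[(iii)] For $\eta_3$ the roles are reversed: $\int_{\alpha_k}\eta_3$ and $\int_{\alpha_k}\mathcal{G}_t^2\eta_3$ cancel in the relevant real parts, and only $\int\mathcal{G}_t\eta_3$ survives. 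This explains why $\mathcal{G}_t\eta_3$ appears in the formula.
\end{itemize}
The same parities should make each single integral $\int_{\alpha_k}\eta_1$, $\int_{\alpha_k}\eta_2$, $\int_{\alpha_k}\mathcal{G}_t\eta_3$ either purely real or purely imaginary.

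With these reductions, each column of $\mathcal{P}er(\mathcal{G}_t)$ (indexed by $\operatorname{Re}$ or $\operatorname{Im}$ part and by $j$) has nonzero entries only in rows associated to one coordinate $l_i$. I would then permute rows and columns to exhibit the matrix as block diagonal. Each block has entries of the form $2\operatorname{Re}$ or $2\operatorname{Im}$ of $\int_{\alpha_1}$ and $\int_{\alpha_2}$ of a single form, and its determinant is $\pm 4$ times a product like $\int_{\alpha_1}\eta_j\int_{\alpha_2}\eta_j$, possibly with a factor $i$ when the integrals are imaginary. Multiplying the three blocks gives $64$ times the product of the six integrals. The overall factor $i$ and the sign come from the reality pattern of the integrals and from the permutation signs.

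The main obstacle is the first step: correctly determining $\tau_*\alpha_j$, $(A_1)_*\alpha_j$, $(A_2)_*\alpha_j$ with signs, including the sheet bookkeeping for $w$ along the explicit circles. An error there would destroy the vanishing pattern. I would handle it by computing the images of the crossing points of $\alpha_j$ with the real axis, together with the sign of $w$ there, and cross-checking against the intersection number $\alpha_1\cdot\alpha_2=\pm1$, which anti-holomorphic maps reverse.
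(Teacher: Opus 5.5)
Your proposal is correct and is essentially the paper's argument. The paper fixes the action of $A_1,A_2,\tau$ on $\alpha_1,\alpha_2$, but via the periods of the holomorphic form $dz/w$ rather than by tracking sheets along the paths. It then gets the reality pattern and the vanishing of $\int_{\alpha_k}\mathcal{G}_t\eta_1$, $\int_{\alpha_k}\mathcal{G}_t\eta_2$ from $A_1$ and $A_2$, and gets $\int_{\alpha_k}\mathcal{G}_t^2\eta_1=\int_{\alpha_k}\eta_1$ and $\int_{\alpha_k}\mathcal{G}_t^2\eta_2=-\int_{\alpha_k}\eta_2$ from $\tau$. The resulting matrix is a signed permutation of a diagonal one, which is exactly your three $2\times 2$ blocks pairing $m_1$ with $l_2$, $m_2$ with $l_1$ and $m_3$ with $l_3$.

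One point in (iii) needs to be stated precisely. You need $\int_{\alpha_k}\eta_3=\int_{\alpha_k}\mathcal{G}_t^2\eta_3=0$ outright. The $\tau$-rule alone only makes $\int_{\alpha_k}(1\mp\mathcal{G}_t^2)\eta_3$ purely real or purely imaginary, whereas both the $\text{Re}$ and the $-\text{Im}$ entries in the $l_1,l_2$ rows must vanish. Full vanishing does follow from your combined $A_1$, $A_2$ parities. Equivalently, as in the paper, it follows from the holomorphic involution $A_1\circ A_2\colon (z,w)\mapsto(z,-w)$, which fixes both forms and acts by $-1$ on $H_1(\overline{M},\mathbb{Z})$.
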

We need to introduce some preliminary lemmas to accomplish this result. In the literature \cite{morabito2009index}, \cite{Nayatani-Costa}, \cite{Sarenhu}, a common technique to calculate integrals of one-forms along the homological generators of a compact Riemann surface is to perturb the one-form by an exact differential, in such a way that the perturbation does not have any poles along the path of integration. In the following calculations we follow a different approach, which is based on the idea that the same homological generator can be deformed in multiple ways in order to avoid the poles.
\begin{lema}\label{lema7}
The homological generators $\alpha_1$ and $\alpha_2$ deform in the following different ways:
\begin{itemize}
    \item[I.] $\alpha_1$ deforms to the path 
    \begin{equation*}
      \left\lbrace\left(s,-i\sqrt{-q(s)}\right),s\in\left[0,\frac{1}{r}\right]\right\rbrace \bigcup \left\lbrace\left(s,i\sqrt{-q\left(s\right)}\right),s\in\left[\frac{1}{r},0\right]\right\rbrace.
    \end{equation*}
    \item[II.] $\alpha_1$ deforms to the path 
    \begin{equation*}
      \left\lbrace\left(s,-i\sqrt{-q(s)}\right),s\in\left[-\infty,-r\right]\right\rbrace \bigcup \left\lbrace\left(s,i\sqrt{-q\left(s\right)}\right),s\in\left[-r,-\infty\right]\right\rbrace.
    \end{equation*}
    \item[III.] $\alpha_2$ deforms to the path
    \begin{equation*}
        \left\lbrace\left(s,\sqrt{q(s)}\right),s\in\left[-r,0\right]\right\rbrace \bigcup \left\lbrace\left(s,-\sqrt{q\left(s\right)}\right),s\in\left[0,-r\right]\right\rbrace.
    \end{equation*}
    \item[IV.] $\alpha_2$ deforms to the path
    \begin{equation*}
        \left\lbrace\left(s,\sqrt{q(s)}\right),s\in\left[\frac{1}{r},\infty\right]\right\rbrace \bigcup \left\lbrace\left(s,-\sqrt{q\left(s\right)}\right),s\in\left[\infty,\frac{1}{r}\right]\right\rbrace.
    \end{equation*}
\end{itemize}
\end{lema}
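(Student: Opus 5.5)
The plan is to treat $\overline{M}$ as the two-sheeted cover of $\overline{\bC}$ branched over $\{-r,0,\frac1r,\infty\}$ and to use Cauchy's theorem on each sheet, cut along the slits $[-\infty,-r]$ and $[0,\frac1r]$ where $q<0$. On the complement of these slits the branches $\pm\sqrt{q}$ are single-valued holomorphic functions, and they swap when a slit is crossed. Homology on $\overline{M}$ then behaves exactly as homology on the slit sphere. The statement is about homology classes only, not about particular integrands, so we do not need to track poles of specific one-forms here; each later integral will choose the deformation that avoids its own poles.

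For I, the path $\alpha_1$ is the circle with center $\frac1r-\frac r4$ and radius $\frac1r+\frac r4$. It crosses the real axis at $\frac2r$ and at $-\frac r2$; since $r>2$, we have $-\frac r2\in(-r,0)$ and $\frac2r>\frac1r$. So it encloses exactly the branch points $0$ and $\frac1r$. The upper half-circle lies on sheet $\hat{\bC}_1$ and the lower half on $\hat{\bC}_2$, and both crossings of the real line happen on intervals where $q>0$, namely $(-r,0)$ and $(\frac1r,\infty)$. Consequently the lifts glue continuously on those intervals exactly because we switch sheets there: the sheet change is forced by the cut convention rather than by a branch cut. We will make this precise by choosing the cuts along $(-r,0)$ and $(\frac1r,\infty)$, or equivalently by checking continuity directly.

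We then shrink the circle inside the region $\{|z-c|\le R\}$ minus the slit $[0,\frac1r]$. In the shrinking region the lift is holomorphic apart from the slit, and this region contains no other branch point. The curve contracts onto a loop tightly encircling $[0,\frac1r]$, which in the limit is the segment traversed on the upper edge with one determination of $w$ and back on the lower edge with the other. On $(0,\frac1r)$ we have $q<0$, so the boundary values are $\mp i\sqrt{-q}$. This gives exactly the path in I, and the small circles around the endpoints $0$ and $\frac1r$ contribute nothing in the limit because the path only encodes a homology class; both ends are branch points where the two edges meet.

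For II, we use that on the compact surface the loop around $\{0,\frac1r\}$ is homologous (up to sign, fixed by orientation) to the loop around the complementary pair $\{-r,\infty\}$, since a large circle encloses all four branch points and lifts to a null-homologous curve. We then collapse onto $[-\infty,-r]$ in the same way. For III and IV we argue the same way for $\alpha_2$: the circle with center $-r+\frac1{4r}$ and radius $r+\frac1{4r}$ crosses the real axis at $\frac1{2r}\in(0,\frac1r)$ and at $-2r-\ldots<-r$. Hence it encloses $-r$ and $0$, lies entirely on $\hat{\bC}_1$, and crosses the real axis only where $q<0$. Collapsing it onto $[-r,0]$, where $q>0$ and $w=\pm\sqrt{q}$, gives III, and passing to the complementary pair $\{\frac1r,\infty\}$ gives IV.

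The main obstacle is bookkeeping rather than ideas: we must fix a consistent branch convention for $\sqrt{q}$ on each sheet, so that the signs and orientations in I–IV match the stated paths exactly. We also need to check that the passage through $\infty$ in II and IV is legitimate, which we do using the chart $\psi$ at $P_\infty$ from Lemma \ref{holomorphic-charts}.
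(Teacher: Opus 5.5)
Your geometric picture is the paper's: lift each circle, collapse it onto the doubled slit between the two enclosed branch points (for II and IV onto the complementary slit through $P_\infty$), and read off $w$ on the two edges. Your checks of where the circles meet the real axis are right: $-\frac r2$ and $\frac2r$ for $\alpha_1$, and $\frac1{2r}$ and exactly $-2r$ for $\alpha_2$.

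The gap is that the only real content of the lemma is which of $\pm i\sqrt{-q}$ (resp.\ $\pm\sqrt q$) goes with which direction of traversal, i.e.\ whether you get $\alpha_j$ or $-\alpha_j$. That is exactly what you defer as ``bookkeeping'', while still asserting ``this gives exactly the path in I''. A choice of cuts cannot decide it. The sheets $\hat{\bC}_1,\hat{\bC}_2$ in the definition of $\alpha_1,\alpha_2$ come from the normalization the paper takes from Sarenhu: $w_1=\sqrt{|q(z)|}\,e^{i(\theta_1+\theta_2+\theta_3)/2}$, $w_2=-w_1$, with $\theta_j=\arg(z-a_j)\in[0,2\pi)$. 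The paper's proof consists precisely in evaluating this on the edges.

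You also mix two incompatible cut systems. With slits on $[-\infty,-r]\cup[0,\frac1r]$ the lift of $\alpha_1$ never changes sheet. The normalization above is discontinuous exactly on $(-r,0)$ and $(\frac1r,\infty)$, which is why $\alpha_1$ switches sheets at $-\frac r2$ and $\frac2r$ while $\alpha_2$ stays on $\hat{\bC}_1$. Shrinking in the slit picture and then labelling the edges by $\hat{\bC}_1,\hat{\bC}_2$ is where a sign can silently flip.

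To close the gap, compute the edge values from the formula.
\begin{itemize}
\item On $(-\infty,-r)$, $w_1$ is continuous across the axis with value $-i\sqrt{-q}$.
\item On $(0,\frac1r)$, $w_1$ is continuous across the axis with value $i\sqrt{-q}$.
\item On $(-r,0)$, $w_1=-\sqrt q$ on the upper edge and $\sqrt q$ on the lower edge.
\item On $(\frac1r,\infty)$, $w_1=\sqrt q$ on the upper edge and $-\sqrt q$ on the lower edge.
\end{itemize}

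For I, the upper half of $\alpha_1$ (on $\hat{\bC}_1$) collapses to the upper edge of $[0,\frac1r]$, run from $\frac1r$ to $0$ with $w_1=i\sqrt{-q}$. The lower half (on $\hat{\bC}_2$) collapses to the lower edge, run from $0$ to $\frac1r$ with $w_2=-i\sqrt{-q}$. The pieces over $(-\frac r2,0)$ and $(\frac1r,\frac2r)$ cancel, because there the upper edge of $\hat{\bC}_1$ is glued to the lower edge of $\hat{\bC}_2$.

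For II, the ``large circle enclosing all four branch points'' is not accurate: $|z|=R$ encloses only the three finite ones, and its preimage is a single curve covering it twice. Moreover, ``homologous up to sign'' is automatic, since the two lifts of a loop around two branch points are negatives of each other in $H_1(\overline{M})$ (cf.\ $A_*\alpha_1=-\alpha_1$); the sign is the whole point. Instead, push the upper half of $\alpha_1$ outward through the upper half-plane on $\hat{\bC}_1$, and the lower half through the lower half-plane on $\hat{\bC}_2$.
\begin{itemize}
\item The pieces over $(-r,-\frac r2)$ and $(\frac2r,\infty)$ cancel as before.
\item The large arcs shrink to $P_\infty$ in the chart $\psi$.
\item What remains is the $\hat{\bC}_1$-edge of $(-\infty,-r)$ with $w_1=-i\sqrt{-q}$, run from $-\infty$ to $-r$, followed by the $\hat{\bC}_2$-edge with $w_2=i\sqrt{-q}$, run back. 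This is II.
\end{itemize}

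Items III and IV are the same computations on $\hat{\bC}_1$ alone. Collapse $\alpha_2$ inward onto $[-r,0]$ for III, and outward through $P_\infty$ onto $[\frac1r,\infty]$ for IV.
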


\begin{proof}
    Recall from \cite[Section 3.1]{Sarenhu} that $w$ is a function that depends on the copy of $\hat{\bC}$
    \begin{gather*}
        w_1:\hat{\bC}_1\to \bC\quad w_1(z)=\sqrt{|q(z)|}e^{i\frac{\theta_1+\theta_2+\theta_3}{2}},\quad  w_2:\hat{\bC}_2\to \bC\quad w_2(z)=-\sqrt{|q(z)|}e^{i\frac{\theta_1+\theta_2+\theta_3}{2}}.
    \end{gather*}
    For item $I$ notice that $\alpha_1$ has half contained in $\hat{\bC}_1$ and the other half in $\hat{\bC}_2$. Moreover along the real segment $\left[0,\frac{1}{r}\right]$ $q(z)<0$, so $|q(z)|=-q(z)$ and we can write that $\alpha_1$ deforms to $C_1\cup C_2$ where
    \begin{equation*}
        C_1=\left\lbrace\left(s,w_2=-\sqrt{|q(z)|}e^{i\frac{\pi+2\pi+2\pi}{2}}=-i\sqrt{-q(s)}\right),s\in\left[0,\frac{1}{r}\right]\right\rbrace,
    \end{equation*}
    and
    \begin{equation*}
        C_2= \left\lbrace\left(s,w_1=\sqrt{|q(z)|}e^{i\frac{\pi+0+0}{2}}=i\sqrt{-q\left(s\right)}\right),s\in\left[\frac{1}{r},0\right]\right\rbrace.
    \end{equation*}

    The other items are proved in the same way.
\end{proof}
Motivated by the solution of the residue problem of Section \ref{residueproblem}, we implement the anti-holomorphic involution symmetries $A_1,\ A_2$ and $\tau$ in order to investigate the period problem.

\begin{lema}\label{periodlema1}
The action of the symmetries $A_1,\ A_2$ and $\tau$ on the homology basis $\alpha_1$ and $\alpha_2$ of $\overline{M}$ is given by
   \begin{gather*}
    (A_1)_{*}\alpha_1=\alpha_1,\quad (A_1)_{*}\alpha_2=-\alpha_2,\quad (A_2)_{*}\alpha_1=-\alpha_1,\quad (A_2)_{*}\alpha_2=\alpha_2,\quad  (\tau)_{*}\alpha_1=-\alpha_1,\quad (\tau)_{*}\alpha_2=\alpha_2.
\end{gather*}
\end{lema}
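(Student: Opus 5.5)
The plan is to compute each pushforward on explicit representatives rather than on the circles defining $\alpha_1,\alpha_2$. By Lemma \ref{lema7}, $\alpha_1$ is homologous to both lifts I and II of real segments, and $\alpha_2$ to both lifts III and IV. On these representatives $z$ is real, and $w$ is either real or purely imaginary with an explicit sign. Each of $A_1,A_2,\tau$ maps such a lifted segment onto another lifted segment of the same type, so I only have to track three things: which segment is hit, which branch of $w$ appears, and the direction of traversal. Since anti-holomorphic involutions are homeomorphisms of $\overline{M}$, the pushforward of a cycle is represented by its pointwise image.

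\textbf{Step 1: $A_1$ and $A_2$.} I use representative I for $\alpha_1$ and III for $\alpha_2$. On I we have $s\in[0,\frac1r]$ real and $w=\mp i\sqrt{-q(s)}$.
\begin{itemize}
\item $A_1(z,w)=(\overline z,-\overline w)$ sends $(s,-i\sqrt{-q(s)})$ to $(s,-i\sqrt{-q(s)})$. So it fixes both arcs of I pointwise, with the same parametrization, and $(A_1)_*\alpha_1=\alpha_1$.
\item $A_2(z,w)=(\overline z,\overline w)$ sends the first arc, traversed $0\to\frac1r$ with $w=-i\sqrt{-q}$, to the arc with $w=+i\sqrt{-q}$ traversed $0\to\frac1r$. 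This is the second arc of I reversed, and symmetrically the second arc goes to the first arc reversed. Hence $(A_2)_*\alpha_1=-\alpha_1$.
\item On III we have $s\in[-r,0]$ and $w=\pm\sqrt{q(s)}$ real. Here $A_2$ fixes III pointwise, while $A_1$ swaps the two sheets with the direction of $s$ kept. This reverses the loop, giving $(A_2)_*\alpha_2=\alpha_2$ and $(A_1)_*\alpha_2=-\alpha_2$.
\end{itemize}

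\textbf{Step 2: $\tau$.} Here I use the fact that $\tau$ maps representative I onto the segment of II and III onto the segment of IV.
\begin{itemize}
\item For $s\in(0,\frac1r)$ put $z'=-1/s\in(-\infty,-r)$. A point $(s,-i\sqrt{-q(s)})$ goes to $(z',\,i\sqrt{-q(s)}/s^2)$. Well-definedness of $\tau$ gives $q(z')=q(s)/s^4<0$, so $w'=+i\sqrt{-q(z')}$.
\item As $s$ runs $0\to\frac1r$, $z'$ runs $-\infty\to-r$. The first arc of I therefore maps to the branch $+i\sqrt{-q}$ traversed $-\infty\to -r$, which is the reverse of the second arc of II. Likewise the second arc of I maps to the first arc of II reversed, so $\tau_*\alpha_1=-\alpha_1$.
\item For III, $s\in[-r,0]$ with $w=\sqrt{q(s)}>0$ maps to $z'=-1/s\in[\frac1r,\infty]$ with $w'=\sqrt{q(s)}/s^2=\sqrt{q(z')}>0$. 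As $s$ runs $-r\to0$, $z'$ runs $\frac1r\to\infty$, which is exactly the first arc of IV with its orientation. The other arc behaves the same way, so $\tau_*\alpha_2=\alpha_2$.
\end{itemize}

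\textbf{Main obstacle.} The difficulty is purely sign bookkeeping. I must check that the branch conventions for $w$ on the sheets $\hat{\bC}_1,\hat{\bC}_2$, and the orientations in Lemma \ref{lema7}, really give the same classes $\alpha_1,\alpha_2$ with the stated signs. If the I$\leftrightarrow$II comparison is in doubt, I would double-check $\tau_*\alpha_1$ a second way. The natural cross-check is the intersection number: $\tau$ is orientation reversing, so $\alpha_1\cdot\alpha_2$ must change sign under it. This is consistent with $\tau_*$ acting as $(-1,+1)$ on $(\alpha_1,\alpha_2)$. The same test applies to $A_1$, with signs $(+1,-1)$, and to $A_2$, with signs $(-1,+1)$.
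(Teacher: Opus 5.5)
Your proof is correct, and your sign bookkeeping checks out in all six cases. The paper, however, never tracks cycles pointwise. It pulls back the single holomorphic differential $dz/w$, which spans $\mathcal{H}$ since the genus is one, using
\[
A_1^*(dz/w)=-\overline{dz/w},\qquad A_2^*(dz/w)=\overline{dz/w},\qquad \tau^*(dz/w)=\overline{dz/w}.
\]
It combines this with the fact, read off from one deformation in Lemma \ref{lema7}, that $\int_{\alpha_1}dz/w\in i\mathbb{R}$ and $\int_{\alpha_2}dz/w\in\mathbb{R}$. From these, $\int_{\sigma_*\alpha_j-\epsilon\alpha_j}dz/w=0$ for the appropriate sign $\epsilon$, and Lemma \ref{holomorphiconeforms} gives $\sigma_*\alpha_j=\epsilon\alpha_j$.

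Your route is more elementary and geometric. It needs no injectivity of the period map, and it shows concretely that $A_1$ fixes representative I pointwise and that $\tau$ carries I onto II reversed. The price is that for $\tau$ you need two different representatives of the same class with matching orientations (I versus II, III versus IV). So you rely on the orientation claims of Lemma \ref{lema7} more heavily than the paper does. The paper only uses whether a period is real or imaginary, and that is insensitive to how the representative is oriented.

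If you want to certify $[\mathrm{I}]=[\mathrm{II}]$ independently, note that
\[
\int_{\mathrm{I}}\frac{dz}{w}=2i\int_0^{1/r}\frac{ds}{\sqrt{-q(s)}},\qquad \int_{\mathrm{II}}\frac{dz}{w}=2i\int_{-\infty}^{-r}\frac{ds}{\sqrt{-q(s)}}.
\]
These agree by Lemma \ref{lema3}, and III and IV agree similarly by Lemma \ref{lema4}. Doing this, though, brings back exactly the period-map argument you were avoiding.

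Your intersection-number test is a sound consistency check, since orientation-reversing maps flip $\alpha_1\cdot\alpha_2$. It only constrains the product of the two signs, however, so it cannot replace the explicit computation.
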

\begin{proof}
    Since the genus of the Riemann surface $\overline{M}$ is one, the space of holomorphic differentials $\cH$ is generated by the holomorphic one-form without zeros $\frac{dz}{w}$ defined globally on $\overline{M}$. Notice that the integral of this one-form around any of the homological paths $\alpha_1$ or $\alpha_2$ is the same as the integral along the deformation paths of Lemma \ref{lema7} where we can determine whether the one-form $\frac{dz}{w}$ assumes purely real or purely imaginary values.
    
    We next prove that $(A_1)_*\alpha_1=\alpha_1$. By integration we have
    \begin{gather*}
        \int_{(A_1)_*\alpha_1}\frac{dz}{w}=\int_{\alpha_1}(A_1)^*\frac{dz}{w}=\int_{\alpha_1}\frac{d\overline{z}}{-\overline{w}}=-\overline{\int_{\alpha_1}\frac{dz}{w}}=\int_{\alpha_1}\frac{dz}{w},
    \end{gather*}
    which implies by Lemma \ref{holomorphiconeforms} that $(A_1)_*\alpha_1=\alpha_1$ as we wanted to prove. The other statements follow using the same approach.
\end{proof}
The basis that we obtained for the space $\hat{H}(\mathcal{G}_t)$ diagonalizes the Period Matrix, up to reordering of the elements. We prove this remarkable property using the symmetries $A_1, A_2$ and $\tau$, with a series of lemmas below. We start with

\begin{lema}\label{periodlema2}
For all $t\in(0,1]$ we have
    \begin{gather*}
        \forall j \in \{1,2\}\quad 
            \int_{\alpha_1}\cG_t\eta_j(t)\in \bR,\quad  \int_{\alpha_2}\cG_t\eta_j(t)\in i\bR.
    \end{gather*}
\end{lema}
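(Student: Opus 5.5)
The plan is to use the two anti-holomorphic symmetries $A_1$ and $A_2$ together with their action on homology from Lemma \ref{periodlema1}. For an anti-holomorphic involution $\gamma$ and a meromorphic one-form $\omega$, one has $\int_{\gamma_*\alpha}\omega=\int_\alpha\gamma^*\omega=\overline{\int_\alpha\overline{\gamma^*\omega}}$. So it suffices to determine how $\overline{A_i^*(\cG_t\eta_j)}$ compares with $\cG_t\eta_j$ for $j=1,2$, i.e. for $\eta^1_{SSS}$ and $\eta^1_{SSA}$.

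First compute the behaviour of $\cG_t$ under $A_1$ and $A_2$. Since $\cG_t$ has real coefficients and is linear in $w$, we get $\cG_t\circ A_1=-\overline{\cG_t}$ and $\cG_t\circ A_2=\overline{\cG_t}$. Combining this with Lemma \ref{symmetrypropertiesBsym}, for $j=1,2$ we obtain
\begin{equation*}
\overline{A_1^*(\cG_t\eta_j)}=(-\cG_t)(-\eta_j)=\cG_t\eta_j,\qquad \overline{A_2^*(\cG_t\eta_j)}=\cG_t\eta_j .
\end{equation*}
This holds because both $\eta^1_{SSS}$ and $\eta^1_{SSA}$ are $A_1$-antisymmetric and $A_2$-symmetric in the sense of that lemma.

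Now apply Lemma \ref{periodlema1}. Using $(A_1)_*\alpha_1=\alpha_1$, we get $\int_{\alpha_1}\cG_t\eta_j=\overline{\int_{\alpha_1}\cG_t\eta_j}$, so this integral is real. Using $(A_1)_*\alpha_2=-\alpha_2$, we get $-\int_{\alpha_2}\cG_t\eta_j=\overline{\int_{\alpha_2}\cG_t\eta_j}$, so this integral is purely imaginary. Equivalently, one could use $A_2$, since $(A_2)_*\alpha_1=-\alpha_1$ and $(A_2)_*\alpha_2=\alpha_2$; this would give the opposite signs and a contradiction with the conclusion.

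The main obstacle is exactly this sign bookkeeping. The sign conventions in $\int_{\gamma_*\alpha}\omega=\overline{\int_\alpha\overline{\gamma^*\omega}}$, and the transformation rule of $\cG_t$ under $A_2$ (which is $A_1$ composed with $w\mapsto -w$), must be checked carefully. Treating the integrals as line integrals of meromorphic forms is harmless here, because the residues vanish: these forms lie in $\hat H(\cG_t)$, and $\cG_t$ is regular at the branch points up to a constant, by Lemma \ref{simplificationresidueeta}. So integrals over homology classes are well defined.

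To pin down which symmetry gives the stated reality, I will also verify directly using the deformed paths of Lemma \ref{lema7}. On path I, $z=s\in[0,1/r]$ is real and $w=\mp i\sqrt{-q(s)}$. There $\cG_t\eta^1_{SSS}=P_1\,dz/k_t^2$ has no $w$-dependence and is real, so the $\alpha_1$-integral is real. On path III, $w=\pm\sqrt{q(s)}$ is real, which gives a real integrand traversed with opposite orientation on the two sheets. I expect the outcome to be the stated one, and I will fix the convention for $(A_i)_*$ accordingly.
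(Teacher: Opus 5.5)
Your $A_1$ argument is correct and is the paper's proof. From $\cG_t\circ A_1=-\overline{\cG_t}$ and $\overline{A_1^*\eta_j}=-\eta_j$ (Lemma \ref{symmetrypropertiesBsym}) you get $\overline{A_1^*(\cG_t\eta_j)}=\cG_t\eta_j$. Then $(A_1)_*\alpha_1=\alpha_1$ and $(A_1)_*\alpha_2=-\alpha_2$ from Lemma \ref{periodlema1} give exactly the two conclusions.

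Your side remark about $A_2$ is wrong, however, and you should not change any convention because of it.

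\textbf{The $A_2$ computation is consistent, not contradictory.} Using $(A_2)_*\alpha_1=-\alpha_1$, $(A_2)_*\alpha_2=\alpha_2$ and $\overline{A_2^*(\cG_t\eta_j)}=\cG_t\eta_j$, you get $\int_{\alpha_1}\cG_t\eta_j\in i\bR$ and $\int_{\alpha_2}\cG_t\eta_j\in\bR$. This is the paper's Lemma \ref{periodlema3}. Being both real and purely imaginary only forces $\int_{\alpha_k}\cG_t\eta_j=0$, and that vanishing is exactly how the paper combines the two lemmas in \eqref{Geta1Geta2}.

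\textbf{You can see the vanishing directly.} For $j=1,2$, $\cG_t\eta_j=P_j(z,t)\,dz/k_t(z)^2$ is a rational form in $z$ alone.
\begin{itemize}
\item It is regular at $z=\infty$, since the numerator has degree at most $6$ and the denominator degree $8$.
\item It has zero residue at every simple root $\beta$ of $k_t$. That residue equals $\bigl(P_j'(\beta)k_t'(\beta)-P_j(\beta)k_t''(\beta)\bigr)/k_t'(\beta)^3$, which vanishes by \eqref{EDO1} and \eqref{EDO23}.
\end{itemize}
Hence it is the differential of a rational function of $z$, and all its periods on $\overline{M}$ vanish.

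\textbf{Your path check points the same way.} The integrand does not depend on $w$, so the contributions of the two sheets cancel. Note that the poles $-1/b_t\in(0,1/r)$ and $a_t\in(-r,0)$ lie on paths I and III; this is harmless only because those residues vanish.
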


\begin{proof}
    We use the $A_1$ symmetry.
    \begin{align*}
        \begin{split}
            \int_{\alpha_1}\cG_t\eta_1&=\int_{(A_1)_*\alpha_1}\cG_t\eta_1=\int_{\alpha_1}A_1^*(\cG_t\eta_1)=\int_{\alpha_1}-\overline{\cG_t}A_1^{*}\left(\frac{\sigma_1}{d\cG_t}\right)=-\int_{\alpha_1}\overline{\cG_t}\frac{\overline{\sigma_1}}{-d\overline{\cG_t}}=\overline{\int_{\alpha_1}\cG_t\eta_1}
        \end{split}
    \end{align*}
    which implies that
    \begin{equation*}
        \int_{\alpha_1}\cG_t\eta_1(t)\in \bR.
    \end{equation*}
    The other assertion follows with the same idea.
\end{proof}
\begin{lema}\label{periodlema3}
For all $t\in(0,1]$ we have
    \begin{gather*}
        \forall j \in \{1,2,3\}\ \begin{cases}
            \int_{\alpha_1}\cG_t\eta_j(t)\in i\bR\\ \int_{\alpha_2}\cG_t\eta_j(t)\in \bR
        \end{cases},\quad \forall j\in \{1,2\}\ \begin{cases}
            \int_{\alpha_1}\eta_j(t)\in i\bR\\ \int_{\alpha_2}\eta_j(t) \in \bR
        \end{cases}.
    \end{gather*}
\end{lema}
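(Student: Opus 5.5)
The plan is to repeat the argument of Lemma \ref{periodlema2}, but with the involution $A_2$ in place of $A_1$. The two ingredients are the action of $A_2$ on homology and the behaviour of $\cG_t$ and of the $\eta_j(t)$ under $A_2$. Then we use the fact that for an anti-holomorphic map $A$ and a one-form $\omega$ with $A^*\omega=\overline{\omega}$ one has $\int_{\alpha}A^*\omega=\overline{\int_\alpha \omega}$.

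First, check directly from \eqref{eq:deformapadegauss} that $\cG_t\circ A_2=\overline{\cG_t}$ for every $t\in(0,1]$. Indeed, $\sqrt{r}$, $t$ and $r$ are real, so
\[
\cG_t(\overline{z},\overline{w})=\sqrt{r}\,\frac{(\overline{z}+t)\overline{w}}{(t\overline{z}-1)(\overline{z}+r)}=\overline{\cG_t(z,w)}.
\]
Second, by Lemma \ref{symmetrypropertiesBsym} every element of $\cB_{\text{sym}}$ satisfies $\overline{A_2^*\eta}=\eta$. In particular this holds for $\eta_1=\eta^1_{SSS}$, $\eta_2=\eta^1_{SSA}$ and $\eta_3=\eta^1_{ASA}$, so $A_2^*\eta_j=\overline{\eta_j}$. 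Combining the two facts gives $A_2^*(\cG_t\eta_j)=\overline{\cG_t}\,\overline{\eta_j}=\overline{\cG_t\eta_j}$ for $j=1,2,3$.

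Third, use Lemma \ref{periodlema1}, which gives $(A_2)_*\alpha_1=-\alpha_1$ and $(A_2)_*\alpha_2=\alpha_2$. Let $\omega$ be either $\eta_j$ or $\cG_t\eta_j$. Then
\[
\int_{\alpha_1}\omega=-\int_{(A_2)_*\alpha_1}\omega=-\int_{\alpha_1}A_2^*\omega=-\overline{\int_{\alpha_1}\omega},
\qquad
\int_{\alpha_2}\omega=\int_{\alpha_2}A_2^*\omega=\overline{\int_{\alpha_2}\omega}.
\]
The first identity says $\int_{\alpha_1}\omega\in i\bR$ and the second says $\int_{\alpha_2}\omega\in\bR$. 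This gives all the claims; in fact it covers the $\eta_j$ statements for $j=3$ as well, although only $j=1,2$ are asserted.

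The only delicate point is justifying the integration identities. The homology classes are represented by paths that avoid the poles of the forms, which are at the branch points and at $P_{\infty}$. The integrals are homological invariants because every $\eta_j$ has zero residues at the branch points, by Theorem \ref{Hchapelsymmetricbasis}. Multiplying by $\cG_t$ does not spoil this, since near a branch point $\cG_t=\cG_t(B_i)+O(\xi^2)$, exactly as in the proof of Theorem \ref{residuematrix}. One should also check that the residue at $P_\infty$ does not obstruct the argument, or choose representatives of $\alpha_1,\alpha_2$ away from $P_\infty$. With these checks in place, the computation above is immediate.

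Combining the result with Lemma \ref{periodlema2} shows in addition that $\int_{\alpha_i}\cG_t\eta_j=0$ for $j=1,2$. This vanishing is what will make the Period Matrix diagonal in Theorem \ref{determinantperiodmatrix}.
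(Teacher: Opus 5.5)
Your proof is correct and is essentially the paper's argument: the paper also uses $A_2$ together with $(A_2)_*\alpha_1=-\alpha_1$, $(A_2)_*\alpha_2=\alpha_2$ and $A_2^*(\cG_t\eta_j)=\overline{\cG_t}\,\overline{\sigma_j}/d\overline{\cG_t}=\overline{\cG_t\eta_j}$, writing out only the case $\int_{\alpha_1}\cG_t\eta_1$ and leaving the rest as ``the same strategy.'' Your remark on why the periods are well defined on $H_1(\overline{M},\mathbb{Z})$ is a useful addition that the paper leaves implicit, and the $P_\infty$ check closes at once: $P_\infty$ is an unbranched simple pole of $\cG_t$ for $t\in(0,1]$, so $\eta_j=\sigma_j/d\cG_t$ vanishes there to order two and $\cG_t\eta_j$ is holomorphic there.
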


\begin{proof}
    We use the $A_2$ symmetry.
    \begin{align*}
        \begin{split}
            \int_{\alpha_1}\cG_t\eta_1=\int_{-(A_2)_*\alpha_1}\cG_t\eta_1=-\int_{\alpha_1}A_2^*(\cG_t\eta_1)=-\int_{\alpha_1}\overline{\cG_t}A_2^*\left(\frac{\sigma_1}{d\cG_t}\right)=-\int_{\alpha_1}\overline{\cG_t}\frac{\overline{\sigma_1}}{d\overline{\cG_t}}=-\overline{\int_{\alpha_1}\cG_t\eta_1}
        \end{split}
    \end{align*}
    which implies that
    \begin{equation*}
        \int_{\alpha_1}\cG_t\eta_1(t)\in i\bR
    \end{equation*}
    as we wanted to prove. The other assertions follow the same strategy.
\end{proof}
\begin{lema}\label{periodlema4}
For all $t\in (0,1]$ we have
    \begin{gather*}
         \forall j\in \{1,2\} \quad   \int_{\alpha_j}\eta_1(t)\cG_t^2=\int_{\alpha_j}\eta_1(t),\quad \int_{\alpha_j}\eta_2(t)\cG_t^2=-\int_{\alpha_j}\eta_2(t).
    \end{gather*}
\end{lema}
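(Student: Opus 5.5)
The plan is to use the $\tau$ symmetry, in the same way that Lemmas \ref{periodlema2} and \ref{periodlema3} use $A_1$ and $A_2$. Two earlier facts combine here. Lemma \ref{periodlema1} gives $\tau_*\alpha_1=-\alpha_1$ and $\tau_*\alpha_2=\alpha_2$. Lemma \ref{symmetrypropertiesBsym} gives $\overline{\tau^*\eta_1}=\cG_t^2\eta_1$ for $\eta_1=\eta^1_{SSS}$, and $\overline{\tau^*\eta_2}=-\cG_t^2\eta_2$ for $\eta_2=\eta^1_{SSA}$.

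First we compute the pullback of $\cG_t^2\eta_j$. Lemma \ref{div-Gt} gives $\cG_t\circ\tau=-1/\overline{\cG_t}$, so
\begin{equation*}
\tau^*(\cG_t^2\eta_1)=\frac{1}{\overline{\cG_t}^2}\,\tau^*\eta_1=\frac{1}{\overline{\cG_t}^2}\,\overline{\cG_t^2\eta_1}=\overline{\eta_1}.
\end{equation*}
In the same way, $\tau^*(\cG_t^2\eta_2)=-\overline{\eta_2}$. By Lemma \ref{periodlema1}, $\int_{\alpha_j}\omega=\epsilon_j\int_{\alpha_j}\tau^*\omega$ with $\epsilon_1=-1$ and $\epsilon_2=1$. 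Applying this to $\omega=\cG_t^2\eta_1$ gives
\begin{equation*}
\int_{\alpha_j}\cG_t^2\eta_1=\epsilon_j\,\overline{\int_{\alpha_j}\eta_1},
\end{equation*}
and similarly $\int_{\alpha_j}\cG_t^2\eta_2=-\epsilon_j\,\overline{\int_{\alpha_j}\eta_2}$.

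Next we remove the conjugates using Lemma \ref{periodlema3}. For $j\in\{1,2\}$ that lemma gives $\int_{\alpha_1}\eta_j\in i\bR$, so $\overline{\int_{\alpha_1}\eta_j}=-\int_{\alpha_1}\eta_j$. It also gives $\int_{\alpha_2}\eta_j\in\bR$, so the conjugate over $\alpha_2$ does nothing. For $\alpha_1$ we therefore get $\int_{\alpha_1}\cG_t^2\eta_1=(-1)(-1)\int_{\alpha_1}\eta_1=\int_{\alpha_1}\eta_1$ and $\int_{\alpha_1}\cG_t^2\eta_2=-(-1)(-1)\int_{\alpha_1}\eta_2=-\int_{\alpha_1}\eta_2$. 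For $\alpha_2$ we get $\int_{\alpha_2}\cG_t^2\eta_1=\int_{\alpha_2}\eta_1$ and $\int_{\alpha_2}\cG_t^2\eta_2=-\int_{\alpha_2}\eta_2$. These are exactly the identities claimed.

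The main obstacle is justifying the change of variables on homology classes. The forms $\eta_j$ and $\cG_t^2\eta_j$ have poles at the branch points $B_i(t)$, and $\cG_t^2\eta_j$ also has poles at the poles of $\cG_t$. Their integrals over $\alpha_j$ depend only on the homology class only if the residues vanish; otherwise one must fix representatives that avoid the poles and check that $\tau$ maps them to representatives of $\mp\alpha_j$ in the punctured surface.

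Vanishing residues at the $B_i(t)$ for $\eta_1,\eta_2$ is Theorem \ref{Hchapelsymmetricbasis}. For $\cG_t^2\eta_j$, the proof of Theorem \ref{residuematrix} gives $\operatorname{Res}_{B_i}\cG_t^2\eta=\cG_t(B_i)^2\operatorname{Res}_{B_i}\eta=0$. At the poles of $\cG_t$ one uses $\sigma_j\in H^{0,2}(\cG_t)$: the form $(1\pm\cG_t^2)\sigma/d\cG_t$ is the Weierstrass-type form of the branched immersion of Theorem \ref{nonorientablemontielros}, whose ends lie only at branch points. I would check directly from the explicit formulas in Lemma \ref{specialbasis} that these residues vanish, or else they are absorbed by the $\tau$-equivariance of the paths.

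Finally, the sign conventions of Lemma \ref{periodlema1} are read off from the action on $dz/w$. I expect them to transfer to the punctured setting by the same deformation argument as in Lemma \ref{lema7}, after choosing the paths away from the finitely many poles.
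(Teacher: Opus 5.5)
Your proof is correct and follows the paper's argument. Both pull back by $\tau$, using $\tau_*\alpha_1=-\alpha_1$, $\tau_*\alpha_2=\alpha_2$ and the $\tau$-(anti)symmetry of $\sigma_1,\sigma_2$, to get $\int_{\alpha_j}\cG_t^2\eta_k=\pm\overline{\int_{\alpha_j}\eta_k}$, and then remove the conjugate with Lemma \ref{periodlema3}. Your remark on well-definedness of the homology integrals is a sound addition that the paper omits. In fact $\cG_t^2\eta_j$ has no poles at the poles of $\cG_t$: there $\sigma_j$ is holomorphic and $d\cG_t$ has a double pole. So only the vanishing residues at the $B_i(t)$ are needed.
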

\begin{proof}
    We use the $\tau$ symmetry.
    \begin{align*}
        \begin{split}
            \int_{\alpha_1}\eta_1\cG_t^2&=\int_{-\tau_*\alpha_1}\eta_1\cG_t^2=-\int_{\alpha_1}\tau^{*}\left(\eta_1\cG_t^2\right)=-\int_{\alpha_1}\tau^{*}\left(\frac{\sigma_1}{d\cG_t}\right)\left(-\frac{1}{\overline{\cG_t}}\right)^2\\&=-\int_{\alpha_1}\frac{\overline{\sigma_1}}{d\left(-\frac{1}{\overline{\cG_t}}\right)}\frac{1}{\overline{\cG_t}^2}=-\int_{\alpha_1}\frac{\overline{\sigma_1}}{\left(\frac{d\overline{\cG_t}}{\overline{\cG_t}^2}\right)}\frac{1}{\overline{\cG_t}^2}=-\overline{\int_{\alpha_1}\eta_1}.
        \end{split}
    \end{align*}
    Therefore, using Lemma \ref{periodlema3} we conclude that
    \begin{equation*}
        \int_{\alpha_1}\eta_1(t)\cG_t^2=\int_{\alpha_1}\eta_1(t).
    \end{equation*}
    The other claims are proved in the same way.
\end{proof}

\begin{lema}\label{extralema}
For all $t\in (0,1]$ we have
    \begin{equation*}
        \forall j \in \{1,2\} \quad \int_{\alpha_j}\eta_3(t)=0,\quad \int_{\alpha_j}\cG_t^2\eta_3(t)=0.
    \end{equation*}
\end{lema}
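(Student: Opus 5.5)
The idea is to use the two symmetries $A_1$ and $A_2$ together, as in Lemmas \ref{periodlema2} and \ref{periodlema3}. For each generator $\alpha_j$, one symmetry will force $\int_{\alpha_j}\eta_3(t)$ to be real and the other will force it to be purely imaginary, so the integral must vanish. No explicit integration or deformation of paths (Lemma \ref{lema7}) should be needed.

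\textbf{Step 1: how the involutions act on $\eta_3$.} Recall $\eta_3(t)=\eta^1_{ASA}(t)$. By Lemma \ref{symmetrypropertiesBsym},
\begin{equation*}
\overline{A_1^*\eta_3}=\eta_3, \qquad \overline{A_2^*\eta_3}=\eta_3,
\end{equation*}
so $A_1^*\eta_3=\overline{\eta_3}$ and $A_2^*\eta_3=\overline{\eta_3}$. From \eqref{eq:deformapadegauss} one checks directly that
\begin{equation*}
\cG_t\circ A_1=-\overline{\cG_t}, \qquad \cG_t\circ A_2=\overline{\cG_t}.
\end{equation*}
In both cases $A_i^*(\cG_t^2)=\overline{\cG_t}^{\,2}$. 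Hence the form $\omega$, taken to be either $\eta_3$ or $\cG_t^2\eta_3$, satisfies $A_i^*\omega=\overline{\omega}$ for $i=1,2$.

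\textbf{Step 2: combine with the action on homology.} Lemma \ref{periodlema1} gives the action of $A_1$ and $A_2$ on $\alpha_1,\alpha_2$. For $\alpha_1$:
\begin{equation*}
\int_{\alpha_1}\omega=\int_{(A_1)_*\alpha_1}\omega=\int_{\alpha_1}A_1^*\omega=\overline{\int_{\alpha_1}\omega},
\end{equation*}
\begin{equation*}
\int_{\alpha_1}\omega=-\int_{(A_2)_*\alpha_1}\omega=-\int_{\alpha_1}A_2^*\omega=-\overline{\int_{\alpha_1}\omega}.
\end{equation*}
So $\int_{\alpha_1}\omega\in\bR\cap i\bR=\{0\}$. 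For $\alpha_2$ the roles of the two involutions swap: $(A_1)_*\alpha_2=-\alpha_2$ makes the integral purely imaginary, and $(A_2)_*\alpha_2=\alpha_2$ makes it real. Hence $\int_{\alpha_2}\omega=0$ as well.

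\textbf{Where care is needed.} There is no real obstacle; the only delicate point is bookkeeping. One must correctly verify the signs in $\cG_t\circ A_1$ and $\cG_t\circ A_2$, and interpret the pullback of a holomorphic form by an anti-holomorphic map as the conjugate form. This is the same convention as in the proofs of Lemmas \ref{periodlema2} and \ref{periodlema3}.

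One should also note that $\eta_3$ may have poles at the branch points, so the periods must be well defined. Since $\eta_3\in\hat H(\cG_t)$, its residues vanish (Theorem \ref{Hchapelsymmetricbasis}), and the residues of $\cG_t^2\eta_3$ vanish too because $\cG_t^2$ has zero derivative at the branch points. Thus both integrals depend only on the homology class, which justifies the substitution $\alpha_j\mapsto (A_i)_*\alpha_j$ used above.

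As a cross-check for the second identity, the $\tau$-relation $\overline{\tau^*\eta_3}=-\cG_t^2\eta_3$ together with $\tau_*\alpha_1=-\alpha_1$ and $\tau_*\alpha_2=\alpha_2$ expresses $\int_{\alpha_j}\cG_t^2\eta_3$ as $\pm\overline{\int_{\alpha_j}\eta_3}$. This gives its vanishing directly from the first identity.
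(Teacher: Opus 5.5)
Your argument is correct and is essentially the paper's. The paper composes your two steps into the holomorphic involution $A=A_1\circ A_2$, $(z,w)\mapsto(z,-w)$, which by Lemma \ref{periodlema1} acts as $-1$ on $\alpha_1,\alpha_2$ and fixes $\eta_3$ and $\cG_t^2\eta_3$, so each integral equals its own negative. Your remark that the vanishing residues make these periods depend only on the homology class is a justification the paper leaves implicit.
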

\begin{proof}
    Indeed, consider the symmetry $$A(z,w)\coloneq A_1\circ A_2(z,w)=(z,-w).$$ Its action on the homology basis $\alpha_1,\ \alpha_2$ is given by
    \begin{equation}\label{Atranformationhomologybasis}
        A_{*}\ \alpha_1=-\alpha_1,\quad A_{*}\ \alpha_2=-\alpha_2,
    \end{equation}
    where we have used Lemma \ref{periodlema1}. On the other hand
    \begin{equation}\label{Ainvarianceeta3}
        A^{*} \eta_3=  A^{*} \left(\frac{P_3(z,t)}{k_t^2(z,t)}dz\right)=\frac{P_3(z,t)}{k_t^2(z,t)}dz=\eta_3,\quad A^{*}\left(\cG^2\eta_3\right)=(\cG\circ A)^2A^{*}\eta_3=(-\cG)^2\eta_3=\cG^2\eta_3.
    \end{equation}
    Therefore, from equation \eqref{Atranformationhomologybasis} and \eqref{Ainvarianceeta3} we obtain for all $j=1,2$
    \begin{equation*}
        \int_{\alpha_j}\eta_3=\int_{-A^{*}\alpha_j}\eta_3=-\int_{\alpha_j}A^{*}\eta_3=-\int_{\alpha_j}\eta_3,\quad \int_{\alpha_j}\cG^2\eta_3=\int_{-A^{*}\alpha_j}\cG^2\eta_3=-\int_{\alpha_j}A^{*}\left(\cG^2\eta_3\right)=-\int_{\alpha_j}\cG^2\eta_3
    \end{equation*}
    which implies
    \begin{equation*}
        \int_{\alpha_j}\eta_3(t)=0,\quad \int_{\alpha_j}\cG_t^2\eta_3(t)=0
    \end{equation*}
    as we wanted to prove.
\end{proof}
\begin{lema}\label{periodlema5}
For all $t\in (0,1]$ we have
\begin{gather*}
    \begin{cases}
        \int_{\alpha_1}\Phi_{l_1,m_2}(t)\in i\bR\\ \int_{\alpha_2}\Phi_{l_1,m_2}(t)\in \bR
    \end{cases} \quad \begin{cases}
        \int_{\alpha_1}\Phi_{l_2,m_1}(t)\in \bR\\ \int_{\alpha_2}\Phi_{l_2,m_1}(t)\in i\bR
    \end{cases}.
\end{gather*}    
\end{lema}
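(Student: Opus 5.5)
Write $\Phi_{l_1,m_2}(t)=(1-\cG_t^2)\eta_2(t)=\eta_2(t)-\cG_t^2\eta_2(t)$ and $\Phi_{l_2,m_1}(t)=i(1+\cG_t^2)\eta_1(t)$. The plan is to rewrite each of these periods in terms of periods already controlled by Lemmas \ref{periodlema3} and \ref{periodlema4}, instead of running new symmetry arguments from scratch.

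For $\Phi_{l_1,m_2}$, Lemma \ref{periodlema4} gives $\int_{\alpha_j}\cG_t^2\eta_2(t)=-\int_{\alpha_j}\eta_2(t)$ for $j=1,2$. Hence
\begin{equation*}
\int_{\alpha_j}\Phi_{l_1,m_2}(t)=\int_{\alpha_j}\eta_2(t)-\int_{\alpha_j}\cG_t^2\eta_2(t)=2\int_{\alpha_j}\eta_2(t).
\end{equation*}
Lemma \ref{periodlema3} then gives $\int_{\alpha_1}\eta_2(t)\in i\bR$ and $\int_{\alpha_2}\eta_2(t)\in\bR$. This yields the first pair of claims.

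For $\Phi_{l_2,m_1}$, Lemma \ref{periodlema4} gives $\int_{\alpha_j}\cG_t^2\eta_1(t)=\int_{\alpha_j}\eta_1(t)$. Therefore
\begin{equation*}
\int_{\alpha_j}\Phi_{l_2,m_1}(t)=i\left(\int_{\alpha_j}\eta_1(t)+\int_{\alpha_j}\cG_t^2\eta_1(t)\right)=2i\int_{\alpha_j}\eta_1(t).
\end{equation*}
Lemma \ref{periodlema3} gives $\int_{\alpha_1}\eta_1(t)\in i\bR$, so $2i\int_{\alpha_1}\eta_1(t)\in\bR$. It also gives $\int_{\alpha_2}\eta_1(t)\in\bR$, so $2i\int_{\alpha_2}\eta_1(t)\in i\bR$. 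This proves the second pair of claims.

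The only real obstacle is making sure the earlier lemmas apply as stated. Their proofs are written out only for $\alpha_1$ and $\eta_1$; the other cases are asserted to ``follow the same strategy.'' So for $\eta_2$ and for $\alpha_2$ I would recheck the sign bookkeeping using Lemma \ref{symmetrypropertiesBsym} and Lemma \ref{periodlema1}.

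\begin{itemize}
\item For the $A_2$ symmetry: $\overline{A_2^*\eta_2}=\eta_2$, and $(A_2)_*\alpha_1=-\alpha_1$, $(A_2)_*\alpha_2=\alpha_2$. This gives $\int_{\alpha_1}\eta_2=-\overline{\int_{\alpha_1}\eta_2}$ and $\int_{\alpha_2}\eta_2=\overline{\int_{\alpha_2}\eta_2}$, matching the reality claims of Lemma \ref{periodlema3}.
\item For the $\tau$ symmetry: $\overline{\tau^*\eta_2}=-\cG_t^2\eta_2$, which produces the sign $-1$ in Lemma \ref{periodlema4}. Here $(\tau)_*\alpha_2=+\alpha_2$ combines with the realness of $\int_{\alpha_2}\eta_2$ to give the same identity as for $\alpha_1$.
\end{itemize}

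Once these sign checks are done, the lemma is just the two-line computations above.
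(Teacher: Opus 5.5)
Your proof is correct and follows the paper's argument: the paper likewise uses Lemma \ref{periodlema4} to reduce $\int_{\alpha_j}(1-\cG_t^2)\eta_2(t)$ to $2\int_{\alpha_j}\eta_2(t)$, and analogously $\int_{\alpha_j} i(1+\cG_t^2)\eta_1(t)$ to $2i\int_{\alpha_j}\eta_1(t)$, then reads off real versus imaginary from Lemma \ref{periodlema3}. Your sign check of the $\eta_2$ and $\alpha_2$ cases, using Lemmas \ref{symmetrypropertiesBsym} and \ref{periodlema1}, is accurate and supplies the cases the paper only calls ``analogous.''
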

\begin{proof}
    Using Lemma \ref{periodlema3} and Lemma \ref{periodlema4}
    \begin{equation*}
        \int_{\alpha_1}\Phi_{l_1,m_2}(t)=\int_{\alpha_1}(1-\cG_t)^2\eta_2(t)=2\int_{\alpha_1}\eta_2(t)\in i\bR.
    \end{equation*}
    The other claims follow an analogous proof.
\end{proof}
Now, we are in the conditions to prove the main result of this Section
\begin{proof}[{Proof of Theorem \ref{determinantperiodmatrix}}]
By Definition \ref{periodmatrix} and Lemma \ref{periodlema4}
\begin{equation}\label{l1m1}
    \int_{\alpha_j}\Phi_{l_1,m_1}(t)=\int_{\alpha_j}(1-\cG_t^2)\eta_1(t)=0,\quad \int_{\alpha_j}\Phi_{l_2,m_2}(t)=\int_{\alpha_j}i(1+\cG_t^2)\eta_2(t)=0.
\end{equation}
By Lemma \ref{periodlema2} and Lemma \ref{periodlema3}
\begin{equation}\label{Geta1Geta2}
    \int_{\alpha_j}\cG_t\eta_{1}(t)=0,\quad \int_{\alpha_j}\cG_t\eta_2(t)=0.
\end{equation}
By Lemma \ref{periodlema3}
\begin{equation}\label{Geta3alphaj}
    \int_{\alpha_1}\cG_t\eta_3(t)\in i\bR\quad \int_{\alpha_2}\cG_t\eta_3(t)\in \bR.
\end{equation}

By Lemma \ref{periodlema4} and Lemma \ref{periodlema5} we can rewrite
\begin{gather*}
    -\text{Im}\int_{\alpha_1}\Phi_{l_1,m_2}(t)=i\int_{\alpha_1}\Phi_{l_1,m_2}(t)=i\int_{\alpha_1}(1-\cG_t^2)\eta_2(t)=2i\int_{\alpha_1}\eta_2(t),
    \end{gather*}
    \begin{gather*}
\text{Re}\int_{\alpha_1}\Phi_{l_2,m_1}(t)=\int_{\alpha_1}\Phi_{l_2,m_1}(t)=\int_{\alpha_1}i(1+\cG_t^2)\eta_1(t)=2i\int_{\alpha_1}\eta_1(t),\end{gather*}
\begin{gather*} \text{Re}\int_{\alpha_2}\Phi_{l_1,m_2}(t)=\int_{\alpha_2}\Phi_{l_1,m_2}(t)=\int_{\alpha_2}(1-\cG_t^2)\eta_2(t)=2\int_{\alpha_2}\eta_2(t),\end{gather*}
\begin{gather*}-\text{Im}\int_{\alpha_2}\Phi_{l_2,m_1}(t)=i\int_{\alpha_2}\Phi_{l_2,m_1}(t)=i\int_{\alpha_2}i(1+\cG_t^2)\eta_1(t)=-2\int_{\alpha_2}\eta_1(t).
\end{gather*}
Notice that by equation \eqref{Geta3alphaj} we can rewrite
\begin{gather*}
    -\text{Im}\int_{\alpha_1}\Phi_{l_3,m_3}(t)=-2\text{Im}\int_{\alpha_1}\cG_t\eta_3(t)=2i\int_{\alpha_1}\cG_t\eta_3(t),\end{gather*}
    \begin{gather*}\text{Re}\int_{\alpha_2}\Phi_{l_3,m_3}(t)=2\text{Re}\int_{\alpha_2}\cG_t\eta_3(t)=2\int_{\alpha_2}\cG_t\eta_3(t).
\end{gather*}

Using Lemma \ref{extralema} and Lemma \ref{periodlema5}, together with the previous equations, we have that in terms of the symmetric basis of Theorem \ref{Hchapelsymmetricbasis}, for all times $t\in (0,1]$, the Period Matrix is diagonal up to a reordering of its elements
\begin{gather*}
    \mathcal{P}er(\mathcal{G}_t)=\begin{pmatrix}
        0&0&0&0&2i\int_{\alpha_1}\eta_2(t)&0\\2i\int_{\alpha_1}\eta_1(t)&0&0&0&0&0\\0&0&0&0&0&2i\int_{\alpha_1}\cG_t\eta_3(t)\\0&2\int_{\alpha_2}\eta_2(t)&0&0&0&0\\ 0&0&0&-2\int_{\alpha_2}\eta_1(t)&0&0\\ 0&0&2\int_{\alpha_2}\cG_t\eta_3(t)&0&0&0
    \end{pmatrix}.
\end{gather*}

Therefore,
\begin{equation*}
    \det(\mathcal{P}er(\mathcal{G}_t))=64i\int_{\alpha_1}\eta_1(t)\cdot \int_{\alpha_1}\eta_2(t)\cdot \int_{\alpha_1}\cG_t \eta_3(t)\cdot \int_{\alpha_2}\eta_1(t)\cdot \int_{\alpha_2}\eta_2(t)\cdot \int_{\alpha_2}\cG_t\eta_3 (t).
\end{equation*}

\end{proof}

We finish this Section proving that the Period Matrix is non-singular for every $t \in (0,1]$
\begin{thm}\label{determinantperiodmatrixnonzero}
    The nullity of $\mathcal{P}er(\mathcal{G}_t)$ is zero for every $t\in (0,1]$.
\end{thm}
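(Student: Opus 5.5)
By Theorem \ref{determinantperiodmatrix}, the Period Matrix is a permuted diagonal matrix for every $t\in(0,1]$. It is therefore non-singular exactly when the six integrals
\begin{equation*}
\int_{\alpha_1}\eta_1(t),\quad \int_{\alpha_1}\eta_2(t),\quad \int_{\alpha_1}\cG_t\eta_3(t),\quad \int_{\alpha_2}\eta_1(t),\quad \int_{\alpha_2}\eta_2(t),\quad \int_{\alpha_2}\cG_t\eta_3(t)
\end{equation*}
are all nonzero. The plan is to prove each non-vanishing separately, uniformly in $t\in(0,1]$.

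\textbf{Step 1: reduce to real integrals.} By Lemmas \ref{periodlema2}--\ref{periodlema3}, each integral is purely real or purely imaginary. So it suffices to show that a single real-valued integral has a definite sign for all $t$. Write each integrand as a rational function of $z$ times $dz$ or $dz/w$:
\begin{itemize}
\item $\eta_1,\eta_2$ are $\frac{(z+t)(tz-1)p_j(z,t)}{\cG_t k_t^2}dz=\frac{(tz-1)^2(z+r)p_j}{\sqrt r\,k_t^2}\,\frac{dz}{w}$;
\item $\cG_t\eta_3=\sqrt r\,\frac{(z+t)(tz-1)(z+r)p_3}{k_t^2}\,w\,\frac{dz}{z\,\ldots}$ after simplification, i.e. a rational function times $w\,dz$, which can be rewritten as $q(z)\,dz/w$.
\end{itemize}

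\textbf{Step 2: deform the paths and split.} Deform $\alpha_1$ and $\alpha_2$ onto real segments using Lemma \ref{lema7}. On those segments the integrand becomes a real function times $\pm i/\sqrt{-q}$ or $\pm 1/\sqrt{q}$, so each period equals twice a real integral.

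The difficulty is the poles of $1/k_t^2$. They sit at $a_t,b_t\in(-\infty,0)$ and at $-1/a_t,-1/b_t>0$. Lemma \ref{lem:posicoesdosatbt} locates them relative to $-r,0,1/r$. To avoid crossing them, I would first decompose each integrand by partial fractions in $z$ into a few simple summands. For each summand I would pick whichever of the two deformations (I/II for $\alpha_1$, III/IV for $\alpha_2$) keeps its poles off the segment of integration. Equivalently, I pick the side of the Riemann surface where that piece is holomorphic along the path.

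\textbf{Step 3: express in elliptic integrals and bound.} Each resulting real integral is a combination of complete elliptic integrals of the first, second and third kind on $[0,1/r]$, $(-\infty,-r]$, $[-r,0]$ or $[1/r,\infty)$. Some pieces will have a sign-definite integrand and can be settled immediately. For the others:
\begin{itemize}
\item rewrite them in Legendre normal form, using the estimates for $r$ from Lemma \ref{rLopezNumerical} and the piecewise-linear bounds on $|a_t|,|b_t|$ from Lemma \ref{lem:estimatesforabtau};
\item use monotonicity of $K$, $E$, $\Pi$ in the modulus and characteristic to get explicit upper and lower bounds;
\item then check that the sum has a fixed sign with the polynomial-positivity framework $\Theta$ of Appendix \ref{section::EstimatesPolynomials}, subdividing $(0,1]$ finely enough.
\end{itemize}
At $t\to0^+$ the limit should be consistent with the fact that $\mathcal G_0$ has odd nullity three by Theorem \ref{thm:indexofnonorientable}, which is a useful check.

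\textbf{Main obstacle.} Step 3 is the hard part. The decomposed pieces can have opposite signs. The third-kind integrals have characteristics depending on $a_t,b_t$ that approach the endpoints as $t$ varies, so crude bounds may fail near $t=0$ or $t=1$. My first attempt would be to group terms so that each group is sign-definite pointwise on its segment, and to use the residue-free property from Lemma \ref{specialpropertiesbasisBsym} to cancel double-pole contributions. Only where that fails would I fall back on quantitative elliptic-integral estimates checked numerically with interval subdivision.
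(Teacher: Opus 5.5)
\textbf{Verdict: genuine gap in Step 3.} Your reduction to six scalar integrals via Theorem \ref{determinantperiodmatrix} and your Step 2 match the paper's set-up. In Step 2 you use partial fractions in $z$, then choose, pole by pole, a deformation from Lemma \ref{lema7} whose segment avoids that pole.

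The problem is Step 3, which carries all the content. You leave it as a plan, and the plan rests on a wrong expectation: no third-kind integrals survive, so there is nothing to enclose by monotonicity of $\Pi$. You do invoke residue-freeness at the end, but its role is not to cancel double-pole terms. It ties the simple-pole coefficient to the double-pole one.

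Concretely, group the partial fractions by pole, $\frac{Q_j}{k_t^2}=\sum_{p\in S_t}\bigl(\frac{D^j_p}{(z-p)^2}+\frac{E^j_p}{z-p}\bigr)$, as in Lemma \ref{lema6}. The residue of $\Omega_j$ at the branch point $B_p$ over $p$ is $\frac{1}{w(B_p)}\bigl(E^j_p-\frac{D^j_p}{2}\frac{q'(p)}{q(p)}\bigr)$. Since $\sigma_1,\sigma_2,\sigma_3\in\hat H(\cG_t)$ (Theorem \ref{Hchapelsymmetricbasis}), this forces $E^j_p=\frac{D^j_p}{2}\frac{q'(p)}{q(p)}$. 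For exactly this ratio one has the identity
\[
\Bigl(\frac{1}{(z-p)^2}+\frac{q'(p)}{2q(p)}\,\frac{1}{z-p}\Bigr)\frac{dz}{w}=\frac{z-p}{2q(p)}\,\frac{dz}{w}-\frac{1}{q(p)}\,d\Bigl(\frac{w}{z-p}\Bigr),
\]
which is what the reduction formula behind Propositions \ref{prop1}--\ref{prop4} encodes.

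Consequences:
\begin{enumerate}
\item Every period is a combination of $\int_\alpha dz/w$ and $\int_\alpha z\,dz/w$ alone, i.e.\ of $F$ and $E$. This gives Theorems \ref{theorem1} and \ref{theorem2}: $\int_{\alpha_l}\Omega_j$ is a nonzero constant multiple of $F(k_{\alpha_l})\sum_{p\in S_t}(\gamma_{\alpha_l}\mp p)\,D^j_p/q(p)$.
\item Non-vanishing therefore becomes a purely algebraic sign question in $(a_t,b_t,t,r,\gamma_{\alpha_l})$.
\item For $\cG_t\eta_3$ the paper settles it by hand. The bracket is $R_3^{\alpha_l}(a,b,t)p_3(a,t)+R_3^{\alpha_l}(b,a,t)p_3(b,t)$, with $p_3(a_t,t),p_3(b_t,t)>0$ and every $R_3^{\alpha_l}<0$. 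These signs come from $x_t\in[2,4]$, $y_t<-1$ and the bounds on $\gamma_{\alpha_l}$.
\item For $\eta_1,\eta_2$ the paper divides by $a_t$, introduces $\tau_t=t/a_t$ (bounded by Lemma \ref{lem:limitt/a}), and runs the $\Theta$ test.
\end{enumerate}

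\textbf{Step 2 also needs this.} The same residue-freeness is what makes your pole-by-pole choice of deformation legitimate, and your proposal does not check this. Checking that the final segment avoids the poles is not enough. An isolated summand $\frac{D_p}{(z-p)^2}\frac{dz}{w}$ or $\frac{E_p}{z-p}\frac{dz}{w}$ has nonzero residues. The homotopy from the circle $\alpha_1$ to the segment $[0,1/r]$ sweeps across a point over $a_t\in(-1,0)$, since that circle meets the real axis at $-r/2<-1$. So the value would depend on the chosen deformation unless you work with the grouped, residue-free pieces.

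\textbf{Uniformity as $t\to0^+$.} This is a real issue, and your consistency check with $\cG_0$ does not address it. As $t\to0^+$, $a_t\to0$ and $-1/a_t\to\infty$, so these poles collide with the branch points $P_0$ and $P_\infty$. For instance, along $[0,1/r]$ the double-pole integral at $a_t$ is of order $|a_t|^{-3/2}$. It is cancelled at leading order by the simple-pole term: that integral is of order $|a_t|^{-1/2}$, multiplied by $E_{a_t}/D_{a_t}\sim 1/(2a_t)$. Termwise enclosures of $E$ and $\Pi$ therefore cannot produce a uniform sign there. The exact identity above, together with the rescaling by $\tau_t$, is what removes this difficulty.
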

The proof of Theorem \ref{determinantperiodmatrixnonzero} will rely on the following two lemmas

\begin{lema}\label{lem:integralseta3nonzero}
    For every $t \in (0,1]$, $\int_{\alpha_1}\cG_t \eta_3 \neq 0 \text{ and } \int_{\alpha_2}\cG_t \eta_3 \neq 0.
    $ 
        
\end{lema}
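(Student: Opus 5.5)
The plan is to compute both periods explicitly after reducing them to real integrals along the deformed paths of Lemma \ref{lema7}, and then to show these real integrals have a definite sign for every $t\in(0,1]$.

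\textbf{Setting up the integrand.} Combining \eqref{eq:deformapadegauss} with Lemma \ref{specialbasis}, and using $w = q(z)/w$, we get
\begin{equation*}
\cG_t\eta_3(t)=\sqrt{r}\,\frac{(z+t)(tz-1)\,p_3(z,t)\,q(z)}{k_t(z)^2}\,\frac{dz}{w}.
\end{equation*}
By \eqref{Lopezcanonicaldivisortrivial}, $dz/w$ has no zeros or poles. So the only poles are the double poles at the preimages of the roots $a_t,b_t,-1/a_t,-1/b_t$ of $k_t$. At the $B_i(t)$ the residues of $\eta_3$ vanish (Lemma \ref{specialpropertiesbasisBsym}), and the same argument as in Theorem \ref{residuematrix} gives $\operatorname{Res}\cG_t\eta_3=\cG_t(B_i)\operatorname{Res}\eta_3=0$. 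Hence all the periods are well defined and the form has pure double-pole principal parts.

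\textbf{Where the poles sit.} By Lemma \ref{lem:posicoesdosatbt} ($q(a_t)>0>q(b_t)$) we have $a_t\in(-r,0)$, $b_t<-r$, $-1/a_t>1/r$ and $-1/b_t\in(0,1/r)$. Thus each of the four deformed paths I--IV of Lemma \ref{lema7} passes through exactly one pole: path I through $-1/b_t$, path II through $b_t$, path III through $a_t$, and path IV through $-1/a_t$.

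\textbf{Splitting and deforming.} I would split $\cG_t\eta_3$ into its principal parts at the poles over each root plus a remainder. Concretely, I would write the rational factor by partial fractions as
\begin{equation*}
R_0(z)+\sum_{p\in\{a_t,b_t,-1/a_t,-1/b_t\}}\frac{c_p\,(z-p)\,\tilde q_p(z)}{(z-p)^2}
\end{equation*}
or a similar arrangement. The requirement is that each summand is a meromorphic one-form on $\overline M$ with poles over a single root $p$ only (together with the harmless point $P_\infty$, checked in the chart of Lemma \ref{holomorphic-charts}), so the summands are simpler pieces with no residues to track. For $\alpha_1$, the summand with pole over $-1/b_t$ is integrated along path II and all the others along path I; for $\alpha_2$, the summand with pole over $-1/a_t$ is integrated along path III and the others along path IV. On each real segment $w$ is either real or purely imaginary, so every piece becomes $i^{\epsilon}$ times a real integral. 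This is consistent with \eqref{Geta3alphaj}: $\int_{\alpha_1}\cG_t\eta_3\in i\bR$ and $\int_{\alpha_2}\cG_t\eta_3\in\bR$. The substitutions $s\mapsto$ Legendre form then express each piece through the complete elliptic integrals $K,E$ (and possibly $\Pi$) of the modulus determined by $-r,0,1/r$. The coefficients are rational in $a_t,b_t,t,r$.

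\textbf{Main obstacle: non-vanishing for all $t$.} The hard step is showing that the resulting real combination never vanishes for $t\in(0,1]$. My first attempt would be to show that the combined real integrand on each segment has a constant sign. If that fails, I would bound the elliptic-integral coefficients using the estimates of Appendix \ref{section::Auxiliary computations with elliptic integrals}, reduce the sign condition to a polynomial inequality in $(|a_t|,|b_t|,t,r)$, and verify it with the $\Theta$-function framework of Appendix \ref{section::EstimatesPolynomials}. That verification would use the piecewise linear bounds of Lemma \ref{lem:estimatesforabtau} and the bounds on $r$ from Lemma \ref{rLopezNumerical}, as was done for \eqref{thetaforresidues}. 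The difficult regime is expected to be $t\to0^+$, where $b_t\to-\infty$ and $-1/b_t\to0$ collide with the branch points of $w$, so that estimate needs separate care.
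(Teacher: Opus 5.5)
Your reduction follows the paper's (Appendix~\ref{section::Auxiliary computations with elliptic integrals}): partial fractions in $z$, then each piece is pushed to a deformed path of Lemma~\ref{lema7} that avoids its pole. For $\alpha_2$ you move the $-1/a_t$ piece to path III, whereas the paper moves the $a_t$ piece to path IV; both choices are valid. Two points need to be made precise, because they are what make the last step tractable.

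First, the decomposition you display has only simple poles. The correct one is $Q_3/k_t^2=\sum_{p\in S_t}D^3_p\bigl[(z-p)^{-2}+\tfrac{q'(p)}{2q(p)}(z-p)^{-1}\bigr]$, with no polynomial part since $\deg_z Q_3=7$. The fixed ratio between the two coefficients is exactly the vanishing of the residues of $\cG_t\eta_3$ (Lemma~\ref{lema6}).

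Second, that same ratio removes the third-kind integral. By the reduction formula, each bracket integrates to $\frac{1}{2q(p)}\int (z-p)\,dz/w$, with vanishing boundary terms. So no $\Pi$ appears, and you get the closed form of Theorem~\ref{theorem1}, $\int_{\alpha_1}\cG_t\eta_3=2i\sqrt{r/(1+r^2)}\,F(k_{\alpha_1})\sum_{p\in S_t}(\gamma_{\alpha_1}-p)D^3_p/q(p)$, and its analogue from Theorem~\ref{theorem2}.

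The genuine gap is the non-vanishing itself. You leave it as two contingency plans: a fixed sign of the real integrands, or else a $\Theta$-type numerical check. Neither is carried out. The first has no evident reason to hold, since the total is a sum of integrals over two different segments.

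The missing idea is that the closed form has a definite sign for elementary reasons. Pair $p$ with $-1/p$, using $p_3(-1/z,t)=-p_3(z,t)/z^2$. The sum becomes a nonzero prefactor times $R_3^{\alpha_j}(a_t,b_t,t)\,p_3(a_t,t)+R_3^{\alpha_j}(b_t,a_t,t)\,p_3(b_t,t)$, and both summands are negative for every $t\in(0,1]$.

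\begin{itemize}
\item \emph{Sign of $p_3$.} We have $p_3(p,t)/p=c_{2D}(p-1/p)+c_{2N}$, with $a_t-1/a_t=-y_t$ and $b_t-1/b_t=-x_t$. So $p_3(a_t,t)>0$ and $p_3(b_t,t)>0$ follow from $c_{2D}<0$ (Lemma~\ref{propertiesconstants}), the elementary inequality $c_{2N}<5c_{2D}$, and $x_t\le 4$, $y_t<-1$ (Lemma~\ref{xtytestimates}).
\item \emph{Sign of $R_3^{\alpha_j}$.} Every factor has its sign fixed by $b_t<-r<-t<a_t<0$, except $p^2+2(-1)^j\gamma_{\alpha_j}p-1=p\bigl(p-1/p+2(-1)^j\gamma_{\alpha_j}\bigr)$. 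The sign of that factor is fixed by $y_t<-2\gamma_{\alpha_1}<x_t$ and $y_t<2\gamma_{\alpha_2}<x_t$ (Lemmas~\ref{xtytestimates} and~\ref{gammaestimatives}).
\end{itemize}

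No computer-assisted estimate is needed for this lemma.

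Your picture of the regime $t\to0^+$ is also inverted. $b_t$ stays in a compact interval below $-r$, since $x_t\in[2,4]$. It is $a_t\sim -t/3$ that tends to $0^-$ (Lemma~\ref{lem:limitt/a}), while $-1/a_t\to+\infty$. In any case, the statement is pointwise in $t$, so a sign argument of this kind needs no uniform control near $t=0$.
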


\begin{proof}
Let
\begin{equation*}
R^{\alpha_j}_3(p,q,t)\coloneq (p+t)(pt-1)q(1+q^2)^2\left[p^2+2\gamma_{\alpha_j}(-1)^jp-1\right].
\end{equation*}
Using Definition \ref{Omegas} and applying Theorems \ref{theorem1}, \ref{theorem2} we obtain
\begin{gather}\label{integralalpha1Omega3}
    \int_{\alpha_1}\cG\eta_3=2i\left(\frac{r}{1+r^2}\right)^{\frac{1}{2}}F(k_{\alpha_1})\left(\frac{\sqrt{r}}{ab(1+a^2)(1+b^2)k_a(a,t)k_b(b,t)}\right)\left[R_3^{\alpha_1}(a,b,t)p_3(a,t)+R_3^{\alpha_1}(b,a,t)p_3(b,t)\right].
    \end{gather}
    \begin{gather}\label{integralalpha2Omega3}
    \int_{\alpha_2}\cG\eta_3=2\left(\frac{r}{1+r^2}\right)^{\frac{1}{2}}F(k_{\alpha_2})\left(\frac{\sqrt{r}}{ab(1+a^2)(1+b^2)k_a(a,t)k_b(b,t)}\right)\left[R_3^{\alpha_2}(a,b,t)p_3(a,t)+R_3^{\alpha_2}(b,a,t)p_3(b,t)\right].
\end{gather}

We claim that for all $t\in (0,1]$ it holds that
    \begin{equation}\label{p3positive}
          p_3(a,t)>0,\quad p_3(b,t)>0,\quad 
    \end{equation}

Indeed from the Definition \ref{specialpolynomials} we rewrite
    \begin{equation*}
        \frac{p_3(a_t,t)}{a_t}=c_{2D}\left(a_t-\frac{1}{a_t}\right)+c_{2N}=-c_{2D}y_t+c_{2N}
    \end{equation*}
and
\begin{equation*}
      \frac{p_3(b_t,t)}{b_t}=c_{2D}\left(b_t-\frac{1}{b_t}\right)+c_{2N}=-c_{2D}x_t+c_{2N}.
\end{equation*}
Since $a_t,b_t<0$ by Lemma \ref{lem:posicoesdosatbt}, and $c_{2D}(t)<0$ by Lemma \ref{propertiesconstants}, it follows that $p_3(a_t,t),\ p_3(b_t,t)>0$ is equivalent to
\begin{equation*}
    x_t,\ y_t <\frac{c_{2N(t)}}{c_{2D(t)}}.
\end{equation*}
We claim that for all $t\in (0,1]$
\begin{equation}\label{inequalitycconstants}
    c_{2N}(t)<5c_{2D}(t).
\end{equation}
In fact, 
\begin{equation*}
    5c_{2D}(t)-c_{2N}(t)=-1 - 15 r + 
 8 r^2 + (10 - 8 r + 10 r^2) t + (-25 r + 4 r^2) t^2 + (10 - 
    4 r) t^3 + t^4.
\end{equation*}
Using the estimate of Lemma \ref{rLopezNumerical} we see that
\begin{equation*}
    4r^2-25r=r(4r-25)<0,\quad 10-4r=2(5-2r)<0
\end{equation*}
and since $t\in (0,1]$ it follows that
\begin{equation*}
     5c_{2D}(t)-c_{2N}(t)>(-1-15r+8r^2)+(-25r+4r^2)t+(10-4r)t+(10-8r+10r^2)t,
\end{equation*}
that is
\begin{equation*}
     5c_{2D}(t)-c_{2N}(t)>(-1-15r+8r^2)+(20-37r+14r^2)t.
\end{equation*}
Using the estimates of Lemma \ref{rLopezNumerical} we obtain
\begin{equation*}
    -1-15r+8r^2>12>0,\quad (20-37r+14r^2)>15>0
\end{equation*}
and therefore $c_{2N}(t)<5c_{2D}(t)$. Finally, since $c_{2D}(t)<0$ by Lemma \ref{propertiesconstants} and applying Lemma \ref{xtytestimates} we conclude that
\begin{equation*}
    x_t<4<5<\frac{c_{2N}(t)}{c_{2D}(t)},\quad y_t<-1<5<\frac{c_{2N}(t)}{c_{2D}(t)}
\end{equation*}
which implies that $p_3(a_t,t),\ p_3(b_t,t)>0$ as we wanted to prove. 

 Moreover we claim that, for all $t\in (0,1]$
    \begin{equation}\label{R3positive}
      R^{\alpha_1}_3(a,b,t)<0,\quad R^{\alpha_1}_3(b,a,t)<0,\quad R^{\alpha_2}_3(a,b,t)<0,\quad R^{\alpha_2}_3(b,a,t)<0.
    \end{equation}

Indeed, using the relative positions of the points $a_t$, $b_t$ of Lemma \ref{lem:posicoesdosatbt}, these inequalities are equivalent to 
\begin{equation*}
    y_t<-2\gamma_{\alpha_1}<x_t,\quad y_t<2\gamma_{\alpha_2}<x_t.
\end{equation*}
These inequalities are verified by an application of Lemmas \ref{gammaestimatives}, \ref{xtytestimates} and Lemma \ref{gammaestimatives}
\begin{equation*}
y_t<-1<-0.5<-2\gamma_{\alpha_1}<-0.3<2<x_t,\quad y_t<-1<0<2\gamma_{\alpha_2}<2<x_t,
\end{equation*}
implying that $R^{\alpha_1}_3(a,b,t)<0,\quad R^{\alpha_1}_3(b,a,t)<0$ and $R^{\alpha_2}_3(a,b,t)<0,\quad R^{\alpha_2}_3(b,a,t)<0$ as we wanted to prove.
Finally, substituting equations \eqref{p3positive} and \eqref{R3positive} into equations \eqref{integralalpha1Omega3} and \eqref{integralalpha2Omega3} we conclude that for every $t \in (0,1]$, $\int_{\alpha_1}\cG_t \eta_3 \neq 0 \text{ and } \int_{\alpha_2}\cG_t \eta_3 \neq 0.$
\end{proof}
And finally, the rest of the integrals are also nonzero for every $t \in (0,1]$.

\begin{lema}\label{lem:integralseta12nonzero}
  For every $t \in (0,1]$ and for each $j,k\in\{1,2\}$ it holds that $\int_{\alpha_j} \eta_k \neq 0
    $.
\end{lema}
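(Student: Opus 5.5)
Since the one-forms $\eta_1=\eta^1_{SSS}$ and $\eta_2=\eta^1_{SSA}$ are odd under $A=A_1\circ A_2$ in the $w$-variable (they carry the factor $1/\cG_t$, which contains $1/w$), I would write them as $\eta_k=\frac{R_k(z,t)}{k_t^2(z)}\frac{dz}{w}$. Here $R_1=\frac{(tz-1)^2(z+r)}{\sqrt r}\,p_1(z,t)$ and $R_2$ is analogous with $p_2$; this follows from $\cG_t=\sqrt r\frac{(z+t)w}{(tz-1)(z+r)}$ and Definition \ref{specialpolynomials}. The plan then mirrors Lemma \ref{lem:integralseta3nonzero}. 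For $\alpha_1$ I use the deformation of Lemma \ref{lema7} I or II, and for $\alpha_2$ the deformation III or IV. The choice of deformation should avoid the double poles of $\eta_k$, which sit at the roots $a_t,b_t<0$ and $-1/a_t,-1/b_t>0$ of $k_t$ (Lemma \ref{lem:posicoesdosatbt}). Where a path must cross a pole, I will split $R_k/k_t^2$ by partial fractions into the pieces
\[
\frac{A}{(z-c)^2}+\frac{B}{z-c},
\]
with $B=0$ by Lemma \ref{specialpropertiesbasisBsym}, plus a polynomial part. Each piece I deform independently, in the direction that avoids its own pole.

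With this decomposition, each period becomes a combination of complete elliptic integrals of the first and second kind. The Weierstrass-type reductions are presumably the content of Definition \ref{Omegas} and Theorems \ref{theorem1}, \ref{theorem2}, which I would invoke exactly as in \eqref{integralalpha1Omega3}. The double-pole terms $\int \frac{dz}{(z-c)^2 w}$ reduce, via the derivative identity $d\bigl(\frac{w}{z-c}\bigr)$, to combinations of $\int dz/w$, $\int z\,dz/w$ and $\int \frac{dz}{(z-c)w}$. The simple-pole terms then cancel in the total because the residues vanish. The expected result is a formula of the form
\[
\int_{\alpha_j}\eta_k \;=\; c_j\,F(k_{\alpha_j})\,Q_k(a_t,b_t,t,r)\Bigl[R_k^{\alpha_j}(a,b,t)\,p_k(a,t)+R_k^{\alpha_j}(b,a,t)\,p_k(b,t)\Bigr],
\]
where $c_j\in\{2i,2\}$, $Q_k$ is a nonvanishing prefactor, and $R_k^{\alpha_j}$ is built from the auxiliary constant $\gamma_{\alpha_j}$ exactly as $R_3^{\alpha_j}$.

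Nonvanishing would then follow from sign arguments. For $k=1$, I write $p_1(z)/z^2=c_{34D}(z^2+z^{-2})-c_{3N}(z-z^{-1})+c_{4N}$. Substituting $z-1/z=-y_t$ or $-x_t$ at $a_t,b_t$ makes this a quadratic in $x_t$ or $y_t$, so its sign can be controlled by Lemma \ref{propertiesconstants} together with the bounds on $x_t,y_t$ in Lemma \ref{xtytestimates}. For $k=2$, the identity $p_2(z)/z^2=c_{1D}(z^2-z^{-2})-c_{1N}(z+z^{-1})$ gives a similar reduction. The factors $R_k^{\alpha_j}$ are handled through the sandwich bounds $y_t<\pm2\gamma_{\alpha_j}<x_t$ from Lemma \ref{gammaestimatives}. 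The target is to show that the two bracketed summands have the same strict sign for all $t\in(0,1]$.

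The main obstacle is precisely this sign step. Unlike the case of $\eta_3$, the coefficients $c_{3N},c_{34D},c_{1N},c_{1D}$ are of high degree in $t$, and the two summands need not obviously agree in sign. If a clean sign argument fails, I would fall back on the interval-subdivision $\Theta$-method of Appendix \ref{section::EstimatesPolynomials}. I would apply it to the resulting polynomial in $(|a_t|,|b_t|,t,r)$, using the piecewise linear envelopes of Lemma \ref{lem:estimatesforabtau} and the bounds on $r$ from Lemma \ref{rLopezNumerical}, as was done for the residue conditions in Lemma \ref{specialpropertiesbasisBsym}.
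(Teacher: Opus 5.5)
Your reduction to the bracket $R_k^{\alpha_j}(a_t,b_t,t)p_k(a_t,t)+R_k^{\alpha_j}(b_t,a_t,t)p_k(b_t,t)$ is exactly the paper's (Theorems \ref{theorem1}, \ref{theorem2}), with one correction. The simple-pole coefficients in the partial fractions of $R_k/k_t^2$ are not zero. Vanishing of $\operatorname{Res}_{B_p}\eta_k$ forces $E_p=D_p\,q'(p)/(2q(p))$ (Lemma \ref{lema6}), and it is this relation that cancels the $\int\frac{dz}{(z-p)w}$ terms produced by $d\bigl(\frac{w}{z-p}\bigr)$ (Propositions \ref{prop1}--\ref{prop4}). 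There is also no polynomial part, since $\deg R_k=7<8$.

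The genuine gap is the nonvanishing step. Your primary target, that the two summands have the same strict sign, is false. Take $\int_{\alpha_1}\eta_1$ and let $t\to0^+$. From $a_t\sim-t/3$ (Lemma \ref{lem:limitt/a}), $c_{34D}(t)=(r-2r^3)t+O(t^2)$ and $c_{3N}(0)=6r^3-3r$, one gets $p_1(a_t,t)=2(r-2r^3)t+O(t^2)$. Hence the contribution of $\{a_t,-1/a_t\}$ to $\sum_{p\in S_t}(\gamma_{\alpha_1}-p)D^1_p/q(p)$ tends to $\tfrac23\gamma_{\alpha_1}\sqrt r\,(r-2r^3)<0$. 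The terms at $b_t$ and at $-1/b_t$, by contrast, both have positive limits. To see this, count signs using $b_0+r<0$, $q(b_0)<0$, $\gamma_{\alpha_1}+1/b_0<0$, $q(-1/b_0)<0$ and $p_1(b_0,0)>0$, where $b_0\approx-3.42$ is the negative root of $3z^2+(4r-\frac2r)z-3$. So nonvanishing is a comparison of magnitudes, not a sign pattern.

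The inputs you cite could not have decided these signs anyway. Lemma \ref{propertiesconstants} says nothing about $c_{34D}$ or $c_{3N}$, and $c_{34D}$ in fact changes sign on $(0,1]$. Also $y_t\sim-3/t$ is unbounded. Finally, the braces in $R_1^{\alpha_j}$ and $R_2^{\alpha_j}$ are not of the form $p^2\pm2\gamma p-1$, so $y_t<\pm2\gamma_{\alpha_j}<x_t$ does not control them.

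Your fallback is the paper's method, but as stated it fails for the same underlying reason. Both summands are $O(t)$ as $t\to0^+$, through $p_k(a_t,t)$ and through the factor $a_t$ in $R_k^{\alpha_j}(b_t,a_t,t)$. This is compensated by the factor $t/a_t^2\sim 9/t$ in the prefactor: your $Q_k$ is nonzero on $(0,1]$ but unbounded. The bracket, viewed as a polynomial in $(|a_t|,|b_t|,t,r,\gamma_{\alpha_j})$, therefore tends to $0$ at $t=0$. On the first subinterval $[0,\frac1n]$ the lower bound $P^{\inf}$ then cannot be positive, so no $\Theta$ certificate exists.

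The missing idea is the paper's renormalization at $t=0$:
\begin{enumerate}
\item Move one factor $1/a_t$ from the prefactor into the bracket.
\item Since the numerator evaluated at $a_t=0$ is divisible by $t$, $\frac1{a_t}\mathrm{Num}$ becomes a polynomial in $a_t,b_t,\tau_t=t/a_t,t,r,\gamma_{\alpha_j}$, with a nonzero value at $t=0$.
\item Apply $\Theta$ to this polynomial, using the $|\tau|$-envelopes of Lemma \ref{lem:estimatesforabtau} and the bounds of Lemmas \ref{rLopezNumerical} and \ref{gammaestimatives}.
\item Use Lemma \ref{lem:limitt/a} to see that the rest of the prefactor stays bounded away from zero; this means $t/a_t$, $k_a(a_t,t)=(a_t-b_t)(ta_t+t/a_t)(a_t+1/b_t)$, $k_b(b_t,t)$ and $k_t(-r)$.
\end{enumerate}
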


\begin{proof}
    Consider the integral of $\eta_1$ along $\alpha_1$. Let

\begin{equation*}
    R_1^{\alpha_1}(p,q,t)\coloneq q(r+q)(-1+rq)(1+q^2)^2\left\lbrace
    \begin{aligned}
    &(t^2-\gamma_1r)p^4+\left[(r-\gamma_1)t^2-2(\gamma_1r+1)t-(r-\gamma_1)\right]p^3\\&-2\left[\gamma_1rt^2+2(r-\gamma_1)t-1\right]p^2\\&-\left[(r-\gamma_1)t^2-2(\gamma_1r+1)t-(r-\gamma_1)\right]p+(t^2-\gamma_1r)
    \end{aligned}
    \right\rbrace.
\end{equation*}
    Using Theorem \ref{theorem1} we have that
    \begin{equation*}
        \int_{\alpha_1}\eta_1=2i\left(\frac{r}{1+r^2}\right)^{\frac{1}{2}}F(k_{\alpha_1})\left(\frac{\sqrt{r}t}{a_t^2b_t^2(1+a_t^2)(1+b_t^2)k_a(a_t,t)k_b(b_t,t)k(-r,t)}\right)\left[
        \begin{aligned}
        &R_1^{\alpha_1}(a_t,b_t,t)p_1(a_t,t)\\&+R_1^{\alpha_1}(b_t,a_t,t)p_1(b_t,t)
        \end{aligned}
       \right].
    \end{equation*}

In the expression above, the factor $t/a_t$ is uniformly bounded away zero by Lemma \ref{lem:limitt/a}. The factor $k_a(a_t,t)$ is also uniformly bounded away from zero which follows from Lemma \ref{lem:limitt/a}, in fact
\begin{align*}
    k_a(a_t,t) &= t (a_t - b_t) \left( a_t + \frac{1}{a_t} \right) \left( a_t + \frac{1}{b_t} \right)\\
    &=(a_t - b_t)\left( t a_t + \frac{t}{a_t} \right) \left( a_t + \frac{1}{b_t} \right).
\end{align*}
And similarly for $k_b(b_t,t)$ and $k(-r,t)$. Therefore, it only remains to prove
\begin{equation}\label{eq:nonzeroeta1alpha1}
    \frac{1}{a_t} (R^{\alpha_1}_1(a_t,b_t,t) p_1(a_t,t) + R^{\alpha_1}_1(b_t,a_t,t) p_1(b_t,t)) \neq 0.
\end{equation}
To prove this, we use the strategy presented in the Appendix \ref{section::EstimatesPolynomials}. We describe now how to transform \eqref{eq:nonzeroeta1alpha1} into an expression for which we can use the aforementioned strategy. The numerator in \eqref{eq:nonzeroeta1alpha1} is a polynomial in $a_t$, $b_t$, $t$, $r$ and $\gamma_1$ that we call $\text{Num}_{\alpha_1, \eta_1}(a_t, b_t, t, r, \gamma_1)$. 

One can verify that every monomial of $\text{Num}_{\alpha_1,\eta_1}(a_t, b_t, t, r, \gamma_1)$ without the variable $a_t$ has the variable $t$ (see the companion Mathematica notebook). That is, $\text{Num}_{\alpha_1,\eta_1}(0, b_t, t, r, \gamma_1) = t \cdot Q_{\alpha_1,\eta_1}(b_t, t, r, \gamma_1)$. Dividing by $a_t$ we obtain a polynomial $\tau \cdot Q_{\alpha_1,\eta_1}(b_t, t, r, \gamma_1) $ with the additional variable $\tau:= t/a_t$. We perform the division by $a_t$ in the monomials that contain $a_t$ as variable and add the resulting polynomial to $\tau \cdot Q_{\alpha_1,\eta_1}(b_t, t, r, \gamma_1)$ to define $\tilde P_{\alpha_1, \eta_1}(a_t, b_t, \tau_t,r,\gamma_1,t)$. Finally, we define a new polynomial in order to use nonnegative variables
\begin{equation*}   
P_{\alpha_1,\eta_1}(|a_t|, |b_t|, |\tau_t|, t, r, \gamma_1) := \tilde P_{\alpha_1, \eta_1}(-a_t, -b_t, -\tau_t, r, \gamma_1, t).
\end{equation*}

We prove that $P_{\alpha_1,\eta_1}$ is positive in $[0,1]$ using the strategy described in Appendix \ref{section::EstimatesPolynomials}. We use the construction of piecewise linear functions estimating $a_t$, $b_t$ and $\tau_t$ from above and from below described in Lemma \ref{lem:estimatesforabtau}, and the explicit estimates for $r$ and $\gamma_1$ given by Lemmas \ref{rLopezNumerical} and \ref{gammaestimatives} to construct the $\Theta$ function for this problem, as defined in \eqref{eq:thetageral}, and prove that for some number of sub-intervals $n$, it holds
\begin{equation} \label{theta-alpha1-eta1}            \Theta(P_{\alpha_1,\eta_1};|a|^{\inf}_t,|a|^{\sup}_t,|b|^{\inf}_t,|b|^{\sup}_t,|\tau|^{\inf}_t,|\tau|^{\sup}_t,r^{\inf},r^{\sup},\gamma_1^{\inf},\gamma_1^{\sup},n) < 0.
\end{equation}
Therefore the integral of $\eta_1$ along $\alpha_1$ is nonzero. See the companion Mathematica notebook for the explicit computations.

Consider the integral of $\Omega_1$ along $\alpha_2$. Let
\begin{equation*}
    R_1^{\alpha_2}(p,q,t)\coloneq q(r+q)(-1+rq)(1+q^2)^2\left\lbrace
    \begin{aligned}
    &(t^2+\gamma_2r)p^4+\left[(r+\gamma_2)t^2-2(-\gamma_2r+1)t-(r+\gamma_2)\right]p^3\\&-2\left[-\gamma_2rt^2+2(r+\gamma_2)t-1\right]p^2\\&-\left[(r+\gamma_2)t^2-2(-\gamma_2r+1)t-(r+\gamma_2)\right]p+(t^2+\gamma_2r)
    \end{aligned}
    \right\rbrace
\end{equation*}
Using Theorem \ref{theorem2} we have
\begin{equation*}
    \int_{\alpha_2}\eta_1=2\left(\frac{r}{1+r^2}\right)^{\frac{1}{2}}F(k_{\alpha_2})\left(\frac{\sqrt{r}t}{a_t^2b_t^2(1+a_t^2)(1+b_t^2)k_a(a_t,t)k_b(b_t,t)k(-r,t)}\right)\left[
    \begin{aligned}
         &R_1^{\alpha_2}(a_t,b_t,t)p_1(a_t,t)\\&+R_1^{\alpha_2}(b_t,a_t,t)p_1(b_t,t)
    \end{aligned}
   \right],
\end{equation*}  
Similarly, this integral being nonzero reduces to prove
\begin{equation}\label{eq:conditionalpha2eta1}
    \frac{1}{a_t} (R_1^{\alpha_2}(a_t,b_t,t) p_1(a_t,t) + R^{\alpha_2}_1(b_t,a_t,t) p_1(b_t,t)) \neq 0.
\end{equation}
For which we can follow the same procedure as before, to conclude that left hand side of \eqref{eq:conditionalpha2eta1} is strictly negative for $t \in [0,1]$.
Thus, the integral of $\eta_1$ along $\alpha_2$ is nonzero for every $t \in (0,1]$. 

Consider the integral of $\Omega_2$ along $\alpha_1$. Let 
    \begin{equation*}
        R_2^{\alpha_1}(p,q,t)\coloneq (1+q^2)q(q+r)(-1+qr)\left\lbrace(\gamma_1r+t^2)p^2+\left[(r-\gamma_1)t^2+2(\gamma_1r-1)t+r-\gamma_1\right]p-(\gamma_1r+t^2)\right\rbrace
    \end{equation*}
Using Theorem \ref{theorem1} we have that
\begin{equation*}
\int_{\alpha_1}\eta_2=2i\left(\frac{r}{1+r^2}\right)^{\frac{1}{2}}F(k_{\alpha_1})\left(\frac{\sqrt{r}t}{a_t^2b_t^2k_a(a_t,t)k_b(b_t,t)k(-r,t)}\right)\left[R_2^{\alpha_1}(a_t,b_t,t)p_2(a_t,t)+R_2^{\alpha_1}(b_t,a_t,t)p_2(b_t,t)\right]
    \end{equation*}
    As before, this reduces to prove
\begin{equation}\label{eq:conditionperiodalpha1eta2}
    \frac{1}{a_t}(R^{\alpha_1}_2(a_t,b_t,t) p_2(a_t,t) + R^{\alpha_1}_2(b_t,a_t,t) p_2(b_t,t)) \neq 0.
\end{equation}
We follow the same procedure described in the last cases and prove that this expression is negative in $[0,t]$

Finally, consider the integral of $\Omega_2$ along $\alpha_2$. Let

  \begin{equation*}
        R_2^{\alpha_2}(p,q,t)\coloneq (1+q^2)q(q+r)(-1+qr)\left\lbrace(-\gamma_2r+t^2)p^2+\left[(r+\gamma_2)t^2-2(\gamma_2r+1)t+r+\gamma_2\right]p-(-\gamma_2r+t^2)\right\rbrace
    \end{equation*}
    Using Theorem \ref{theorem2} we have
 \begin{gather*}
\int_{\alpha_2}\eta_2=2\left(\frac{r}{1+r^2}\right)^{\frac{1}{2}}F(k_{\alpha_2})\left(\frac{\sqrt{r}t}{a^2b^2k_a(a_t,t)k_b(b_t,t)k(-r,t)}\right)\left[R_2^{\alpha_2}(a_t,b_t,t)p_2(a_t,t)+R_2^{\alpha_2}(b_t,a_t,t)p_2(b_t,t)\right]
    \end{gather*}
Then, following the same scheme as before, we can conclude that
\begin{equation*}
    \frac{1}{a_t} \left(R^{\alpha_2}_2(a_t,b_t,t) p_2(a_t,t) + R^{\alpha_2}_2(b_t,a_t,t) p_2(b,t)\right) \neq 0.
\end{equation*}
To conclude, we follow the same procedure as in the last integrals to conclude that this expression is positive in $[0,t]$. 
\end{proof}

We now use the previous lemmas to prove Theorem \ref{determinantperiodmatrixnonzero}.

\begin{proof}[Proof of Theorem \ref{determinantperiodmatrixnonzero}]
It follows from Lemmas \ref{lem:integralseta3nonzero}, \ref{lem:integralseta12nonzero} and Theorem \ref{determinantperiodmatrix}, which conclude that the determinant of the Period Matrix is nonzero for any $t \in (0,1]$, in particular, the nullity is always nonzero.
\end{proof}

Therefore, we have solved the period problem. In conclusion, we have proved that $H(\mathcal{G}_t)=0$ for every $t \in (0,1]$.

\subsubsection{Nullity of the meromorphic function $\mathcal{G}_t$ for $t \in [0,1]$}

We are now in position to gather the information derived in the previous Sections to compute the nullity of the Schr\"odinger operators associated to each element of the family $\{\mathcal{G}_t\}_{t \in [0,1]}$ of meromorphic function defined in equation \eqref{eq:deformapadegauss}. 
\begin{cor}\label{cor:nullityofallGt}
    The nullity of the meromorphic functions $\cG_t:\Sigma\to \bS^2$ is three, for all $t\in [0,1]$.
\end{cor}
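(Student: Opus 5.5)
The plan is to split into the two cases $t\in(0,1]$ and $t=0$. In both, the isomorphism $N(\phi)/L(\phi)\cong H(\phi)$ of Montiel and Ros recalled in Section \ref{sec:indexandgaussmap} gives
\[
\text{nul}(\cG_t)=3+\dim_{\bR}H(\cG_t),
\]
where $L(\cG_t)$ is spanned by the three coordinate functions of $\cG_t$ composed with stereographic projection. So it suffices to show that $H(\cG_t)=0$ for every $t\in[0,1]$.

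For $t\in(0,1]$, Theorem \ref{Hchapelsymmetricbasis} gives $\hat H(\cG_t)=\text{span}_{\bC}\{\sigma_1,\sigma_2,\sigma_3\}$. Any $\sigma\in H(\cG_t)$ is therefore $\sum_j (c_j+ic_{j+3})\sigma_j$ with $c\in\bR^6$. By the proposition preceding the homology generators, the period conditions defining $H(\cG_t)$ are equivalent to $c\in\text{Nul}(\mathcal{P}er(\cG_t))$. Theorem \ref{determinantperiodmatrixnonzero}, which rests on Theorem \ref{determinantperiodmatrix} and Lemmas \ref{lem:integralseta3nonzero} and \ref{lem:integralseta12nonzero}, shows that this null space is trivial. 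Hence $H(\cG_t)=0$ and $\text{nul}(\cG_t)=3$.

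For $t=0$, the previous argument does not apply, because the branch structure changes: $P_0$ and $P_\infty$ become branch points of order two (Lemma \ref{branchvaluesG0}). Here I would use that all branch values of $\cG_0$ lie on an equator. Corollary 15 of Montiel--Ros, the orientable counterpart of Theorem \ref{thm:indexofnonorientable}, then gives full nullity $3$ directly. Alternatively, one can rerun the partition argument of Lemmas \ref{lem:adaptmontielros1} and \ref{lem:adaptmontielros2} without the involution: the preimage of the complement of the equator splits into $2\deg\cG_0=8$ hemispheres, and the Neumann count yields $\text{index}+\text{nul}\le 8+3$ while the Dirichlet count yields $\text{index}\ge 8-1$. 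Since $\text{nul}\ge 3$ always, this forces $\text{nul}(\cG_0)=3$. The degree of $\cG_0$ is $4$ by the remark following Lemma \ref{deformationpath}.

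The main obstacle is making sure the statement is genuinely about the full nullity and not only the odd nullity. The residue and period analysis works in the full space $H^{0,2}(\cG_t)$ of dimension $8$ (Theorem \ref{lema:formadeH0}) with a complex basis, so it does give the full space, and no odd/even splitting is required. At $t=0$, I need to check that Montiel--Ros Corollary 15 is stated for the total index and nullity with no orientation hypotheses. It is, since it only requires a holomorphic map whose branch values lie on a great circle.
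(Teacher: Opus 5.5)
Your proposal is correct and follows the paper's proof. For $t\in(0,1]$ you get $H(\cG_t)=0$ from the nonsingular period matrix of Theorem \ref{determinantperiodmatrixnonzero}, and at $t=0$ you combine Lemma \ref{branchvaluesG0} with Montiel--Ros Corollary 15, which is what the paper's citation of that lemma amounts to.

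There is one arithmetic slip in your optional alternative for $t=0$. On a hemisphere the Neumann eigenvalue $2$ has multiplicity $2$, not $3$, so the Neumann count gives $\text{index}+\text{nul}\le 7\cdot 1+(1+2)=10$ rather than $11$. Only with the bound $10$ does the squeeze against $\text{index}\ge 7$ and $\text{nul}\ge 3$ force $\text{nul}(\cG_0)=3$.
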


\begin{proof}

As presented in Section \ref{sec:indexandgaussmap}, Montiel and Ros proved in \cite{MontielRos} that the nullity of a meromorphic function is always at least three, since the three coordinate functions of the map are Jacobi functions, and that the remaining of the null-space is in correspondence with the space $H(\mathcal{G}_t)$, which we proved is always trivial for any $t \in [0,1]$, as follows from Theorem \ref{determinantperiodmatrixnonzero} and 
Theorem \ref{branchvaluesG0}.
\end{proof}

\subsection{Proof of Theorem \ref{thm-index-lopez}}\label{sec:proofB}
We collect the results of the previous Sections to give a streamlined proof of Theorem \ref{thm-index-lopez}.

\begin{proof}[Proof of Theorem \ref{thm-index-lopez}]

As concluded in Corollary \ref{cor:nullityofallGt}, all maps $\mathcal{G}_t$ for $t \in [0,1]$ have nullity equals three. Recall that $\mathcal{G}_1$ corresponds to the Gauss map of the double cover of the López Klein bottle and that $\mathcal{G}_0$ has odd Morse index and nullity also equals three, by Theorem \ref{thm:indexofnonorientable} and Lemma \ref{branchvaluesG0}. Thus, since the family $\mathcal{G}_t$ is continuous in the $C^1$ topology, we have that the eigenvalues associated to odd eigenfunctions of $-\Delta_{\mathcal{G}_t} - 2$ vary continuously. Therefore, since the nullity, and in particular the odd nullity of each operator is constant, the number of negative eigenvalues of $-\Delta_{\mathcal{G}_t} - 2$ associated to odd eigenfunctions must be constant. This means that the odd Morse index and the odd nullity must be the same as for $\mathcal{G}_0$, from which the theorem follows.
\end{proof}
\subsection{Proof of Corollary \ref{coro-index-8pi}}\label{indexthreecorollary}. We conclude this Section with a proof of Corollary \ref{coro-index-8pi}.
\begin{proof}[Proof of Corollary \ref{coro-index-8pi}]
    Let $X:M\to \bR^3$ be a complete minimal immersion of finite total curvature $8\pi$. We have two cases to consider.
    
    If $M$ is orientable, then $M$ is either a genus zero surface \cite[Theorem 3, 4, 5, 6]{Lopezorientable8pi} or the Chen-Gackstatter surface, by the Lopez classification result \cite[Corollary 1]{Lopezorientable8pi}. Notice that the Gauss map of all these minimal surfaces has degree two. On one hand, for the genus zero minimal surfaces, the maximum number of branch points of their Gauss map is at most two, by the Riemann-Hurwitz Theorem \ref{RiemannHurwitz}, so the branch values of the Gauss map belong to an equator of the sphere. On the other hand, the Gauss map of the Chen-Gackstatter surface has exactly four branch points whose branch values also belong to an equator by a direct computation. Hence, the Morse index of all these minimal surfaces is three by an application of \cite[Corollary 15]{MontielRos}.

    If $M$ is nonorientable, then $M$ is the López minimal Klein bottle by the characterization result of Lopez  \cite{LopezUniqueness}. Therefore the Morse index of $M$ is three by Theorem \ref{thm-index-lopez}.
\end{proof}

\section{Minimal surfaces with Morse index two immersed in Euclidean three-space}\label{section::Minimal surfaces with Morse index two immersed in Euclidean three-space}

  The main goal of this Section is to restrict the topology and geometry of a complete minimal surface with index $2$ immersed in $\mathbb{R}^3$ and establish Theorem \ref{thm-classification-index-two}. Our proof builds strongly on the work of Chodosh-Maximo \cite{ChodoshMaximotopandindexII}, as we use their results and the ideas behind the proof of their Theorem 1.5. 

\begin{proof}[Proof of Theorem \ref{thm-classification-index-two}]

 Chodosh and Maximo proved the non-existence of orientable complete minimal surfaces immersed in $\mathbb{R}^3$ with Morse index 2 \cite{ChodoshMaximotopandindexII}. It is then enough to consider nonorientable complete minimal immersions. Suppose $X: M \to \mathbb{R}^3$ is a complete, nonorientable minimal immersion with Morse index two, where $M=\Sigma\setminus \{p_1,\ldots, p_r\}$ and $\Sigma$ compact. We denote by $g$ the genus of the orientable double cover. Applying the estimate \eqref{eq:chodosh-maximo2} of Chodosh-Maximo, we have:
\begin{equation}\label{eq:chodosh2forindex2}
    \frac{8 \pi}{3} \leq \int_{M} (-\kappa) \leq 10 \pi.
\end{equation}
Recall that the total curvature of a complete nonorientable immersion is an integer multiple of $2 \pi$, and by \cite{LopezUniqueness} and \cite{Meeks}, the only nonorientable examples with total curvature greater than or equal to $8 \pi$ are the Meeks minimal M\"obius band and the López minimal Klein bottle. Thus, by \eqref{eq:chodosh2forindex2} the only possibility remaining is total curvature equals $10 \pi$. Now, applying the inequality \eqref{eq:chodosh-maximo4} of Chodosh-Maximo, we have:
\begin{equation}\label{inequalitycontradiction}
    10 -g \geq 2 \sum_{i=1}^r(d_j+1) \geq 4 r \geq 4.
\end{equation}
On the other hand, applying the Jorge-Meeks formula \eqref{jogemeeksformulaonesided} in the nonorientable case, we have
\begin{equation}\label{eq:jorgemeekscurvatura10pi}
    g - 1 + \sum_{i=1}^r (d_i + 1) = \frac{\int_{\Sigma} (-\kappa)}{2 \pi} = 5.
\end{equation}
Thus, we have
\begin{equation*}
    6 - g \geq \sum_{i=1}^r (d_i+1) \geq 2r \geq 2.
\end{equation*}
Then, we conclude that the genus satisfies $0 \leq g \leq 4$. First, consider the genus four case. Using the Jorge-Meeks formula \eqref{eq:jorgemeekscurvatura10pi} we conclude:
\begin{equation*}
    \sum_{i=1}^r (d_i+1) = 2.
\end{equation*}
And last equation is only possible if $r=1$ and $d_1 = 1$, which means that the orientable double cover has two ends with multiplicity one, which means they are embedded, see \cite{jorge1983topology}, \cite{MartinThesis}. Then by a theorem of Schoen \cite{schoen1983uniqueness} the orientable double cover must be the catenoid, which is impossible. For the genus three case, using the Meeks equation \eqref{eq:meeksmod2} we would have $5 \equiv -2\ (\text{mod } 2)$, which gives a contradiction.

For the genus two case, we have by Jorge-Meeks formula \eqref{eq:jorgemeekscurvatura10pi} that:
\begin{equation*}
    \sum_{i=1}^r(d_i+1) = 4.
\end{equation*}
The case $r=2$ and $d_1 = d_2 = 1$ is impossible by a theorem of Kusner \cite{kusner1987conformal} saying that there is no complete complete, nonorientable minimal surface in $\mathbb{R}^3$ with two embedded ends. The other possibility is $r=1$ and $d_1 = 3$, which is compatible with equations \eqref{eq:meeksmod2} and \eqref{eq:chodosh2forindex2} .

If the genus is one, then we have from the Jorge Meeks formula:
\begin{equation*}
    \sum_{i=1}^r (d_i + 1) = 5,
\end{equation*}
which contradicts equation \eqref{inequalitycontradiction}.

Finally, the remaining case is having genus zero, from which we obtain again a contradiction putting equation \eqref{eq:jorgemeekscurvatura10pi} into \eqref{inequalitycontradiction}. This completes the proof of Theorem \eqref{thm-classification-index-two}.
\end{proof}

\section{The index of a minimal oriented double cover immersed in Euclidean three-space} \label{section::The index of a minimal oriented double cover immersed in Euclidean three-space}

We investigate the sharpest lower bound on the index of a minimal oriented double cover. The goal of this Section is to prove Theorem \ref{thm-classification-oriented-double-cover}. As in Section \ref{section::Minimal surfaces with Morse index two immersed in Euclidean three-space}, our approach builds strongly on the work of Chodosh and Maximo. We divide the proof of Theorem \ref{thm-classification-oriented-double-cover} into three theorems, which rely on different ideas and techniques. Our first theorem is a direct application of the ideas of Montiel and Ros \cite{MontielRos}.

\begin{thm}\label{indexdoublecoverMeeks}
     The oriented double cover of the Meeks minimal M\"obius band has index $5$.
\end{thm}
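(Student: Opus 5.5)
The plan is to apply Corollary 15 of Montiel and Ros \cite{MontielRos} directly to the extended Gauss map of the oriented double cover. That result is the orientable analogue of Theorem \ref{thm:indexofnonorientable}: if a holomorphic map $\phi:\Sigma\to\bS^2$ of degree $d$ has all its branch values on one equator, then $\text{index}(\phi)=2d-1$ and $\text{nul}(\phi)=3$. By Fischer-Colbrie \cite{Fisher-Colbrie1}, the Morse index of a complete orientable minimal immersion of finite total curvature equals the index of its extended Gauss map. So it is enough to identify the Gauss map of the oriented double cover, compute its degree, and check where its branch values lie.

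First, I would recall the Weierstrass data used in the proof of Theorem \ref{thm-Meeks-indice}. The oriented double cover is $M=\bC\setminus\{0\}$, with Gauss map $\cG(z)=z^2\frac{z+1}{z-1}$. This extends to a rational map $\overline{\bC}\to\overline{\bC}$ of degree $3$, which matches the total curvature $12\pi=4\pi\cdot 3$ of the double cover. The proof of Theorem \ref{thm-Meeks-indice} already located the branch points: $0$, $\infty$, and the two real roots $\zeta_1,\zeta_2$ of $z^2-z-1$, each of order one. Their images are $0$, $\infty$, $\cG(\zeta_1)$ and $\cG(\zeta_2)$, and the last two are real because $\cG$ has real coefficients. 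Hence every branch value lies on the equator corresponding to $\bR\cup\{\infty\}$ under stereographic projection.

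Next, I would apply \cite[Corollary 15]{MontielRos} with $d=3$, which gives $\text{index}(\cG)=2\cdot 3-1=5$. As a consistency check, the odd part of this index is $d-1=2$ by Theorem \ref{thm:indexofnonorientable}, and this is exactly Theorem \ref{thm-Meeks-indice}. The even part is then $3$, and it comes from the even eigenfunctions in the half-sphere partition argument.

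There is essentially no obstacle. The one point that needs care is stating precisely the hypotheses of \cite[Corollary 15]{MontielRos}, namely that only branch values are constrained, not the branch orders. That constraint was already verified above.
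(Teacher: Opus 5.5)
Your proposal is correct and is essentially the paper's own proof. The paper likewise cites the verification, made in the proof of Theorem \ref{thm-Meeks-indice}, that all branch values of the degree-three Gauss map $z^2\frac{z+1}{z-1}$ lie on the equator $\mathbb{R}\cup\{\infty\}$, and then applies \cite[Corollary 15]{MontielRos} to obtain index $2\cdot 3-1=5$.
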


\begin{proof}
    The Gauss map of the double cover of Meeks minimal M\"obius band has its branch values contained in an equator of $\bS^2$, as shown in the proof of Theorem \ref{thm-Meeks-indice} (see Section \ref{section::The Meeks minimal M\"obius band and the Oliveira family of minimal M\"obius bands}). Hence, we conclude that the double cover of the Meeks minimal M\"obius band has index $5$ as an application of Corollary 15 in \cite{MontielRos}. 
\end{proof}
\begin{thm}\label{indexdoublecoverLopez}
    The oriented double cover of the López minimal Klein bottle has index $7$.
\end{thm}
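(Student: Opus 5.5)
We show that the oriented double cover of the López minimal Klein bottle has index $7$. The idea is to rerun the continuity argument of Section \ref{sec:proofB}, but on the full eigenvalue problem of $-\Delta_{\mathcal{G}_t}-2$ rather than its restriction to $\tau$-odd functions. The oriented double cover of the López Klein bottle is the two-sided immersion $M\to\bR^3$ with extended Gauss map $\mathcal{G}_1$. By Fischer-Colbrie \cite{Fisher-Colbrie1}, its index equals $\operatorname{index}(\mathcal{G}_1)$ in the sense of Definition \ref{indexholomorphic}, namely the total number of negative eigenvalues of $L_{\mathcal{G}_1}$ on the compact torus $\overline{M}$ with the branched metric. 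We then move along the path $\{\mathcal{G}_t\}_{t\in[0,1]}$ of \eqref{eq:deformapadegauss}, which is $C^1$-continuous by Lemma \ref{deformationpath}.

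First, at $t=0$, Lemma \ref{branchvaluesG0} shows that all branch values of $\mathcal{G}_0$ lie on an equator. The map $\mathcal{G}_0$ has degree $d=4$, by the remark following Lemma \ref{deformationpath}. The orientable result \cite[Corollary 15]{MontielRos} then gives $\operatorname{index}(\mathcal{G}_0)=2d-1=7$ and $\operatorname{nul}(\mathcal{G}_0)=3$. This is the same computation as in Theorem \ref{indexdoublecoverMeeks}, where $d=3$ gave index $5$.

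Second, for every $t\in[0,1]$ the full nullity $\operatorname{nul}(\mathcal{G}_t)$ equals $3$. This is exactly the content of Corollary \ref{cor:nullityofallGt}. Its proof uses the Montiel--Ros isomorphism $N(\phi)/L(\phi)\cong H(\phi)$ and the vanishing of $H(\mathcal{G}_t)$, and no oddness hypothesis enters. Indeed, Theorem \ref{Hchapelsymmetricbasis} and Theorem \ref{determinantperiodmatrixnonzero} were established for the whole complex space $H^{0,2}(\mathcal{G}_t)$, not only its $\tau$-antisymmetric part.

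Third, since $\mathcal{G}_t$ varies $C^1$-continuously, the conformal factors $|\nabla\mathcal{G}_t|^2$ vary continuously. By min-max, the eigenvalues of $-\Delta_{\mathcal{G}_t}-2$, computed as Rayleigh quotients of $\int|\nabla u|^2-\tfrac12|\nabla\mathcal{G}_t|^2u^2$ relative to a fixed background metric, therefore vary continuously in $t$. The eigenvalues all lie in a bounded window near $0$ (constant degree gives a uniform area bound), and the dimension of the zero eigenspace is constantly $3$. Hence no eigenvalue crosses $0$, and the number of negative eigenvalues is constant along the path. This yields $\operatorname{index}(\mathcal{G}_1)=\operatorname{index}(\mathcal{G}_0)=7$.

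The main obstacle is making the continuity step rigorous for branched metrics. The operator for the branched metric $g_{\mathcal{G}_t}$ degenerates at branch points, and the branch points of $\mathcal{G}_t$ move and even merge as $t\to0$: the order-one branch points at $t>0$ collapse into the order-two points $P_0,P_\infty$ of $\mathcal{G}_0$. To handle this, I would work with the equivalent weighted problem relative to a fixed smooth metric $g$ on $\overline{M}$. Here $L^2$ is weighted by the density $\tfrac12|\nabla^g\mathcal{G}_t|^2_g\,dv_g$, which converges uniformly as $t\to0$ because of the $C^1$ convergence. This gives continuity of the min-max values via \cite{Kokarev}, exactly as was already used implicitly in Section \ref{sec:proofB}.

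As a consistency check, Theorem \ref{thm-index-lopez} gives the odd index $3$, so the even index should be $4$, matching the even index $4=d$ expected in the equatorial case at $t=0$.
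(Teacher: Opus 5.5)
Your argument is correct and is essentially the paper's proof. Both use the same three steps: index $7$ and nullity $3$ for $\mathcal{G}_0$ from \cite[Corollary 15]{MontielRos}; constancy of the full (not merely odd) nullity along $\{\mathcal{G}_t\}$, coming from $H(\mathcal{G}_t)=0$ (Theorem \ref{determinantperiodmatrixnonzero}, Corollary \ref{cor:nullityofallGt}); and continuity of eigenvalues along the $C^1$ path. Your extra step of passing to the Schr\"odinger operator $-\Delta_g-\tfrac12|\nabla^g\mathcal{G}_t|_g^2$ for a fixed background metric is a sound way to make that continuity rigorous through the merging of branch points at $t=0$, which the paper leaves implicit.
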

\begin{proof}
    The Gauss map of the oriented double cover of the López minimal Klein bottle $\cG_1$ can be connected by a $C^1$-continuous one-parameter family of meromorphic functions $\{\cG_t\}_{t\in[0,1]}$, as in Lemma \ref{deformationpath}, in such a way that $\cG_0$ is also a meromorphic function of same degree $4$ but has branch values contained on an equator (see Lemma \ref{branchvaluesG0}). Hence, $\cG_0$ has index $7$ and nullity $3$ by \cite[Corollary 15]{MontielRos}. 
    
    By Theorem \ref{determinantperiodmatrixnonzero} the nullity of $\cG_t$ is constant along the path $\{\cG_t\}_{t\in[0,1]}$, therefore the López minimal Klein bottle also has index $7$.
\end{proof}

We now study more general minimal oriented double covers.

\begin{thm}
    Every minimal oriented double cover immersed in Euclidean three-space has index at least $5$.
\end{thm}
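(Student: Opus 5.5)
We argue by contradiction, combining the Chodosh--Maximo inequalities for both the nonorientable surface and its double cover with the classification by total curvature. Let $X: M \to \bR^3$ be a complete nonorientable minimal immersion with oriented double cover $\tilde X:\tilde M\to\bR^3$, and suppose $\text{index}(\tilde M)\le 4$. By Fischer-Colbrie, finite index gives finite total curvature. Write $\int_M(-\kappa) = 2\pi m$, so that $\int_{\tilde M}(-\kappa)=4\pi m$. Since $\tilde X$ is orientable and non-planar, the lower bound in \eqref{eq:chodosh-maximo1} yields
\[
\tfrac13 + \tfrac{2m}{3}\le \text{index}(\tilde M)\le 4 .
\]
Hence $m\le 5$. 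We also have $m\ge 3$ by Meeks \cite{Meeks}, since there is no nonorientable example with total curvature below $6\pi$.

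We next dispose of the classified values. If $m=3$, then $M$ is the Meeks Möbius band, and Theorem \ref{indexdoublecoverMeeks} gives index $5$. If $m=4$, then $M$ is the López Klein bottle \cite{LopezUniqueness}, and Theorem \ref{indexdoublecoverLopez} gives index $7$. Both contradict the assumption $\text{index}(\tilde M)\le 4$.

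It remains to rule out $m=5$, which forces $\text{index}(\tilde M)=4$. Here we use the topological bound \eqref{eq:chodosh-maximo3} for $\tilde M$. The double cover has genus $g$, where $g$ is the genus of the orientable double cover appearing in \eqref{jogemeeksformulaonesided}. It has $2r$ ends, each end of $M$ lifting to two ends of the same multiplicity $d_j$. Thus \eqref{eq:chodosh-maximo3} gives
\[
4\ge \tfrac13\Big(2g+4\sum_{j=1}^r(d_j+1)-5\Big),\qquad\text{that is}\qquad 2g+4S\le 17,
\]
with $S=\sum_j(d_j+1)$. The Jorge--Meeks formula \eqref{jogemeeksformulaonesided} gives $g-1+S=5$, so $S=6-g$. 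Substituting, $2g+24-4g\le 17$, hence $g\ge 4$ (using $g$ integer from $g\ge 7/2$). Since $S\ge 2$, we also have $g\le 4$, so $g=4$. Then $S=2$, which forces $r=1$ and $d_1=1$.

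So $\tilde M$ has genus four with exactly two embedded ends. By Schoen's theorem \cite{schoen1983uniqueness}, $\tilde M$ would be a catenoid, whose genus is zero. This is a contradiction (the same argument already used in the proof of Theorem \ref{thm-classification-index-two}).

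The main point to check carefully is the bookkeeping of genus and ends between $M$ and $\tilde M$ when feeding \eqref{eq:chodosh-maximo3}. Specifically, we must confirm that the number of ends doubles with multiplicities preserved, and that the $g$ in Martin's formula is exactly the genus of the compactified double cover. One should also double-check the parity constraint \eqref{eq:meeksmod2} in case an intermediate genus survives. If the inequality above turned out weaker than expected, the fallback would be to also invoke \eqref{eq:chodosh-maximo4} together with Kusner's theorem, as in the index-two analysis.
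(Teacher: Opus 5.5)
Your proof is correct and follows essentially the paper's argument: the Chodosh--Maximo total-curvature bound for the double cover, the Meeks/L\'opez classification with the computed indices $5$ and $7$, and at total curvature $10\pi$ the topological Chodosh--Maximo bound combined with Martin's Jorge--Meeks formula and Schoen's theorem. The only difference is organizational. You assume $\text{index}(\tilde M)\le 4$ from the outset and use Meeks' bound $m\ge 3$, which disposes of indices $0,1,2$ without the paper's appeal to the classification of orientable surfaces of index at most two and to the Chodosh--Maximo nonexistence of index two; this is a mild simplification.
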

\begin{proof}
    Let us consider a nonorientable complete minimal surface $\Sigma$ immersed in $\mathbb{R}^3$ and let $\tilde \Sigma$ denote its oriented double cover with genus $g$. Let $r$ the number of ends of $\Sigma$ and $\tilde r$ the number of ends of $\tilde \Sigma$. According to \cite[Theorem 1.1]{ChodoshMaximotopandindexII} we have
\begin{equation}\label{estimateoneindex}
    \text{index}(\tilde \Sigma) \ge \frac{1}{3}(2g+ 2 \sum_{j=1}^{\tilde r} (d_j + 1) - 5 ) = \frac{1}{3}(2g+ 4 \sum_{j=1}^{ r} (d_j + 1) - 5 )
\end{equation}

\noindent where we have used that every end of $\Sigma$ is repeated in $\tilde \Sigma$ twice with the same multiplicity \cite{MartinThesis}. 

Now we use the index to estimate the total curvature of $\tilde \Sigma$. According to \cite[Theorem 1.10]{ChodoshMaximotopandindexII} and since $$\int_{\tilde \Sigma} (-\kappa) = 2 \int_\Sigma (-\kappa) $$ we find

\begin{equation}\label{estimatetwoindex}
    \frac{1}{3} + \frac{1}{3\pi} \int_{\Sigma} (-\kappa) \le \text{index}(\tilde \Sigma) \le -3 + \frac{3}{\pi} \int_{ \Sigma} (-\kappa).
\end{equation}

Note that orientable complete minimal surfaces immersed in Euclidean three-space with Morse index at most two are completely classified, as mentioned in the introduction (\cite{DoCarmo}, \cite{fischerschoen1980structure},\cite{pogorelov1981stability},\cite{LopezRosWeakly}). These minimal surfaces are not minimal oriented double covers, since the Catenoid is embedded and the Enneper surface and the plane only have one end. Moreover there does not exist a complete orientable minimal immersion in $\bR^3$ with index two \cite[Theorem 1.5]{ChodoshMaximotopandindexII}

We claim that $\text{index}(\tilde \Sigma)\neq 3$. Indeed, suppose by contradiction that $\text{index}(\tilde \Sigma)= 3$, then using \eqref{estimatetwoindex} we obtain
\begin{equation*}
      2\pi \le \int_\Sigma (-\kappa) \le 8\pi.
\end{equation*}

Using the classification results of Meeks \cite[Section 4]{Meeks} and Lopez \cite[Section 3]{LopezUniqueness} we find that $\Sigma$ is either the Meeks M\"obius band, or the López minimal Klein bottle. However, the oriented double cover of the Meeks minimal M\"obius band has index $5$ by Theorem \ref{indexdoublecoverMeeks} and the oriented double cover of the López minimal Klein bottle has index $7$ by Theorem \ref{indexdoublecoverLopez}. Therefore $\text{index}(\tilde \Sigma)\neq 3$.

Finally, in order to conclude the proof, we claim that $\text{index}(\tilde \Sigma)\neq 4$. In fact, suppose by contradiction that $\text{index}(\tilde \Sigma)= 4$. On one hand, evaluating equation \eqref{estimatetwoindex} we obtain
\begin{equation*}
      \frac{7}{3}\pi \le \int_\Sigma (-\kappa) \le 11\pi.
\end{equation*}
Therefore, using Meeks Theorem \cite[Section 4]{Meeks} and the fact that the total curvature must be even, we have
\begin{equation*}
    6\pi \le \int_\Sigma (-\kappa) \le 10\pi.
\end{equation*}
By the same reasoning when we excluded the index three case, it remains to analyze the case when 
\begin{equation*}
    \int_{\Sigma}(-\kappa)=10\pi
\end{equation*}
Notice that evaluating equation \eqref{estimateoneindex} we obtain after simplification that
\begin{equation}\label{restricao topologica para double cover of index 5}
    8 \ge g+ 2 \sum_{j=1}^{ r} (d_j + 1)
\end{equation}
On the other hand, applying the Jorge-Meeks formula for the nonorientable case \eqref{jogemeeksformulaonesided} we obtain
  \begin{equation}\label{Jorge-Meeks para one-sided de curvatura 10pi}
    6 = g +\sum_{j=1}^r(d_j+1).
\end{equation}
Putting together equations \eqref{restricao topologica para double cover of index 5} and \eqref{Jorge-Meeks para one-sided de curvatura 10pi} we have

\begin{equation}
    8 \ge g+ 2 \sum_{j=1}^{ r} (d_j + 1) = 6 + \sum_{j=1}^r(d_j+1)
\end{equation}
so 
\begin{equation*}
    \sum_{j=1}^r(d_j+1) \le 2
\end{equation*}

\noindent which implies $r=1$ and $d_1 = 1$. But this is excluded by R. Schoen's characterization of the Catenoid \cite{schoen1983uniqueness}.  We conclude that $\text{index}(\tilde \Sigma)\neq 4$.

Therefore $\text{index}(\tilde \Sigma)\geq 5$, as we wanted to prove.
\end{proof}

It is clear now that Theorem \ref{thm-classification-oriented-double-cover} follows directly from the three results established in this Section. Theorem \ref{indexdoublecoverMeeks} shows that our estimate on the index of minimal double covers is sharp.

\appendix
\section{Riemann surfaces results} \label{section::Riemann surfaces results}

\begin{defn}[{\cite[Definition I.1.5-6]{MartinThesis}}]\label{multiplicity}
\normalfont{
   Let $F:M\to N$ be a non-constant holomorphic map between two Riemann surfaces. Consider charts $(U,\psi)$ of $M$ and $(V,\phi)$ of $N$ such that $\psi(0)=P\in M$ and $\phi(0)=F(P)\in N$, which we call centered at $P$ and $F(P)$ respectively. Consider the holomorphic map
   \begin{equation*}
       \hat{F}\coloneqq \phi^{-1}\circ F\circ \psi:U\subset \bC\to V\subset \bC.
   \end{equation*} 
   Since $\hat{F}(0)=0$, there exists a natural number $n\in \bN$ and a holomorphic function $H(z)$ with $H(0)\neq 0$ such that
\begin{equation*}
   \hat{F}(z)=z^nH(z).
\end{equation*}
The integer $n$ does not depend on the coordinate charts and it is called \textit{the multiplicity} $mult_{P}(F)$ of the function $F$ at the point $P$. We define \textit{the branch order} of $F$ at $P$ as $r_{P}(F)=mult_{P}(F)-1$. If $r_{P}(F)\geq 1$ then $P$ is called a \textit{branch point} of $F$, and its image $F(P)$ a \textit{branch value}.
}
\end{defn}

\begin{thm}[{Riemann-Hurwitz}]\cite[Theorem 1.76]{RiemannSurfacesGirondo}\label{RiemannHurwitz}
    Let $F:M_1\to M_2$ be a non-constant holomorphic map between Riemann surfaces. Then
    \begin{equation*}
        \cX(M_1)=\deg F\cdot \cX(M_2)-\sum_{P_1\in M_1}r_{P_1}(F).
    \end{equation*}
\end{thm}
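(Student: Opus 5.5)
This is the classical Riemann--Hurwitz formula, cited from \cite{RiemannSurfacesGirondo}; the plan is the standard triangulation argument. We take $M_1$, $M_2$ compact (the setting in which it is used) and $F$ non-constant of degree $d=\deg F$. The set of branch points of $F$ is discrete in the compact surface $M_1$, hence finite, and so is its image $B\subset M_2$ of branch values. We then choose a triangulation $\mathcal{T}_2$ of $M_2$ that contains every point of $B$ among its vertices, refined if necessary so that each closed face lies in a simply connected open set whose interior avoids $B$.

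Next we lift $\mathcal{T}_2$ to $M_1$. Over $M_2\setminus B$ the map $F$ is an unbranched covering of degree $d$, and each open edge or face is simply connected, so it has exactly $d$ disjoint lifts; their closures give a triangulation $\mathcal{T}_1$ of $M_1$. The local normal form $\hat F(z)=z^{n}H(z)$ of Definition \ref{multiplicity} shows that, after a local coordinate change, $F$ looks like $z\mapsto z^n$ near each point. Hence a vertex $v$ of $\mathcal{T}_2$ has $\#F^{-1}(v)=\sum_{P\in F^{-1}(v)}1=d-\sum_{P\in F^{-1}(v)} r_P(F)$ preimages. This uses $\sum_{P\in F^{-1}(v)}\mathrm{mult}_P(F)=d$, which is the constancy of the degree, i.e.\ the fact that the fiber cardinality counted with multiplicity is locally constant by the normal form and $M_2$ is connected.

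Counting gives $E_1=dE_2$, $F_1=dF_2$ and $V_1=dV_2-\sum_{P\in M_1} r_P(F)$, since every branch point lies over a vertex. Taking the alternating sum, $\cX(M_1)=V_1-E_1+F_1=d\,\cX(M_2)-\sum_P r_P(F)$. The main point needing care is that the lifts of the closed faces really form a triangulation near branch points, namely that the $n$ sheets glue cyclically around $P$. This follows from the model $z\mapsto z^n$, under which a small disk around $F(P)$, cut along edges into sectors, pulls back to a disk divided into $n$ times as many sectors meeting at the single vertex $P$.
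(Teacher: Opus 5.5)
The paper gives no proof of this statement; it only quotes Theorem 1.76 of Girondo--Gonz\'alez-Diez. Your argument is the standard proof of that result and is correct under the compactness and connectedness you assume: you lift a triangulation of $M_2$ whose vertex set contains the branch values and count $V_1=dV_2-\sum_{P} r_P(F)$, with edges and faces each multiplied by $d$. One justification should be made explicit: both the unbranched-covering property over $M_2\setminus B$ and the constancy of $\sum_{P\in F^{-1}(v)}\mathrm{mult}_P(F)$ need properness of $F$ (compactness of $M_1$), not just the local normal form, because every preimage of a point near $v$ must lie near some preimage of $v$.
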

The following proposition follows using the same argument as \cite[Proposition 4.4.7]{CarlosThesis}
\begin{prop}\label{BranchPointsComeInPairs}
   Let $\phi:\Sigma\to \bS^2$ be a holomorphic map on a compact Riemann surface $\Sigma$. Suppose that there exists a anti-holomorphic involution $\tau:\Sigma\to \Sigma$ such that $\phi\circ \tau=-\phi$. Then 
    \begin{equation*}
       \forall p\in \Sigma,\quad  r_p(g)=r_{\tau(p)}(g).
    \end{equation*}
\end{prop}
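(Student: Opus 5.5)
The plan is to prove the statement with the local-chart argument that defines branch order (Definition \ref{multiplicity}), using the antipodal map to relate the charts at $p$ and at $\tau(p)$. Let $a:\bS^2\to\bS^2$ denote the antipodal map, so the hypothesis reads $\phi\circ\tau = a\circ\phi$. Both $\tau$ and $a$ are anti-holomorphic diffeomorphisms; in the stereographic coordinate $a(x)=-1/\bar x$. The identity $\phi = a\circ\phi\circ\tau^{-1}$ then expresses $\phi$ near $\tau(p)$ as a composition of two anti-holomorphic diffeomorphisms and $\phi$ near $p$. Composing an anti-holomorphic chart change on each side should not alter the local degree $n$ in the normal form $z^nH(z)$.

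First I fix a holomorphic chart $\psi$ centered at $p$ and a holomorphic chart $\phi_0$ of $\bS^2$ centered at $\phi(p)$. In these charts $\hat\phi(z)=z^nH(z)$ with $H(0)\ne0$, where $n=mult_p(\phi)$. Next I build charts at $\tau(p)$ and at $\phi(\tau(p))=a(\phi(p))$. I take $\tilde\psi(z):=\tau(\psi(\bar z))$, which is holomorphic because $\tau$ is anti-holomorphic and composition with conjugation restores holomorphy. On the target I take $\tilde\phi_0(w):=a(\phi_0(\bar w))$, which is holomorphic for the same reason. Both charts are centered at the required points.

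Then I compute the local representative at $\tau(p)$:
\[
\tilde\phi_0^{-1}\circ\phi\circ\tilde\psi(z)=\overline{\phi_0^{-1}\bigl(a^{-1}(\phi(\tau(\psi(\bar z))))\bigr)}=\overline{\phi_0^{-1}\bigl(\phi(\psi(\bar z))\bigr)}=\overline{\hat\phi(\bar z)}=z^n\,\overline{H(\bar z)}.
\]
The factor $\overline{H(\bar z)}$ is holomorphic and nonvanishing at $0$. Hence $mult_{\tau(p)}(\phi)=n$, which gives $r_{\tau(p)}(\phi)=r_p(\phi)$. Here $g$ in the statement is understood as $\phi$.

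The only point needing care is checking that $\tilde\psi$ and $\tilde\phi_0$ really are holomorphic charts, that is, that conjugating an anti-holomorphic map yields a holomorphic one. This is routine: it follows from the Cauchy--Riemann equations. I expect no real obstacle.
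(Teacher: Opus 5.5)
Your argument is correct and complete. The charts $z\mapsto\tau(\psi(\bar z))$ and $w\mapsto a(\phi_0(\bar w))$ are holomorphic and centered at $\tau(p)$ and $\phi(\tau(p))$. The relation $\phi\circ\tau=a\circ\phi$ then turns the local representative at $\tau(p)$ into $z^n\overline{H(\bar z)}$, so the multiplicities at $p$ and $\tau(p)$ agree.

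The paper does not write out a proof of this proposition; it only says the same argument as Proposition 4.4.7 of the cited thesis applies. Your chart computation therefore supplies a self-contained proof, and I can't compare it step by step with the paper's.

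One detail is worth adding. $\phi$ is automatically nonconstant, since a constant value $c$ would have to satisfy $c=-c$ on $\bS^2$. So the multiplicity in Definition~\ref{multiplicity} is indeed defined.
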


\begin{prop}\label{residuetool}
    Let $\beta_i\neq \beta_j\ \forall i\neq j$ and $Q_j$ polynomials with roots different than the $\beta_i$'s and consider the one-form
    \begin{equation*}
        \alpha\coloneq \frac{\Pi_{j=1}^m Q_j(z)}{\Pi_{j=1}^n(z-\beta_j)^2}dz
    \end{equation*}
    Then for any $k$
    \begin{equation*}
        \text{Res}_{\beta_k}\alpha=\frac{\Pi_{j=1}^m Q_j(\beta_k)}{\Pi_{j=1,\ j\neq k}^n(\beta_k-\beta_j)^{2}}\left(\sum_{j=1}^m\frac{Q_j'(\beta_k)}{Q_j(\beta_k)}-\sum_{j=1,\ j\neq k}^n\frac{2}{\beta_k-\beta_j}\right)
    \end{equation*}
\end{prop}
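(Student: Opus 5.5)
We have to prove the residue formula of Proposition \ref{residuetool}, stated right above. The plan is a direct Laurent expansion at the double pole $\beta_k$.

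Write the one-form as $\alpha = \frac{h(z)}{(z-\beta_k)^2}\,dz$, where
\begin{equation*}
h(z) := \frac{\prod_{j=1}^m Q_j(z)}{\prod_{j\neq k}(z-\beta_j)^2}.
\end{equation*}
The function $h$ is holomorphic near $\beta_k$, because the $\beta_j$ are pairwise distinct. It is also nonvanishing at $\beta_k$, because the $Q_j$ have no roots among the $\beta_i$. For a double pole we have the standard identity $\operatorname{Res}_{\beta_k}\alpha = h'(\beta_k)$, which follows from expanding $h(z)=h(\beta_k)+h'(\beta_k)(z-\beta_k)+O((z-\beta_k)^2)$ and reading off the coefficient of $(z-\beta_k)^{-1}$.

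It remains to compute $h'(\beta_k)$. Since $h(\beta_k)\neq 0$, we can use the logarithmic derivative:
\begin{equation*}
\frac{h'}{h}=\sum_{j=1}^m \frac{Q_j'}{Q_j}-\sum_{j\neq k}\frac{2}{z-\beta_j}.
\end{equation*}
This identity is valid near $\beta_k$, where none of the factors vanish. Evaluating at $\beta_k$ and multiplying by $h(\beta_k)=\prod_j Q_j(\beta_k)\big/\prod_{j\neq k}(\beta_k-\beta_j)^2$ gives exactly the claimed formula.

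There is no genuine obstacle here. The one point to watch is the hypothesis $Q_j(\beta_k)\neq 0$, which is what justifies dividing by $Q_j(\beta_k)$ in the logarithmic derivative. If one wanted to drop that hypothesis, one would simply state the result as $h'(\beta_k)$ via the product rule, with no division needed. This is also consistent with how the result is used in the paper: in Lemma \ref{specialpropertiesbasisBsym}, the case $P_1(a_t,t)=0$ was handled separately before the proposition was applied.
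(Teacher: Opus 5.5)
Your proof is correct and takes essentially the same route as the paper. Both write the residue at $\beta_k$ as the derivative at $\beta_k$ of $\prod_{j} Q_j(z)\prod_{j\neq k}(z-\beta_j)^{-2}$, and then evaluate that derivative through the logarithmic derivative of a product. Your remark is also accurate: the only hypothesis actually used is $Q_j(\beta_k)\neq 0$, and it serves only to justify dividing by $Q_j(\beta_k)$ in the logarithmic derivative.
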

\begin{proof}
  Consider the function $$F(z)=\prod_{j=1}^nf_j(z)^{a_j}$$ where $f_j(z)$ $j=1,\ldots n$ are complex functions and $a_j\in \bZ$. Then applying the Leibniz's rule recursively we obtain
\begin{equation}\label{logderivative}
F'(z)=\sum_{j=1}^nf_1(z)^{a_1}\ldots a_jf(z)^{a_j-1}f'(z)\ldots f_n(z)^{a_n}=F(z)\sum_{j=1}^na_j\frac{f'_j(z)}{f_j(z)}
\end{equation}
  Notice that the $\alpha$ one-form has a pole of order two at each $\beta_k$ since $Q_j(\beta_k)\neq 0$ by hypothesis. Therefore we obtain
  $$\operatorname{Res}_{z=\beta_k}\alpha=\eval{\frac{d}{dz}}_{\beta_k}\prod_{j=1}^mQ_j(z)\prod_{\substack{j=1\\j\neq k}}^n(z-\beta_j)^{-2}$$
  by an application of the residue formula \cite[Theorem 1.4]{Stein}. We conclude the proof by evaluating equation \eqref{logderivative} at $z=\beta_k$ for the function
$$F(z)=\prod_{j=1}^mQ_j(z)\prod_{\substack{j=1\\j\neq k}}^n(z-\beta_j)^{-2}.$$
\end{proof}
The following Lemma is a useful tool that we learned from Matthias Weber.
\begin{lema}\label{polynomialidentity}
   Let $\beta_1,\ldots \beta_n$ be distinct complex numbers. Consider the polynomial $$K(z)=\prod_{k=1}^n(z-\beta_k)$$ Then for all $1\leq k\leq n$
    \begin{equation*}
    K''(\beta_k)=2K'(\beta_k)\sum_{\substack{j=1\\j\neq k}}^n\frac{1}{(\beta_k-\beta_j)}.
    \end{equation*}
\end{lema}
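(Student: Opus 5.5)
Set $K(z)=\prod_{j=1}^n(z-\beta_j)$. The plan is to get the identity from the logarithmic derivative of $K$ near the simple root $\beta_k$, by factoring out that root and differentiating the remaining product.

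Fix $k$ and write $K(z)=(z-\beta_k)L_k(z)$ with $L_k(z)=\prod_{j\neq k}(z-\beta_j)$. Differentiating twice gives
\begin{equation*}
K'(z)=L_k(z)+(z-\beta_k)L_k'(z),\qquad K''(z)=2L_k'(z)+(z-\beta_k)L_k''(z).
\end{equation*}
Evaluating at $z=\beta_k$ yields $K'(\beta_k)=L_k(\beta_k)$ and $K''(\beta_k)=2L_k'(\beta_k)$.

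It then remains to express $L_k'(\beta_k)$ in terms of $L_k(\beta_k)$. Apply the logarithmic derivative formula \eqref{logderivative} from the proof of Proposition \ref{residuetool} to $F=L_k$, with $f_j(z)=z-\beta_j$ and exponents $a_j=1$. This gives
\begin{equation*}
L_k'(\beta_k)=L_k(\beta_k)\sum_{j\neq k}\frac{1}{\beta_k-\beta_j}.
\end{equation*}
The formula can be used at $\beta_k$ because the $\beta_j$ are distinct, so $L_k(\beta_k)\neq 0$ and no denominator $\beta_k-\beta_j$ vanishes.

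Combining the last two displays, $K''(\beta_k)=2L_k'(\beta_k)=2K'(\beta_k)\sum_{j\neq k}(\beta_k-\beta_j)^{-1}$, which is the claimed identity. The computation is routine, and the only hypothesis needed is distinctness of the roots. That hypothesis enters twice: it is why $\beta_k$ is a simple root of $K$, and it is why the sum is well defined.
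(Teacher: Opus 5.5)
Your proof is correct and follows essentially the same route as the paper: both split off the simple factor $(z-\beta_k)$, identify $K''(\beta_k)=2L_k'(\beta_k)$, and evaluate $L_k'(\beta_k)$ using the logarithmic derivative formula \eqref{logderivative}. The only difference is that you get $K''(\beta_k)=2L_k'(\beta_k)$ directly from the product rule, while the paper uses a Taylor expansion for this step, so your version is somewhat cleaner.
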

\begin{proof}
    By an application of \eqref{logderivative} we find that
    \begin{equation*}
        K'(z)=\frac{K(z)}{(z-p_j)}+K(z)\sum_{\substack{k=1\\k\neq j}}^n\frac{1}{(z-p_k)}
    \end{equation*}
    since the polynomial $K(z)$ has simple zeros by hypothesis. Computing one more derivative and applying a Taylor expansion around $p_j$ we have that
    \begin{equation*}
         K''(z)=\frac{1}{2}K''(p_j)+(z-p_j)Q(z)+K'(z)\sum_{\substack{k=1\\k\neq j}}^n\frac{1}{(z-p_k)}-K(z)\sum_{\substack{k=1\\k\neq j}}^n\frac{1}{(z-p_k)^2}
    \end{equation*}
     where $Q(z)$ is a polynomial. The result then follows by evaluation at the point $z=p_j$
\end{proof}
\begin{lema}\cite[Lemma 4.2.11]{CarlosThesis}\label{Residuerelatedpoints}
    Let $M$ be a Riemann surface and $\gamma$ an anti-holomorphic involution on $M$. Then for any meromorphic one-form $\omega$ in $M$ and any point $P\in M$,
    \begin{equation*}
        \eval{Res}_{P}\overline{\gamma^{*}\omega}=\overline{\eval{Res}_{\gamma(P)}\omega}.
    \end{equation*}
\end{lema}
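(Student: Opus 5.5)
The plan is to compare both sides of the identity through local Laurent expansions in conjugate charts. Fix $P\in M$ and a holomorphic chart $\xi:U\to\bC$ centered at $\gamma(P)$, so $\xi(\gamma(P))=0$. Since $\gamma$ is an anti-holomorphic involution, the map $\zeta:=\overline{\xi\circ\gamma}$ defined on $\gamma(U)$ is a holomorphic chart centered at $P$. Indeed, a composition of an anti-holomorphic map with complex conjugation is holomorphic, and it is a local homeomorphism because $\gamma$ is a diffeomorphism. This is the only geometric input needed.

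Next, expand $\omega$ near $\gamma(P)$ as a Laurent series
\begin{equation*}
\omega=\sum_{n\ge -N} a_n\,\xi^n\,d\xi,\qquad a_{-1}=\operatorname{Res}_{\gamma(P)}\omega .
\end{equation*}
Pulling back by $\gamma$ gives $\gamma^*\omega=\sum a_n(\xi\circ\gamma)^n\,d(\xi\circ\gamma)=\sum a_n\,\overline{\zeta}^{\,n}\,d\overline{\zeta}$, which is an anti-holomorphic form. Conjugating coefficientwise yields
\begin{equation*}
\overline{\gamma^*\omega}=\sum_{n\ge -N}\overline{a_n}\,\zeta^n\,d\zeta,
\end{equation*}
which is a genuine meromorphic one-form near $P$ written in the holomorphic coordinate $\zeta$. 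Its $\zeta^{-1}d\zeta$ coefficient is $\overline{a_{-1}}$. Since the residue of a meromorphic one-form does not depend on the chart, we conclude $\operatorname{Res}_P\overline{\gamma^*\omega}=\overline{\operatorname{Res}_{\gamma(P)}\omega}$.

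As an alternative, or as a check, one can argue with the integral formula. Take a small positively oriented loop $c$ around $P$. Then $\gamma\circ c$ is a loop around $\gamma(P)$, but it is negatively oriented because $\gamma$ reverses orientation. Hence
\begin{equation*}
2\pi i\operatorname{Res}_P\overline{\gamma^*\omega}=\overline{\int_c\gamma^*\omega}=\overline{\int_{\gamma\circ c}\omega}=\overline{-2\pi i\operatorname{Res}_{\gamma(P)}\omega}=2\pi i\,\overline{\operatorname{Res}_{\gamma(P)}\omega}.
\end{equation*}
The main point to handle carefully is this sign, in the integral version, or equivalently the verification that $\zeta$ is holomorphic and that the conjugated series is correctly written in $d\zeta$, in the chart version. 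Both are routine, and there is no real obstacle.
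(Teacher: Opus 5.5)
Your proof is correct. The paper gives no argument for this lemma; it only quotes it from Lemma 4.2.11 of the first author's thesis, so there is no in-paper proof to compare against.

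Both of your routes work:
\begin{itemize}
\item \emph{Chart version.} Work in the holomorphic chart $\zeta=\overline{\xi\circ\gamma}$, defined on $\gamma^{-1}(U)=\gamma(U)$. There one has $\overline{\gamma^*\omega}=\sum\overline{a_n}\,\zeta^n\,d\zeta$, so the residue at $P$ is $\overline{a_{-1}}$.
\item \emph{Contour version.} The minus sign from $\gamma\circ c$ being negatively oriented exactly compensates the sign change of $2\pi i$ under conjugation.
\end{itemize}

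A minor remark: the involution hypothesis enters only through the identification $\gamma^{-1}(U)=\gamma(U)$. The same argument proves the identity for any anti-holomorphic map that is a local diffeomorphism at $P$.
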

\begin{lema}\label{holomorphiconeforms}(\textit{Cf.} \cite[Lemma I.1.2]{MartinThesis})
    Let $\cH$ be the space of holomorphic one-forms on a compact Riemann surface $M$ and fix $\Gamma\in H_1(M;\bZ)$. If for all $\omega \in\cH$ it holds that $\int_\Gamma\omega=0$, then $\Gamma=0$.
\end{lema}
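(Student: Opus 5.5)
The plan is to use the same device as in the proof of Lemma \ref{periodlema1}: on a compact Riemann surface a homology class is zero as soon as every holomorphic one-form integrates to zero over it. Concretely, I would fix a canonical homology basis $\{a_1,\dots,a_g,b_1,\dots,b_g\}$ of $H_1(M;\bZ)$ and write $\Gamma=\sum_k (m_k a_k+n_k b_k)$ with $m_k,n_k\in\bZ$. The goal is then to show that all these integers vanish, using only the hypothesis that $\int_\Gamma\omega=0$ for every $\omega\in\cH$.

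The first step is to pass from holomorphic forms to real harmonic forms. The real parts $\operatorname{Re}\omega$, for $\omega\in\cH$, are closed real one-forms. Since $\int_\Gamma\omega=0$ for both $\omega$ and $i\omega$, we also have $\int_\Gamma \operatorname{Re}\omega=0$ and $\int_\Gamma\operatorname{Im}\omega=0$. I would then invoke Hodge theory on the compact surface: the map $\omega\mapsto[\operatorname{Re}\omega]$ is a real-linear isomorphism from $\cH$, of real dimension $2g$, onto $H^1_{dR}(M;\bR)$. Injectivity is the key point here. If $\operatorname{Re}\omega=du$ is exact, then $u$ is harmonic, hence constant by the maximum principle, so $\operatorname{Re}\omega=0$ and therefore $\omega=0$. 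Equality of dimensions then gives surjectivity.

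The second step is to produce closed forms dual to the basis. Since the map above is onto, every de Rham class is represented by some $\operatorname{Re}\omega$. In particular, there are closed real forms $\theta$ whose periods over $a_1,\dots,a_g,b_1,\dots,b_g$ are any prescribed real numbers; this is de Rham's theorem, using that the intersection pairing is nondegenerate. Choosing $\theta$ dual to $a_k$ (respectively $b_k$), the hypothesis forces $0=\int_\Gamma\theta=m_k$ (respectively $n_k$). Hence $\Gamma=0$ in $H_1(M;\bZ)$, because $H_1$ of a compact orientable surface is torsion-free.

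The main obstacle is the Hodge/de Rham input: the injectivity argument and the dimension count $\dim_\bC\cH=g$. I would take the latter from Riemann--Roch, namely $l(K)=g$, as already used in the paper, rather than reprove it. An alternative, avoiding real forms, is the Riemann bilinear relations: the period matrix $(\int_{a_k}\omega_j,\int_{b_k}\omega_j)$ has $\bR$-linearly independent columns, and the vanishing of $\sum_k\bigl(m_k\int_{a_k}\omega_j+n_k\int_{b_k}\omega_j\bigr)$ for all $j$ then forces all the integers to vanish. I would fall back on this route if needed.
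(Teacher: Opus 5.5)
Your proof is correct, and it is a genuinely different route only because the paper does not prove this lemma at all: it is imported from \cite[Lemma I.1.2]{MartinThesis} and used as a black box (in genus one, in Lemma \ref{periodlema1}).

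What you supply is the standard self-contained Hodge--de Rham argument:
\begin{itemize}
\item holomorphic forms are closed and have harmonic real parts;
\item if $\operatorname{Re}\omega=du$, comparing types gives $\partial u=\tfrac12\omega$, so $u$ is harmonic, hence constant, and $\omega=0$;
\item the count $\dim_\bR\cH=2g=\dim H^1_{dR}(M;\bR)$ turns injectivity into surjectivity;
\item de Rham duality converts the vanishing of all periods over $\Gamma$ into $\Gamma=0$.
\end{itemize}

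Two minor comments on the inputs.
\begin{itemize}
\item Nondegeneracy of the intersection pairing is not needed. De Rham's theorem already identifies $H^1_{dR}(M;\bR)$ with $\operatorname{Hom}(H_1(M;\bZ),\bR)$ via periods. So what you have shown is that $\Gamma$ vanishes in $H_1(M;\bR)$, and $H_1(M;\bZ)\cong\bZ^{2g}$ injects there.
\item Taking $\dim_\bC\cH=g$ from Theorem \ref{RiemannRoch} (with $i(K)=1$, so $l(K)=g$) is fine as a black box. However, in standard treatments such as \cite{Farkas} this count is itself derived from the same Hodge theory before Riemann--Roch. So your argument really rests only on the Hodge theorem and de Rham's theorem.
\end{itemize}

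Your fallback via the Riemann bilinear relations is equally valid. It amounts to the fact that the period lattice in $\bC^g$ has real rank $2g$.

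Your opening sentence presents Lemma \ref{periodlema1} as the source of the device, but that lemma uses the present statement rather than supplying it. This is only framing and does not affect your logic.

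Compared with the paper's bare citation, your route costs a paragraph of classical material. In exchange, it makes explicit which facts about compact Riemann surfaces the later period computations rely on.
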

 We introduce the spaces that appear in the Riemann-Roch theorem (see \cite[Section III.4]{Farkas}). Fixing a divisor $D$ in a compact Riemann surface $\Sigma$, we define the vector space of meromorphic functions 
\begin{equation*}
    \cL(D)\coloneq \left\lbrace f:\Sigma\to \overline{\bC}\mid  \operatorname{div}(f)+D\geq 0\right\rbrace\cup \{0\}
\end{equation*}
Notice that for a holomorphic map $\phi:\Sigma\to \bS^2$, there is a clear isomorphism between $\cL(2k_{\Sigma}+R(\phi))$ and the space $H^{0,2}(\phi)$ defined on Section \ref{sec:indexandgaussmap} given by
\begin{equation}\label{isomorphismLspaceH0space}
    f\in \cL(2k_{\Sigma}+R(\phi))\longrightarrow f\left(\frac{dz}{w}\right)^2\in H^{0,2}(\phi)
\end{equation}
We also define the following vector space of meromorphic one-forms on $\Sigma$
\begin{equation*}
     \cJ(D)\coloneq \left\lbrace \omega\ \text{meromorphic 1-forms}\mid  \operatorname{div}(w)\geq D\right\rbrace\cup \{0\}
\end{equation*}
We denote by $l(D)$ and $i(D)$ the dimensions of the spaces $\cL(D)$ and $\cJ(D)$, respectively.

The $\cJ(D)$ space is trivial when the divisor $D$ has high degree

\begin{lema}\label{highdegreedivisor}
Let $M$ be a compact Riemann surface of genus $g$. If the divisor $D$ has degree $deg(D)>2g-2$ then $i(D)=0$.
\end{lema}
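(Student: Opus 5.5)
The plan is to deduce the lemma from the Riemann--Roch theorem via the standard Serre duality identification of $\cJ(D)$ with a space $\cL(\cdot)$. First I will fix a nonzero meromorphic one-form $\omega_0$ on $M$ with canonical divisor $K_M=\operatorname{div}(\omega_0)$. Every meromorphic one-form can then be written uniquely as $\omega=f\omega_0$ with $f$ meromorphic, and $\operatorname{div}(\omega)=\operatorname{div}(f)+K_M$. So the condition $\operatorname{div}(\omega)\geq D$ is equivalent to $\operatorname{div}(f)+K_M-D\geq 0$. In other words, $f\mapsto f\omega_0$ is a linear isomorphism $\cL(K_M-D)\to\cJ(D)$, and therefore $i(D)=l(K_M-D)$.

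Next I will use the degree of the canonical divisor. From Riemann--Roch applied to $D=0$ and $D=K_M$ (or from the classical fact), $\deg K_M=2g-2$. Hence
$$\deg(K_M-D)=2g-2-\deg D<0.$$

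The remaining step is the elementary fact that a divisor $E$ of negative degree has $l(E)=0$. Suppose $f\in\cL(E)$ with $f\neq 0$. Then $\operatorname{div}(f)+E\geq 0$, so taking degrees gives $\deg\operatorname{div}(f)+\deg E\geq 0$. On a compact Riemann surface a nonzero meromorphic function has as many zeros as poles counted with multiplicity, so $\deg\operatorname{div}(f)=0$. This yields $\deg E\geq 0$, a contradiction. Applying this with $E=K_M-D$ gives $l(K_M-D)=0$, and hence $i(D)=0$.

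I do not expect any step to be a genuine obstacle. The only point that needs care is matching the convention in the definition of $\cJ(D)$, where the condition is written $\operatorname{div}(w)\geq D$ and $w$ is a typo for $\omega$. Under this convention the identification is with $\cL(K_M-D)$ and not with $\cL(D-K_M)$. With that settled, the degree count goes through directly.
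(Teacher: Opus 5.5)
Your proof is correct and is essentially the paper's argument: both rest on the same degree count, namely that a nonzero $\omega\in\cJ(D)$ would have $\deg\operatorname{div}(\omega)=2g-2<\deg D$, which contradicts $\operatorname{div}(\omega)\geq D$. The paper applies $\deg\operatorname{div}(\omega)=2g-2$ directly to $\omega$, while you write $\omega=f\omega_0$ and pass through $l(K_M-D)=0$; this is only a cosmetic rephrasing of the same argument.
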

\begin{proof}
    Indeed suppose by contradiction that there is $\omega\in \cJ(D)\setminus\{0\}$. By definition
    \begin{equation*}
        \operatorname{div}(\omega)\geq D.
    \end{equation*}
    The degree of any meromorphic one-form $\eta$ in $M$ is
    \begin{equation*}
        \deg(\operatorname{div}(\eta))=2g-2
    \end{equation*}
    Therefore, we arrive at the contradiction
    \begin{equation*}
        2g-2=deg(\operatorname{div}(\omega))\geq deg(D)>2g-2,
    \end{equation*}
    which shows that $\cJ(D)=\{0\}$ is the trivial space.
\end{proof}

We conclude this appendix with the classical Riemann-Roch theorem
\begin{thm}[{Riemann-Roch}]\cite[\textit{Cf}. Section III.4.8]{Farkas}\label{RiemannRoch}
Let $D$ be any divisor in a compact Riemann surface of genus $g$. Then
    \begin{equation*}
        l(D)-i(D)=1-g+deg(D)
    \end{equation*}
\end{thm}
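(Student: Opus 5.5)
This is the classical Riemann–Roch theorem, which the paper cites from Farkas–Kra rather than proves. If I were to prove it, I would follow the standard route through the Mittag-Leffler problem and Serre duality for the sheaf $\mathcal{O}(D)$. The plan is to establish it first for an effective or sufficiently negative divisor and then propagate to all $D$ by adding or removing one point at a time.

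\emph{Step 1: the zero divisor.} Here $l(0)=1$, since a holomorphic function on a compact Riemann surface is constant. Also $i(0)=g$, the dimension of the space of holomorphic one-forms. This grounds the formula at $D=0$.

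\emph{Step 2: the jump under $D\mapsto D+P$.} I would show that
\[
\bigl(l(D+P)-i(D+P)\bigr)-\bigl(l(D)-i(D)\bigr)=1.
\]
Consider the map $\mathcal{L}(D+P)\to\mathbb{C}$ taking the leading Laurent coefficient at $P$ (relative to the order allowed by $D+P$). Its kernel is $\mathcal{L}(D)$, so $l(D+P)-l(D)\in\{0,1\}$. Dually, $\mathcal{J}(D+P)\subset\mathcal{J}(D)$ has codimension $0$ or $1$. The key claim is that exactly one of these two jumps occurs. By the residue theorem, if $f\in\mathcal{L}(D+P)\setminus\mathcal{L}(D)$ and $\omega\in\mathcal{J}(D)\setminus\mathcal{J}(D+P)$, then $f\omega$ would have a single simple pole at $P$ with nonzero residue, which is impossible. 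So at most one jump occurs. Showing that at least one occurs is the existence half: if $\mathcal{J}(D+P)=\mathcal{J}(D)$, one must construct the meromorphic function with the prescribed principal part at $P$.

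\emph{Step 3: the main obstacle.} I expect that existence statement to be the hardest part. It amounts to solving a Mittag-Leffler problem: a principal part is realized by a global meromorphic function if and only if it pairs to zero, under the residue pairing, with every form in $\mathcal{J}(D)$. I would prove it by Hodge/Dirichlet-principle methods. First, harmonic differentials with prescribed singularities exist, which follows from the Weyl lemma and orthogonal projection in $L^2$ one-forms. From these one obtains a meromorphic differential with the given singular part. Its periods can then be corrected using the $g$-dimensional space of holomorphic forms, and this correction succeeds exactly when the obstruction pairing vanishes. Integrating the resulting exact differential produces the required function.

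\emph{Step 4: conclusion.} Starting from $D=0$ and applying Step 2 repeatedly, adding and subtracting points, yields
\[
l(D)-i(D)=1-g+\deg D
\]
for every divisor $D$, since every divisor is reached from $0$ by finitely many such moves. In the paper's setting this is taken as known, as \cite{Farkas} does.
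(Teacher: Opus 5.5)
The paper gives no proof of this statement and only cites Farkas--Kra, so the comparison below is with that classical argument. The explicit parts of your proposal are correct. With $\chi(D):=l(D)-i(D)$ you have $\chi(0)=1-g$. The inclusions $\mathcal{L}(D)\subset\mathcal{L}(D+P)$ and $\mathcal{J}(D+P)\subset\mathcal{J}(D)$ have codimension at most one. Your residue-theorem argument that at most one of the two jumps occurs is right.

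\textbf{The gap: Step 3 for non-effective divisors.} Step 4 needs the existence half of Step 2 for arbitrary $D$, not just effective ones. In general, Step 4 must apply Step 2 to pairs $(D,D+P)$ in which $D+P$ itself has negative coefficients; this is unavoidable once the negative part of the target has degree at least $2$. In that situation the function you must build has to \emph{vanish} to prescribed orders at those points. Your mechanism (a second-kind differential with prescribed singular part, periods killed, then integrated) only controls poles. The primitive $f=\int\theta+c$ has a single free constant, which cannot force vanishing at several points. Prescribing zeros is also not part of the prescribed-singularity existence theorem you invoke. So, as written, your induction proves the formula only for effective $D$.

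\textbf{A secondary misstatement in the period step.} Even for effective $D$, adding holomorphic forms gives only $g$ complex parameters, which cannot kill $2g$ complex periods in general. By the Riemann bilinear relations, the period vector of a second-kind differential with local singular parts $d\xi_Q$ lies in the $g$-dimensional image of the holomorphic forms if and only if $\sum_Q\operatorname{Res}_Q(\xi_Q\omega)=0$ for \emph{all} holomorphic $\omega$. That is orthogonality to $\mathcal{J}(0)$, not to $\mathcal{J}(D)$. The condition against $\mathcal{J}(D)$ appears only after you also vary the principal parts by everything $D+P$ permits: lower-order terms at $P$, and poles of order at most $n_Q$ at the other points of $D$. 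You then need the resulting linear algebra. The bilinear relation is the real key lemma here and should be stated.

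\textbf{How to close the gap.} Run your induction only upward from $0$ through effective divisors, then reach arbitrary $D$ as the classical proof does.
\begin{enumerate}
\item The quantities $l$, $i$ and $\deg$ are invariant under linear equivalence.
\item Multiplication by $\omega_0$ gives $l(K-D)=i(D)$ and $i(K-D)=l(D)$. You also need $\deg K=2g-2$: apply the effective case to $K$ when $g\ge 1$; the case $g=0$ is direct. Then the formula for $D$ is equivalent to the formula for $K-D$.
\item If $l(D)>0$, then $D$ is linearly equivalent to an effective divisor. If $i(D)>0$, the same holds for $K-D$.
\item In the remaining case $l(D)=i(D)=0$, write $D=D_1-D_2$ with $D_1,D_2\ge 0$ of disjoint supports. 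Since $\mathcal{L}(D)$ is cut out of $\mathcal{L}(D_1)$ by $\deg D_2$ linear conditions, the effective case gives $0=l(D)\ge l(D_1)-\deg D_2\ge \deg D+1-g$. The same argument for $K-D$ gives $\deg D\ge g-1$. Hence $\deg D=g-1$ and the formula holds.
\end{enumerate}
This is essentially the structure of the Farkas--Kra proof. Effective divisors are handled by counting second-kind differentials with vanishing periods, with the rank of the period map computed by the bilinear relations. Arbitrary divisors are handled by linear equivalence and the symmetry $D\leftrightarrow K-D$. Your one-point induction repackages the first half and isolates a clean residue argument for the easy inequality. It does not supply the passage to non-effective divisors unless you prove the residue duality for general $D$ directly, which is a substantial additional argument.
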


\section{Auxiliary computations with elliptic integrals} \label{section::Auxiliary computations with elliptic integrals}

We calculate the six fundamental integrals
\begin{equation*}
    \int_{\alpha_1}\eta_1,\ \int_{\alpha_1}\eta_2,\ \int_{\alpha_1}\cG \eta_3,\ \int_{\alpha_2}\eta_1,\ \int_{\alpha_2}\eta_2,\ \int_{\alpha_2}\cG\eta_3 
\end{equation*}
that appear in the determinant of the Period Matrix of Theorem \ref{determinantperiodmatrix}.
\begin{defn}\cite[Eq. 110.02-0.3]{handbook}\label{ellipticintegrals}
\normalfont
    We introduce the definition of the \textit{elliptic integrals of first and second kind}
    \begin{equation*}
        F(\varphi,k)\coloneqq \int_0^{\varphi}\frac{d\theta}{\sqrt{1-k^2\sin^2\theta}},\quad E(\varphi,k)\coloneqq \int_0^\varphi \sqrt{1-k^2\sin^2\theta}d\theta
    \end{equation*}
    In the special case when $\varphi=\frac{\pi}{2}$ the elliptic integrals are said to be complete, we denote them by
    \begin{equation*}
        F(k)\equiv F\left(\frac{\pi}{2},k\right),\quad E(k)\equiv E\left(\frac{\pi}{2},k\right)
    \end{equation*}
    Moreover when $\varphi=0$ we have from \cite[Eq. 111.00]{handbook}
    \begin{equation*}
        F(0,k)=E(0,k)=0
    \end{equation*}
\end{defn}
\begin{remark}\label{monotonicityellipticintegrals}
\normalfont
    The complete elliptic integral of the first kind $F(k)$ is positive and monotonically increasing in $k$ while the complete integral of the second kind $E(k)$ is positive and monotonically decreasing in $k$.
\end{remark}
\begin{defn}\label{gammaconstants}
\normalfont
   We define the following positive real constants which only depend on the parameter $r$ controlling the conformal structure of the López minimal Klein bottle
    \begin{gather*}
        k_{\alpha_1}=\frac{1}{\sqrt{1+r^2}},\quad k_{\alpha_2}=\frac{r}{\sqrt{1+r^2}},\quad \gamma_{\alpha_1}\coloneqq \frac{k_{\alpha_2}}{F(k_{\alpha_1})}\frac{dF(k_{\alpha_1})}{dk_{\alpha_1}},\quad \gamma_{\alpha_2}\coloneqq \frac{k_{\alpha_1}}{F(k_{\alpha_2})}\frac{dF(k_{\alpha_2})}{dk_{\alpha_2}}
    \end{gather*}
\end{defn}

\begin{defn}
\normalfont
    For all $t\in (0,1]$ we define the set $S_t\coloneq\{a_t,b_t,-\frac{1}{a_t},-\frac{1}{b_t}\}$. 
\end{defn}

\begin{defn}\label{Omegas}
\normalfont
    Consider the meromorphic one-forms
    \begin{gather*}
       \Omega_j= \begin{cases}
             \eta_j,\ \quad j=1,2\\ \cG\eta_3,\ \quad j=3
        \end{cases}
    \end{gather*}
\end{defn}
Recall Definition \ref{specialpolynomials}.
\begin{defn}\label{Dconstants}
\normalfont
    For all $t\in(0,1]$ and $p\in S_t$, we introduce the following useful quantities
    \begin{gather*}
     D_p^1=\frac{\omega(B_p)}{\cG_t(B_p)}\frac{P_1(p,t)}{k_p^2(p,t)},\quad  D_p^2=\frac{\omega(B_p)}{\cG_t(B_p)}\frac{P_2(p,t)}{k_p^2(p,t)},\quad  D_p^3= \cG_t(B_p)\omega(B_p)\frac{P_3(p,t)}{k_p^2(p,t)}
    \end{gather*}
    where $k_p(z,t)$ denotes the polynomial $k_t(z)$ where the factor $(z-p)$ has been deleted.
\end{defn}
\begin{remark}
\normalfont
    The quantities $D_p^j$ can be rewritten as,
    \begin{equation*}
        D_p^1=\frac{1}{\sqrt{r}}\frac{(tp-1)^2(p+r)p_1(p,t)}{k_p^2(p,t)},\quad   D_p^2=\frac{1}{\sqrt{r}}\frac{(tp-1)^2(p+r)p_2(p,t)}{k_p^2(p,t)},\quad D_p^3=\sqrt{r}\frac{(p+t)(tp-1)p_3(p,t)}{k_p^2(p,t)}q(p)
    \end{equation*}
\end{remark}

We next state the two main results of this Section

\begin{thm}[{Integrals along $\alpha_1$}]\label{theorem1}
    For all $j=1,2,3$
    \begin{gather*}
    \int_{\alpha_1}\Omega_j=2i\left(\frac{r}{1+r^2}\right)^{\frac{1}{2}}F(k_{\alpha_1})\sum_{p\in S_t} (\gamma_{\alpha_1}-p)\frac{D_p^j}{q(p)}
    \end{gather*}
\end{thm}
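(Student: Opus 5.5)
Since $\alpha_1$ is a closed cycle, the period $\int_{\alpha_1}\Omega_j$ depends only on the cohomology class of $\Omega_j$ modulo exact forms. The plan is to subtract an explicit exact meromorphic form so that the remainder is a constant multiple of the holomorphic form $dz/w$. That constant can then be integrated in closed form by elliptic integrals.

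First, write each $\Omega_j$ explicitly as a rational expression in $z$ times $dz/w$. By Lemma \ref{specialbasis} and \eqref{eq:deformapadegauss}, $\eta_1,\eta_2$ have the form $\frac{(tz-1)^2(z+r)p_j(z,t)}{\sqrt r\,k_t^2}\frac{dz}{w}$. Similarly, $\cG_t\eta_3=\sqrt r\frac{(z+t)(tz-1)p_3}{k_t^2}\frac{w\,dz}{(\dots)}$; using $w^2=q(z)$, it is again a rational function of $z$ times $dz/w$. The poles of $\Omega_j$ on $\overline M$ lie only over $S_t$ and are of order at most two there. By Lemma \ref{specialpropertiesbasisBsym} and Theorem \ref{Hchapelsymmetricbasis} all residues vanish, so $\Omega_j$ is a differential of the second kind. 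Near each branch point $B_p$ its principal part is $D^j_p\,\omega(B_p)$-type data. Indeed, Definition \ref{Dconstants} isolates exactly the coefficient of $(z-p)^{-2}$ after factoring $k_t^2=(z-p)^2k_p^2$.

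Second, construct the exact form. For each $p\in S_t$ I would use $d\!\left(\frac{w}{z-p}\right)=\left(\frac{q'(z)}{2(z-p)}-\frac{q(z)}{(z-p)^2}\right)\frac{dz}{w}$. Near $z=p$, the double-pole coefficient of this form is $-q(p)$. Hence $\Omega_j+\sum_{p\in S_t}\frac{c_p}{q(p)}\,d\!\left(\frac{w}{z-p}\right)$, with $c_p$ proportional to $D^j_p$, has no double poles over $S_t$, and it has zero residues since exact forms and $\Omega_j$ both do. It may still have poles at $P_\infty$ or at $P_{-r},P_0,P_{1/r}$; these can be removed similarly. The corrections coming from expanding $d(w/(z-p))$ at infinity are linear in $z$ (behaving like $z\,dz/w$), which explains the affine weight $(\gamma_{\alpha_1}-p)$ in the formula. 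The resulting form is $c_0\frac{dz}{w}+c_1\frac{z\,dz}{w}$ modulo exact forms, with $c_0,c_1$ linear combinations of $D^j_p/q(p)$ and $pD^j_p/q(p)$. So $\int_{\alpha_1}\Omega_j=\sum_p\frac{D^j_p}{q(p)}\bigl(A\,(\ast)-p\,(\ast\ast)\bigr)$, where $(\ast)=\int_{\alpha_1}\frac{z\,dz}{w}$ and $(\ast\ast)=\int_{\alpha_1}\frac{dz}{w}$, up to normalization.

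Third, evaluate these two periods. Using deformation I of Lemma \ref{lema7}, $\int_{\alpha_1}\frac{dz}{w}=2i\int_0^{1/r}\frac{ds}{\sqrt{-q(s)}}$. The substitution $s=\frac1r\sin^2\theta$ reduces this to $2i\left(\frac{r}{1+r^2}\right)^{1/2}F(k_{\alpha_1})$ with $k_{\alpha_1}^2=\frac{1}{1+r^2}$, which matches the prefactor. The same substitution turns $\int_{\alpha_1}\frac{z\,dz}{w}$ into a combination of $E(k_{\alpha_1})$ and $F(k_{\alpha_1})$. Expressing $E$ through $dF/dk$ via the Legendre-type identity $\frac{dF}{dk}=\frac{E}{k(1-k^2)}-\frac Fk$ shows that the ratio $(\ast)/(\ast\ast)$ equals $\gamma_{\alpha_1}$ as in Definition \ref{gammaconstants}. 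Substituting gives the stated formula. The main obstacle is the bookkeeping of the second step: choosing the exact forms so that the poles at infinity and at the roots of $q$ cancel, and checking that the residual coefficients come out exactly as $\sum_p(\gamma_{\alpha_1}-p)D^j_p/q(p)$. The identification of the ratio with $\gamma_{\alpha_1}$ also needs a careful check of the $(1-k^2)=k_{\alpha_2}^2$ factors.
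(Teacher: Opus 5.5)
Your proposal is correct. It is a cleaner, more intrinsic version of what the paper does.

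\textbf{How the paper proceeds.} The paper writes $Q_j/k_t^2$ in partial fractions (Lemma \ref{lema6}); the relation $E_p^j=\frac{D_p^j}{2}\frac{q'(p)}{q(p)}$ is exactly the vanishing of residues. It then integrates each pole term on its own deformed real path. For $p\neq-\frac{1}{b_t}$ it uses path I of Lemma \ref{lema7}. For $p=-\frac{1}{b_t}$ it uses path II along $(-\infty,-r]$, because $-\frac{1}{b_t}\in(0,\frac1r)$ lies on path I. On these segments it applies the Byrd--Friedman reduction formulas. This requires controlling a boundary term as $y_1\to-\infty$ and the substitution $z\mapsto-\frac1z$ (Proposition \ref{prop2}, Lemma \ref{lema3}).

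\textbf{How your route compares.} Those reduction formulas are the real-variable shadow of your exact form. Expanding the monic cubic $q$ at $p$ gives, on all of $\overline{M}$,
\begin{equation*}
\left[\frac{1}{(z-p)^2}+\frac{q'(p)}{2q(p)}\,\frac{1}{z-p}\right]\frac{dz}{w}=\frac{z-p}{2q(p)}\,\frac{dz}{w}-\frac{1}{q(p)}\,d\left(\frac{w}{z-p}\right).
\end{equation*}
Hence $\Omega_j-\sum_{p\in S_t}\frac{D_p^j}{2q(p)}(z-p)\frac{dz}{w}$ is exact. The circle $\alpha_1$ meets the real axis only at $\frac2r$ and $-\frac r2$, so it avoids $S_t$ and the exact part integrates to zero.

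This identity settles your bookkeeping step at once:
\begin{itemize}
\item nothing needs removing at $P_0,P_{-r},P_{\frac1r}$, since $\Omega_j$ is holomorphic there, and also at $P_\infty$ because $\deg Q_j=7<8$;
\item the remainder already lies in the span of $\frac{dz}{w}$ and $\frac{z\,dz}{w}$;
\item the weight $-p$ is visible directly.
\end{itemize}
Because only these two forms remain, you need a single path, and the positions of the points of $S_t$ relative to it become irrelevant. The paper's route, in exchange, makes every step a quotable handbook formula for a real integral.

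\textbf{Corrections to your third step.} Two normalizations need fixing.
\begin{itemize}
\item Along deformation I, $\int_{\alpha_1}\frac{dz}{w}=2i\int_0^{1/r}\frac{ds}{\sqrt{-q(s)}}=4i\left(\frac{r}{1+r^2}\right)^{1/2}F(k_{\alpha_1})$. This is twice the prefactor; the factor $\frac12$ in the identity above produces the stated constant.
\item The substitution $s=\frac1r\sin^2\theta$ leads to $\frac{2}{\sqrt r}\int_0^{\pi/2}(1+r^{-2}\sin^2\theta)^{-1/2}\,d\theta$, which has imaginary modulus. Use $s=\frac1r\cos^2\theta$ instead to get modulus $k_{\alpha_1}$ directly.
\end{itemize}
With $\int_{\alpha_1}\frac{z\,dz}{w}=4i\left(\frac{r}{1+r^2}\right)^{3/2}\frac{1}{k_{\alpha_1}}\frac{dF(k_{\alpha_1})}{dk_{\alpha_1}}$, the ratio is $\frac{r}{1+r^2}\cdot\frac{1}{k_{\alpha_1}F}\frac{dF}{dk_{\alpha_1}}=\frac{k_{\alpha_2}}{F}\frac{dF}{dk_{\alpha_1}}=\gamma_{\alpha_1}$, as you expected.

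\textbf{Residues for $j=3$.} You need one more remark: $\cG_t-\cG_t(B_p)$ vanishes to second order at the branch point. Hence $\operatorname{Res}_{B_p}\cG_t\eta_3=\cG_t(B_p)\operatorname{Res}_{B_p}\eta_3=0$.
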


\begin{thm}[{Integrals along $\alpha_2$}]\label{theorem2}
    For all $j=1,2,3$
    \begin{gather*}
    \int_{\alpha_2}\Omega_j=-2\left(\frac{r}{1+r^2}\right)^{\frac{1}{2}}F(k_{\alpha_2})\sum_{p\in S_t}(\gamma_{\alpha_2}+p)\frac{D_p^j}{q(p)}
    \end{gather*}
\end{thm}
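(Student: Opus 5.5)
The plan is to compute $\int_{\alpha_2}\Omega_j$ by replacing $\alpha_2$ with one of the deformations in Lemma \ref{lema7}, and to reduce the integral to complete elliptic integrals of modulus $k_{\alpha_2}$. This mirrors what we expect for Theorem \ref{theorem1}. First write $\Omega_j = R_j(z)\,\frac{dz}{w}$ with $R_j$ rational. In every case the only poles of $R_j$ lie at the four points of $S_t$, with order at most two. For $j=1,2$ this holds because $\cG_t$ cancels against the factor $(z+t)(tz-1)$ and leaves $w$ times something over $k_t^2$. For $j=3$ one multiplies by $\cG_t$ and uses $w^2=q$.

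The key step is a partial fraction decomposition of $R_j$ modulo exact forms. For each $p\in S_t$, use the derivative identity
\[
d\Big(\frac{w}{z-p}\Big)=\Big(\frac{q'(z)}{2(z-p)}-\frac{q(z)}{(z-p)^2}\Big)\frac{dz}{w}.
\]
This lets us trade the double-pole term $\frac{c_p}{(z-p)^2}\frac{dz}{w}$ for simple poles plus polynomial terms. Exact forms have zero periods, because $w/(z-p)$ is single-valued on $\overline M$. After this reduction,
\[
\Omega_j\equiv \Big(A+Bz+\sum_{p\in S_t}\frac{C_p}{z-p}\Big)\frac{dz}{w}.
\]
The coefficient of the double pole at $p$ is exactly $D^j_p$ from Definition \ref{Dconstants}, written in terms of $k_p$ and $\omega(B_p)$. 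The residue problem solved in Lemma \ref{specialpropertiesbasisBsym} makes the residues vanish, so the $C_p$ terms are produced only by the exact-form correction. Since $k_t$ has only simple roots, this produces terms proportional to $D^j_p/q(p)$.

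The main obstacle is the third-kind pieces $\int_{\alpha_2}\frac{dz}{(z-p)w}$. I would handle them as the authors suggest: choose whichever deformation of $\alpha_2$ (item III on $[-r,0]$, or item IV on $[1/r,\infty)$) avoids $p$, since $a_t,b_t<0$ and $-1/a_t,-1/b_t>0$. Then combine the $p$ and $-1/p$ contributions using the $\tau$-symmetry. The aim is to show that, after adding the exact corrections, the total simple-pole and constant parts collapse to the two basic periods $\int_{\alpha_2}\frac{dz}{w}$ and $\int_{\alpha_2}\frac{z\,dz}{w}$. The substitution $z=-r\sin^2\theta$-type (Byrd–Friedman 110/111) on $[-r,0]$ should give
\[
\int_{\alpha_2}\frac{dz}{w}=2\Big(\frac{r}{1+r^2}\Big)^{1/2}F(k_{\alpha_2})\cdot(\text{const}).
\]
The integral $\int_{\alpha_2}\frac{z\,dz}{w}$ should then be expressible through $E(k_{\alpha_2})$, which via Legendre's derivative formula $\frac{dF}{dk}$ is encoded by $\gamma_{\alpha_2}$. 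With $\alpha_2$ in place of $\alpha_1$ the real and imaginary roles swap, which accounts for the overall sign and the factor $-2$ instead of $2i$.

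Finally, collect terms. Each $p$ contributes $\frac{D^j_p}{q(p)}$ times the period of $(\gamma_{\alpha_2}+z)\frac{dz}{w}$ normalized by $F(k_{\alpha_2})$, which yields the stated formula. The delicate part is checking signs and normalizations of the elliptic reductions, including the branch of $w$ on each sheet as in Lemma \ref{lema7}. Exactly here the constant $\gamma_{\alpha_2}$ must appear with a plus sign, whereas $\gamma_{\alpha_1}$ appears with a minus sign.
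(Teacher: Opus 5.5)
Your framework is the right one, and it is essentially the paper's. The Byrd--Friedman reduction 230.02 used in Propositions \ref{prop3} and \ref{prop4} is exactly your identity for $d\big(\frac{w}{z-p}\big)$, and the final evaluation goes through Lemmas \ref{lema2}, \ref{lema4} and \ref{lema0} as you expect. But the decisive step is wrong.

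You read the vanishing of the residues as saying that the coefficient $E^j_p$ of $(z-p)^{-1}$ in $R_j=Q_j/k_t^2$ is zero. From this you conclude that the reduced coefficients $C_p$ are nonzero multiples of $D^j_p/q(p)$, so third-kind periods remain as the ``main obstacle''. This confuses the coefficient of $(z-p)^{-1}$ in $R_j$ with the residue of $R_j\,\frac{dz}{w}$. Expanding $\frac{1}{w}=\frac{1}{w(B_p)}-\frac{q'(p)}{2w(B_p)^3}(z-p)+O((z-p)^2)$ gives
\[
\operatorname{Res}_{B_p}\Omega_j=\frac{1}{w(B_p)}\Big(E^j_p-\frac{D^j_p}{2}\frac{q'(p)}{q(p)}\Big).
\]
So the residue problem yields $E^j_p=\frac{D^j_p}{2}\frac{q'(p)}{q(p)}$ (this is Lemma \ref{lema6}), not $E^j_p=0$. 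Inserting this into your identity gives
\[
\Big(\frac{D^j_p}{(z-p)^2}+\frac{E^j_p}{z-p}\Big)\frac{dz}{w}=\frac{D^j_p}{2q(p)}(z-p)\frac{dz}{w}-\frac{D^j_p}{q(p)}\,d\Big(\frac{w}{z-p}\Big),
\]
so $C_p=0$ for every $p\in S_t$. Equivalently: exact forms have no residues, while $\frac{C_p}{z-p}\frac{dz}{w}$ has residues $\pm C_p/w(B_p)$.

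No third-kind integral ever appears, and
\[
\int_{\alpha_2}\Omega_j=\sum_{p\in S_t}\frac{D^j_p}{2q(p)}\int_{\alpha_2}(z-p)\frac{dz}{w}.
\]
Lemma \ref{lema7}(III) and Lemma \ref{lema2} give $\int_{\alpha_2}\frac{dz}{w}=4\big(\frac{r}{1+r^2}\big)^{1/2}F(k_{\alpha_2})$ and $\int_{\alpha_2}\frac{z\,dz}{w}=-4\big(\frac{r}{1+r^2}\big)^{3/2}\frac{1}{k_{\alpha_2}}\frac{dF}{dk}(k_{\alpha_2})$. Together with $\frac{r}{1+r^2}\frac{1}{k_{\alpha_2}}=k_{\alpha_1}$, this is exactly the stated formula.

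The plan you give for the third-kind pieces would not have closed the argument, for three reasons.
\begin{enumerate}
\item Complete integrals of the third kind do not in general reduce to $F(k_{\alpha_2})$ and $E(k_{\alpha_2})$.
\item The $\tau$-pairing only relates the piece at $p$ to the conjugate of the piece at $-1/p$ plus a multiple of $\frac{dz}{w}$; it gives no cancellation mechanism.
\item Choosing ``whichever deformation of $\alpha_2$ avoids $p$'' separately for each piece is legitimate only for pieces with zero residue. An isolated $\frac{dz}{(z-p)w}$ is not such a piece: passing from path III to path IV sweeps across points over $b_t$ and $-1/b_t$ and changes the integral.
\end{enumerate}
The paper may use path IV for $p=a_t$ and path III for the other three points precisely because each grouped piece $\big(\frac{D^j_p}{(z-p)^2}+\frac{E^j_p}{z-p}\big)\frac{dz}{w}$ has zero residue.

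Once you have $C_p=0$, your route is slightly cleaner than the paper's. The reduced form is holomorphic away from $P_\infty$, so path III alone suffices.

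Note also a slip in your final assembly: each $p$ contributes $\frac{D^j_p}{2q(p)}$ times the period of $(z-p)\frac{dz}{w}$, not of $(\gamma_{\alpha_2}+z)\frac{dz}{w}$.
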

For the proof of these results we introduce some preliminary lemmas.

\begin{lema}[{\cite[Eq. 710.00]{handbook}}]\label{lema0}
    \begin{gather*}
        \frac{1}{k_{\alpha_1}}\frac{dF(k_{\alpha_1})}{dk_{\alpha_1}}=\frac{E(k_{\alpha_1})-k_{\alpha_2}^2F(k_{\alpha_1})}{k_{\alpha_1}^2k_{\alpha_2}^2},\quad \frac{1}{k_{\alpha_2}}\frac{dF(k_{\alpha_2})}{dk_{\alpha_2}}=\frac{E(k_{\alpha_2})-k_{\alpha_1}^2F(k_{\alpha_2})}{k_{\alpha_1}^2k_{\alpha_2}^2}
    \end{gather*}
\end{lema}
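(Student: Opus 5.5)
Both identities are instances of the classical formula
\begin{equation*}
\frac{dF(k)}{dk}=\frac{E(k)-k'^2F(k)}{k\,k'^2},\qquad k'=\sqrt{1-k^2},
\end{equation*}
so I would prove this formula for a general modulus $k\in(0,1)$ and then specialize. The specialization uses Definition \ref{gammaconstants}: $k_{\alpha_1}^2+k_{\alpha_2}^2=\frac{1+r^2}{1+r^2}=1$, so $k_{\alpha_2}$ is the complementary modulus of $k_{\alpha_1}$ and vice versa. Dividing the formula by $k$ gives exactly the two displayed identities, with $(k,k')=(k_{\alpha_1},k_{\alpha_2})$ for the first and $(k,k')=(k_{\alpha_2},k_{\alpha_1})$ for the second. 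Both moduli lie in $(0,1)$ since $r>0$, so everything below is well defined.

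\emph{Step 1 (differentiate under the integral).} Write $\Delta(\theta)=\sqrt{1-k^2\sin^2\theta}$, which is bounded below by $k'>0$ on $[0,\pi/2]$. This justifies differentiating $F(k)=\int_0^{\pi/2}\Delta^{-1}\,d\theta$ under the integral sign:
\begin{equation*}
\frac{dF}{dk}=\int_0^{\pi/2}\frac{k\sin^2\theta}{\Delta^3}\,d\theta .
\end{equation*}
Since $k^2\sin^2\theta=1-\Delta^2$, this equals
\begin{equation*}
\frac1k\left(\int_0^{\pi/2}\Delta^{-3}\,d\theta-F(k)\right).
\end{equation*}

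\emph{Step 2 (the key identity).} The only non-routine point is to evaluate $\int_0^{\pi/2}\Delta^{-3}\,d\theta$ in terms of $E(k)$. I would verify directly the pointwise identity
\begin{equation*}
\frac{d}{d\theta}\left(\frac{k^2\sin\theta\cos\theta}{\Delta}\right)=\Delta-\frac{k'^2}{\Delta^3}.
\end{equation*}
To check it, differentiate the left side, obtaining $\frac{k^2(\cos^2\theta-\sin^2\theta)}{\Delta}+\frac{k^4\sin^2\theta\cos^2\theta}{\Delta^3}$. Then eliminate $\sin^2\theta=(1-\Delta^2)/k^2$ and $\cos^2\theta=(k^2-1+\Delta^2)/k^2$, put everything over $\Delta^3$, and collect powers of $\Delta$.

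Integrating over $[0,\pi/2]$, the boundary term vanishes because $\sin\theta\cos\theta=0$ at both endpoints. This gives $E(k)=k'^2\int_0^{\pi/2}\Delta^{-3}\,d\theta$.

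\emph{Step 3 (conclusion).} Substituting Step 2 into Step 1 yields
\begin{equation*}
\frac{dF}{dk}=\frac{1}{k}\left(\frac{E}{k'^2}-F\right)=\frac{E-k'^2F}{k\,k'^2},
\end{equation*}
and the specialization described at the start finishes the proof.

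The only step requiring care is the algebra in Step 2; the rest is routine. Alternatively, one could simply cite \cite[Eq. 710.00]{handbook}, as the statement already does.
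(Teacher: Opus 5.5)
Your proof is correct, and it takes a different route from the paper. The paper gives no argument of its own: it quotes the classical formula $dF/dk=(E-k'^2F)/(k\,k'^2)$ from \cite[Eq. 710.00]{handbook}. It also leaves implicit the fact that $k_{\alpha_1}^2+k_{\alpha_2}^2=1$, i.e.\ that each of $k_{\alpha_1},k_{\alpha_2}$ is the complementary modulus of the other. That fact is exactly what is needed to replace $k'$ by $k_{\alpha_2}$ (respectively $k_{\alpha_1}$) in the handbook formula and obtain the two displayed identities.

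You instead derive the classical formula from scratch in three steps:
\begin{itemize}
\item Differentiating under the integral sign reduces $dF/dk$ to $\int_0^{\pi/2}\Delta^{-3}\,d\theta$.
\item Your exact-derivative identity checks out: after the substitution, the numerator collapses to $\Delta^4+k^2-1$.
\item Integrating that identity, with vanishing boundary terms, gives $\int_0^{\pi/2}\Delta^{-3}\,d\theta=E(k)/k'^2$.
\end{itemize}

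The citation is shorter. Your version makes the lemma self-contained and states the complementary-modulus observation explicitly, at the cost of a short trigonometric computation.
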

\begin{lema}\label{lema1}
The following integrals are useful for $\alpha_1$
    \begin{itemize}
        \item[I.] $ \int_0^{\frac{1}{r}}\frac{dz}{\sqrt{-q(z)}}=2\left(\frac{r}{1+r^2}\right)^{\frac{1}{2}}F(k_{\alpha_1})$
        \item[II.] $ \int_0^{\frac{1}{r}}\frac{zdz}{\sqrt{-q(z)}}=2\left(\frac{r}{1+r^2}\right)^{\frac{3}{2}}\frac{1}{k_{\alpha_1}}\frac{dF(k_{\alpha_1})}{dk_{\alpha_1}}$
    \end{itemize}
\end{lema}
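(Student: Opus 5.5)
The plan is to reduce both integrals to complete elliptic integrals of modulus $k_{\alpha_1}=1/\sqrt{1+r^2}$ by one trigonometric substitution, and then to identify the answer in II with the derivative of $F$ via Lemma \ref{lema0}.

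\textbf{Setup.} On the open interval $(0,\frac1r)$ we have
\[
-q(z)=z\left(\tfrac1r-z\right)(z+r)>0,
\]
so both integrands are real and positive. The endpoint singularities are of inverse square root type, hence integrable.

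\textbf{Substitution.} I will substitute $z=\frac1r\cos^2\theta$ with $\theta\in[0,\frac{\pi}{2}]$; then $z=\frac1r$ at $\theta=0$ and $z=0$ at $\theta=\frac\pi2$. We have $dz=-\frac2r\sin\theta\cos\theta\,d\theta$ and
\[
z\left(\tfrac1r-z\right)=\tfrac1{r^2}\sin^2\theta\cos^2\theta .
\]
The key computation is the remaining factor. Writing $\cos^2\theta=1-\sin^2\theta$,
\[
z+r=\frac{r^2+\cos^2\theta}{r}=\frac{(1+r^2)\bigl(1-k_{\alpha_1}^2\sin^2\theta\bigr)}{r}.
\]
The factors $\sin\theta\cos\theta$ cancel, and the orientation reversal absorbs the minus sign. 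This gives
\[
\frac{dz}{\sqrt{-q(z)}}=2\left(\frac{r}{1+r^2}\right)^{1/2}\frac{d\theta}{\sqrt{1-k_{\alpha_1}^2\sin^2\theta}} .
\]
Integrating over $[0,\frac\pi2]$ yields item I directly from Definition \ref{ellipticintegrals}.

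\textbf{Item II.} The same substitution multiplies the integrand by $z=\frac1r\cos^2\theta$. Write $\Delta=\sqrt{1-k^2\sin^2\theta}$ with $k=k_{\alpha_1}$, and $k'=k_{\alpha_2}$, so that $k^2+k'^2=1$. From the identity $\sin^2\theta=\frac{1-\Delta^2}{k^2}$ we get the classical formula
\[
\int_0^{\pi/2}\frac{\sin^2\theta}{\Delta}\,d\theta=\frac{F(k)-E(k)}{k^2}.
\]
Therefore
\[
\int_0^{\pi/2}\frac{\cos^2\theta}{\Delta}\,d\theta=\frac{E(k)-k'^2F(k)}{k^2}.
\]
This gives
\[
\int_0^{\frac1r}\frac{z\,dz}{\sqrt{-q(z)}}=\frac2r\left(\frac{r}{1+r^2}\right)^{1/2}\frac{E(k_{\alpha_1})-k_{\alpha_2}^2F(k_{\alpha_1})}{k_{\alpha_1}^2}.
\]

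\textbf{Matching the stated form.} By Lemma \ref{lema0}, the target expression $2\left(\frac{r}{1+r^2}\right)^{3/2}\frac1{k_{\alpha_1}}\frac{dF(k_{\alpha_1})}{dk_{\alpha_1}}$ equals
\[
2\left(\frac{r}{1+r^2}\right)^{3/2}\frac{E-k_{\alpha_2}^2F}{k_{\alpha_1}^2k_{\alpha_2}^2}.
\]
Since $k_{\alpha_2}^2=\frac{r^2}{1+r^2}$, we have $\left(\frac{r}{1+r^2}\right)\big/k_{\alpha_2}^2=\frac1r$. Hence the target reduces exactly to the previous display, which proves II.

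\textbf{Main obstacle.} The only real difficulty is bookkeeping. One must pick the substitution ($\cos^2$ rather than $\sin^2$) that produces the form $1-k^2\sin^2\theta$ with the correct modulus $k_{\alpha_1}$. One must also track the constant factors and the orientation sign correctly.
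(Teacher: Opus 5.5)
Your proof is correct and is essentially the paper's proof. The paper reduces both integrals to complete elliptic integrals of modulus $k_{\alpha_1}$ by quoting the Byrd--Friedman formulas 235.00, 235.06 and 318.02, then rewrites $E(k_{\alpha_1})-k_{\alpha_2}^2F(k_{\alpha_1})$ in derivative form via Lemma \ref{lema0}. The only difference is that you do the reduction by hand with the substitution $z=\frac1r\cos^2\theta$, so item II follows directly from $\sin^2\theta=(1-\Delta^2)/k^2$ and the constants can be checked without the tables.
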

\begin{proof}
    For item I) we use \cite[Eq. 235.00]{handbook} with $(a,b,c,y)=\left(\frac{1}{r},0,-r,\frac{1}{r}\right)$. For item II), we use \cite[Eq. 235.06]{handbook} with the same choice and use additionally \cite[Eq. 318.02, Eq. 122.01-0.2]{handbook}, Lemma \ref{lema0}.
\end{proof}

\begin{lema}\label{lema2}
The following integrals are useful for $\alpha_2$
    \begin{itemize}
        \item[I.] $\int_{-r}^{0}\frac{dz}{\sqrt{q(z)}}=2\left(\frac{r}{1+r^2}\right)^{\frac{1}{2}}F(k_{\alpha_2})$
        \item[II.] $ \int_{-r}^{0}\frac{zdz}{\sqrt{q(z)}}=-2\left(\frac{r}{1+r^2}\right)^{\frac{3}{2}}\frac{1}{k_{\alpha_2}}\frac{dF(k_{\alpha_2})}{dk_{\alpha_2}}$
    \end{itemize}
\end{lema}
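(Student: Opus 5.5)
The plan is to mirror the proof of Lemma \ref{lema1}. Instead of quoting the handbook formulas, I would use an explicit trigonometric substitution that reduces both integrals to complete elliptic integrals of modulus $k_{\alpha_2}$, and then apply Lemma \ref{lema0}. First I check the sign of $q$ on $(-r,0)$. There $z<0$, $z-\frac1r<0$ and $z+r>0$, so $q(z)=z(z-\frac1r)(z+r)>0$. Hence $\sqrt{q}$ is real and positive on the open interval. The integrand has only integrable inverse square root singularities at the simple roots $-r$ and $0$.

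Next I substitute $z=-r\cos^2\theta$ with $\theta\in[0,\frac{\pi}{2}]$. This sends $\theta=0$ to $z=-r$ and $\theta=\frac{\pi}{2}$ to $z=0$, and gives $dz=2r\cos\theta\sin\theta\,d\theta$. The three factors of $q$ become $z=-r\cos^2\theta$, $z+r=r\sin^2\theta$ and $z-\frac1r=-(r\cos^2\theta+\frac1r)$. Therefore
\[
\sqrt{q(z)}=r\cos\theta\sin\theta\,\sqrt{r\cos^2\theta+\tfrac1r}.
\]
Writing $r\cos^2\theta+\frac1r=\frac{1+r^2}{r}\bigl(1-k_{\alpha_2}^2\sin^2\theta\bigr)$ with $k_{\alpha_2}^2=\frac{r^2}{1+r^2}$, item I follows directly:
\[
\int_{-r}^0\frac{dz}{\sqrt{q}}=2\Bigl(\frac{r}{1+r^2}\Bigr)^{\frac12}\int_0^{\pi/2}\frac{d\theta}{\sqrt{1-k_{\alpha_2}^2\sin^2\theta}}=2\Bigl(\frac{r}{1+r^2}\Bigr)^{\frac12}F(k_{\alpha_2}).
\]

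For item II the same substitution gives
\[
-2r\Bigl(\frac{r}{1+r^2}\Bigr)^{\frac12}\int_0^{\pi/2}\frac{\cos^2\theta\,d\theta}{\sqrt{1-k_{\alpha_2}^2\sin^2\theta}}.
\]
I would evaluate this integral by writing $\cos^2\theta=1-\sin^2\theta$ and using the standard identity $\int_0^{\pi/2}\frac{\sin^2\theta}{\Delta}d\theta=\frac{F-E}{k^2}$, where $\Delta=\sqrt{1-k^2\sin^2\theta}$. That identity follows by differentiating $\Delta$, or from \cite{handbook}. Since $1-k_{\alpha_2}^2=k_{\alpha_1}^2$, the result is $\frac{E(k_{\alpha_2})-k_{\alpha_1}^2F(k_{\alpha_2})}{k_{\alpha_2}^2}$. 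By Lemma \ref{lema0} this equals $k_{\alpha_1}^2\cdot\frac{1}{k_{\alpha_2}}\frac{dF(k_{\alpha_2})}{dk_{\alpha_2}}$. Finally $r\,k_{\alpha_1}^2=\frac{r}{1+r^2}$, so the prefactor combines to $-2\bigl(\frac{r}{1+r^2}\bigr)^{3/2}$, which is item II.

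The only delicate points are bookkeeping. One is the orientation and sign of $dz$ under the substitution, which produces the overall minus sign in II. The other is matching the complementary modulus $k_{\alpha_1}$ with $k_{\alpha_2}$ in Lemma \ref{lema0}. I expect no real obstacle beyond these, and I would double-check the constants numerically at the value of $r$ from Lemma \ref{rLopezNumerical}.
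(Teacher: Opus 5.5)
Your proposal is correct and is in substance the paper's argument: the paper gets the same reduction to complete elliptic integrals of modulus $k_{\alpha_2}$ by quoting Eqs.~234.00, 234.07 and 318.02 of \cite{handbook}, and then rewrites $E(k_{\alpha_2})-k_{\alpha_1}^2F(k_{\alpha_2})$ via Lemma~\ref{lema0} exactly as you do; your substitution $z=-r\cos^2\theta$ just makes that reduction explicit and self-contained. One small correction to your closing remark: the substitution preserves orientation and $dz=2r\cos\theta\sin\theta\,d\theta\geq 0$, so the minus sign in II comes from the factor $z=-r\cos^2\theta$ itself, not from $dz$, as your displayed computation already correctly reflects.
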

\begin{proof}
    For item I) we use \cite[Eq. 234.00]{handbook} with $(a,b,c,y)=\left(\frac{1}{r},0,-r,-r\right)$. For item II), we use \cite[Eq. 234.07]{handbook} with the same choice and use additionally \cite[Eq. 318.02, Eq. 122.01-0.2]{handbook}, Lemma \ref{lema0}.
\end{proof}

\begin{lema}\label{lema3}
The following integrals are useful for $\alpha_1$
    \begin{itemize}
        \item[I.] $\int_{-\infty}^{-r}\frac{dz}{\sqrt{-q(z)}}=\int_{0}^{\frac{1}{r}}\frac{dz}{\sqrt{-q(z)}}=2\left(\frac{r}{1+r^2}\right)^{\frac{1}{2}}F(k_{\alpha_1})$
        \item[II.] $ \int_{-\infty}^{-r}\frac{dz}{z\sqrt{-q(z)}}=-\int_{0}^{\frac{1}{r}}\frac{zdz}{\sqrt{-q(z)}}=-2\left(\frac{r}{1+r^2}\right)^{\frac{3}{2}}\frac{1}{k_{\alpha_1}}\frac{dF(k_{\alpha_1})}{dk_{\alpha_1}}$
    \end{itemize}
\end{lema}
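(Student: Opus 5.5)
We want Lemma \ref{lema3}: $\int_{-\infty}^{-r}\frac{dz}{\sqrt{-q(z)}}=\int_0^{1/r}\frac{dz}{\sqrt{-q(z)}}$ and $\int_{-\infty}^{-r}\frac{dz}{z\sqrt{-q(z)}}=-\int_0^{1/r}\frac{z\,dz}{\sqrt{-q(z)}}$. Both identities come from the substitution $z=-1/u$, which is the $z$-component of the involution $\tau$ in \eqref{tauinvolution}. The closed forms then follow from Lemma \ref{lema1}.

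\emph{Step 1: the key identity.} From $q(z)=z(z-\frac{1}{r})(z+r)$ we compute
\begin{equation*}
q(-1/u)=-\frac{1}{u}\left(-\frac{1}{u}-\frac{1}{r}\right)\left(-\frac{1}{u}+r\right)=\frac{(1+\frac{u}{r})(ru-1)}{u^3}.
\end{equation*}
Since $(1+\frac{u}{r})(ru-1)=(u+r)(u-\frac1r)$, this gives $q(-1/u)=q(u)/u^4$. This is the same transformation law already used in \eqref{tauinvolution}, where $w\mapsto \overline{w}/\overline{z}^2$.

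\emph{Step 2: the change of variables.} The map $u\mapsto -1/u$ sends $u\in(0,1/r)$ bijectively and increasingly onto $z\in(-\infty,-r)$, with $u\to0^+$ giving $z\to-\infty$ and $u=1/r$ giving $z=-r$. On this range $q(u)<0$ and $-q(z)>0$. By Step 1, $\sqrt{-q(z)}=\sqrt{-q(u)}/u^2$, taking positive roots since $u^2>0$. Also $dz=du/u^2$. Hence
\begin{equation*}
\frac{dz}{\sqrt{-q(z)}}=\frac{du}{\sqrt{-q(u)}},\qquad \frac{dz}{z\sqrt{-q(z)}}=-u\,\frac{du}{\sqrt{-q(u)}}.
\end{equation*}
Integrating over $u\in(0,1/r)$ gives both equalities in items I and II. The signs come out as stated because the orientation is preserved.

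\emph{Step 3: closed forms and convergence.} The right-hand expressions are exactly items I and II of Lemma \ref{lema1}. The only point needing care is that all integrals are improper but convergent. At the finite endpoints the integrands have $|z-z_0|^{-1/2}$ singularities. At $-\infty$ the integrands decay like $|z|^{-3/2}$ and $|z|^{-5/2}$, which are integrable. The substitution therefore holds for improper integrals by taking limits on compact subintervals.

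The main (mild) obstacle is choosing the branches of the square roots consistently, which is handled by the positivity checks in Step 2.
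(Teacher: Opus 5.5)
Your proof is correct and follows the same route as the paper: the paper also uses the identity $q(-1/z)=q(z)/z^4$, the substitution $z\mapsto -1/z$ taking $(0,\frac{1}{r})$ onto $(-\infty,-r)$, and then Lemma \ref{lema1} for the closed forms. Your checks on the sign of $q$ on both intervals, the choice of positive square roots, and convergence of the improper integrals fill in details the paper leaves implicit.
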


\begin{proof}
    Indeed, notice that
    \begin{equation*}
        q\left(-\frac{1}{z}\right)=\left(-\frac{1}{z}\right)\left(-\frac{1}{z}-\frac{1}{r}\right)\left(-\frac{1}{z}+r\right)=\frac{z(z+r)\left(z-\frac{1}{r}\right)}{z^4}=\frac{q(z)}{z^4}
    \end{equation*}
    Then the change of variable $z\to -\frac{1}{z}$ gives
    \begin{gather*}
        \int_{-\infty}^{-r}\frac{dz}{\sqrt{-q(z)}}=\int_0^{\frac{1}{r}}\frac{d\left(-\frac{1}{z}\right)}{\sqrt{-q\left(-\frac{1}{z}\right)}}=\int_0^{\frac{1}{r}}\frac{dz/z^2}{\sqrt{-q(z)/z^4}}=\int_0^{\frac{1}{r}}\frac{dz}{\sqrt{-q(z)}}
    \end{gather*}
    \begin{gather*}
        \int_{-\infty}^{-r}\frac{dz}{z\sqrt{-q(z)}}=\int_0^{\frac{1}{r}}\frac{dz/z^2}{\left(-\frac{1}{z}\right)\sqrt{-q(z)/z^4}}=-\int_0^{\frac{1}{r}}\frac{zdz}{\sqrt{-q(z)}}
    \end{gather*}
    The rest of the proof follows from Lemma \ref{lema1}
\end{proof}

\begin{lema}\label{lema4}
The following integrals are useful for $\alpha_2$
    \begin{itemize}
        \item[I.] $\int_{\frac{1}{r}}^{\infty}\frac{dz}{\sqrt{q(z)}}=\int_{-r}^{0}\frac{dz}{\sqrt{q(z)}}=2\left(\frac{r}{1+r^2}\right)^{\frac{1}{2}}F(k_{\alpha_2})$
        \item[II.] $ \int_{\frac{1}{r}}^{\infty}\frac{dz}{z\sqrt{q(z)}}=-\int_{-r}^{0}\frac{zdz}{\sqrt{q(z)}}=2\left(\frac{r}{1+r^2}\right)^{\frac{3}{2}}\frac{1}{k_{\alpha_2}}\frac{dF(k_{\alpha_2})}{dk_{\alpha_2}}$
    \end{itemize}
\end{lema}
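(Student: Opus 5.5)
The plan is to derive both identities of Lemma \ref{lema4} from Lemma \ref{lema2} by the inversion $z\mapsto -\frac{1}{z}$, exactly as Lemma \ref{lema3} was derived from Lemma \ref{lema1}.

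The key algebraic fact is the one already used in the proof of Lemma \ref{lema3}:
\begin{equation*}
q\left(-\tfrac{1}{z}\right)=\frac{q(z)}{z^4}.
\end{equation*}
The map $z\mapsto -\frac{1}{z}$ sends the interval $(-r,0)$ bijectively and increasingly onto $(\frac{1}{r},\infty)$. Indeed, $z=-r$ goes to $\frac{1}{r}$, and $z\to 0^-$ goes to $+\infty$. Moreover, $q>0$ on $(-r,0)$, since $z<0$, $z-\frac{1}{r}<0$ and $z+r>0$. It follows that $q>0$ on $(\frac{1}{r},\infty)$ as well, so all square roots below are real and positive.

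For item I, I will substitute $s=-\frac{1}{z}$ with $z\in(-r,0)$. Then $ds=\frac{dz}{z^2}$, and
\begin{equation*}
\sqrt{q(s)}=\frac{\sqrt{q(z)}}{z^2},
\end{equation*}
where positivity is automatic because $z^2>0$. This gives
\begin{equation*}
\int_{\frac{1}{r}}^{\infty}\frac{ds}{\sqrt{q(s)}}=\int_{-r}^{0}\frac{dz/z^2}{\sqrt{q(z)}/z^2}=\int_{-r}^0\frac{dz}{\sqrt{q(z)}}.
\end{equation*}
Lemma \ref{lema2} I then yields the value $2\left(\frac{r}{1+r^2}\right)^{\frac12}F(k_{\alpha_2})$.

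For item II, the same substitution introduces the extra factor $\frac{1}{s}=-z$, so
\begin{equation*}
\int_{\frac{1}{r}}^{\infty}\frac{ds}{s\sqrt{q(s)}}=\int_{-r}^{0}\frac{(-z)\,dz}{\sqrt{q(z)}}=-\int_{-r}^0\frac{z\,dz}{\sqrt{q(z)}}.
\end{equation*}
Lemma \ref{lema2} II then gives $2\left(\frac{r}{1+r^2}\right)^{\frac32}\frac{1}{k_{\alpha_2}}\frac{dF(k_{\alpha_2})}{dk_{\alpha_2}}$, which is positive, as it must be since the integrand is positive.

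The only point needing care is convergence and the signs at the endpoints, and there is no real obstacle. Near $s=\frac{1}{r}$ the integrands have integrable singularities of type $(s-\frac1r)^{-1/2}$. At infinity they decay like $s^{-3/2}$ and $s^{-5/2}$ respectively. So all the improper integrals converge, and the change of variables is justified on compact subintervals followed by a limit.
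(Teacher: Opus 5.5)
Your proof is correct and follows the paper's route. The paper proves this lemma by ``the same strategy'' as Lemma~\ref{lema3}: the inversion $z\mapsto -\frac{1}{z}$ using $q(-\frac{1}{z})=\frac{q(z)}{z^4}$, followed by Lemma~\ref{lema2}. Your explicit checks on the orientation of the substitution, the positivity of $q$ on both intervals, and the convergence of the improper integrals are accurate, and they supply details the paper leaves implicit.
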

\begin{proof}
    It follows the same strategy of proof of Lemma \ref{lema3}.
\end{proof}

\begin{prop}[{Useful for $\alpha_1$}]\label{prop1}
    For all $p\in S_t\setminus \{-\frac{1}{b_t}\}$
    \begin{equation*}
        \int_0^{\frac{1}{r}}\frac{dz}{(z-p)^2\sqrt{-q(z)}}+\frac{q'(p)}{2q(p)}\int_0^{\frac{1}{r}}\frac{dz}{(z-p)\sqrt{-q(z)}}=\frac{1}{q(p)}\left(\frac{r}{1+r^2}\right)^{\frac{3}{2}}\frac{1}{k_{\alpha_1}}\frac{dF(k_{\alpha_1})}{dk_{\alpha_1}}-\frac{p}{q(p)}\left(\frac{r}{1+r^2}\right)^{\frac{1}{2}}F(k_{\alpha_1})
    \end{equation*}
\end{prop}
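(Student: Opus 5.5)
Set $I_n(p)=\int_0^{1/r}\frac{dz}{(z-p)^n\sqrt{-q(z)}}$ and $J_m=\int_0^{1/r}\frac{z^m\,dz}{\sqrt{-q(z)}}$. The plan is to find an exact derivative that links $I_2$, $I_1$, $J_0$ and $J_1$, and then to evaluate $J_0$ and $J_1$ with Lemma \ref{lema1}. The candidate antiderivative is
\begin{equation*}
\frac{d}{dz}\left(\frac{\sqrt{-q(z)}}{z-p}\right)=\frac{-q'(z)(z-p)+2q(z)}{2(z-p)^2\sqrt{-q(z)}}.
\end{equation*}

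First, expand the numerator $N(z)=2q(z)-(z-p)q'(z)$ in powers of $(z-p)$. Since $q$ is a monic cubic, Taylor's formula gives $q(z)=q(p)+q'(p)(z-p)+\tfrac12 q''(p)(z-p)^2+(z-p)^3$ and $q'(z)=q'(p)+q''(p)(z-p)+3(z-p)^2$. Therefore
\begin{equation*}
N(z)=2q(p)+q'(p)(z-p)-(z-p)^3,
\end{equation*}
because the $(z-p)^2$ terms cancel. Dividing by $2(z-p)^2\sqrt{-q}$ and integrating over $[0,1/r]$ gives
\begin{equation*}
\left[\frac{\sqrt{-q(z)}}{z-p}\right]_0^{1/r}=q(p)I_2(p)+\tfrac12 q'(p)I_1(p)-\tfrac12\left(J_1-pJ_0\right).
\end{equation*}
Next, the boundary term vanishes. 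We have $q(0)=q(1/r)=0$, and $p\notin[0,1/r]$ by Lemma \ref{lem:posicoesdosatbt}, because the points of $S_t$ are negative or large positive. This also makes every integrand integrable: the only singularities are the inverse square roots at the endpoints. Dividing by $q(p)\neq 0$ yields
\begin{equation*}
I_2+\frac{q'(p)}{2q(p)}I_1=\frac{J_1-pJ_0}{2q(p)}.
\end{equation*}
Substituting items I and II of Lemma \ref{lema1} into the right-hand side gives exactly the claimed formula, with the factor $2$ cancelling.

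The step I expect to need the most care is justifying the hypothesis $p\in S_t\setminus\{-1/b_t\}$. I would check from Lemma \ref{lem:posicoesdosatbt} and Lemma \ref{xtytestimates} where each of $a_t$, $b_t$, $-1/a_t$, $-1/b_t$ sits relative to $0$ and $1/r$. Presumably $-1/b_t$ is excluded because it can lie in $(0,1/r)$, or close enough to the path that the deformation of Lemma \ref{lema7} must be chosen differently, for instance using item II and Lemma \ref{lema3}. For the remaining three points I would verify that $p<0$ or $p>1/r$ holds uniformly in $t\in(0,1]$, so that no pole lies on $[0,1/r]$. Once this is established, the rest is the routine algebra above.
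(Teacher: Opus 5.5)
Your proof is correct and is essentially the paper's argument. The paper quotes the reduction formula \cite[Eq. 230.02]{handbook} to get $I_2(p)+\frac{q'(p)}{2q(p)}I_1(p)=\frac{1}{2q(p)}\int_0^{1/r}\frac{(z-p)\,dz}{\sqrt{-q(z)}}$ and then applies Lemma~\ref{lema1}; you derive the same identity directly by differentiating $\sqrt{-q(z)}/(z-p)$, which also shows explicitly why the boundary term vanishes. Your guess about the excluded point is right and can be stated definitely: by Lemma~\ref{lem:posicoesdosatbt}, $b_t<-r$ gives $-1/b_t\in(0,1/r)$, so that pole always lies on the integration interval, while $a_t,b_t<0$ and $-1/a_t>1/t\geq 1>1/r$ keep the other three points off $[0,1/r]$.
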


\begin{proof}
    From \cite[Eq. 230.02]{handbook} with the choices
    \begin{equation*}
        (a_0,r_1,r_2,r_3,m,y_1,y)=\left(-1,r,0,-\frac{1}{r},2,0,\frac{1}{r}\right),\quad p\neq -\frac{1}{b}
    \end{equation*}
    \begin{equation*}
        \int_0^{\frac{1}{r}}\frac{dz}{(z-p)^2\sqrt{-q(z)}}=\frac{1}{2(-q(p))}\left\lbrace-\int_0^{\frac{1}{r}}\frac{(z-p)dz}{\sqrt{-q(z)}}+\left[3p^2+2p\left(r-\frac{1}{r}\right)-1\right]\int_0^{\frac{1}{r}}\frac{dz}{(z-p)\sqrt{-q(z)}}\right\rbrace
    \end{equation*}
    Since $q(z)=z^3+\left(r-\frac{1}{r}\right)z^2-z$, $q'(z)=3z^2+2\left(r-\frac{1}{r}\right)z-1$, so we have by Lemma \ref{lema1}
    \begin{align*}
    \begin{split}
        \int_0^{\frac{1}{r}}\frac{dz}{(z-p)^2\sqrt{-q(z)}}+\frac{q'(p)}{2q(p)}\int_0^{\frac{1}{r}}\frac{dz}{(z-p)\sqrt{-q(z)}}&=\frac{1}{2q(p)}\int_0^{\frac{1}{r}}\frac{(z-p)dz}{\sqrt{-q(z)}}\\&=\frac{1}{q(p)}\left(\frac{r}{1+r^2}\right)^{\frac{3}{2}}\frac{1}{k_{\alpha_1}}\frac{dF(k_{\alpha_1})}{dk_{\alpha_1}}-\frac{p}{q(p)}\left(\frac{r}{1+r^2}\right)^{\frac{1}{2}}F(k_{\alpha_1})
    \end{split}
    \end{align*}
\end{proof}

\begin{prop}[{Useful for $\alpha_1$}]\label{prop2}
    \begin{align*}
        \int_{-\infty}^{-r}\frac{dz}{\left(z+\frac{1}{b}\right)^2\sqrt{-q(z)}}+&\frac{q'\left(-\frac{1}{b}\right)}{2q\left(-\frac{1}{b}\right)}\int_{-\infty}^{-r}\frac{dz}{\left(z+\frac{1}{b}\right)\sqrt{-q(z)}}\\&=\frac{1}{q\left(-\frac{1}{b}\right)}\left(\frac{r}{1+r^2}\right)^{\frac{3}{2}}\frac{1}{k_{\alpha_1}}\frac{dF(k_{\alpha_1})}{dk_{\alpha_1}}-\frac{\left(-\frac{1}{b}\right)}{q\left(-\frac{1}{b}\right)}\left(\frac{r}{1+r^2}\right)^{\frac{1}{2}}F(k_{\alpha_1})
    \end{align*}
\end{prop}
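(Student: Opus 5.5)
The plan is to reduce Proposition \ref{prop2} to Proposition \ref{prop1} through the inversion $z\mapsto -\frac{1}{z}$, exactly as Lemma \ref{lema3} relates integrals over $(-\infty,-r)$ to integrals over $(0,\frac{1}{r})$. The inversion exchanges the two intervals and sends $-\frac{1}{b}$ to $b$. Applying Proposition \ref{prop1} at $p=b$ is allowed, since only $p=-\frac1b$ is excluded there. Recall also that $q(-\frac1z)=\frac{q(z)}{z^4}$, and hence $\sqrt{-q(-\frac1z)}=\frac{\sqrt{-q(z)}}{z^2}$ for $z\in(0,\frac1r)$, with the signs checked as in Lemma \ref{lema3}.

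First, substitute $z=-\frac1u$, so that $dz=\frac{du}{u^2}$ and $z+\frac1b=\frac{u-b}{bu}$. This gives
\[
\int_{-\infty}^{-r}\frac{dz}{(z+\frac1b)\sqrt{-q(z)}}=b\int_0^{\frac1r}\frac{u\,du}{(u-b)\sqrt{-q(u)}},
\qquad
\int_{-\infty}^{-r}\frac{dz}{(z+\frac1b)^2\sqrt{-q(z)}}=b^2\int_0^{\frac1r}\frac{u^2\,du}{(u-b)^2\sqrt{-q(u)}}.
\]
Next, expand the numerators using $u=(u-b)+b$ and $u^2=(u-b)^2+2b(u-b)+b^2$. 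Writing $I_k=\int_0^{\frac1r}\frac{du}{(u-b)^k\sqrt{-q(u)}}$ and $J_0=\int_0^{\frac1r}\frac{du}{\sqrt{-q(u)}}$, the left-hand side of Proposition \ref{prop2} becomes a linear combination of $J_0$, $I_1$ and $I_2$. For the coefficient of the middle term, compute $q'(-\frac1b)$ from $q'(z)=3z^2+2(r-\frac1r)z-1$ and $q(-\frac1b)=\frac{q(b)}{b^4}$.

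Then use Proposition \ref{prop1} at $p=b$ to eliminate $I_2$ in favor of $I_1$, together with $J_0$ and the $z$-moment integral of Lemma \ref{lema1}. The algebraic check is that the coefficient of $I_1$ cancels; this should follow from the identity $\frac{q'(-1/b)}{q(-1/b)}=-b^2\frac{q'(b)}{q(b)}\cdot(\text{sign bookkeeping})+\frac{4b}{1}$, which comes from differentiating $q(-\frac1z)=\frac{q(z)}{z^4}$. What remains is a combination of $F(k_{\alpha_1})$ and $\frac{1}{k_{\alpha_1}}\frac{dF}{dk_{\alpha_1}}$ via Lemma \ref{lema1}, and this should match the right-hand side once $q(b)=b^4q(-\frac1b)$ is used.

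A cleaner alternative would avoid inverting altogether and instead apply the reduction formula \cite[Eq. 230.02]{handbook} directly on $(-\infty,-r)$, with pole $-\frac1b$. The boundary terms would vanish because $\sqrt{-q}$ vanishes at $-r$ and the integrand decays at $-\infty$. This would give
\[
\frac{1}{2q(-\frac1b)}\int_{-\infty}^{-r}\frac{(z+\frac1b)\,dz}{\sqrt{-q(z)}}.
\]
However, the integral $\int_{-\infty}^{-r}\frac{z\,dz}{\sqrt{-q}}$ diverges, since the integrand behaves like $|z|^{-1/2}$. So the direct route is only formal, and the inversion route is the one I would actually follow.

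The main obstacle is the sign and coefficient bookkeeping: the branch of the square root under inversion, the factor $b$ versus $b^2$, and verifying that the $I_1$ coefficient cancels exactly. I expect the cancellation to hold because both propositions encode the same exact-differential identity $d\!\left(\frac{\sqrt{-q}}{z-p}\right)$, which transforms covariantly under the inversion.
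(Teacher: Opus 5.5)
Your inversion argument is correct, and it takes a genuinely different route from the paper's. The paper stays on $(-\infty,-r)$. It applies \cite[Eq. 230.02]{handbook} on a truncated interval $[y_1,-r]$, then uses \cite[Eq. 230.01]{handbook} to trade the divergent moment $\int z\,dz/\sqrt{-q}$ for the convergent $\int dz/(z\sqrt{-q})$ plus a boundary term. It then lets $y_1\to-\infty$ and finishes with Lemma \ref{lema3}.

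So the direct route is not merely formal, and your description of it is off. The boundary term $\sqrt{-q(y)}/(y+\frac1b)$ does not vanish at $-\infty$; it grows like $|y|^{1/2}$. That growth is exactly what offsets the divergence of the moment integral, and the paper's limit $\sqrt{-q(y_1)}\bigl(\frac{1}{y_1}-\frac{1}{y_1+1/b}\bigr)\to 0$ is that cancellation.

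Your route instead transports Proposition \ref{prop1}, legitimately used at $p=b$, by $z\mapsto-\frac1z$. It needs no truncation or limit, and it explains why the right-hand side has the same shape as in Proposition \ref{prop1}. The two proofs are closely linked: under $z=-\frac1u$ one has $\frac{\sqrt{-q(z)}}{z+1/b}-\frac{\sqrt{-q(z)}}{z}=\frac{\sqrt{-q(u)}}{u-b}$. So the paper's pair of cancelling boundary terms is just the boundary term of Proposition \ref{prop1}, which vanishes at $u=0$.

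Two pieces of your bookkeeping need correcting. First, the identity you wrote for $q'(-\frac1b)/q(-\frac1b)$ is wrong; either sign choice in your formula leaves an uncancelled multiple of $I_1$. Differentiating $q(-\frac1z)=z^{-4}q(z)$ gives
\[
\frac{q'(-1/b)}{q(-1/b)}=b^2\,\frac{q'(b)}{q(b)}-4b .
\]
With this, your expansion gives
\[
\text{LHS}=b^4\Bigl(I_2+\frac{q'(b)}{2q(b)}I_1\Bigr)+\Bigl(\frac{b^3q'(b)}{2q(b)}-b^2\Bigr)J_0 .
\]
The two $2b^3I_1$ terms cancel outright, and what remains is $b^4$ times the combination in Proposition \ref{prop1}.

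Second, the final match needs one more identity. Write $\Lambda_1=(\frac{r}{1+r^2})^{1/2}F(k_{\alpha_1})$ and $\Lambda_3=(\frac{r}{1+r^2})^{3/2}\frac{1}{k_{\alpha_1}}\frac{dF(k_{\alpha_1})}{dk_{\alpha_1}}$. Proposition \ref{prop1} and $J_0=2\Lambda_1$ then give
\[
\text{LHS}=\frac{b^4\Lambda_3+\bigl(b^2(bq'(b)-2q(b))-b^5\bigr)\Lambda_1}{q(b)} .
\]
The $\Lambda_3$-coefficients agree using only $q(b)=b^4q(-\frac1b)$. The $\Lambda_1$-coefficient also requires $zq'(z)-2q(z)=z^3+z$, which holds for $q(z)=z^3+(r-\frac1r)z^2-z$. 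With it, that coefficient becomes $b^3/q(b)$, and $\frac{b^4\Lambda_3+b^3\Lambda_1}{q(b)}$ is exactly the stated right-hand side.
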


\begin{proof}
    From \cite[Eq. 230.02]{handbook} with the choices 
    \begin{equation*}
        \left(a_0,r_1,r_2,r_3,m,y_1,y\right)=\left(-1,r,0,-\frac{1}{r},2,y_1,-r\right),\quad p=-\frac{1}{b}
    \end{equation*}

    \begin{equation*}
        \int_{y_1}^{-r}\frac{dz}{\left(z+\frac{1}{b}\right)^2\sqrt{-q(z)}}=\frac{1}{2\left(q\left(-\frac{1}{b}\right)\right)}\left\lbrace \frac{2\sqrt{-q(y_1)}}{y_1+\frac{1}{b}}-\int_{y_1}^{-r}\frac{\left(z+\frac{1}{b}\right)dz}{\sqrt{-q(z)}}+q'\left(-\frac{1}{b}\right)\int_{y_1}^{-r}\frac{dz}{\left(z+\frac{1}{b}\right)\sqrt{-q(z)}} \right\rbrace
    \end{equation*}
    \begin{equation*}
        \int_{y_1}^{-r}\frac{dz}{\left(z+\frac{1}{b}\right)^2\sqrt{-q(z)}}+\frac{q'\left(-\frac{1}{b}\right)}{2q\left(-\frac{1}{b}\right)}\int_{y_1}^{-r}\frac{dz}{\left(z+\frac{1}{b}\right)\sqrt{-q(z)}}=-\frac{\sqrt{-q(y_1)}}{q\left(-\frac{1}{b}\right)\left(y_1+\frac{1}{b}\right)}+\frac{1}{2q\left(-\frac{1}{b}\right)}\int_{y_1}^{-r}\frac{\left(z+\frac{1}{b}\right)dz}{\sqrt{-q(z)}}
    \end{equation*}

    From \cite[Eq. 230.01]{handbook} with the choices
    \begin{equation*}
        (a_0,r_1,r_2,r_3,m,y_1,y)=\left(-1,r,0,-\frac{1}{r},1,y_1,-r\right)
    \end{equation*}
    \begin{equation*}
        \int_{y_1}^{-r}\frac{zdz}{\sqrt{-q(z)}}=-\left\lbrace -2\frac{\sqrt{-q(y_1)}}{y_1}+\int_{y_1}^{-r}\frac{dz}{z\sqrt{-q(z)}}\right\rbrace=\frac{2\sqrt{-q(y_1)}}{y_1}-\int_{y_1}^{-r}\frac{dz}{z\sqrt{-q(z)}}
    \end{equation*}
    \begin{align*}
        \begin{split}
            &\int_{y_1}^{-r}\frac{dz}{\left(z+\frac{1}{b}\right)^2\sqrt{-q(z)}}+\frac{q'\left(-\frac{1}{b}\right)}{2q\left(-\frac{1}{b}\right)}\int_{y_1}^{-r}\frac{dz}{\left(z+\frac{1}{b}\right)\sqrt{-q(z)}}\\&=-\frac{-\sqrt{-q(y_1)}}{q\left(-\frac{1}{b}\right)\left(y_1+\frac{1}{b}\right)}+\frac{\sqrt{-q(y_1)}}{q\left(-\frac{1}{b}\right)y_1}-\frac{1}{2q\left(-\frac{1}{b}\right)}\int_{y_1}^{-r}\frac{dz}{z\sqrt{-q(z)}}-\frac{\left(-\frac{1}{b}\right)}{2q\left(-\frac{1}{b}\right)}\int_{y_1}^{-r}\frac{dz}{\sqrt{-q(z)}}
        \end{split}
    \end{align*}
    Since
    \begin{equation*}
        \lim_{y_1\to -\infty}\left[\frac{-\sqrt{-q(y_1)}}{\left(y_1+\frac{1}{b}\right)}+\frac{\sqrt{-q(y_1)}}{y_1}\right]= \lim_{y_1\to -\infty}\frac{\left(\frac{1}{b}\right)\sqrt{-q(y_1)}}{y_1\left(y_1+\frac{1}{b}\right)}=0
    \end{equation*}
    we have by Lemma \ref{lema3}
    \begin{align*}
    \begin{split}
        \int_{-\infty}^{-r}\frac{dz}{\left(z+\frac{1}{b}\right)^2\sqrt{-q(z)}}+\frac{q'\left(-\frac{1}{b}\right)}{2q\left(-\frac{1}{b}\right)}&\int_{-\infty}^{-r}\frac{dz}{\left(z+\frac{1}{b}\right)\sqrt{-q(z)}}\\&=-\frac{1}{2q\left(-\frac{1}{b}\right)}\int_{-\infty}^{-r}\frac{dz}{z\sqrt{-q(z)}}-\frac{\left(-\frac{1}{b}\right)}{2q\left(-\frac{1}{b}\right)}\int_{-\infty}^{-r}\frac{dz}{\sqrt{-q(z)}}\\&=\frac{1}{q\left(-\frac{1}{b}\right)}\left(\frac{r}{1+r^2}\right)^{\frac{3}{2}}\frac{1}{k_{\alpha_1}}\frac{dF(k_{\alpha_1})}{dk_{\alpha_1}}-\frac{\left(-\frac{1}{b}\right)}{q\left(-\frac{1}{b}\right)}\left(\frac{r}{1+r^2}\right)^{\frac{1}{2}}F(k_{\alpha_1})
    \end{split}
    \end{align*}
\end{proof}

\begin{prop}[{Useful for $\alpha_2$}]\label{prop3}
    For all $p\in S_t\setminus \{a_t\}$
    \begin{equation*}
        \int_{-r}^{0}\frac{dz}{(z-p)^2\sqrt{q(z)}}+\frac{q'(p)}{2q(p)}\int_{-r}^{0}\frac{dz}{(z-p)\sqrt{q(z)}}=-\frac{1}{q(p)}\left(\frac{r}{1+r^2}\right)^{\frac{3}{2}}\frac{1}{k_{\alpha_2}}\frac{dF(k_{\alpha_2})}{dk_{\alpha_2}}-\frac{p}{q(p)}\left(\frac{r}{1+r^2}\right)^{\frac{1}{2}}F(k_{\alpha_2})
    \end{equation*}
\end{prop}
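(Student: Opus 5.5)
Proposition~\ref{prop3} is the $\alpha_2$-analogue of Proposition~\ref{prop1}, and I would prove it the same way: combine the reduction formula for $\int dz/((z-p)^2\sqrt{q})$ with the elementary integrals of Lemma~\ref{lema2}. The plan has three steps.

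First, apply \cite[Eq. 230.02]{handbook} on the interval $[-r,0]$, where $q(z)=z(z-\tfrac1r)(z+r)\ge 0$. I would use the choices $(a_0,r_1,r_2,r_3,m,y_1,y)=(1,\tfrac1r,0,-r,2,-r,0)$ or the equivalent ordering in the handbook. With these choices the boundary term $\sqrt{q(y)}/(y-p)$ vanishes at both endpoints $y=-r$ and $y=0$, since $q$ vanishes there. One needs $p\notin[-r,0]$ so that the integrand has no singularity on the path. The exclusion $p\neq a_t$ is exactly what guarantees this, because $a_t\in(-r,0)$ by Lemma~\ref{lem:posicoesdosatbt}, while $b_t<-r$ and $-1/a_t,-1/b_t>0$.

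Second, the reduction identity should take the form
\[
2q(p)\int_{-r}^0\frac{dz}{(z-p)^2\sqrt{q}} = -\int_{-r}^{0}\frac{(z-p)\,dz}{\sqrt{q}} - q'(p)\int_{-r}^0\frac{dz}{(z-p)\sqrt q}.
\]
The sign in front of the first integral on the right is opposite to the one in Proposition~\ref{prop1}, because here the integrand involves $q$ rather than $-q$. The identity can be checked directly by differentiating $\sqrt{q(z)}/(z-p)$: this gives
\[
\frac{d}{dz}\frac{\sqrt{q}}{z-p} = \frac{q'(z)(z-p)-2q(z)}{2(z-p)^2\sqrt q}.
\]
Then expand $q(z)$ and $q'(z)$ around $p$ in powers of $(z-p)$. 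The expansion of $q$ is cubic, so it produces only the terms $q(p)$, $q'(p)(z-p)$, $\tfrac12 q''(p)(z-p)^2$ and $(z-p)^3$. Integrating from $-r$ to $0$ kills the left side, and collecting terms should give the stated relation. The only delicate point in this step is keeping the signs straight.

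Third, rearrange to get
\[
\int\frac{dz}{(z-p)^2\sqrt q}+\frac{q'(p)}{2q(p)}\int\frac{dz}{(z-p)\sqrt q}=-\frac{1}{2q(p)}\left(\int_{-r}^0\frac{z\,dz}{\sqrt q}-p\int_{-r}^0\frac{dz}{\sqrt q}\right).
\]
Substituting items I and II of Lemma~\ref{lema2} into the right-hand side gives exactly
\[
-\frac{1}{q(p)}\Big(\frac{r}{1+r^2}\Big)^{3/2}\frac{1}{k_{\alpha_2}}\frac{dF}{dk_{\alpha_2}}-\frac{p}{q(p)}\Big(\frac{r}{1+r^2}\Big)^{1/2}F(k_{\alpha_2}),
\]
which is the claim.

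The main obstacle is the sign bookkeeping in the reduction formula, both in the handbook's parameter conventions and in the derivative computation. I would settle it by the direct derivative check rather than by trusting the table entry.
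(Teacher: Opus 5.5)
Your route is the paper's: reduce via the exact derivative of $\sqrt{q}/(z-p)$ (equivalently the handbook formula 230.02), note that the boundary terms vanish because $p\notin[-r,0]$, then substitute Lemma~\ref{lema2}. Your argument that excluding $a_t$ is exactly what keeps the pole off $[-r,0]$ is also correct. However, the reduction identity you display has the wrong sign, and your final substitution hides this.

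\textbf{The reduction identity.} Carry out the expansion you propose, with $u=z-p$:
\[
q'(z)\,u-2q(z)=\bigl(q'(p)+q''(p)u+3u^2\bigr)u-2\bigl(q(p)+q'(p)u+\tfrac12 q''(p)u^2+u^3\bigr)=-2q(p)-q'(p)u+u^3 .
\]
Integrating $\frac{d}{dz}\frac{\sqrt{q}}{z-p}$ over $[-r,0]$ then gives
\[
2q(p)\int_{-r}^{0}\frac{dz}{(z-p)^2\sqrt{q}}=\int_{-r}^{0}\frac{(z-p)\,dz}{\sqrt{q}}-q'(p)\int_{-r}^{0}\frac{dz}{(z-p)\sqrt{q}} .
\]
The first integral on the right carries a plus sign, not a minus.

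A quick check confirms this. As $p\to+\infty$, both sides above are asymptotic to $2p\int_{-r}^0 dz/\sqrt{q}$. With your sign, the right side would instead be asymptotic to $4p\int_{-r}^0 dz/\sqrt{q}$.

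\textbf{The comparison with Proposition~\ref{prop1}.} Your reason for expecting a flip is mistaken. Passing from $\sqrt{-q}$ to $\sqrt{q}$ only multiplies the exact derivative by an overall factor $-1$. That factor is harmless, because the derivative integrates to zero either way. In Proposition~\ref{prop1} the prefactor is $\frac{1}{2(-q(p))}$, so once the identity there is written in terms of $q(p)$, it has the same sign as here.

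\textbf{The final substitution.} Your third step follows correctly from your wrong identity, but it does not give the claim. Substituting items I and II of Lemma~\ref{lema2} into $-\frac{1}{2q(p)}\bigl(\int_{-r}^0\frac{z\,dz}{\sqrt q}-p\int_{-r}^0\frac{dz}{\sqrt q}\bigr)$ yields
\[
+\frac{1}{q(p)}\Bigl(\frac{r}{1+r^2}\Bigr)^{\frac{3}{2}}\frac{1}{k_{\alpha_2}}\frac{dF(k_{\alpha_2})}{dk_{\alpha_2}}+\frac{p}{q(p)}\Bigl(\frac{r}{1+r^2}\Bigr)^{\frac{1}{2}}F(k_{\alpha_2}),
\]
which is the negative of the statement. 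The two sign slips cancel only in your assertion that the substitution ``gives exactly'' the claim.

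With the corrected identity, the left-hand side of the proposition equals $\frac{1}{2q(p)}\int_{-r}^0\frac{(z-p)\,dz}{\sqrt{q}}$. Lemma~\ref{lema2} then gives the stated formula, and that is precisely the paper's argument.
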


\begin{proof}
    From \cite[Eq. 230.02]{handbook} with the choices
    \begin{equation*}
        (a_0,r_1,r_2,r_3,m,y_1,y)=\left(+1,r,0,-\frac{1}{r},2,-r,0\right),\quad p\neq a_t
    \end{equation*}
    \begin{equation*}
        \int_{-r}^{0}\frac{dz}{(z-p)^2\sqrt{q(z)}}=\frac{1}{2q(p)}\left\lbrace\int_{-r}^{0}\frac{(z-p)dz}{\sqrt{q(z)}}-q'(p)\int_{-r}^{0}\frac{dz}{(z-p)\sqrt{q(z)}}\right\rbrace
    \end{equation*}
    So we have by Lemma \ref{lema2}
    \begin{align*}
    \begin{split}
        \int_{-r}^{0}\frac{dz}{(z-p)^2\sqrt{q(z)}}+\frac{q'(p)}{2q(p)}\int_{-r}^{0}\frac{dz}{(z-p)\sqrt{q(z)}}&=\frac{1}{2q(p)}\int_{-r}^{0}\frac{(z-p)dz}{\sqrt{q(z)}}\\&=-\frac{1}{q(p)}\left(\frac{r}{1+r^2}\right)^{\frac{3}{2}}\frac{1}{k_{\alpha_2}}\frac{dF(k_{\alpha_2})}{dk_{\alpha_2}}-\frac{p}{q(p)}\left(\frac{r}{1+r^2}\right)^{\frac{1}{2}}F(k_{\alpha_2})
    \end{split}
    \end{align*}
\end{proof}

\begin{prop}[{Useful for $\alpha_2$}]\label{prop4}
    \begin{equation*}
        \int_{\frac{1}{r}}^{+\infty}\frac{dz}{\left(z-a\right)^2\sqrt{q(z)}}+\frac{q'\left(a\right)}{2q\left(a\right)}\int_{\frac{1}{r}}^{+\infty}\frac{dz}{\left(z-a\right)\sqrt{q(z)}}=-\frac{1}{q\left(a\right)}\left(\frac{r}{1+r^2}\right)^{\frac{3}{2}}\frac{1}{k_{\alpha_2}}\frac{dF(k_{\alpha_2})}{dk_{\alpha_2}}-\frac{a}{q\left(a\right)}\left(\frac{r}{1+r^2}\right)^{\frac{1}{2}}F(k_{\alpha_2})
    \end{equation*}
\end{prop}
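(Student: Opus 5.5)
We mirror the proof of Proposition \ref{prop2}, now on the interval $[\frac{1}{r},+\infty)$ where $q>0$. The key is to apply \cite[Eq. 230.02]{handbook} with $a_0=+1$, $(r_1,r_2,r_3)=(r,0,-\frac{1}{r})$, $m=2$, pole $p=a$, and integration limits $y_1=\frac{1}{r}$ and a large finite upper limit $y$. We then let $y\to+\infty$, because $a$ lies on the integration path of item III of Lemma \ref{lema7} (it is excluded in Proposition \ref{prop3}); this is why $\alpha_2$ is deformed to the path of item IV for this pole. Since $\sqrt{q}$ vanishes at $\frac{1}{r}$, the reduction formula will produce a boundary term only at the finite upper endpoint $y$.

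\textbf{Reduction formula.} With these choices the formula should give, using $q'(z)=3z^2+2(r-\frac1r)z-1$,
\begin{equation*}
\int_{\frac1r}^{y}\frac{dz}{(z-a)^2\sqrt{q(z)}}+\frac{q'(a)}{2q(a)}\int_{\frac1r}^{y}\frac{dz}{(z-a)\sqrt{q(z)}}=\frac{\pm\sqrt{q(y)}}{q(a)(y-a)}+\frac{1}{2q(a)}\int_{\frac1r}^{y}\frac{(z-a)\,dz}{\sqrt{q(z)}} .
\end{equation*}
The integral $\int (z-a)/\sqrt{q}$ diverges at infinity, so it cannot be evaluated directly. As in Proposition \ref{prop2}, we rewrite $\int_{\frac1r}^{y} z\,dz/\sqrt{q(z)}$ using \cite[Eq. 230.01]{handbook} with $m=1$ and pole at the root $0$. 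This expresses it as a boundary term $\mp 2\sqrt{q(y)}/y$ plus $-\int_{\frac1r}^{y}dz/(z\sqrt{q(z)})$, with a sign to be checked.

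\textbf{Limit $y\to+\infty$.} The two boundary terms combine to
\begin{equation*}
\pm\frac{a\sqrt{q(y)}}{y(y-a)}=O(y^{-1/2})\to 0,
\end{equation*}
exactly as in the computation for Proposition \ref{prop2}. What survives is
\begin{equation*}
-\frac{1}{2q(a)}\int_{\frac1r}^{\infty}\frac{dz}{z\sqrt{q(z)}}-\frac{a}{2q(a)}\int_{\frac1r}^{\infty}\frac{dz}{\sqrt{q(z)}} .
\end{equation*}
Substituting Lemma \ref{lema4} I and II gives
\begin{equation*}
-\frac{1}{q(a)}\Big(\frac{r}{1+r^2}\Big)^{3/2}\frac{1}{k_{\alpha_2}}\frac{dF(k_{\alpha_2})}{dk_{\alpha_2}}-\frac{a}{q(a)}\Big(\frac{r}{1+r^2}\Big)^{1/2}F(k_{\alpha_2}),
\end{equation*}
which is the claimed identity.

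\textbf{Main obstacle.} The delicate step is bookkeeping signs. In the handbook formulas with $a_0=+1$ on an interval to the right of all roots, one must check the sign convention of $\sqrt{q}$ and the orientation of the boundary terms. A cross-check is available: the change of variable $z\mapsto-\frac1z$ maps $[\frac1r,\infty)$ to $[-r,0)$ with $q(-\frac1z)=q(z)/z^4$, as in Lemma \ref{lema3}. This turns the left side into an expression comparable to Proposition \ref{prop3} at the point $-\frac1a$, which can be used to confirm the final signs.
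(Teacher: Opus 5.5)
Your proposal is correct and is essentially the paper's proof: Eq.~230.02 of the handbook with $(a_0,r_1,r_2,r_3,m,y_1)=(+1,r,0,-\frac{1}{r},2,\frac{1}{r})$ up to a finite upper limit $y$, then Eq.~230.01 to rewrite $\int_{1/r}^{y} z\,dz/\sqrt{q}$ as $2\sqrt{q(y)}/y-\int_{1/r}^{y}dz/(z\sqrt{q})$, the limit $y\to+\infty$, and Lemma \ref{lema4}. The signs you left open are $-\sqrt{q(y)}/\bigl(q(a)(y-a)\bigr)$ and $+2\sqrt{q(y)}/y$, which you can confirm by differentiating $\sqrt{q}/(z-a)$ and $\sqrt{q}/z$; the boundary terms therefore combine to $-a\sqrt{q(y)}/\bigl(q(a)\,y(y-a)\bigr)\to 0$, and the opposite signs are in any case forced, since each boundary term alone grows like $y^{1/2}$ while every other quantity converges.
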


\begin{proof}
    From \cite[Eq. 230.02]{handbook} with the choices 
    \begin{equation*}
        \left(a_0,r_1,r_2,r_3,m,y_1,y\right)=\left(+1,r,0,-\frac{1}{r},2,\frac{1}{r},y\right),\quad p=a_t
    \end{equation*}

    \begin{equation*}
        \int_{\frac{1}{r}}^{+\infty}\frac{dz}{\left(z-a\right)^2\sqrt{q(z)}}=\frac{1}{2q\left(a\right)}\left\lbrace -\frac{2\sqrt{q(y)}}{y-a}+\int_{\frac{1}{r}}^{+\infty}\frac{\left(z-a\right)dz}{\sqrt{q(z)}}-q'\left(a\right)\int_{\frac{1}{r}}^{+\infty}\frac{dz}{\left(z-a\right)\sqrt{q(z)}} \right\rbrace
    \end{equation*}
    \begin{equation*}
        \int_{\frac{1}{r}}^{y}\frac{dz}{\left(z-a\right)^2\sqrt{q(z)}}+\frac{q'\left(a\right)}{2q\left(a\right)}\int_{\frac{1}{r}}^{y}\frac{dz}{\left(z-a\right)\sqrt{q(z)}}=-\frac{\sqrt{q(y)}}{q\left(a\right)\left(y-a\right)}+\frac{1}{2q\left(a\right)}\int_{\frac{1}{r}}^{y}\frac{\left(z-a\right)dz}{\sqrt{q(z)}}
    \end{equation*}

    From \cite[Eq. 230.01]{handbook} with the choices
    \begin{equation*}
        (a_0,r_1,r_2,r_3,m,y_1,y)=\left(+1,r,0,-\frac{1}{r},1,\frac{1}{r},y\right)
    \end{equation*}
    \begin{equation*}
        \int_{\frac{1}{r}}^{y}\frac{zdz}{\sqrt{q(z)}}=\frac{2\sqrt{q(y)}}{y}-\int_{\frac{1}{r}}^{y}\frac{dz}{z\sqrt{q(z)}}
    \end{equation*}
    \begin{align*}
        \begin{split}
            &\int_{\frac{1}{r}}^{y}\frac{dz}{\left(z-a\right)^2\sqrt{q(z)}}+\frac{q'\left(a\right)}{2q\left(a\right)}\int_{\frac{1}{r}}^{y}\frac{dz}{\left(z-a\right)\sqrt{q(z)}}\\&=\frac{-\sqrt{q(y)}}{q\left(a\right)\left(y-a\right)}+\frac{\sqrt{q(y)}}{q\left(a\right)y}-\frac{1}{2q\left(a\right)}\int_{\frac{1}{r}}^{y}\frac{dz}{z\sqrt{q(z)}}-\frac{a}{2q\left(a\right)}\int_{\frac{1}{r}}^{y}\frac{dz}{\sqrt{q(z)}}
        \end{split}
    \end{align*}
    Since
    \begin{equation*}
        \lim_{y\to +\infty}\frac{\sqrt{q(y)}}{y(y-a)}=0
    \end{equation*}
    we have by Lemma \ref{lema4}
    \begin{equation*}
        \int_{\frac{1}{r}}^{+\infty}\frac{dz}{\left(z-a\right)^2\sqrt{q(z)}}+\frac{q'\left(a\right)}{2q\left(a\right)}\int_{\frac{1}{r}}^{+\infty}\frac{dz}{\left(z-a\right)\sqrt{q(z)}}=-\frac{1}{q\left(a\right)}\left(\frac{r}{1+r^2}\right)^{\frac{3}{2}}\frac{1}{k_{\alpha_2}}\frac{dF(k_{\alpha_2})}{dk_{\alpha_2}}-\frac{a}{q\left(a\right)}\left(\frac{r}{1+r^2}\right)^{\frac{1}{2}}F(k_{\alpha_2})
    \end{equation*}
\end{proof}

\begin{lema}\label{lema5}
    For each $j=1,2,3$ there exists a polynomial $Q_j(z,t)$ with $\deg(Q_j)=7$ in the $z$ variable such that
    \begin{equation*}
        \Omega_j=\frac{Q_j(z,t)}{k_t^2(z)}\frac{dz}{w}
    \end{equation*}
\end{lema}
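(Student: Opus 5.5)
The plan is a direct computation. I will substitute the explicit one-forms of Lemma \ref{specialbasis} (with the indexing of Theorem \ref{Hchapelsymmetricbasis} and Definition \ref{Omegas}) and the formula \eqref{eq:deformapadegauss} for $\mathcal{G}_t$. In each case I will cancel the common factors and rewrite the result in the form $\frac{Q_j}{k_t^2}\frac{dz}{w}$. The only relation needed is $w^2=q(z)$ on $\overline{M}$, together with the explicit factorizations of $P_1,P_2,P_3$ from Definition \ref{specialpolynomials}.

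For $j=1,2$ I will use
\begin{equation*}
\frac{1}{\mathcal{G}_t}=\frac{(tz-1)(z+r)}{\sqrt{r}\,(z+t)\,w}.
\end{equation*}
Then
\begin{equation*}
\Omega_1=\eta^1_{SSS}=\frac{(z+t)(tz-1)p_1(z,t)}{\mathcal{G}_t k_t^2}dz=\frac{(tz-1)^2(z+r)p_1(z,t)}{\sqrt{r}\,k_t^2}\frac{dz}{w}.
\end{equation*}
So I take $Q_1=\frac{1}{\sqrt r}(tz-1)^2(z+r)p_1(z,t)$, which has $z$-degree $2+1+4=7$. The same cancellation gives $Q_2=\frac{1}{\sqrt r}(tz-1)^2(z+r)p_2(z,t)$, also of degree $7$. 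This agrees with the rewritten forms of $D^1_p,D^2_p$ in the remark after Definition \ref{Dconstants}.

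For $j=3$ I will multiply $\eta_3=\eta^1_{ASA}=\frac{(tz-1)^2(z+r)p_3}{k_t^2}dz$ by $\mathcal{G}_t$. This cancels one factor $(tz-1)$ and the factor $(z+r)$, leaving
\begin{equation*}
\sqrt r\,(z+t)(tz-1)p_3\,\frac{w\,dz}{k_t^2}.
\end{equation*}
Writing $w\,dz=q(z)\frac{dz}{w}$ gives $Q_3=\sqrt r\,(z+t)(tz-1)p_3(z,t)\,q(z)$, of degree $1+1+2+3=7$.

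The only delicate point is whether the degree is exactly $7$ or only at most $7$. The leading coefficients of $Q_1$, $Q_2$, $Q_3$ are, up to the nonzero factors $t^2/\sqrt r$, $t^2/\sqrt r$ and $t\sqrt r$, equal to $c_{34D}(t)$, $c_{1D}(t)$ and $c_{2D}(t)$ respectively.
\begin{itemize}
\item For $Q_3$ the leading coefficient is nonzero on $(0,1]$, since $c_{2D}<0$ by Lemma \ref{propertiesconstants}.
\item For $Q_2$ we have $c_{1D}(t)=-rt(3t^2-2rt+1)$. This vanishes at the smaller root of $3t^2-2rt+1$, which lies in $(0,1]$ because $r>2$.
\item For $Q_1$ I would check the sign of $c_{34D}$ with the $\Theta$-method of Appendix \ref{section::EstimatesPolynomials}.
\end{itemize}
So for $Q_2$ the honest statement is $\deg Q_j\le 7$, with equality for generic $t$. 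I would phrase the proof that way and note that the later use, the partial-fraction decomposition of $\frac{Q_j}{k_t^2}$ with $\deg k_t^2=8>7$, only needs $\deg Q_j\le 7$.
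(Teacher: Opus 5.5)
Your computation is the same as the paper's: cancel $(z+t)(tz-1)$ against $\mathcal{G}_t^{\pm1}$, and for $j=3$ rewrite $w\,dz=q(z)\,\frac{dz}{w}$. Your factor $1/\sqrt{r}$ in $Q_1,Q_2$ is correct; the paper's proof drops it, harmlessly. Your caveat about the exact degree is also right, and it applies to $Q_1$ as well, so the $\Theta$-check you propose for $c_{34D}$ would fail: $c_{34D}(t)=t\bigl(r(1-2r^2)+O(t)\bigr)<0$ for small $t>0$, while $c_{34D}(1)=4r(r^3-3r^2+4r-1)>0$, so $c_{34D}$ vanishes somewhere in $(0,1)$. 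Only $\deg_z Q_j\le 7<8=\deg k_t^2$ is actually used in Lemma \ref{lema6}.
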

\begin{proof}
    Fix $j=1,2$. We know that
    \begin{equation*}
        \Omega_j=\eta_j=\frac{1}{\cG_t}\frac{P_j}{k_t^2}dz
    \end{equation*}
    Recall that for $j=1,2$ $P_j=(tz-1)(z+t)p_j(z,t)$ where $\deg(p_j)=4$ in the $z$ variable. Then
    \begin{equation*}
        \forall j=1,2\quad \Omega_j=\frac{(tz-1)(z+r)}{\sqrt{r}(z+t)w}\frac{(tz-1)(z+t)p_j}{k_t^2}dz=\frac{Q_j}{k_t^2}\frac{dz}{w}
    \end{equation*}
    with 
    \begin{equation*}
        Q_j(z,t)=(tz-1)^2(z+r)p_j(z,t)\implies \deg(Q_j)=7,\ \text{in the $z$ variable}
    \end{equation*}
    Now consider $j=3$
    \begin{equation*}
        \Omega_3=\cG_t\eta_3=\cG_t\frac{P_3}{k_t^2}dz
    \end{equation*}
    Recall
    \begin{equation*}
        P_3(z,t)=(tz-1)^2(z+r)p_3(z,t),\ \text{with}\quad \deg(p_3)=2,\ \text{in the $z$ variable}
    \end{equation*}
    Then
    \begin{equation*}
        \Omega_3=\sqrt{r}\frac{(z+t)w}{(tz-1)(z+r)}\frac{(tz-1)^2(z+r)p_3}{k_t^2}dz=\frac{Q_3}{k_t^2}\frac{dz}{w}
    \end{equation*}
    with 
    \begin{equation*}
        Q_3(z,t)=\sqrt{r}(z+t)(tz-1)z\left(z-\frac{1}{r}\right)(z+r)p_3(z,t),\ \text{so}\quad \deg(Q_3)=7,\ \text{in the $z$ variable}.
    \end{equation*}
\end{proof}

\begin{lema}\label{lema6}
    For all $j=1,2,3$ we have the partial fraction decomposition
    \begin{equation*}
        \frac{Q_j(z,t)}{k_t^2(z)}=\sum_{p\in S_t}\frac{D_p^j}{(z-p)^2}+\frac{E_p^j}{(z-p)},\quad E_p^j\coloneq \frac{D_p^j}{2}\frac{q'(p)}{q(p)}
    \end{equation*}
\end{lema}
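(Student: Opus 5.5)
The decomposition will follow from two facts: the rational function $Q_j(z,t)/k_t^2(z)$ is proper with double poles only at the simple roots of $k_t$, and the residue problem has already been solved for $\eta_1,\eta_2,\eta_3$ in Theorem \ref{Hchapelsymmetricbasis}.

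\textbf{Step 1 (shape of the decomposition).} By Lemma \ref{lema5}, $Q_j(\cdot,t)$ has degree $7$ in $z$, while $k_t^2$ has degree $8$. Moreover, by Lemma \ref{lem:defkt}, $k_t$ has the four simple roots $S_t=\{a_t,b_t,-\frac{1}{a_t},-\frac{1}{b_t}\}$. Hence $Q_j/k_t^2$ is a proper rational function with no polynomial part, and it decomposes as
\begin{equation*}
\frac{Q_j}{k_t^2}=\sum_{p\in S_t}\frac{\tilde D_p}{(z-p)^2}+\frac{\tilde E_p}{z-p}.
\end{equation*}
Write $k_t=(z-p)k_p$. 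Then the coefficients are
\begin{equation*}
\tilde D_p=\frac{Q_j(p,t)}{k_p^2(p,t)},\qquad \tilde E_p=\frac{d}{dz}\Big|_{z=p}\frac{Q_j}{k_p^2}.
\end{equation*}
Comparing with the explicit forms of $Q_j$ in the proof of Lemma \ref{lema5} and with the remark after Definition \ref{Dconstants}, we get $\tilde D_p=D_p^j$. For $j=1,2$ this reads $Q_j=(tz-1)^2(z+r)p_j/\sqrt r$ up to the stated normalization, and for $j=3$ the factor $q(p)$ appears through $z(z-\frac1r)(z+r)$. Getting this match is only a bookkeeping check of constants.

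\textbf{Step 2 (identify $\tilde E_p$ via residues).} This is the main step. By Theorem \ref{Hchapelsymmetricbasis}, $\eta_1,\eta_2,\eta_3$ lie in $\hat H(\cG_t)$, so $\operatorname{Res}_{B}\eta_j=0$ at every branch point $B$ of $\cG_t$. Since $\cG_t'(B)=0$, the expansion used in the proof of Theorem \ref{residuematrix} gives $\operatorname{Res}_B\cG_t\eta_3=\cG_t(B)\operatorname{Res}_B\eta_3=0$. Therefore $\operatorname{Res}_B\Omega_j=0$ for $j=1,2,3$ and every branch point $B$.

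By Lemma \ref{BranchpointsDeformationmap}, the branch points lie over the points of $S_t$, which avoid $\{0,-r,\frac1r\}$. Over each $p\in S_t$ we may therefore use the chart \eqref{chartgenericpoints}, with local coordinate $z$ and $w=\sqrt{q(z)}$ holomorphic and nonvanishing. Expanding gives
\begin{equation*}
\frac{1}{w}=\frac{1}{w(p)}\Big(1-\frac{q'(p)}{2q(p)}(z-p)+O((z-p)^2)\Big).
\end{equation*}
Multiplying by the principal part $\frac{\tilde D_p}{(z-p)^2}+\frac{\tilde E_p}{z-p}$, the residue of $\Omega_j=\frac{Q_j}{k_t^2}\frac{dz}{w}$ at $B_p$ is
\begin{equation*}
\frac{1}{w(p)}\Big(\tilde E_p-\tilde D_p\frac{q'(p)}{2q(p)}\Big).
\end{equation*}
Setting this equal to zero gives $\tilde E_p=\frac{D_p^j}{2}\frac{q'(p)}{q(p)}=E_p^j$, which is the claimed decomposition.

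The only delicate point is confirming that the residue-vanishing of Theorem \ref{Hchapelsymmetricbasis} is available for all three forms, including $\Omega_3=\cG_t\eta_3$. The transfer argument through $\cG_t'(B)=0$ handles this case. The rest is a direct Laurent expansion.
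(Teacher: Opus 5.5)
Your proof is correct and follows the paper's argument. The paper also takes the proper partial fraction decomposition and compares the Laurent expansion of $\Omega_j=\frac{Q_j}{k_t^2}\frac{dz}{w}$ at $B_p$ (expanding $1/w$) with the expansion of $\frac{1}{\cG_t}\frac{P_j}{k_t^2}dz$ (resp.\ $\cG_t\frac{P_3}{k_t^2}dz$); the latter has no $(z-p)^{-1}$ term because $\operatorname{Res}_{z=p}P_j/k_t^2=0$ and $d\cG_t(B_p)=0$, which is exactly your residue-vanishing input. The only cosmetic difference is that you read off $\tilde D_p=Q_j(p,t)/k_p^2(p,t)$ directly instead of matching $(z-p)^{-2}$ coefficients; for $j=1,2$ this equals $D_p^j$ only with $Q_j=\frac{1}{\sqrt r}(tz-1)^2(z+r)p_j$, i.e.\ restoring the factor $1/\sqrt r$ dropped in the proof of Lemma \ref{lema5}, which you correctly flagged as normalization.
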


\begin{proof}
    Notice that by Lemma \ref{lema5}, $Q_j(z,t)$ is a polynomial with degree $7$ in $z$ which is strictly less than the degree of $k_t^2(z)$ in $z$. Then by the well-known theorem of partial fraction decomposition we have that
    \begin{equation*}
        \frac{Q_j(z,t)}{k_t^2(z)}=\sum_{p\in S_t}\frac{\Tilde{D}_p^j}{(z-p)^2}+\frac{E_p^j}{(z-p)}
    \end{equation*}
    We now Taylor expand $\Omega_j=\frac{Q_j}{k_t^2}\frac{dz}{w}$ about the branch point $B_p$ in two different equivalent ways. On one hand for all $j=1,2,3$
    \begin{align}\label{firstway}
        \begin{split}
            \frac{Q_j}{k_t^2}\frac{dz}{w}&=\left[\frac{\Tilde{D}_p^j}{(z-p)^2}+\frac{E_p^j}{(z-p)}+\text{holomorphic}\right]\left[\frac{1}{w(B_p)}-\frac{w'(B_p)}{w^2(B_p)}(z-p)+O\left((z-p)^2\right)\right]dz\\&=\left[\frac{\left(\frac{\Tilde{D}_p^j}{w(B_p)}\right)}{(z-p)^2}+\frac{\left(-\frac{\Tilde{D}_p^jw'(B_p)}{\omega^2(B_p)}+\frac{E_p^j}{\omega(B_p)}\right)}{(z-p)}+\text{holomorphic}\right]dz
        \end{split}
    \end{align}
    On the other hand we have
    \begin{itemize}
        \item[I.] $j=1,2$. In this case
        \begin{equation}\label{secondway}
            \Omega_j=\frac{1}{\cG_t}\frac{P_j}{k_t^2}dz=\left[\frac{\left(\frac{1}{\cG_t(B_p)}\frac{P_j(p)}{K_p^2(p)}\right)}{(z-p)^2}+\text{holomorphic}\right]dz
        \end{equation}
        Comparing equations \eqref{firstway}, \eqref{secondway} and using Definition \ref{Dconstants} we obtain
        \begin{equation*}
            \Tilde{D}_p^j=\frac{w(B_p)}{\cG_t(B_p)}\frac{P_j(p)}{K_p^2(p)}=D_p^j,\quad E_p^j=\Tilde{D}_p^j\frac{w'(B_p)}{w(B_p)}=D_p^j\frac{w'(B_p)}{w(B_p)}
        \end{equation*}
        We know that $w^2=q(z)$, then $2w'(B_p)w(B_p)=q'(p)$, so
        \begin{equation*}
            E_p^j=D_p^j\frac{w'(B_p)}{w(B_p)}=\frac{D_p^j}{2}\frac{q'(p)}{w^2(B_p)}=\frac{D_p^j}{2}\frac{q'(p)}{q(p)}
        \end{equation*}
        \item[II.] $j=3.$ In this case we have
        \begin{equation}\label{secondwayj3}
            \Omega_3=\frac{\cG_tP_3}{k_t^2}dz=\left[\frac{\left(\frac{\cG_t(B_p)P_3(p)}{K_p^2(p)}\right)}{(z-p)^2}+\text{holomorphic}\right]dz
        \end{equation}
        Comparing equations \eqref{firstway}, \eqref{secondwayj3} and using Definition \ref{Dconstants} we obtain
        \begin{equation*}
            \Tilde{D}_p^3=\cG_t(B_p)w(B_p)\frac{P_3(p)}{K_p^2(p)}=D_p^3,\quad E_p^3=\Tilde{D}_p^3\frac{w'(B_p)}{w(B_p)}=\frac{D_p^3}{2}\frac{q'(p)}{q(p)}
        \end{equation*}
    \end{itemize}
\end{proof}

We are ready to prove the main results
\begin{proof}[{Proof of Theorem \ref{theorem1}}]
  From Lemma \ref{lema6} we have
  \begin{equation*}
      \int_{\alpha_1}\Omega_j=\int_{\alpha_1}\left[\frac{D_{-\frac{1}{b}}^j}{\left(z+\frac{1}{b_t}\right)^2}+\frac{E_{-\frac{1}{b}}^j}{\left(z+\frac{1}{b_t}\right)}\right]\frac{dz}{w}+\sum_{p\in S_t\setminus \left\lbrace-\frac{1}{b}\right\rbrace}\int_{\alpha_1}\left[\frac{D_p^j}{(z-p)^2}+\frac{E_p^j}{(z-p)}\right]\frac{dz}{w}
  \end{equation*}

Using Lemma \ref{lema7} items $I$ and $II$ we obtain
\begin{align*}
    \begin{split}
        \int_{\alpha_1}\Omega_j&=2iD_{-\frac{1}{b}}^j\int_{-\infty}^{-r}\frac{dz}{\left(z+\frac{1}{b_t}\right)^2\sqrt{-q(z)}}+2iE_{-\frac{1}{b}}^j\int_{-\infty}^{-r}\frac{dz}{\left(z+\frac{1}{b_t}\right)\sqrt{-q(z)}}\\&+\sum_{p\in S_t\setminus\left\lbrace -\frac{1}{b_t} \right\rbrace}2iD_{p}^j\int_{0}^{\frac{1}{r}}\frac{dz}{\left(z-p\right)^2\sqrt{-q(z)}}+2iE_{p}^j\int_{0}^{\frac{1}{r}}\frac{dz}{\left(z-p\right)\sqrt{-q(z)}}
    \end{split}
\end{align*}
By Lemma \ref{lema6} we can rewrite the integral as
\begin{align*}
    \begin{split}
        \int_{\alpha_1}\Omega_j&=2iD_{-\frac{1}{b}}^j\left[\int_{-\infty}^{-r}\frac{dz}{\left(z+\frac{1}{b_t}\right)^2\sqrt{-q(z)}}+\frac{q'\left(-\frac{1}{b_t}\right)}{2q\left(-\frac{1}{b_t}\right)}\int_{-\infty}^{-r}\frac{dz}{\left(z+\frac{1}{b_t}\right)\sqrt{-q(z)}}\right]\\&+\sum_{p\in S_t\setminus\left\lbrace -\frac{1}{b_t} \right\rbrace}2iD_{p}^j\left[\int_{0}^{\frac{1}{r}}\frac{dz}{\left(z-p\right)^2\sqrt{-q(z)}}+\frac{q'(p)}{2q(p)}\int_{0}^{\frac{1}{r}}\frac{dz}{\left(z-p\right)\sqrt{-q(z)}}\right]
    \end{split}
\end{align*}
Finally, using Propositions \ref{prop1} and \ref{prop2}
\begin{align*}
    \begin{split}
        \int_{\alpha_1}\Omega_j&=2iD_{-\frac{1}{b}}^j\left[\frac{1}{q\left(-\frac{1}{b}\right)}\left(\frac{r}{1+r^2}\right)^{\frac{3}{2}}\frac{1}{k_{\alpha_1}}\frac{dF(k_{\alpha_1})}{dk_{\alpha_1}}-\frac{\left(-\frac{1}{b}\right)}{q\left(-\frac{1}{b}\right)}\left(\frac{r}{1+r^2}\right)^{\frac{1}{2}}F(k_{\alpha_1})\right]\\&+\sum_{p\in S_t\setminus\left\lbrace -\frac{1}{b_t} \right\rbrace}2iD_{p}^j\left[\frac{1}{q(p)}\left(\frac{r}{1+r^2}\right)^{\frac{3}{2}}\frac{1}{k_{\alpha_1}}\frac{dF(k_{\alpha_1})}{dk_{\alpha_1}}-\frac{p}{q(p)}\left(\frac{r}{1+r^2}\right)^{\frac{1}{2}}F(k_{\alpha_1})\right]\\&=2i\left(\frac{r}{1+r^2}\right)^{\frac{3}{2}}\frac{1}{k_{\alpha_1}}\frac{dF(k_{\alpha_1})}{dk_{\alpha_1}}\sum_{p\in S_t} \left(\frac{D_{p}^j}{q(p)}\right)-2i\left(\frac{r}{1+r^2}\right)^{\frac{1}{2}}F(k_{\alpha_1})\left(\sum_{p\in S_t}\frac{pD_p^j}{q(p)}\right)
    \end{split}
\end{align*}
where the conclusion follows from an application of Definition \ref{gammaconstants}.
\end{proof}

In exactly the same analogous way we can prove the corresponding result for integration on $\alpha_2$:
\begin{proof}[{Proof of Theorem \ref{theorem2}}]
     From Lemma \ref{lema6} we have
  \begin{equation*}
      \int_{\alpha_2}\Omega_j=\int_{\alpha_2}\left[\frac{D_{a}^j}{\left(z-a_t\right)^2}+\frac{E_{a}^j}{\left(z-a_t\right)}\right]\frac{dz}{w}+\sum_{p\in S_t\setminus \left\lbrace a \right\rbrace}\int_{\alpha_2}\left[\frac{D_p^j}{(z-p)^2}+\frac{E_p^j}{(z-p)}\right]\frac{dz}{w}
  \end{equation*}

Using Lemma \ref{lema7} items $III$ and $IV$ we obtain
\begin{align*}
    \begin{split}
        \int_{\alpha_2}\Omega_j&=2D_{a}^j\int_{\frac{1}{r}}^{\infty}\frac{dz}{\left(z-a_t\right)^2\sqrt{q(z)}}+2E_{a}^j\int_{\frac{1}{r}}^{\infty}\frac{dz}{\left(z-a_t\right)\sqrt{q(z)}}\\&+\sum_{p\in S_t\setminus\left\lbrace a_t \right\rbrace}2D_{p}^j\int_{-r}^{0}\frac{dz}{\left(z-p\right)^2\sqrt{q(z)}}+2E_{p}^j\int_{-r}^{\infty}\frac{dz}{\left(z-p\right)\sqrt{q(z)}}.
    \end{split}
\end{align*}
By Lemma \ref{lema6} we can rewrite the integral as
\begin{align*}
    \begin{split}
        \int_{\alpha_2}\Omega_j&=2D_{a}^j\left[\int_{\frac{1}{r}}^{\infty}\frac{dz}{\left(z-a_t\right)^2\sqrt{q(z)}}+\frac{q'\left(a\right)}{2q\left(a\right)}\int_{\frac{1}{r}}^{\infty}\frac{dz}{\left(z-a_t\right)\sqrt{q(z)}}\right]\\&+\sum_{p\in S_t\setminus\left\lbrace a_t \right\rbrace}2D_{p}^j\left[\int_{-r}^{0}\frac{dz}{\left(z-p\right)^2\sqrt{q(z)}}+\frac{q'(p)}{2q(p)}\int_{-r}^{0}\frac{dz}{\left(z-p\right)\sqrt{q(z)}}\right].
    \end{split}
\end{align*}
Finally, using Propositions \ref{prop3} and \ref{prop4}
\begin{align*}
    \begin{split}
        \int_{\alpha_2}\Omega_j&=2D_{a}^j\left[\frac{1}{q\left(a\right)}\left(\frac{r}{1+r^2}\right)^{\frac{3}{2}}\frac{1}{k_{\alpha_2}}\frac{dF(k_{\alpha_2})}{dk_{\alpha_2}}-\frac{a}{q\left(a\right)}\left(\frac{r}{1+r^2}\right)^{\frac{1}{2}}F(k_{\alpha_2})\right]\\&+\sum_{p\in S_t\setminus\left\lbrace a_t \right\rbrace}2D_{p}^j\left[\frac{1}{q(p)}\left(\frac{r}{1+r^2}\right)^{\frac{3}{2}}\frac{1}{k_{\alpha_2}}\frac{dF(k_{\alpha_2})}{dk_{\alpha_2}}-\frac{p}{q(p)}\left(\frac{r}{1+r^2}\right)^{\frac{1}{2}}F(k_{\alpha_2})\right]\\&=-2\left(\frac{r}{1+r^2}\right)^{\frac{3}{2}}\frac{1}{k_{\alpha_2}}\frac{dF(k_{\alpha_2})}{dk_{\alpha_2}}\sum_{p\in S_t} \left(\frac{D_{p}^j}{q(p)}\right)-2\left(\frac{r}{1+r^2}\right)^{\frac{1}{2}}F(k_{\alpha_2})\left(\sum_{p\in S_t}\frac{pD_p^j}{q(p)}\right),
    \end{split}
\end{align*}
where the conclusion follows from an application of Definition \ref{gammaconstants}.
\end{proof}

\section{Explicit estimates}\label{section::Explicit estimates}

In this appendix we present explicit estimates that are relevant to our work. We have divided the appendix in two subsections. In Section \ref{est-elliptic-integrals} we present estimates for some elliptic integrals. The Section \ref{subsection::Explicit estimates for the positions of the branch points} contains explicit estimates for the positions of the branch points of the meromorphic maps $\mathcal{G}_t$ defined in \eqref{eq:deformapadegauss}.

\subsection{Elliptic integrals} \label{est-elliptic-integrals}
We now explain how to obtain an estimate for the conformal parameter $r$ of the López minimal Klein bottle.
\begin{lema}\label{rLopez}
  The conformal parameter $r$ of the López minimal Klein bottle is the reciprocal of the unique zero of the real valued function $h:[0,+\infty)\to \bR$ defined by
    \begin{equation}\label{hfunction}
        h(t)\coloneq t^2F\left(\frac{1}{\sqrt{1+t^2}}\right)-(t^2+3t-1)E\left(\frac{1}{\sqrt{1+t^2}}\right).
    \end{equation}
\end{lema}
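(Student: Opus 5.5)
We prove Lemma \ref{rLopez} by writing the period condition of the López data in closed form. By construction, $r$ is the unique $r>2$ for which the Weierstrass data $(\cG,\eta)$ on $M_r$ has no real periods. The plan is to write the three components of $\Phi=\big(\tfrac12(1-\cG^2),\tfrac{i}{2}(1+\cG^2),\cG\big)\eta$ explicitly and reduce them to elliptic integrals along the cycles of Lemma \ref{lema7}. The resulting period equation should then become $h(1/r)=0$ after substituting $s=1/r$.

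\textbf{Step 1: reduce to one real equation.} Using \eqref{eq:deformapadegauss} with $t=1$, we have $\cG\eta=i\sqrt r\,\frac{(z+1)(z-1)}{z^2}\,dz$. This form is exact up to the residue at $P_0$, and since $\operatorname{div}(z)=2P_0-2P_\infty$ the residue in the local coordinate vanishes. Hence the third coordinate has no periods.

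For the other two coordinates, $\eta$ and $\cG^2\eta=i r\frac{(z+1)^2(z+r)w}{z^2(z+r)^2}\cdots dz$ must be written as combinations of $dz/w$, $z\,dz/w$, $dz/(z\,w)$ and exact forms. Here $\cG^2\eta=ir\frac{(z+1)^2 q(z)}{z^2 (z+r)w}\,dz$ simplifies via $q=z(z-\frac1r)(z+r)$ to $ir\frac{(z+1)^2(z-\frac1r)}{z\,w}dz$.

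Next we apply the symmetries $A_1,A_2,\tau$ together with Lemma \ref{periodlema1}, exactly as in Lemmas \ref{periodlema2}--\ref{periodlema4}. This should show that all real periods vanish automatically except one real combination along a single cycle, say $\alpha_1$. That leaves one scalar equation in $r$.

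\textbf{Step 2: evaluate the remaining period.} Reduce the remaining integrand modulo exact forms to $c_0\,dz/w+c_1\,z\,dz/w$. A term $dz/(zw)$ can be rewritten through the involution $z\mapsto-1/z$, as in Lemma \ref{lema3}. Deform $\alpha_1$ to the segment $[0,1/r]$ as in Lemma \ref{lema7}. Lemma \ref{lema1} then expresses the period through $F(k_{\alpha_1})$ and $\frac{1}{k_{\alpha_1}}\frac{dF}{dk_{\alpha_1}}$, and Lemma \ref{lema0} turns the latter into $E(k_{\alpha_1})$ and $F(k_{\alpha_1})$. The coefficients $c_0,c_1$ are polynomials in $r$.

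Multiplying through by a suitable power of $r$ and substituting $s=1/r$, so that $k_{\alpha_1}=1/\sqrt{1+r^2}=s/\sqrt{1+s^2}$, should yield an equation of the form $s^2F-(s^2+3s-1)E=0$. The modulus must be matched carefully. If necessary we pass to the complementary modulus: $k_{\alpha_2}=1/\sqrt{1+s^2}$ is exactly the argument of $h$, so we would deform to the $\alpha_2$ cycle via Lemmas \ref{lema2} and \ref{lema4}.

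\textbf{Step 3: uniqueness of the zero.} At $s=0$ we get $h(0)=E(1)>0$. As $s\to\infty$ the modulus tends to $0$ and $h(s)\sim s^2\frac{\pi}{2}-(s^2+3s)\frac{\pi}{2}<0$, so a zero exists. For uniqueness, write $h(s)=E\cdot\big(s^2(F/E-1)-3s+1\big)$. The ratio $F/E$ decreases in $s$, because the modulus $1/\sqrt{1+s^2}$ decreases and Remark \ref{monotonicityellipticintegrals} applies. We would then argue that $\phi(s)=s^2(F/E-1)-3s+1$ is strictly decreasing, perhaps by a direct derivative estimate using Lemma \ref{lema0}.

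\textbf{Main obstacle.} The hardest step is Step 2. It requires exact bookkeeping of the constants and signs from the sheet choices in Lemma \ref{lema7}, together with the correct identification of which modulus appears. Without that, the polynomial $s^2+3s-1$ will not come out. The monotonicity argument in Step 3 is the secondary difficulty. If it resists, we would fall back on López's existence and uniqueness of $r$ and only verify that $h(1/r)=0$.
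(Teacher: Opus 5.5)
Your route differs from the paper's. The paper never touches the Weierstrass data. It quotes López's period function
\[
f(t)=\int_{-1}^0\bigl[(t^2+3t-1)u+t^2(2-3t)\bigr]\frac{du}{\sqrt{u(u+1)(u-t^2)}},
\]
for which López proved a unique zero $t_*$ on $[0,+\infty)$ with $r=1/t_*$. It then checks $f(t)=2\sqrt{1+t^2}\,h(t)$ from two handbook integrals, so existence, uniqueness and $r=1/t_*$ are all inherited.

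Your Steps 1--2 instead rederive the period equation. They do close, but not along the cycle you guess. Using $d(w)$, $d(w/z)$ and $d(w/z^2)$, one gets, modulo exact forms, $\frac{dz}{zw}\equiv-\frac{z\,dz}{w}$. Also $\frac{z^2\,dz}{w}$ and $\frac{dz}{z^2w}$ are both $\equiv\frac13\frac{dz}{w}-\frac23\bigl(r-\frac1r\bigr)\frac{z\,dz}{w}$. Hence
\[
\eta\equiv\cG^2\eta\equiv\tfrac{2i}{3}\Big[(1+3r-r^2)\tfrac{z\,dz}{w}+(2r-3)\tfrac{dz}{w}\Big].
\]
By Lemma~\ref{lema7}, the $\alpha_1$-integrals of $dz/w$ and $z\,dz/w$ are purely imaginary. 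So $\int_{\alpha_1}\eta=\int_{\alpha_1}\cG^2\eta\in\bR$, and the $\alpha_1$ condition $\int\eta=\overline{\int\cG^2\eta}$ holds automatically. This also follows from $\tau_*\alpha_1=-\alpha_1$ and $\tau^*\eta=-\overline{\cG^2\eta}$.

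Along $\alpha_2$ these integrals are real, and the only condition is $\int_{\alpha_2}\eta=0$. Lemmas~\ref{lema2} and~\ref{lema0} turn this into $F(k_{\alpha_2})=(1+3r-r^2)\,E(k_{\alpha_2})$. Since $k_{\alpha_2}=1/\sqrt{1+s^2}$ with $s=1/r$, this is exactly $h(1/r)=0$, and no change of modulus is needed. Substituting $z=ru$ and $t=1/r$, this condition is literally López's $f(t)=0$, so you are re-deriving his function.

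The genuine gap is the uniqueness half of the statement. In Step~3, the fact that $F/E$ decreases in $s$ does not make $\phi(s)=s^2(F/E-1)-3s+1$ decreasing, because $s^2$ increases. You only get $\phi'(s)<2s(F/E-1)-3$. You would still need a quantitative bound such as $s(F-E)/E<3/2$ for all $s>0$. That requires controlling the logarithmic blow-up of $F$ as the modulus tends to $1$; the same blow-up means $h(0)$ is only defined as a limit. The bound is true, but you have not supplied it.

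Your fallback ("verify only $h(1/r)=0$") does not yield uniqueness of the zero of $h$ either. You would need two things:
\begin{enumerate}
\item the equivalence ``period-free $\Leftrightarrow h(1/r)=0$'' for every $r>0$, which your computation does give, since it never uses the specific value of $r$;
\item López's uniqueness over the whole parameter range $t=1/r\in[0,\infty)$.
\end{enumerate}
Uniqueness only among $r>2$, the formulation used when the López data are introduced, leaves possible zeros of $h$ on $[1/2,\infty)$ unexcluded. Once you invoke López's result about $f$ on $[0,+\infty)$, the shortest path is the paper's: the identity $f=2\sqrt{1+t^2}\,h$ transfers existence, uniqueness and the value of $r$ at once. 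Your Weierstrass-data computation then becomes an optional consistency check.
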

\begin{proof}
   In the proof of \cite[Existence Theorem]{LopezConstruction}, F. López introduced the real function $f:[0,+\infty)\to \bR$ defined by
   \begin{equation*}
    f(t)=\int_{-1}^0\left[(t^2+3t-1)u+t^2(2-3t)\right]\frac{du}{\sqrt{u(u+1)(u-t^2)}}
   \end{equation*}
   and he proved \cite[a), b),c), d)]{LopezConstruction} that there exists a unique $t_{*}\in (0,\frac{1}{2})$ in $[0,+\infty)$ such that $f(t_*)=0$. Moreover Lopez proves \cite[Eq. 5]{LopezConstruction} that $r=\frac{1}{t_*}$ is the unique value of the conformal parameter which solves the period problem for the Lopez minimal Klein bottle. We now prove that 
   \begin{equation}\label{identity}
       f(t)=2\sqrt{1+t^2}h(t).
   \end{equation}
In fact, we can rewrite
\begin{equation}\label{rewriting}
    f(t)=(t^2+3t-1)\int_{-1}^0\frac{udu}{\sqrt{u(u+1)(u-t^2)}}+t^2(2-3t)\int_{-1}^0\frac{du}{\sqrt{u(u+1)(u-t^2)}}.
\end{equation}
From \cite[Eq. 234.00]{handbook} with the choice $(c,y,b,a)=(-1,-1,0,t^2)$ we have
\begin{equation}\label{firstintegral}
    \int_{-1}^0\frac{du}{\sqrt{u(u+1)(u-t^2)}}=\frac{2}{\sqrt{1+t^2}}F\left(\frac{1}{\sqrt{1+t^2}}\right).
\end{equation}
From \cite[Eq. 234.07]{handbook} and \cite[Eq. 318.02]{handbook} with the choice $(c,y,b,a)=(-1,-1,0,t^2)$ we have
\begin{equation}\label{secondintegral}
    \int_{-1}^0\frac{udu}{\sqrt{u(u+1)(u-t^2)}}=-\int_{-1}^0\frac{\sqrt{-u}du}{\sqrt{(u+1)(t^2-u)}}=-2\sqrt{1+t^2}\left[E\left(\frac{1}{\sqrt{1+t^2}}\right)-\frac{t^2}{1+t^2}F\left(\frac{1}{\sqrt{1+t^2}}\right)\right].
\end{equation}
Substituting equations \eqref{firstintegral} and \eqref{secondintegral} into \eqref{rewriting} proves the identity \eqref{identity} and concludes the proof of Lemma \ref{rLopez}.
\end{proof}
 From Definitions \cite[17.3.1, 17.3.3]{HandbookStegun} and equations \cite[17.3.11, 17.3.12]{HandbookStegun}, we have according to the notation of Definition \ref{ellipticintegrals} that for all $k\in (0,1)$ 
   \begin{equation*}
       F(k)=\frac{\pi}{2}\left[1+\sum_{j=1}^\infty a_jk^{2j}\right],\quad E(k)=\frac{\pi}{2}\left[1-\sum_{j=1}^{\infty}b_jk^{2j}\right]
   \end{equation*}
   where, for all $j\geq 1$
   \begin{equation*}
       a_j=\left(\frac{1}{2}\cdot \frac{3}{4}\ldots \frac{2j-1}{2j}\right)^2,\quad b_j=\frac{a_j}{2j-1}.
   \end{equation*}
   \begin{defn}\label{truncationelliptic}
   \normalfont
       Let $N$ be a natural number and $k\in (0,1)$. We define the \textit{$N$-truncations of the first and second kind of complete elliptic integrals $F_N(k)$ and $E_N(k)$} as the finite sums
       \begin{equation}\label{explicittruncationsellipticintegrals}
            F_N(k)=\frac{\pi}{2}\left[1+\sum_{j=1}^N a_jk^{2j}\right],\quad E_N(k)=\frac{\pi}{2}\left[1-\sum_{j=1}^{N}b_jk^{2j}\right].
       \end{equation}
   \end{defn}
\begin{lema}\label{CalculationEllipticIntegrals}
    Fix $k\in (0,1)$ and $\epsilon>0$. For all $N\in \bN$ sufficiently large such that 
    \begin{equation*}
        \frac{\pi}{2}\frac{k^{2(N+1)}}{1-k^2}<\epsilon
    \end{equation*}
    it holds that
    \begin{equation*}
        \abs{F(k)-F_N(k)}<\epsilon,\quad \abs{E(k)-E_N(k)}<\epsilon.
    \end{equation*}
\end{lema}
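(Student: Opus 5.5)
The plan is to bound the tails of the two Maclaurin series directly, using only that every coefficient is at most one. From the expansions recalled just before Definition \ref{truncationelliptic}, for $k\in(0,1)$ we have
\begin{equation*}
F(k)-F_N(k)=\frac{\pi}{2}\sum_{j=N+1}^{\infty}a_jk^{2j},\qquad E(k)-E_N(k)=-\frac{\pi}{2}\sum_{j=N+1}^{\infty}b_jk^{2j}.
\end{equation*}
Both series converge absolutely because $k^2<1$, which justifies the termwise subtraction.

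The first step is to bound the coefficients. Each factor $\frac{2i-1}{2i}$ lies in $(0,1)$, so $0<a_j\le 1$ for every $j\ge 1$. Since $2j-1\ge 1$, it follows that $0<b_j=\frac{a_j}{2j-1}\le a_j\le 1$.

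The second step is to compare each tail with a geometric series:
\begin{equation*}
\abs{F(k)-F_N(k)}\le \frac{\pi}{2}\sum_{j=N+1}^{\infty}k^{2j}=\frac{\pi}{2}\,\frac{k^{2(N+1)}}{1-k^2},
\end{equation*}
and the same bound holds for $\abs{E(k)-E_N(k)}$ because $b_j\le 1$ as well. Under the stated hypothesis on $N$, the right-hand side is less than $\epsilon$, which gives both inequalities.

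There is no genuine obstacle in this argument. The only point to check is that the truncations in Definition \ref{truncationelliptic} are exactly the partial sums of the quoted series, with the same signs. If a sharper estimate were wanted, one could use that $a_j$ is decreasing in $j$ to replace the geometric bound by $a_{N+1}k^{2(N+1)}/(1-k^2)$, but the stated lemma does not require this.
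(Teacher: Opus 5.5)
Your proposal is correct and is essentially the paper's proof. Both write $F-F_N$ and $E-E_N$ as the tails $\pm\frac{\pi}{2}\sum_{j\ge N+1}(\cdot)\,k^{2j}$, bound the coefficients $a_j,b_j$ by $1$, and compare with the geometric series to get $\frac{\pi}{2}\,k^{2(N+1)}/(1-k^2)$.
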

\begin{proof}
    Let $k\in (0,1)$. Notice that 
    \begin{equation*}
F(k)=F_N(k)+R^F_N(k),\quad E(k)=E_N(k)+R^E_N(k),
    \end{equation*}
    where we have defined the convergent series of positive coefficients $a_j,b_j\in \bR^+$
    \begin{equation*}
    R_N^F(k)=\frac{\pi}{2}\sum_{j=N+1}^\infty a_jk^{2j},\quad -R_N^E(k)=\frac{\pi}{2}\sum_{j=N+1}^\infty b_jk^{2j}.
    \end{equation*}
    Since $a_j,\ b_j\in (0,1)$ by Definition \ref{truncationelliptic}, it follows by comparison with the well-known geometric series that
    \begin{equation*}
       0< R_N^F(k)\leq\frac{\pi}{2}\sum_{j=N+1}^{\infty}k^{2j}=\frac{\pi}{2}\frac{k^{2(N+1)}}{1-k^2},\quad  0< -R_N^E(k)\leq\frac{\pi}{2}\sum_{j=N+1}^{\infty}k^{2j}=\frac{\pi}{2}\frac{k^{2(N+1)}}{1-k^2}
    \end{equation*}
    concluding the proof.
\end{proof}

\begin{lema}\label{rLopezNumerical}
    The value of the conformal parameter $r$ of the López minimal Klein bottle is estimated as
    \begin{equation*}
        2.544702667\leq r\leq 2.544702669.
    \end{equation*}
\end{lema}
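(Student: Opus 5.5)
The plan is to realize $r$ as $1/t_*$, where $t_*$ is the unique zero of $h$ from Lemma \ref{rLopez}. The two-sided bound on $r$ then follows from a sign change of $h$ at two explicit points: it suffices to show
\[
h\!\left(\tfrac{1}{2.544702669}\right)\quad\text{and}\quad h\!\left(\tfrac{1}{2.544702667}\right)
\]
have opposite signs. Since $t_*$ is the only zero of $h$ on $[0,+\infty)$, and in fact on $(0,\tfrac12)$ by López's analysis, it must then lie between these two points. I will also fix which sign occurs on which side. López's argument gives $h(0)=E(1)\cdot 1>0$ in the limit $k\to 1$, since $t^2+3t-1\to -1$ and $t^2F\to 0$. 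Hence $h$ is positive to the left of $t_*$ and negative to the right.

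To evaluate $h$ at each endpoint $t_\pm$, set $k_\pm=1/\sqrt{1+t_\pm^2}$. With $t\approx 0.393$ we get $k^2\approx 0.866$, so the series converge, though slowly. I replace $F$ and $E$ by their truncations $F_N$, $E_N$ from Definition \ref{truncationelliptic}. Lemma \ref{CalculationEllipticIntegrals} gives one-sided control: $F_N\le F\le F_N+\epsilon_N$ and $E_N-\epsilon_N\le E\le E_N$, with $\epsilon_N=\frac{\pi}{2}k^{2(N+1)}/(1-k^2)$.

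Because $t^2>0$ and the sign of $-(t^2+3t-1)$ is known (it is positive here, since $t^2+3t-1<0$ for $t<0.303$... I will check this against the actual value), I can bound $h$ monotonically. Concretely, $h(t)\ge t^2F_N-(t^2+3t-1)(E_N-\epsilon_N)$ or the analogous bound, with the direction chosen according to the sign of the coefficient. This yields a rigorous interval $[h^{\inf}_N(t),h^{\sup}_N(t)]$ containing $h(t)$. All of these quantities are finite rational combinations of $\pi$ and $t$, so they can be evaluated exactly or with interval arithmetic, as in the companion notebook.

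The main obstacle is precision. The interval for $r$ has width $2\cdot10^{-9}$, and $h'(t_*)$ is of order one, so near the endpoints $|h|$ is only about $10^{-9}$. I therefore need $\epsilon_N<10^{-10}$. With $k^2\approx0.87$ and $1/(1-k^2)\approx 7.5$, this requires $N$ of a few hundred (roughly $N\approx 180$–$200$). All arithmetic must be done in exact rationals or validated intervals so that rounding does not swamp the sign. If the truncation is too slow, I would instead use the AGM representation $F(k)=\pi/(2\,\mathrm{AGM}(1,\sqrt{1-k^2}))$ together with a Legendre-type relation for $E$, which converges quadratically and gives rigorous enclosures after a handful of iterations. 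Once the two signs are certified, uniqueness of the zero gives $t_*\in[1/2.544702669,\,1/2.544702667]$, which is exactly the claimed bound on $r=1/t_*$.
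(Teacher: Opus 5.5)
Your proposal is correct and follows the paper's proof. Like the paper, you evaluate $h$ at $t_1=1/2.544702669$ and $t_2=1/2.544702667$ using the truncations $F_N,E_N$ of Lemma \ref{CalculationEllipticIntegrals} (the paper takes $N=200$, $\epsilon=10^{-11}$), certify $h(t_1)>0>h(t_2)$ as signs of rational multiples of $\pi$, and conclude from the uniqueness of the zero in Lemma \ref{rLopez}.

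Two slips are harmless. First, at $t\approx 0.393$ one has $t^2+3t-1\approx 0.333>0$ (as it must be at the zero, since $F,E>0$), so the coefficient of $E$ in $h$ is negative, not positive, and in fact $h\ge t^2F_N-(t^2+3t-1)E_N$ there. Second, $|h(t_1)|\approx 2.5\cdot 10^{-10}$ rather than $10^{-9}$; your choice $\epsilon_N<10^{-10}$ still suffices, because the truncation error in $h$ is at most $(2t^2+3t-1)\,\epsilon_N\approx 0.49\,\epsilon_N$.
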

\begin{proof}
Consider the function
\begin{equation*}
    k(t)=\frac{1}{\sqrt{1+t^2}}.
\end{equation*}
Implementing Lemma \ref{CalculationEllipticIntegrals} with $N=200$ we find that for
\begin{equation*}
 t_1=\frac{1}{2.544702669}<t_2=\frac{1}{2.544702667},
\end{equation*}
the complete integrals of the first kind and second kind are estimated by
\begin{equation}\label{estimativet}
    \abs{F(k(t_j))-F_{N}(k(t_j))}< \epsilon ,\quad \abs{E(k(t_j))-E_N(k(t_j))}< \epsilon,\quad j=1,2,
\end{equation}
where $\epsilon=10^{-11}$ and $F_N(k(t_j)),\ E_N(k(t_j))$ are rational multiples of $\pi$, obtained from Definition \ref{truncationelliptic}.

It then follows by equations \eqref{hfunction}, \eqref{explicittruncationsellipticintegrals} and \eqref{estimativet} that
    \begin{equation*}
      h(t_1)\in \bR^+,\quad h(t_2)\in  \bR^-
    \end{equation*}
    This means that the unique zero $t_0$ of $h$ in the interval $[0,+\infty)$ satisfies $$t_0\in \left(t_1,t_2\right)$$
    The result follows from an application of Lemma \ref{rLopez}.
\end{proof}
\begin{remark}
\normalfont
Since the López minimal Klein bottle has a uniqueness property \cite{LopezUniqueness}, the fixed number $r$ which controls its conformal structure is an important constant. Even though the precision of the $r$ parameter that we need for the main results of this work is of the order $10^{-9}$ as stated in Lemma \ref{rLopezNumerical}, by increasing the number of iterations to $N=300$, we are able to estimate it with sufficiently high accuracy
\begin{equation}\label{preciserlopez}
    \abs{r-2.544702668431974} <10^{-15}
\end{equation}
\end{remark}
\begin{lema}\label{gammaestimatives}
    The constants $\gamma_{\alpha_1},\ \gamma_{\alpha_2}$ defined in Definition \ref{gammaconstants}, which only depend on the conformal parameter $r$ of the López minimal Klein bottle are estimated as
    \begin{equation}
        0.1925\leq \gamma_{\alpha_1}\leq 0.1933,\quad  0.9678\leq \gamma_{\alpha_2}\leq 0.9682
    \end{equation}
\end{lema}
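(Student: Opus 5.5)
We need to prove bounds $0.1925\le\gamma_{\alpha_1}\le0.1933$ and $0.9678\le\gamma_{\alpha_2}\le0.9682$, where
\[
\gamma_{\alpha_1}=\frac{k_{\alpha_2}}{F(k_{\alpha_1})}F'(k_{\alpha_1}),\qquad
\gamma_{\alpha_2}=\frac{k_{\alpha_1}}{F(k_{\alpha_2})}F'(k_{\alpha_2}),
\]
with $k_{\alpha_1}=1/\sqrt{1+r^2}$ and $k_{\alpha_2}=r/\sqrt{1+r^2}$.

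\textbf{Step 1: remove the derivative.} By Lemma \ref{lema0},
\[
F'(k)=\frac{E(k)-k'^2F(k)}{k\,k'^2},
\]
where $k'$ is the complementary modulus, and $k_{\alpha_1}^2+k_{\alpha_2}^2=1$. This gives
\[
\gamma_{\alpha_1}=\frac{1}{k_{\alpha_1}k_{\alpha_2}}\left(\frac{E(k_{\alpha_1})}{F(k_{\alpha_1})}-k_{\alpha_2}^2\right),
\qquad
\gamma_{\alpha_2}=\frac{1}{k_{\alpha_1}k_{\alpha_2}}\left(\frac{E(k_{\alpha_2})}{F(k_{\alpha_2})}-k_{\alpha_1}^2\right),
\]
with $k_{\alpha_1}k_{\alpha_2}=r/(1+r^2)$. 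Each $\gamma$ is therefore an explicit expression in $r$ and the ratio $E/F$ at a single modulus.

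\textbf{Step 2: enclose the inputs.} Using Lemma \ref{rLopezNumerical}, $r\in[r^{\inf},r^{\sup}]$, which yields interval enclosures for $k_{\alpha_1}$, $k_{\alpha_2}$ and $r/(1+r^2)$. Since $F$ is increasing and $E$ is decreasing (Remark \ref{monotonicityellipticintegrals}), the ratio $E/F$ is decreasing in $k$. Evaluate it at the two endpoint moduli, using the truncations $F_N,E_N$ of Definition \ref{truncationelliptic} together with the error bound of Lemma \ref{CalculationEllipticIntegrals}. This gives rigorous upper and lower bounds for $E(k)/F(k)$ on the whole modulus interval.

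\textbf{Step 3: combine.} Feed the interval bounds into the formulas of Step 1 with monotone interval arithmetic, taking the lower endpoint of each positive factor for the lower bound and the upper endpoint for the upper bound. The target windows are wide (about $10^{-3}$) while the uncertainty in $r$ is about $10^{-9}$, so the propagated uncertainty is negligible. The only real requirement is that the truncation error be below roughly $10^{-5}$.

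\textbf{Main obstacle: the large modulus.} $k_{\alpha_2}\approx0.93$ is close to $1$, so the series converges slowly: one needs $k^{2(N+1)}/(1-k^2)$ small, i.e. $N$ of a few hundred, as in Lemma \ref{rLopezNumerical}. This is still fine to do exactly in rational arithmetic in the companion notebook.

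A second point to check is the subtraction $E/F-k_{\alpha_2}^2$ in $\gamma_{\alpha_1}$. Here $k_{\alpha_2}^2\approx0.866$ and $E/F$ at $k_{\alpha_1}\approx0.366$ is about $0.93$, so the difference is about $0.07$. The relative error is amplified by roughly a factor of $10$ but remains far inside the tolerance. If desired, one can instead use the AGM, or the Legendre relation to express the large-modulus quantities through the small one, for faster convergence.
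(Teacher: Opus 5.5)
Your proposal is correct and is essentially the paper's own proof. The paper also uses Lemma \ref{lema0} to rewrite $\gamma_{\alpha_1},\gamma_{\alpha_2}$ as $\frac{E(k)-k'^2F(k)}{F(k)\,k_{\alpha_1}k_{\alpha_2}}$, encloses $k_{\alpha_1},k_{\alpha_2}$ from the bound on $r$, brackets $F$ and $E$ separately by monotonicity and the truncations of Lemma \ref{CalculationEllipticIntegrals} (with $N=7$ and $N=120$, $\epsilon=10^{-6}$, so a few hundred terms are not needed), and concludes by monotone interval arithmetic. Your variants only sharpen these enclosures: you bound the decreasing ratio $E/F$ directly, and you use the $10^{-9}$ enclosure of $r$, whereas the paper uses only $r\in(2.5446,2.5448)$ and ends with tight margins. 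The true values $\gamma_{\alpha_1}\approx 0.19296$ and $\gamma_{\alpha_2}\approx 0.96795$ lie well inside the target windows relative to the errors involved.
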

\begin{proof}
     According with Definition \ref{gammaconstants} and Lemma \ref{lema0} we have an alternative way to determine the constants $\gamma_{\alpha_1}$ and $\gamma_{\alpha_2}$ in terms of the complete elliptic integrals of the first and second kind
    \begin{equation}\label{alternativeformulagamma}
        \gamma_{\alpha_1}=\frac{1}{F(k_{\alpha_1})}\left(\frac{E(k_{\alpha_1})-k_{\alpha_2}^2F(k_{\alpha_1})}{k_{\alpha_1}k_{\alpha_2}}\right),\quad \gamma_{\alpha_2}=\frac{1}{F(k_{\alpha_2})}\left(\frac{E(k_{\alpha_2})-k_{\alpha_1}^2F(k_{\alpha_2})}{k_{\alpha_1}k_{\alpha_2}}\right).
    \end{equation}
    Observe that the functions $k_{\alpha_1}(r),\ k_{\alpha_2}(r)$ of Definition \ref{gammaconstants} are monotone decreasing and increasing respectively. Therefore, since in particular $ r\in (2.5446,2.5448)$ by Lemma \ref{rLopezNumerical}, we have that
\begin{equation}\label{estimateskappa}
    k_{\alpha_1}\in (k_{\alpha_1}^{\text{min}},k_{\alpha_1}^{\text{max}}),\quad k_{\alpha_2}\in (k_{\alpha_2}^{\text{min}},k_{\alpha_2}^{\text{max}}),
\end{equation}
where
\begin{equation}\label{minmaxestimates0}
   k_{\alpha_1}^{\text{min}}= 0.365733,\quad k_{\alpha_1}^{\text{max}}=0.365760,\quad k_{\alpha_2}^{\text{min}}=0.93070,\quad k_{\alpha_2}^{\text{max}}=0.93072.
\end{equation}
As an application of Remark \ref{monotonicityellipticintegrals} we have for $k_{\alpha_1}$
\begin{equation*}
     F(k_{\alpha_1})\in (F(k_{\alpha_1}^{\text{min}}),F(k_{\alpha_1}^{\text{max}})),\quad E(k_{\alpha_1})\in (E(k_{\alpha_1}^{\text{max}}),E(k_{\alpha_1}^{\text{min}})),
\end{equation*}
while for $k_{\alpha_2}$
\begin{equation*}
     F(k_{\alpha_2})\in (F(k_{\alpha_2}^{\text{min}}),F(k_{\alpha_2}^{\text{max}})),\quad E(k_{\alpha_2})\in (E(k_{\alpha_2}^{\text{max}}),E(k_{\alpha_2}^{\text{min}})). 
\end{equation*}
Applying Lemma \ref{CalculationEllipticIntegrals} for  $k_{\alpha_1}^{\text{min}}$ and $k_{\alpha_1}^{\text{max}}$ with $N=7$, we obtain that
\begin{equation*}
    F(k_{\alpha_1})\in (F_N(k_{\alpha_1}^{\text{min}})-\epsilon,F_N(k_{\alpha_1}^{\text{max}})+\epsilon),\quad E(k_{\alpha_1})\in(E_N(k_{\alpha_1}^{\text{max}})-\epsilon,E_N(k_{\alpha_1}^{\text{min}})+\epsilon),
\end{equation*}
where $\epsilon=10^{-6}$ and $F_N(k_{\alpha_1}^{\text{min}}),\ F_N(k_{\alpha_1}^{\text{max}}),\ E_N(k_{\alpha_1}^{\text{min}}),\ E_N(k_{\alpha_1}^{\text{max}})$ are rational multiples of $\pi$, obtained from Definition \ref{truncationelliptic}. 
Therefore, we find that
\begin{equation}\label{estimativesalpha1}
    F(k_{\alpha_1})\in (F_{\text{min}}^{k_{\alpha_1}},F_{\text{max}}^{k_{\alpha_1}}),\quad  E(k_{\alpha_1})\in (E_{\text{min}}^{k_{\alpha_1}},E_{\text{max}}^{k_{\alpha_1}}),
\end{equation}
where
\begin{equation}\label{minmaxquantities1}
    F_{\text{min}}^{k_{\alpha_1}}1.62767,\quad  F_{\text{max}}^{k_{\alpha_1}}=1.62780,\quad  E_{\text{min}}^{k_{\alpha_1}}=1.51685,\quad  E_{\text{max}}^{k_{\alpha_1}}=1.51688.
\end{equation}
Similarly, applying Lemma \ref{CalculationEllipticIntegrals} for each $k_{\alpha_2}^{\text{min}}$ and $k_{\alpha_2}^{\text{max}}$ with $N=120$, we obtain that
\begin{equation*}
    F(k_{\alpha_2})\in (F_N(k_{\alpha_2}^{\text{min}})-\epsilon,F_N(k_{\alpha_2}^{\text{max}})+\epsilon),\quad E(k_{\alpha_2})\in(E_N(k_{\alpha_2}^{\text{max}})-\epsilon,E_N(k_{\alpha_2}^{\text{min}})+\epsilon),
\end{equation*}
where $\epsilon=10^{-6}$ and $F_N(k_{\alpha_1}^{\text{min}}),\ F_N(k_{\alpha_1}^{\text{max}}),\ E_N(k_{\alpha_1}^{\text{min}}),\ E_N(k_{\alpha_1}^{\text{max}})$ are rational multiples of $\pi$, obtained from Definition \ref{truncationelliptic}. 
Therefore, we find that
\begin{equation}\label{estimativesalpha2}
 F(k_{\alpha_2})\in (F_{\text{min}}^{k_{\alpha_2}},F_{\text{max}}^{k_{\alpha_2}}),\quad  E(k_{\alpha_1})\in (E_{\text{min}}^{k_{\alpha_2}},E_{\text{max}}^{k_{\alpha_2}}),
\end{equation}
where
\begin{equation}\label{minmaxquantities2}
F_{\text{min}}^{k_{\alpha_2}}=2.44194,\quad  F_{\text{max}}^{k_{\alpha_2}}=2.44210,\quad  E_{\text{min}}^{k_{\alpha_2}}=1.13130,\quad  E_{\text{max}}^{k_{\alpha_2}}=1.13135.
\end{equation}
Therefore, we may estimate $\gamma_{\alpha_1}$ and $\gamma_{\alpha_2}$ using equations \eqref{alternativeformulagamma}, \eqref{estimateskappa},  \eqref{estimativesalpha1} and \eqref{estimativesalpha2} 
\begin{equation}\label{estimativegamma1}
\frac{E_{\text{min}}^{k_{\alpha_1}}}{F_{\text{max}}^{k_{\alpha_1}}k_{\alpha_1}^{\text{max}}k_{\alpha_2}^{\text{max}}}-\frac{k_{\alpha_2}^{\text{max}}}{k_{\alpha_1}^{\text{min}}}  \leq \gamma_{\alpha_1}\leq \frac{E_{\text{max}}^{k_{\alpha_1}}}{F_{\text{min}}^{k_{\alpha_1}}k_{\alpha_1}^{\text{min}}k_{\alpha_2}^{\text{min}}}-\frac{k_{\alpha_2}^{\text{min}}}{k_{\alpha_1}^{\text{max}}},
\end{equation}
\begin{equation}\label{estimativegamma2}
\frac{E_{\text{min}}^{k_{\alpha_2}}}{F_{\text{max}}^{k_{\alpha_2}}k_{\alpha_1}^{\text{max}}k_{\alpha_2}^{\text{max}}}-\frac{k_{\alpha_1}^{\text{max}}}{k_{\alpha_2}^{\text{min}}}  \leq \gamma_{\alpha_2}\leq \frac{E_{\text{max}}^{k_{\alpha_2}}}{F_{\text{min}}^{k_{\alpha_2}}k_{\alpha_1}^{\text{min}}k_{\alpha_2}^{\text{min}}}-\frac{k_{\alpha_1}^{\text{min}}}{k_{\alpha_2}^{\text{max}}}.
\end{equation}
Finally, substituting equations \eqref{minmaxestimates0}, \eqref{minmaxquantities1} and \eqref{minmaxquantities2} into equations \eqref{estimativegamma1} and \eqref{estimativegamma2} we conclude that
\begin{equation*}
    \gamma_{\alpha_1}\in (0.1925,0.1933),\quad \gamma_{\alpha_2}\in (0.9678,0.9682)
\end{equation*}
as we wanted to prove.
\end{proof}

\subsection{Explicit estimates for the positions of the branch points} \label{subsection::Explicit estimates for the positions of the branch points}

The next lemma provides information about the position of $a_t$ and $b_t$ in the real line, for every $t \in (0,1]$, when compared to $t$ and $r$. 

\begin{lema}\label{lem:posicoesdosatbt}

The following inequalities hold  for every $t \in (0,1]$, $$b_t < -r< -t<a_t<0.$$
    
\end{lema}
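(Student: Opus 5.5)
The plan is to read off the positions of the two negative roots of $k_t$ from the sign of $k_t$ at the two test points $z=-t$ and $z=-r$. Two facts from the proof of Lemma \ref{lem:defkt} drive this. First, $k_t(z)/z^2=f_t(\lambda)$ with $f_t(\lambda):=t\lambda^2-L_1\lambda+L_2+2t$ and $\lambda=\frac1z-z$. Second, $a_t$ and $b_t$ are the negative solutions of $\lambda(z)=y_t$ and $\lambda(z)=x_t$ respectively, where $y_t<x_t$ are the two roots of $f_t$.

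On $(-\infty,0)$ the map $\lambda(z)=\frac1z-z$ is strictly decreasing, since $\lambda'=-\frac1{z^2}-1<0$, and it is a bijection onto $\bR$. Hence the chain $b_t<-r<-t<a_t<0$ is equivalent to
\begin{equation*}
y_t<\lambda(-t)=t-\tfrac1t \quad\text{and}\quad \lambda(-r)=r-\tfrac1r<x_t,
\end{equation*}
together with $-r<-t$. The last inequality is automatic from $r>2\geq t$ (Lemma \ref{rLopezNumerical}), and it also gives $t-\frac1t<r-\frac1r$. Since $t>0$, the quadratic $f_t$ is negative exactly on the open interval $(y_t,x_t)$. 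It therefore suffices to show $f_t(t-\frac1t)<0$ and $f_t(r-\frac1r)<0$, which is the same as
\begin{equation*}
k_t(-t)<0 \quad\text{and}\quad k_t(-r)<0 .
\end{equation*}

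Rather than expanding \eqref{eq:kt}, I will evaluate these two values using \eqref{eq:derivativeofgaussmap}, which gives
\begin{equation*}
k_t(z)=\tfrac{2}{\sqrt r}\,(tz-1)^2(z+r)\,w\,\tfrac{d\mathcal G_t}{dz}.
\end{equation*}
Write $\mathcal G_t=\sqrt r\,N/D$ with $N=(z+t)w$ and $D=(tz-1)(z+r)$.

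At $z=-t$ the factor $N$ vanishes, so $\mathcal G_t'=\sqrt r\,N'/D$ with $N'=w$. This gives
\begin{equation*}
k_t(-t)=2(tz-1)q(z)\big|_{z=-t}=-2(1+t^2)\,q(-t).
\end{equation*}
Here $q(-t)=(-t)(-t-\frac1r)(r-t)>0$, so $k_t(-t)<0$.

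At $z=-r$ the function $\mathcal G_t$ has a pole, so I will compute $k_t(-r)$ as a limit, keeping only the dominant term $-\sqrt r\,N D'/D^2$ of $\mathcal G_t'$. Using $w^2=q$, this yields
\begin{equation*}
k_t(-r)=\lim_{z\to -r}\Big(-2(z+t)\tfrac{q(z)}{z+r}D'(z)\Big)=-2(t-r)(r^2+1)\big(-(tr+1)\big)=2(t-r)(r^2+1)(tr+1),
\end{equation*}
which is negative because $t<r$.

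The main point of care is this limit at $z=-r$. One has to confirm that the remaining terms of $\mathcal G_t'$ contribute nothing after multiplication by $(tz-1)^2(z+r)w$. Because $k_t$ is a polynomial, the limit exists, so I will double-check the value by substituting $z=-r$ directly into \eqref{eq:kt}. With both signs established, the equivalence above yields $b_t<-r<-t<a_t<0$ for every $t\in(0,1]$.
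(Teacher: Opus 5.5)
Your reduction is sound and is essentially the paper's argument: both come down to the two sign conditions $k_t(-t)<0$ and $k_t(-r)<0$. The paper finishes with the intermediate value theorem for the quartic $k_t$ (positive leading coefficient, $k_t(0)=t>0$, exactly two negative roots $b_t<a_t$). You instead pass through the decreasing bijection $\lambda=\frac1z-z$ of $(-\infty,0)$ onto $\mathbb{R}$ and the sign of the quadratic $f_t$. This is equally valid, and it has the small advantage of locating $a_t$ and $b_t$ directly from their defining formulas. Your evaluation $k_t(-t)=-2(1+t^2)q(-t)<0$ is correct and agrees with the paper's expression.

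The evaluation of $k_t(-r)$, however, is wrong as written: the other terms of $\mathcal{G}_t'$ do not drop out. Since $w(-r)=0$ while $q'(-r)=r^2+1\neq 0$, the derivative $N'=w+(z+t)\frac{q'}{2w}$ blows up like $|z+r|^{-1/2}$. So $\sqrt r\,N'/D$ has the same order $|z+r|^{-3/2}$ as $\sqrt r\,ND'/D^2$. Keeping both terms gives the exact polynomial identity
\[
k_t(z)=(tz-1)\bigl(2q(z)+(z+t)q'(z)\bigr)-2(z+t)\,\frac{q(z)}{z+r}\,D'(z).
\]
Its first term tends to $-(t-r)(r^2+1)(tr+1)\neq 0$ at $z=-r$. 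Hence
\[
k_t(-r)=(t-r)(r^2+1)(tr+1)=-(1+r^2)(r-t+r^2t-rt^2),
\]
which is half of your value. This is what direct substitution into the formula for $k_t$ gives, and it is exactly the paper's value. The sign is unchanged because $t<r$, so your conclusion $b_t<-r$ still holds. You just need to replace the ``dominant term'' limit by the direct substitution you already planned, or by the identity above.
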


\begin{proof}

We use the intermediate value theorem to verify these estimates. First, observe that since
\begin{equation*}
    k_t(-r) = -(1 + r^2) (r - t + r^2 t - r t^2) < 0,
\end{equation*}
it follows that $b_t < -r$, as $k_t$ is a fourth order polynomial with positive leading coefficient. Now,
\begin{equation*}
k_t(-t) = \frac{2 t (1 + t^2) (t(1 - r^2) + r (-1 + t^2))}{r} < 0,
\end{equation*}
therefore, since $k_t(0) = 0$, we must have $-t < a_t < 0$.
\end{proof}
The following estimate will be useful when studying the period problem for the López minimal Klein bottle.
\begin{lema}\label{xtytestimates}
    For all $t\in (0,1]$, $x_t\in [2,4]$ and $y_t<-1.$
\end{lema}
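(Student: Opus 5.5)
We have to show that $x_t\in[2,4]$ and $y_t<-1$ for every $t\in(0,1]$. The plan is to work directly with the quadratic $t\lambda^2-L_1\lambda+(L_2+2t)$ from \eqref{eq:ktdivididoz2}, whose roots are $x_t>y_t$ as in \eqref{xtyt}. Write
\begin{equation*}
f_t(\lambda):=\frac{k_t(z)}{z^2}=t\lambda^2-L_1\lambda+L_2+2t .
\end{equation*}
From \eqref{eq:kt} we read off
\begin{equation*}
L_1=-(3-2rt+t^2),\qquad L_2=\tfrac{2}{r}-4r-2t+\tfrac{2t^2}{r}.
\end{equation*}
The leading coefficient $t$ is positive, so $f_t<0$ exactly on the open interval $(y_t,x_t)$. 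Since $x_t,y_t$ are the two roots, it is therefore enough to establish three sign conditions, each holding uniformly for $t\in(0,1]$:
\begin{equation*}
f_t(-1)<0,\qquad f_t(2)\le 0,\qquad f_t(4)\ge 0,
\end{equation*}
together with $4>\tfrac{L_1}{2t}$, so that the point $4$ lies to the right of the vertex.

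The first condition puts $-1$ strictly inside $(y_t,x_t)$, which gives $y_t<-1$. The second puts $2$ inside the interval as well, which gives $x_t\ge 2$. The third, combined with the vertex condition, forces $x_t\le 4$.

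Each of these quantities is an explicit polynomial in $t$ and $r$, so the next step is to expand them:
\begin{align*}
f_t(-1)&=t+L_1+L_2+2t=-3+2rt-t^2+\tfrac{2}{r}-4r+\tfrac{2t^2}{r}+t,\\
f_t(2)&=6t-2L_1+L_2,\\
f_t(4)&=18t-4L_1+L_2.
\end{align*}
For $f_t(-1)$ the estimate $r\approx2.5447$ from Lemma \ref{rLopezNumerical} gives $2rt\le 2r$ on $(0,1]$. Then $f_t(-1)\le -3+2r+\tfrac{4}{r}+1-4r<0$, since $-2r+\tfrac{4}{r}-2<0$ for $r>2$.

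For $f_t(2)$ and $f_t(4)$ I plan to argue the same way, as with $5c_{2D}-c_{2N}$ in Lemma \ref{lem:integralseta3nonzero}. Substituting $L_1,L_2$ turns each into a quadratic in $t$ with coefficients depending on $r$. I will bound terms of definite sign using $0<t\le1$, plug in the numerical bracket for $r$, and check the sign at $t=0$ and $t=1$. I will also use convexity or concavity in $t$, or simply monotonicity, to cover the whole interval.

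The vertex condition $L_1<8t$ is automatic for small $t$, because $L_1\to-3$. For $t$ near $1$, it is immediate from $-3+2rt-t^2\le 2r-3<8t$ once $t\ge \tfrac{2r-3}{8}$. The remaining range $t<\tfrac{2r-3}{8}$ is then checked directly.

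The hardest step is the bound $x_t\ge 2$, that is, $f_t(2)\le0$. As $t\to0^+$ we have $x_t\sim L_1/t$ with $L_1\to-3$, so $x_t$ may actually become negative and large. If that happens, the lower bound $2$ fails near $t=0$, and the claimed range $[2,4]$ would hold only on part of $(0,1]$. For this reason I will first evaluate $f_t(2)$ at $t\to0$, where it equals $6+\tfrac{2}{r}-4r$. Since this value is negative, the numerics should support the claim, with the sign analysis uniform in $t$. If instead a sign change appears, I will fall back on the interval-subdivision framework $\Theta$ of Appendix \ref{section::EstimatesPolynomials} in the variables $(t,r)$, as the companion notebook does for the other estimates.
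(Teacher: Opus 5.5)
Your strategy is the paper's: locate $-1$, $2$, $4$ relative to the roots via the signs of $f_t(-1)$, $f_t(2)$, $f_t(4)$ (the paper works with $F_t=f_t/t$). Your arguments for $f_t(-1)<0$ (crude bounding) and $f_t(2)<0$ (convex in $t$, negative at $t=0$ and $t=1$) are fine.

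The step that does not close as planned is $f_t(4)\ge 0$. Explicitly,
\[
f_t(4)=\Bigl(12+\tfrac{2}{r}-4r\Bigr)+(16-8r)\,t+\Bigl(4+\tfrac{2}{r}\Bigr)t^2 .
\]
This is \emph{convex} in $t$ and positive at both endpoints ($\approx 2.61$ and $\approx 3.04$). It is not monotone, and its minimum sits at the interior point $t^\ast=\frac{8r-16}{8+4/r}\approx 0.455$. So none of the tools you list works here:
\begin{itemize}
\item endpoint values combined with convexity or monotonicity prove nothing for a convex function;
\item term-by-term bounding, i.e.\ $(16-8r)t\ge 16-8r$ and dropping the $t^2$ term, gives $28+\frac{2}{r}-12r\approx-1.75<0$.
\end{itemize}
The missing ingredient is a discriminant (or vertex) computation. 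The paper writes $f_t(4)=-2h(t)/r$, where $h$ is a concave quadratic in $t$ whose discriminant $4(4r^4-12r^3+6r^2-8r-1)$ is negative (about $-50$) by Lemma \ref{rLopezNumerical}. Equivalently, the minimum of $f_t(4)$ over $t$ is about $1.6>0$.

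Your asymptotic worry is also misdirected, and it led you to misidentify the delicate step. Since $L_1\to-3$ and $\sqrt{\delta_t}\to3$, the numerator of $x_t=(L_1+\sqrt{\delta_t})/(2t)$ cancels. From $x_t+y_t=L_1/t$ and $x_ty_t=(L_2+2t)/t$ you get $y_t\sim L_1/t\to-\infty$, while $x_t\to\frac{4r-2/r}{3}\approx 3.13$. So $x_t\ge 2$ is the easy bound, and $x_t\le 4$ is the one needing care.

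Finally, your vertex condition is redundant. Once $f_t(2)\le 0$ you have $y_t\le 2<4$, so $f_t(4)\ge 0$ alone forces $x_t\le 4$. In any case $L_1-8t=-3+(2r-8)t-t^2<0$ for all $t>0$, since $2r<8$.
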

\begin{proof}
    Since the discriminant $\delta$ is positive by equation \eqref{discriminant}, it follows from equation \eqref{xtyt} that for each $t\in (0,1]$, $x_t$ is the largest solution, while $y_t$ is the smallest solution to the second degree polynomial
    \begin{equation*}
      F_t(\lambda)=\lambda^2-\left(\frac{L_1(t)}{t}\right)\lambda +\left(\frac{L_2(t)}{t}+2\right).
    \end{equation*}
    We claim that $F_t(2)<0$ and $F_t(4)>0$, and as a consequence, $x_t\in [2,4]$ by the concavity of the polynomial $F_t(z).$

    After simplification, we obtain
    \begin{equation*}
        F_t(2)=-\frac{2f(t)}{rt},\quad f(t)=(-1-r)t^2+(2r^2-2r)t+(2 r^2-3r-1),
    \end{equation*}
    moreover, using that $r>2$, it holds that
    \begin{equation*}
        f(0)=r(2r-3)-1>0,\quad f(1)=2r(2r-3)-2>0
    \end{equation*}
Hence, we obtain that for all $t\in (0,1]$ $f(t)>0$, since the polynomial $f(t)$ is concave down and therefore $F_t(2)<0.$ Finally we prove that $F_t(4)>0$. Indeed,
\begin{equation*}
    F_t(4)=-\frac{2h(t)}{rt},\quad h(t)=-1 - 6 r + 2 r^2 + (-8 r + 4 r^2) t + (-1 - 2 r) t^2.
\end{equation*}
The discriminant $\delta_h$ of the polynomial $h(t)$ for each $t\in (0,1]$ is given by
\begin{equation*}
    \delta_h=4 (-1 - 8 r + 6 r^2 - 12 r^3 + 4 r^4).
\end{equation*}
Using the improved estimative for the $r$ conformal parameter of the López minimal Klein bottle of Lemma \ref{rLopezNumerical} we find that for all $t\in (0,1]$
\begin{equation*}
    \delta_h<-39<0,
\end{equation*}
which implies that the polinomial $h(t)$ does not have real roots, and then $h(t)<0$ since it is concave down. As a consequence we conclude that $F_t(4)>0$. Therefore $x_t\in [2,4]$. We now prove that $F_t(-1)<0$ which immediately implies that $y_t<-1$ by the concavity of $F_t(\lambda)$ and the property of $y_t$ as the smallest zero of the polynomial $F_t(\lambda)$. By direct evaluation we find that
\begin{equation*}
    F_t(-1)=\frac{u(t)}{rt},\quad u(t)=2 - 3 r - 4 r^2 + (r + 2 r^2) t + (2  - r) t^2.
\end{equation*}
Therefore using $r>2$
\begin{equation*}
    u(t)<2-3r-4r^2+r+2r^2=2(1-r-r^2)<-10<0
\end{equation*}
which proves that $F_t(-1)<0$, concluding the proof.
\end{proof}

For the following lemma, recall the continuous real valued functions $a_t$ and $b_t$ defined on on $t \in (0,1]$ described in Lemma \ref{lem:defkt} and consider $\tau_t := t/a_t$. 
\begin{lema}\label{lem:estimatesforabtau}
    There exist piecewise linear functions with rational coefficients $\lvert a \lvert^{\inf}_t$, $\lvert a \lvert^{\sup}_t$, $\lvert b \lvert^{\inf}_t$, $\lvert b \lvert^{\sup}_t$, $\lvert \tau \lvert^{\inf}_t$, and $\lvert \tau \lvert^{\sup}_t$ defined on $(0,1]$, estimating $|a_t|, |b_t|$ and $|\tau_t|$ as
    \begin{equation}
    0 \leq \lvert a \lvert^{\inf}_t \leq \lvert a_t \lvert  \leq \lvert a \lvert_t^{\sup}, \quad 0\leq 
    \lvert b \lvert^{\inf}_t \leq \lvert b_t \lvert \leq \lvert b \lvert_t^{\sup}, \quad \text{ and } \quad
    0\leq \lvert \tau \lvert_t^{\inf} \leq \lvert \tau_t \lvert \leq \lvert \tau \lvert_t^{\sup}
    \end{equation}
    \noindent such that equations \eqref{thetaforresidues} and \eqref{theta-alpha1-eta1} hold.

\end{lema}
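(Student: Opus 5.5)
The plan is to construct the six envelopes explicitly and certify them with exact rational arithmetic, using root enclosures for $k_t$ and the monotone relation $\tau_t=t/a_t$. Fix a partition $0=t_0<t_1<\dots<t_n=1$ with rational nodes. I use the structure from Lemma \ref{lem:defkt}: $a_t$ and $b_t$ are the two negative roots of $k_t$. They are obtained as $a_t=\frac{-y_t-\sqrt{y_t^2+4}}{2}$ and $b_t=\frac{-x_t-\sqrt{x_t^2+4}}{2}$, where $x_t,y_t$ are the roots of the quadratic \eqref{eq:ktdivididoz2}. These functions are real analytic on $(0,1]$, since the discriminant \eqref{discriminant} is positive. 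On each subinterval $[t_{i-1},t_i]$ I define the lower and upper envelopes as the linear interpolants of rational numbers $\ell_i^\pm$, $u_i^\pm$. These numbers are chosen slightly below and above the true values at the nodes, and then corrected by a margin that absorbs the deviation from linearity.

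\textbf{Certifying the node values.} This is the first concrete step. For $|a_t|$ and $|b_t|$ at a rational node $t_i$, I use a sign-change certificate. For rational $s<s'$, exact evaluation giving $k_{t_i}(-s)$ and $k_{t_i}(-s')$ of opposite signs certifies a root in $(-s',-s)$. I work with rational enclosures of $r$ from Lemma \ref{rLopezNumerical}, treating $r$ as an interval variable, and Lemma \ref{lem:posicoesdosatbt} separates the roots. The intervals $(-t,0)$ and $(-\infty,-r)$ each contain exactly one root, so the certified root is the correct one. For $\tau_t=t/a_t$, the envelopes come directly from those of $a_t$, because $|\tau_t|=t/|a_t|$. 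Near $t=0$ this is delicate: $a_t\to 0$, but $\tau_t$ stays bounded away from $0$ by Lemma \ref{lem:limitt/a}. There I would instead write $a_t$ as $t$ times a convergent expansion, obtained from $k_t(a_t)=0$ divided by $t$, and bound the ratio $|\tau_t|$ directly on a first small interval $(0,t_1]$.

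\textbf{Between nodes.} For the interpolation step I bound $|a_t|$, $|b_t|$ and $|\tau_t|$ on each subinterval using monotonicity or a derivative bound. The derivatives come from implicit differentiation, $\dot a_t=-\partial_t k/\partial_z k$, and both numerator and denominator are bounded by interval arithmetic on the box $[t_{i-1},t_i]\times[\text{enclosure of }a]\times[r^{\inf},r^{\sup}]$. The error between a chord and a $C^1$ function is at most $\frac{h}{2}\sup|\dot f|$ times a small factor, which determines the margin. Refining the partition shrinks the envelope width linearly in $h$.

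\textbf{Closing the argument.} With these envelopes, \eqref{thetaforresidues} and \eqref{theta-alpha1-eta1} are finite certified computations via the $\Theta$ framework of Appendix \ref{section::EstimatesPolynomials}. Since the target polynomials are strictly signed with a margin on compact pieces, any sufficiently tight envelope makes the corresponding $\Theta$ negative, so $n$ is simply increased until this holds. I expect the main obstacle to be the endpoint $t\to 0$. There $a_t$ degenerates and $\tau_t$ is a $0/0$ quotient, so the envelopes for $|a|$ must vanish linearly while those for $|\tau|$ stay uniform. I would handle this by treating $\tau$ as the primary variable on the first subinterval and setting $|a|^{\inf,\sup}_t=t/|\tau|^{\sup,\inf}$, suitably linearized.
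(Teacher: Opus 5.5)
Your node certificates are sound, since Lemma \ref{lem:posicoesdosatbt} isolates $a_t$ in $(-t,0)$ and $b_t$ in $(-\infty,-r)$. The step between nodes, however, does not go through as written. To bound $\dot a_t=-\partial_t k/\partial_z k$ on $[t_{i-1},t_i]$ you need an enclosure of $a_t$ valid for \emph{every} $t$ in the subinterval, on which $\partial_z k$ stays away from zero. You have only produced enclosures at the two endpoints. The only a priori enclosure available, $-t<a_t<0$, does not work. At the simple root $a_t$, $k_t$ passes from negative to positive, so $\partial_z k_t(a_t)>0$. On the other hand $\partial_z k_t(0)=3-2rt+t^2<0$ for $t$ near $1$ (for instance $4-2r<0$ at $t=1$). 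Hence $\partial_z k_t$ vanishes inside $(a_t,0)$, and your interval bound on $\dot a_t$ blows up. The monotonicity alternative needs the sign of $\dot a_t$ on the whole subinterval and meets the same obstruction. A continuity argument on a candidate box could repair this, but the natural fix is a bracket that is uniform in $t$.

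That uniform bracket is exactly the paper's proof, and it makes your derivative and chord-error machinery unnecessary. The polynomial $k_t$ has positive leading coefficient and roots $b_t<a_t<0<-1/b_t<-1/a_t$. So any negative $a^{\inf}<a^{\sup}$ with $k_t(a^{\inf})<0<k_t(a^{\sup})$ satisfy $a^{\inf}<a_t<a^{\sup}$, and the reversed signs bracket $b_t$. The paper chooses piecewise linear rational candidates by ansatz and certifies $k_t(a^{\inf}_t)<0<k_t(a^{\sup}_t)$ for all $t\in(0,1]$. On each linear piece this is a polynomial sign problem in $(t,r)$, handled by the $\Theta$ method of Appendix \ref{section::EstimatesPolynomials}, so no node-by-node work or margins are needed. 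For $\tau_t$ the paper does the same after substituting $a=t/\tau$. Note that $\tau^4k_t(t/\tau)/t$ is a polynomial in $(\tau,t,r)$ that equals $\tau^3(\tau+3)$ at $t=0$, so the endpoint you flag as delicate needs no series expansion.

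Finally, your argument that $\Theta$ must eventually become negative because the target polynomials are strictly signed is circular. That strict sign is exactly what \eqref{thetaforresidues} and \eqref{theta-alpha1-eta1} are used to prove in Lemmas \ref{specialpropertiesbasisBsym} and \ref{lem:integralseta12nonzero}. The existence asserted in the lemma is established only by exhibiting concrete envelopes and running the finite computation, which the paper defers to its notebook.
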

\begin{proof}

From the definition of $k_t$ in equation \eqref{eq:kt} and its factorization in Lemma \ref{lem:defkt}, we know that for any $t \in (0,1]$ the polynomial $k_t$ has exactly four distinct roots $a_t$, $b_t$, $-\frac{1}{a_t}$, and $-\frac{1}{b_t}$ that satisfy
\begin{equation*}
    b_t < a_t < 0 < - \frac{1}{b_t} < - \frac{1}{a_t}.
\end{equation*}

Since $k_t$ is a polynomial of degree four with a positive leading coefficient, any pair of negative numbers $a^{\inf}<a^{\sup}$ satisfying $k_t(a^{\inf}) < 0 < k_t(a^{\sup})$ is such that $a^{\inf} < a_t < a^{\sup}$. Similarly, for any pair of negative numbers $b^{\inf}<b^{\sup}$ such that $k_t(b^{\sup})<0<k_t(b^{\inf})$, it follows that $b^{\inf}<b_t<b^{\sup}$. These are the facts that we explore to construct non-positive piecewise linear functions with rational coefficients $a_t^{\inf}$, $a_t^{\sup}$, $b_t^{\inf}$ and $b_t^{\sup}$ which bound from below and from above the functions $a_t$ and $b_t$, respectively. For instance, we construct $a_t^{\inf}$ and $a_t^{\sup}$ using an ansatz and prove that $k_t(a_t^{\inf}) <0< k_t(a_t^{\sup})$ holds for every $t \in (0,1]$. In order to estimate $\tau_t$, we use that $a_t = t/\tau_t$ and the previous strategy holds to estimate $a_t$ indicates a way to estimate $\tau_t$ with analogous computations. 

As we saw, the strategy we follow here, depends on the possibility of checking if the piecewise linear functions constructed produces a positive (or negative) function after composition with $k_t$. This sign verification relies on the strategy described in Appendix \ref{section::EstimatesPolynomials}, using the estimates for $r$ given by Lemma \ref{rLopezNumerical}. Finally, we mention that the functions we constructed are close enough to the exact functions $|a_t|, |b_t|$ and $|\tau_t|$ so that equations \eqref{thetaforresidues} and \eqref{theta-alpha1-eta1} hold. We describe these computations explicitly in the companion Mathematica notebook.

\end{proof}

\begin{lema}\label{lem:limitt/a}The continuous function $\frac{a_t}{t}$ defined for all $t\in (0,1]$ has a well-defined limit at $t=0$ 
\begin{equation*}
    \lim_{t\to 0^{+}}\frac{a_t}{t}=-\frac{1}{3}.
\end{equation*} 
\end{lema}
\begin{proof}
    Using equations \eqref{eq:kt}, \eqref{discriminant}, \eqref{xtyt} and \eqref{eq:atbt} we obtain after rationalization that
    \begin{equation*}
        \frac{a_t}{t}=-\frac{4}{((L_1(t)-\sqrt{\delta_t})^2+16t^2)^{1/2}-(L_1(t)-\sqrt{\delta_t})},
    \end{equation*}
    where
    \begin{equation*}
        L_1(t)=-3 + 2rt - t^2,\quad \delta_t=9+\left(4r-\frac{8}{r}\right)t+(4r^2+6)t^2-\left(4r+\frac{8}{r}\right)t^3+t^4>0.
    \end{equation*}
    The conclusion then follows from direct evaluation.
\end{proof}

\section{A framework for quantitative estimates for a several variables polynomial
}\label{section::EstimatesPolynomials}

We consider a polynomial in several variables $P(u_1,...,u_k,t,\sigma_1,...,\sigma_l)$ with coefficients in the field of rational numbers $\mathbb{Q}$. In this appendix we study the map
\begin{equation}\label{eq:generalsettingpolnonzero}                  t \in [0,1] \mapsto P(u_1(t),...,u_k(t),t,\sigma_1,...,\sigma_l)
\end{equation}
    where $u_i(t) \geq 0$ is a list of non-negative continuous functions of $t$, $t \in [0,1]$, and $\sigma_j > 0$ is a list of positive real parameters. We want to guarantee that \eqref{eq:generalsettingpolnonzero} never vanishes and we now explain a sufficient condition that relies on a finite number of calculations with rational numbers. For simplicity, we explain the method for the case where we want to prove that \eqref{eq:generalsettingpolnonzero} is positive. The method depends mainly on the possibility of finding good approximations for the functions $u_i(t)$ using piecewise linear functions with rational coefficients. We also assume that one can find good rational approximations for the positive numbers $\sigma_j$. 
    
 The basic idea is to divide $[0,1]$ into smaller intervals, say $n \in \mathbb{N}$ intervals of equal length, $n\ge 2$, and to bound \eqref{eq:generalsettingpolnonzero} from below on each of these intervals by a positive rational number depending on the interval.

We denote the aforementioned approximations of $u_i$ and $\sigma_j$ by

\begin{equation*}
        u_i^{\inf}(t) \leq u_i(t) \leq u_i^{\sup}(t), \quad \text{ and } \quad \sigma_j^{\inf} \leq \sigma_j \leq \sigma_j^{\sup},
\end{equation*}
\noindent for any $1 \leq i \leq k$ and $1 \leq j \leq l$.

We now define a new polynomial of $2k+2+2l$ variables $P^{\inf}$ that will be used to bound $P$ from below. We introduce new variables $u_{i,+},u_{i,-},t_+,t_-,\sigma_{j,+},\sigma_{j,-}$ to define $P^{\inf}(u_{i,-},u_{i,+},t_-,t_+,\sigma_{j,-},\sigma_{j,+})$ in the following way. A monomial of $P$ is of the form $ m_{I,J,d} := c_{I,J,d} \cdot u_1^{i_1}\cdots u_k^{i_k} \sigma_1^{j_1}\cdots \sigma_l^{j_l} t^d$, $I=(i_1,\dots,i_k)$, $J=(j_1,\dots,j_l)$. On the one hand, if $c_{I,J,d}\ge0$, then we use the monomial $$ m^{\inf}_{I,J,d} = c_{I,J,d} \cdot (u_{1,-})^{i_1}\cdots (u_{k,-})^{i_k} (\sigma_{1,-})^{j_1}\cdots (\sigma_{l,-})^{j_l} t_-^d$$ in $P^{\inf}$. On the other hand, if $c_{I,J,d}<0$, then we use the monomial $$ m^{\inf}_{I,J,d} = c_{I,J,d} \cdot (u_{1,+})^{i_1}\cdots (u_{k,+})^{i_k} (\sigma_{1,+})^{j_1}\cdots (\sigma_{l,+})^{j_l} t_+^d$$ in $P^{\inf}$. These are all the monomials of $P^{\inf}$. Note that, in particular, 

\begin{equation}
    P(u_i(t), t, \sigma_j) \geq P^{\inf}(u_i^{\inf}(t), u_i^{\sup}(t), T_-(t), T_+(t), \sigma_j^{\inf}, \sigma_j^{\sup}), \quad \text{ for every } t \in [0,1],
\end{equation}
\noindent as long as $0 \le T_t(t) \le t \le T_+(t)$ holds for every $t \in [0,1]$.

We now conclude the description of the method. First, we divide the interval $[0,1]$ into $n$ sub-intervals $I_s = [\frac{s}{n}, \frac{s+1}{n}]$, for $0 \leq s \leq n-1$. Then, in each sub-interval $I_s$, we estimate $u_i^{\inf}$ from below by its minimum $\min_{I_s} u_i^{\inf}$, $u_i^{\sup}$ from above by its maximum $\max_{I_s} u_i^{\sup}$, and $t$ from above and below by $\frac{s}{n} \le t \le \frac{s+1}{n}$. Observe that $\min_{I_s} u_i^{\inf}$ and $\max_{I_s} u_i^{\sup}$ are rational numbers, by construction. The construction gives $$P(u_i(t),t,\sigma_j) \ge P^{\inf}\left(\min_{I_s} u_i^{\inf}, \max_{I_s} u_i^{\sup}, \frac{s}{n}, \frac{s+1}{n}, \sigma_j^{\inf}, \sigma_j^{\sup}\right), \quad \text{for every } t \in I_s.$$

We then compute the rational number
\begin{equation}\label{eq:thetageral}
    \Theta(P; u_i^{\inf}, u_i^{\sup} \sigma_j^{\inf}, \sigma_j^{\sup},n) := \min_{0 \leq s < n-1} P^{\inf}\left(\min_{I_s} u_i^{\inf}, \max_{I_s} u_i^{\sup}, \frac{s}{n}, \frac{s+1}{n}, \sigma_j^{\inf}, \sigma_j^{\sup}\right) \in \mathbb{Q}.
\end{equation}

\noindent Therefore,
$$P(u_i(t),t,\sigma_j) \ge \Theta(P; u_i^{\inf}, u_i^{\sup} \sigma_j^{\inf}, \sigma_j^{\sup},n) \quad \text{for every } t \in [0,1].$$ 

In conclusion, if $\Theta(P; u_i^{\inf}, u_i^{\sup} \sigma_j^{\inf}, \sigma_j^{\sup}) > 0$, then \eqref{eq:generalsettingpolnonzero} is positive, as desired. This is conditioned to the approximations $u_i^{\inf}, u_i^{\sup} \sigma_j^{\inf}, \sigma_j^{\sup}$ being good enough.

\begin{exam}\label{ex:examplec2D}
\normalfont
    Here, we use the method described in this appendix in order to prove one of the assertions of Lemma \ref{propertiesconstants}. More specifically, we prove that $c_{2D}(t) < 0$ for $t \in [0,1]$. For this, we consider the two variables polynomial
    \begin{equation*}
        P_{c_{2D}}(t,r) = 2 t^3 - 5 r t^2 + (2 + 2 r^2) t - 3r.
    \end{equation*}
    We use a number $n = 7$ of sub-intervals and we consider $r^{\inf} = 2.54470$ and $r^{\sup} = 2.54471$ (see Lemma \ref{rLopezNumerical}) to prove our claim. We construct the approximation from above as we have described and this gives us a new polynomial $P^{\sup}_{c2D}$ such that for $t \in [\frac{s}{n}, \frac{s+1}{n}]$ and $0 \leq s \leq n-1$, we have
    \begin{align*}
        P_{c_{2D}}(t,r) &\leq P^{\sup}_{c_{2D}}(\frac{s}{n}, \frac{s+1}{n},r^{\inf}, r^{\sup})\\ 
        &= 2 \left( \frac{s+1}{n} \right)^3 - 5 r^{\inf} \left( \frac{s}{n} \right)^2 + \left(2 + 2 (r^{\sup})^2 \right) \frac{s+1}{n} - 3 r^{\inf}.  
    \end{align*}
    Finally, introducing the notation $P^{\sup}_{c_{2D}}[s]:=P^{\sup}_{c_{2D}}(\frac{s}{n}, \frac{s+1}{n},r^{\inf}, r^{\sup})$ we compute
    \[
\begin{array}{c|c|c|c|c|c|c|c}
  s & 0 & 1 & 2 & 3 & 4 & 5 & 6  \\ \hline
   P^{\sup}_{c_{2D}}[s] & r_0 & r_1 & r_2 & r_3 & r_4 & r_5 & r_6
\end{array}
\]
    Where these numbers $r_0$ are explicit rational numbers such that $r_0 \leq -5.4923$, $r_1 \leq -3.5753$, $r_2 \leq -2.1077$, $r_3 \leq -1.0544$, $r_4 \leq -0.3804$, $r_5 \leq -0.0509$ and $r_6 \leq -0.0509$.
    We then conclude that $$\Theta(P_{c2D};r^{\inf},r^{\sup},7)<0$$ \noindent using the $\Theta$ function defined in equation \eqref{eq:thetageral}, and this proves that $c_{2D}(t) < 0$ for $t \in [0,1]$.
    \end{exam}

\printbibliography

\end{document}